\newtheorem*{theorem}{Theorem}
\newtheorem*{lemma}{Lemma}
\newtheorem*{proposition}{Proposition}
\theoremstyle{definition}
\newtheorem*{definition}{Definition}
\newtheorem*{remark}{Remark}
\renewcommand{\bar}[1]{\overline{#1}}
\newcommand{\cat}[1]{\mathscr{#1}}
\renewcommand{\hat}[1]{\widehat{#1}}
\newcommand{\lie}[1]{\mathfrak{#1}}
\renewcommand{\tilde}[1]{\widetilde{#1}}
\renewcommand{\vec}[1]{\bm{#1}}
\newcommand{\bC}{\mathbb{C}}
\newcommand{\bE}{\mathbb{E}}
\newcommand{\bF}{\mathbb{F}}
\newcommand{\bL}{\mathbb{L}}
\newcommand{\bM}{\mathbb{M}}
\newcommand{\bP}{\mathbb{P}}
\newcommand{\bR}{\mathbb{R}}
\newcommand{\bZ}{\mathbb{Z}}
\newcommand{\cE}{\mathcal{E}}
\newcommand{\cF}{\mathcal{F}}
\newcommand{\cI}{\mathcal{I}}
\newcommand{\cK}{\mathcal{K}}
\newcommand{\cL}{\mathcal{L}}
\newcommand{\cN}{\mathcal{N}}
\newcommand{\cO}{\mathcal{O}}
\newcommand{\cOb}{\mathcal{O}b}
\newcommand{\cQ}{\mathcal{Q}}
\newcommand{\cT}{\mathcal{T}}
\newcommand{\cV}{\mathcal{V}}
\newcommand{\cY}{\mathcal{Y}}
\newcommand{\cZ}{\mathcal{Z}}
\newcommand{\fc}{\mathfrak{c}}
\newcommand{\fn}{\mathfrak{n}}
\newcommand{\fA}{\mathfrak{A}}
\newcommand{\fB}{\mathfrak{B}}
\newcommand{\fC}{\mathfrak{C}}
\newcommand{\fE}{\mathfrak{E}}
\newcommand{\fF}{\mathfrak{F}}
\newcommand{\fM}{\mathfrak{M}}
\newcommand{\fN}{\mathfrak{N}}
\newcommand{\fQ}{\mathfrak{Q}}
\newcommand{\fX}{\mathfrak{X}}
\newcommand{\fY}{\mathfrak{Y}}
\newcommand{\se}{\mathsf{e}}
\newcommand{\sE}{\mathsf{E}}
\newcommand{\sN}{\mathsf{N}}
\newcommand{\sQ}{\mathsf{Q}}
\newcommand{\sT}{\mathsf{T}}
\newcommand{\sV}{\mathsf{V}}
\newcommand{\sW}{\mathsf{W}}
\newcommand{\sZ}{\mathsf{Z}}
\newcommand{\scE}{\mathscr{E}}
\newcommand{\scI}{\mathscr{I}}
\newcommand{\scL}{\mathscr{L}}
\newcommand{\scV}{\mathscr{V}}
\newcommand{\BS}{\mathrm{BS}}
\newcommand{\DT}{\mathrm{DT}}
\newcommand{\exc}{\mathrm{exc}}
\newcommand{\ind}{\mathsf{ind}}
\newcommand{\loc}{\mathrm{loc}}
\newcommand{\pl}{\mathrm{pl}}
\newcommand{\PT}{\mathrm{PT}}
\newcommand{\pt}{\mathrm{pt}}
\newcommand{\sst}{\mathrm{sst}}
\newcommand{\st}{\mathrm{st}}
\newcommand{\vir}{\mathrm{vir}}
\newcommand\bigbullet{\scalebox{1.5}{$\bullet$}}
\DeclareMathOperator{\Aut}{Aut}
\DeclareMathOperator{\End}{End}
\DeclareMathOperator{\ch}{ch}
\DeclareMathOperator{\cl}{cl}
\DeclareMathOperator{\coker}{coker}
\DeclareMathOperator{\cocone}{cocone}
\DeclareMathOperator{\cone}{cone}
\DeclareMathOperator{\ev}{ev}
\DeclareMathOperator{\Ext}{Ext}
\DeclareMathOperator{\cExt}{\mathcal{E}\!{\it xt}}
\DeclareMathOperator{\fl}{fl}
\DeclareMathOperator{\Frac}{Frac}
\DeclareMathOperator{\GL}{GL}
\DeclareMathOperator{\Hilb}{Hilb}
\DeclareMathOperator{\fHilb}{\mathfrak{Hilb}}
\DeclareMathOperator{\cHom}{\mathcal{H}\!{\it om}}
\DeclareMathOperator{\Hom}{Hom}
\DeclareMathOperator{\id}{id}
\DeclareMathOperator{\im}{im}
\DeclareMathOperator{\length}{length}
\DeclareMathOperator{\PGL}{PGL}
\DeclareMathOperator{\Quot}{Quot}
\DeclareMathOperator{\rank}{rank}
\DeclareMathOperator{\rk}{rk}
\DeclareMathOperator{\SO}{SO}
\DeclareMathOperator{\Spec}{Spec}
\DeclareMathOperator{\SU}{SU}
\DeclareMathOperator{\supp}{supp}
\DeclarePairedDelimiter{\inner}{\langle}{\rangle}
\DeclarePairedDelimiter{\abs}{\lvert}{\rvert}
\tikzset{%
  vertex/.style={shape=circle,fill=black,minimum size=6pt,inner sep=0},
  framing/.style={shape=rectangle,fill=black,minimum size=6pt,inner sep=0},
  baseline={([yshift=-0.8ex]current bounding box.center)}
}
\newcounter{count:assumps}
\title{The $3$-fold K-theoretic DT/PT vertex correspondence holds}
\author{Nikolas Kuhn, Henry Liu, Felix Thimm}
\date{\today}
\begin{document}

\maketitle

\begin{abstract}
  We prove the $3$-fold DT/PT correspondence for K-theoretic vertices
  via wall-crossing techniques. We provide two different setups,
  following Mochizuki and following Joyce; both reduce the problem to
  $q$-combinatorial identities on word rearrangements. An important
  technical step is the construction of symmetric almost-perfect
  obstruction theories (APOTs) on auxiliary moduli stacks, e.g. master
  spaces, from the symmetric DT or PT obstruction theory. For this, we
  introduce symmetrized pullbacks of symmetric obstruction theories
  along smooth morphisms to Artin stacks.
\end{abstract}

\tableofcontents

\section{Introduction}
\subsection{DT and PT theory}

\subsubsection{}

Let $X$ be a smooth quasi-projective $3$-fold over $\bC$, with the
action of a torus $\sT \coloneqq (\bC^\times)^r$ (where $r$ could be
$0$) such that the fixed locus $X^\sT$ is proper. Donaldson--Thomas
(DT) theory, as originally formulated by Thomas \cite{Thomas2000},
studies the moduli schemes
\[ \DT_{\beta, n}(X) \coloneqq \left\{\begin{array}{c} \text{ideal sheaves } \cI_Z \subset \cO_X \text{ of} \\ 1\text{-dimensional subschemes } Z \end{array} : \begin{array}{c} [Z] = \beta \\ \chi(\cO_Z) = n\end{array}\right\} \]
for $\beta \in H_2(X, \bZ)$ and $n \in \bZ$. Of particular importance
in this subject are the generating series
\begin{equation} \label{eq:DT-partition-function}
  \sZ_\beta^{\DT}(X) \coloneqq \sum_{n \in \bZ} Q^n \int_{[\DT_{\beta, n}(X)]^{\vir}} 1 \in A^\sT_*(\pt)_{\loc}((Q))
\end{equation}
of (equivariant) {\it DT invariants of curves} in $X$, where:
\begin{itemize}
\item $A^\sT_*(-)$ is the $\sT$-equivariant Chow group, and
  $A^\sT_*(-)_{\loc} \coloneqq A^\sT_*(-) \otimes_{A^\sT_*(\pt)} \Frac A^\sT_*(\pt)$;
\item $[\DT_{\beta, n}(X)]^{\vir} \in A^\sT_*(\DT_{\beta, n}(X))_{\loc}$
  is the virtual fundamental class for the perfect obstruction theory
  $\Ext_X^*(\cI_Z, \cI_Z)_0$, defined via $\sT$-equivariant
  localization if necessary.
\end{itemize}
DT invariants of this flavor have deep connections to Gromov--Witten
theory \cite{Maulik2011}, representation theory of quantum groups
\cite{Okounkov2018}, supersymmetric gauge and string theories
\cite{Nekrasov2016}, etc., to give a (non-representative) sample.

\subsubsection{}

It is productive to refine cohomological enumerative invariants into
K-theoretic ones. The K-theoretic analogue of $\sZ_\beta^{\DT}(X)$ is
the series
\begin{equation} \label{eq:DT-K-partition-function}
  \sZ_\beta^{\DT, K}(X) \coloneqq \sum_{n \in \bZ} Q^n \chi\left(\DT_{\beta, n}(X), \hat\cO^\vir\right) \in K_\sT(\pt)_\loc((Q))
\end{equation}
of {\it K-theoretic DT invariants of curves} in $X$, where, in analogy
with \eqref{eq:DT-partition-function}:
\begin{itemize}
\item $K_\sT(-) \coloneqq K_0(\cat{Coh}_\sT(-))$ is the Grothendieck
  K-group of $\sT$-equivariant coherent sheaves;
\item $\cO^\vir \in K_\sT(\DT_{\beta, n}(X))_\loc$ is the virtual
  structure sheaf, see e.g. \cite{CioKap09};
\item $\hat\cO^\vir \coloneqq \cO^\vir \otimes \cK_\vir^{1/2}$
  is the Nekrasov--Okounkov twist \cite{Nekrasov2016} by a square root
  of the virtual canonical bundle, a special but important feature of
  K-theoretic DT-like theories.
\end{itemize}
The integrand in \eqref{eq:DT-K-partition-function} can be much more
general (see \S\ref{sec:DT-PT-descendants}), though $\sZ_\beta^{\DT,K}(X)$
is already a very sophisticated quantity even in the simplest
non-trivial examples.

While K-theoretic enumerative geometry is a rapidly-growing subject,
basic questions are often very difficult because vanishing arguments
via dimension counts are unavailable. Only in special circumstances,
including K-theoretic DT theory using $\hat\cO^\vir$ (but not
$\cO^\vir$), do {\it rigidity} arguments provide an appropriate
substitute.

The remainder of this paper works in equivariant K-theory, but most
statements and results hold equally well in equivariant
cohomology/Chow.

\subsubsection{}

Every DT invariant of curves in $X$ implicitly contains contributions
from freely-roaming points, or $0$-dimensional subschemes, in $X$.
Algebraically, these can be formally removed by considering
$\sZ_\beta^{\DT,K}(X)/\sZ_0^{\DT,K}(X)$. Geometrically, Pandharipande
and Thomas \cite{Pandharipande2009} define the moduli schemes of \emph{stable pairs}
\[ \PT_{\beta, n}(X) \coloneqq \left\{\cO_X \xrightarrow{s} \cF : \begin{array}{c} \cF \text{ pure of dimension } 1 \\ \dim \supp \coker(s) = 0 \end{array}, \, \begin{array}{c} [\cF] = \beta \\ \chi(\cF) = n \end{array}\right\} \]
for $\beta \in H_2(X, \bZ)$ and $n \in \bZ$, which constrain point
contributions to lie within the Cohen--Macaulay curve $\supp \cF$. In
contrast, and we take this perspective from here on, elements in DT
moduli spaces are two-term complexes $\cO_X \twoheadrightarrow \cO_Z$
for an arbitrary curve $Z$. Letting
\[ I^\bullet \coloneqq [\cO_X \xrightarrow{s} \cF] \in D^b\cat{Coh}_\sT(X), \]
these PT moduli spaces carry a perfect obstruction theory
$\Ext_X^*(I^\bullet, I^\bullet)_0$ analogous to that of DT moduli
spaces where
$I^\bullet = [\cO_X \twoheadrightarrow \cO_Z] \simeq \cI_Z$. 
Exactly as in \eqref{eq:DT-partition-function} and
\eqref{eq:DT-K-partition-function} but replacing the $\DT$ moduli
space with $\PT$, define the series
\[ \sZ_\beta^{\PT}(X) \in A_*^\sT(\pt)_\loc((Q)), \qquad \sZ_\beta^{\PT,K}(X) \in K_\sT(\pt)_\loc((Q)) \]
of (equivariant) {\it cohomological} and {\it K-theoretic PT
  invariants of curves} in $X$.

\subsubsection{}

The question of comparing $\sZ_\beta^{\DT,K}(X)/\sZ_0^{\DT,K}(X)$ and
$\sZ_\beta^{\PT,K}(X)$ is the {\it K-theoretic DT/PT correspondence},
to be addressed in this paper using wall-crossing techniques in
equivariant K-theory. While our techniques are most naturally applied
to smooth quasi-projective Calabi--Yau $3$-folds, in this paper we
focus on the case of smooth quasi-projective toric $3$-folds (possibly
Fano), and we prove a stronger DT/PT correspondence at the level of
{\it K-theoretic vertices}.

\subsection{DT and PT vertices}

\subsubsection{}

Suppose $X$ is toric, with the action of its dense open torus $\sT$.
Let $\Delta(X)$ be the toric $1$-skeleton, whose vertices $v$ are
toric charts $\bC^3$ with toric coordinates $\vec t_v$ and whose edges
$e$ are the non-empty double intersections $\bC^\times \times \bC^2$
with toric coordinates $\vec t_e$. Then $\sT$-equivariant localization
and \v Cech cohomology imply
\begin{equation} \label{eq:factorization-partition-function}
  \begin{aligned}
    \sZ_\beta^{\DT, K}(X) &= \sum_{\vec\lambda} \prod_e \sE^K_{\vec\lambda(e)}(\vec t_e) \prod_v \sV^{\DT, K}_{\vec\lambda(e_1), \vec\lambda(e_2), \vec\lambda(e_3)}(\vec t_v) \\
    \sZ_\beta^{\PT, K}(X) &= \sum_{\vec\lambda} \prod_e \sE^K_{\vec\lambda(e)}(\vec t_e) \prod_v \sV^{\PT, K}_{\vec\lambda(e_1), \vec\lambda(e_2), \vec\lambda(e_3)}(\vec t_v)
  \end{aligned}
\end{equation}
factorize into {\it edge} contributions $\sE^K$ and {\it vertex}
contributions $\sV^{\DT,K}$ and $\sV^{\PT,K}$. Here $e_1, e_2, e_3$
denote the three incident edges at each vertex $v$, and the sum is
over all assignments $\vec\lambda$ of an integer partition to each
edge in $\Delta(X)$; see \cite[\S 4]{Maulik2006} for details. One can
arrange this factorization such that the edges $\sE^K$ are simple
combinatorial products which are the same for DT and PT, and indeed
for any other flavor of curve-counting theory. The bulk of the
complexity lies in the K-theoretic vertices
$\sV^{\DT, K}_{\lambda,\mu,\nu}$ and $\sV^{\PT, K}_{\lambda,\mu,\nu}$,
which we define below using a special geometry $X$.

\subsubsection{}
\label{sec:DT-PT-threefold-geometry}

Following \cite{Maulik2011}, let
$\bar X \coloneqq \bP^1 \times \bP^1 \times \bP^1$ with the action of
$\sT \coloneqq (\bC^\times)^3$ by scaling with weights
$t_1, t_2, t_3$. Let
\[ X \coloneqq \bar X \setminus (L_1 \cup L_2 \cup L_3) \]
where the $L_i$ are the three $\sT$-invariant lines passing through
$(\infty, \infty, \infty)$, and let
\[ \iota_i\colon D_i \hookrightarrow \bar X \]
be the three $\sT$-invariant boundary divisors given by setting the
$i$-th coordinate to $\infty$. Clearly each $D_i \cap X \cong \bC^2$,
and for $i \neq j$ the divisors $D_i \cap X$ and $D_j \cap X$ are
disjoint in $X$. Set
\[ D \coloneqq D_1 \cup D_2 \cup D_3. \]
Our sheaves will live on $\bar X$ but have pre-specified restrictions
to $D$, so interesting things occur only in the toric chart
$\bar X \setminus D \cong \bC^3$.

\subsubsection{}

\begin{definition}
  Fix integer partitions $\lambda, \mu, \nu$, corresponding to
  $\sT$-fixed points in $\Hilb(D_i \cap X)$ for $i = 1, 2, 3$
  respectively. They specify a curve class
  \[ \beta_C \coloneqq (|\lambda|, |\mu|, |\nu|) \in H_2(\bar X, \bZ). \]
  For $M \in \{\DT, \PT\}$, let
  \begin{equation} \label{eq:DT-and-PT-moduli-spaces}
    M_{(\lambda,\mu,\nu), n} \coloneqq \left\{I = [\cO_{\bar X} \xrightarrow{s} \cE] : (\iota_i^*I)_{i=1}^3 = (L\iota_i^*I)_{i=1}^3 \cong (\lambda, \mu, \nu)\right\} \subset M_{\beta_C, n}(\bar X)
  \end{equation}
  where $\cong$ means isomorphic to the two-term complex $[\cO_{D_i}
    \twoheadrightarrow \cO_{Z_i}]$ denoted by $\lambda$ or $\mu$ or
  $\nu$. These $M_{(\lambda,\mu,\nu),n}$ carry symmetric \footnote{For
    us, ``symmetric'' always means up to an overall equivariant
    weight; see Definition~\ref{def:obstruction-theories}.} perfect
  obstruction theories given by $\Ext_{\bar X}^*(I, I(-D))$, using
  which we define the {\it equivariant K-theoretic DT} or {\it PT
    vertex}
  \begin{equation} \label{eq:DT-and-PT-vertices}
    \sV^{M, K}_{\lambda,\mu,\nu} \coloneqq \sum_{n \in \bZ} Q^n \chi(M_{(\lambda,\mu,\nu), n}, \hat\cO^\vir) \in K_\sT(\pt)_\loc((Q)),
  \end{equation}
  cf. \eqref{eq:DT-K-partition-function}. This definition is
  equivalent to that of \cite[\S 4]{Maulik2006} or \cite[\S
  4]{Pandharipande2009a}, which use a formal redistribution of
  vertex/edge contributions in a \v Cech cohomology computation. See
  \S\ref{sec:enumerative-invariants} for details.
\end{definition}

\subsubsection{}

The notion of vertices and factorizations of the form
\eqref{eq:factorization-partition-function} have a long history in
mathematical physics, and the equivariant vertices
\eqref{eq:DT-and-PT-vertices} subsume many previous notions of vertex.
Taking the {\it Calabi--Yau limit} $t_1t_2t_3 \to 1$ produces the {\it
  topological vertices} of \cite{Aganagic2005, Iqbal2008, Li2009}, or,
with a bit more care \cite{Nekrasov2016} to incorporate an extra
parameter into the CY limit, the {\it refined topological vertices} of
\cite{Iqbal2009}.

The general principle is that the $\sT$-fixed points in
$M_{(\lambda,\mu,\nu),n}$ are in bijection with plane partitions or
other (labeled) configurations of boxes in $\bZ^3$, with legs along
the three positive axes. Various limits dramatically simplify their
contributions in equivariant localization. Fixed loci for $M = \PT$
are especially complicated and can form positive-dimensional families
if all three legs are non-trivial \cite{Pandharipande2009a}. In this
way, equivariant vertices may be viewed from the lens of enumerative
combinatorics, with connections to statistical mechanics
\cite{Okounkov2006,Jenne2021}.

Even if one only wants to study $\sV^{M,K}$ or $\sZ^{M,K}$ for
$M = \DT$, the $M = \PT$ case usually has nicer properties and confers
many technical advantages. For example, $\sZ^{\PT}$ is a rational
function in $Q$ \cite[Conjecture 2]{Maulik2006a} \cite[Theorem
1.1(b)]{Bridgeland2011} \cite{Pandharipande2013} while $\sZ^{\DT}$ is
not. The DT/PT correspondence implies that it is enough to study
$M = \PT$.

\subsection{The main theorem and possible extensions}
\label{sec:DT-PT}

\subsubsection{}

\begin{theorem}[K-theoretic DT/PT vertex correspondence] \label{thm:dt-pt}
  \[ \sV^{\DT,K}_{\lambda,\mu,\nu} = \sV^{\PT,K}_{\lambda,\mu,\nu} \sV^{\DT,K}_{\emptyset,\emptyset,\emptyset}. \]
\end{theorem}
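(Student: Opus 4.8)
The plan is to run a Joyce-style (and in parallel, a Mochizuki-style) wall-crossing argument in equivariant K-theory, using the auxiliary moduli stacks and symmetric APOTs advertised in the abstract. First I would set up a one-parameter family of stability conditions interpolating between the DT chamber and the PT chamber on the relevant moduli of two-term complexes $I = [\cO_{\bar X} \to \cE]$ with the prescribed restrictions to $D$. The DT-side objects are the $\cI_Z$ and the PT-side objects are the stable pairs; in between, the semistable objects acquire $0$-dimensional subsheaves, and the wall-crossing must track exactly how a point sheaf can be added or removed. Because everything is relative to the fixed boundary data $(\lambda,\mu,\nu)$ on $D$ and because $X^\sT$ is proper, the relevant Hilbert/Quot-type parameter spaces are finite-type and the invariants \eqref{eq:DT-and-PT-vertices} are well-defined; the factor $\sV^{\DT,K}_{\emptyset,\emptyset,\emptyset}$ will appear precisely as the generating series of these freely-roaming point contributions, i.e. the "degree-$0$" piece that a stable pair forbids.

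Next I would build the wall-crossing machine. On each master space $\fM$ (a $\bC^\times$-equivariant compactification of the family of stability conditions across a single wall), I need a symmetric APOT. Here is where the technical heart of the paper enters: I would apply the construction of symmetrized pullbacks of symmetric obstruction theories along smooth morphisms to Artin stacks, starting from the symmetric obstruction theory $\Ext_{\bar X}^*(I, I(-D))$ on the DT/PT moduli, to endow $\fM$ (and the intermediate semistable loci, and their "flag"/stack-of-pairs variants) with symmetric APOTs compatible with the $\bC^\times$-action. Nekrasov--Okounkov twisting by $\cK_\vir^{1/2}$ is then functorial along these pullbacks, so $\hat\cO^\vir$ makes sense uniformly. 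Applying $\bC^\times$-localization (rigidity) to the master space yields an identity equating the K-theoretic contributions of the two boundary fixed loci (DT-type and PT-type) to a sum of contributions from the remaining, lower-rank fixed loci, which are themselves built from strictly smaller moduli of the same type together with moduli of $0$-dimensional sheaves on $\bC^3$.

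Then I would iterate/assemble. Summing the master-space identities over all walls and over $n$, and using the multiplicativity of $\hat\cO^\vir$ under the relevant direct-sum/extension maps, collapses the wall-crossing terms into a single combinatorial sum: each term is a product of a PT-type vertex contribution with a contribution from configurations of boxes (the point sheaves) along the legs. The upshot is that the DT vertex equals the PT vertex times the generating function of these box configurations, and that generating function is independent of $(\lambda,\mu,\nu)$ and therefore equals its value at $(\emptyset,\emptyset,\emptyset)$, namely $\sV^{\DT,K}_{\emptyset,\emptyset,\emptyset}$. Making this last collapse precise is, as stated in the abstract, a matter of reducing to $q$-combinatorial identities on word rearrangements: one must show that the signed sum of localized contributions over all ways of inserting/removing points reorganizes into the claimed product, which amounts to a rearrangement identity for the associated $q$-series.

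I expect the main obstacle to be the construction and compatibility of the symmetric APOTs on the master spaces. The symmetric obstruction theory on DT/PT moduli lives on a scheme, but the master space and the intermediate stacky loci are Artin stacks with automorphisms (from the extra $\bC^\times$ and from strictly semistable objects), so the naive pullback of a symmetric obstruction theory is neither perfect nor symmetric; the symmetrized-pullback construction is designed to fix this, but verifying that it (i) is compatible with the $\bC^\times$-localization that produces rigidity, (ii) behaves multiplicatively under the direct-sum maps used to split off point sheaves, and (iii) reproduces the standard obstruction theories on the honest moduli at the boundary, is where the real work lies. A secondary obstacle is the bookkeeping in the combinatorial reduction, particularly because PT fixed loci can be positive-dimensional when all three legs are non-trivial \cite{Pandharipande2009a}; one must check that the wall-crossing contributions remain expressible purely in terms of vertex contributions without leaking into genuinely new geometry, and that the final word-rearrangement identity holds in the Nekrasov--Okounkov-twisted (rather than merely cohomological) setting.
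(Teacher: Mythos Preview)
Your proposal is essentially the paper's approach, and you correctly identify the main technical obstacle (constructing and verifying compatibility of the symmetrized-pullback APOTs on master spaces). Two clarifications are worth making. First, the ``flag/stack-of-pairs variants'' you mention in passing are not a side consideration but the load-bearing mechanism: the original DT/PT wall is \emph{not} simple---a $\tau_0$-semistable object can split as $[\cO_{\bar X}\to\cE]\oplus\bigoplus_i[0\to\cZ_i]$ into arbitrarily many pieces---so no master space exists directly. One must first pass to quiver-framed auxiliary stacks (\S\ref{sec:general-framing-argument}, \S\ref{sec:quiver-framed-stacks}) carrying a refined family of stability conditions under which every wall has splittings into at most two pieces; only then does the master-space-plus-rigidity machinery of Proposition~\ref{prop:master-space-relation} apply. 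Second, your worry about positive-dimensional PT fixed loci is misplaced: the wall-crossing formula is derived as an identity among the abstract invariants $\sN_{(\lambda,\mu,\nu),m}(\tau^\pm)$ and $\tilde\sQ_m$ via the residue of the localization formula on the master space, and never requires computing anything on the $\sT$-fixed loci of the DT or PT moduli themselves---their detailed structure is irrelevant to the argument.
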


This theorem is the main goal of this paper. Using the factorization
\eqref{eq:factorization-partition-function}, it is an immediate
corollary that, for smooth toric $3$-folds $X$,
\[ \sZ^{\DT,K}_\beta(X) = \sZ^{\PT,K}_\beta(X) \sZ^{\DT,K}_0(X), \]
which we refer to as the K-theoretic DT/PT correspondence {\it for
  partition functions} of $X$. 

\subsubsection{}

Theorem~\ref{thm:dt-pt} first appeared as a conjecture for
cohomological equivariant vertices \cite[Conjecture
4]{Pandharipande2009a}. Later, it was stated as a conjecture in its
present K-theoretic form in \cite[Equation (16)]{Nekrasov2016}, where
the importance of using $\hat\cO^{\vir}$ instead of $\cO^{\vir}$ was
first recognized.

Some forms of Theorem~\ref{thm:dt-pt} were previously known in
the CY limit $t_1t_2t_3 \to 1$. For up to two non-trivial legs
$\lambda, \mu, \nu$, there is a relatively elementary proof using
transfer matrix techniques \cite{Kononov2021}, which also works for
refined topological vertices. For three non-trivial legs, there is a
much harder combinatorial proof via dimer models \cite{Jenne2021},
assuming a technical conjecture on the smoothness of PT fixed loci for
the CY subtorus $\{t_1t_2t_3=1\} \subset \sT$.

More geometrically, Bridgeland \cite{Bridgeland2011} and Toda
\cite{toda_dtpt, Toda2020} use Hall algebra and wall-crossing
techniques to prove the (non-equivariant) cohomological DT/PT
correspondence for partition functions of {\it arbitrary} smooth
projective Calabi--Yau threefolds. Interestingly, it is known how to
remove the CY restriction here if one uses non-virtual, i.e. ordinary,
fundamental classes \cite{Stoppa2011}. In equivariant cohomology, an
indirect proof of Theorem~\ref{thm:dt-pt} is available by composing
the GW/DT \cite{Maulik2011} and GW/PT \cite[Theorem 21]{Maulik2010}
vertex correspondences.

Morally, all these known cases take place in cohomology. Significantly
more complexity appears in equivariant K-theory, and when working with
vertices instead of partition functions.

\subsubsection{}

From the lens of representation theory, there is an expectation that
the critical cohomology (resp. K-theory) of $\bigsqcup_n
M_{(\lambda,\mu,\nu),n}$, for $M \in \{\DT, \PT\}$, are modules for
shifted $\hat{\lie{gl}}_1$ Yangians (resp. quantum loop algebras)
\cite{Feigin2017, Rapcak2020, Gaiotto2022}, and that fully-equivariant
DT and PT vertices arise as appropriate $qq$-characters
\cite{Nekrasov2016a} of these DT and PT modules. One can then imagine
that the DT and PT modules are related in some way, e.g. taking duals
followed by induction/restriction, and that the DT/PT vertex
correspondence of Theorem~\ref{thm:dt-pt} is what appears after taking
$qq$-characters. In the $1$-legged case there is some preliminary
work in this direction \cite{Liu2021}, but much of the general
technical work remains to be done.

Such a representation-theoretic approach would provide a satisfying 
conceptual explanation for the simple form of Theorem~\ref{thm:dt-pt} 
beyond our proof via geometric wall-crossing (\S\ref{sec:intro-strategy}).

\subsubsection{}

There exist many variations on DT and PT vertices and the DT/PT
correspondence. In what follows, we give some variations where the
techniques that we use to prove Theorem~\ref{thm:dt-pt} should be
almost immediately applicable. Other variations, e.g. higher-rank
\cite{Lo2012, Sheshmani2016} or motivic DT/PT \cite{Davison2021}, may
also be amenable to our techniques with some more work.

\subsubsection{}
\label{sec:DT-PT-descendants}

The integrands $\hat\cO^\vir$ in the vertices
\eqref{eq:DT-and-PT-vertices} may be tensored with {\it descendent
  insertions} --- symmetric polynomials $f$ in
the K-theoretic Chern roots of universal classes
\[ \cO^{[n]} \coloneqq R\pi_*\scI \in K_\sT(M_{(\lambda,\mu,\nu),n}), \]
or some concrete modification thereof, for instance $\cO^{[n]} - \cO$.
Here $\scI$ is the universal family on
$\pi\colon M_{(\lambda,\mu,\nu),n} \times \bar X \to
M_{(\lambda,\mu,\nu),n}$ of the pairs $I$ parameterized by $M$. Let
$\sV^{M,K}_{\lambda,\mu,\nu}(f)$ denote the resulting {\it descendent
  vertices}. A natural question is how descendent insertions transform
under the DT/PT correspondence; namely, whether
\[ \sV^{\DT,K}_{\lambda,\mu,\nu}(f) = \sum_i \sV^{\PT,K}_{\lambda,\mu,\nu}(f_i^{(1)}) \sV^{\DT,K}_{\emptyset,\emptyset,\emptyset}(f_i^{(2)}) \]
for some operation
$\Delta(f) \coloneqq \sum_i f_i^{(1)} \otimes f_i^{(2)}$ where
$f_i^{(1)}, f_i^{(2)}$ are also symmetric polynomials.
Theorem~\ref{thm:dt-pt} says that $\Delta(1) = 1 \otimes 1$. For
non-trivial $f$, a conjectural formula exists for $1$-legged
cohomological equivariant vertices \cite[Conjecture
5.3.1]{Oblomkov2019}, roughly of the form
$\Delta(\ch_k) = \sum_{k_1+k_2=k} \ch_{k_1} \otimes \ch_{k_2}$ up to
some normalization.

We plan to tackle this problem in future work, using a variant of
Joyce's recent universal wall-crossing machine \cite{joyce_wc_2021}
adapted to moduli stacks with symmetric obstruction theories, in
particular moduli stacks of (complexes of) coherent sheaves on CY
$3$-folds.

\subsubsection{}
\label{sec:PT-BS}

If $X$ is the (singular) coarse moduli space of a projective CY
3-orbifold $\fX$ satisfying the hard Lefschetz condition, and
$\pi\colon \tilde X \to X$ is the preferred crepant resolution given
by the derived McKay correspondence, then there is a notion of {\it
  Bryan--Steinberg (BS)} or {\it $\pi$-stable pairs} for $\pi$
\cite{BrSt16}. Roughly, in the same way that PT-stability disallows
$0$-dimensional components in DT-stable curves, BS-stability disallows
those $1$-dimensional components in PT-stable curves of $\tilde X$
which are supported completely in the exceptional fibers of $\pi$. We
expect, cf. \cite[Theorem 6]{BrSt16} \cite[\S 1.1]{Beentjes2022}, that
our wall-crossing techniques can be used to prove the {\it K-theoretic
  PT/BS correspondence}
\[ \sZ^{\PT,K}_\beta(\tilde X) = \sZ^{\BS,K}_\beta(\pi) \sZ^{\PT,K}_{\exc}(\pi), \]
where $\sZ^{\BS,K}_\beta(\pi)$ is the analogue of the partition
function \eqref{eq:DT-K-partition-function} for BS-stable pairs, and
$\sZ^{\PT,K}_{\exc}(\pi)$ is the partition function for PT-stable
pairs supported on exceptional fibers of $\pi$.

Furthermore, if $X$ is toric, then the toric building blocks of $\pi$
are crepant resolutions of $[\bC^3/G]$ where $G \subset \SO(3)$ or
$G \subset \SU(2)$ is a finite subgroup \cite[Lemma 24]{BrGh09}, and
for each such geometry, there should exist a BS vertex
$\sV^{\BS,K}_{\vec\lambda}(G)$ such that the BS analogue of the
factorization \eqref{eq:factorization-partition-function} holds; see
\cite{Liu21} for the simplest case of the type-A surface singularity
$X = (\bC^2/\bZ_n) \times \bC$. We expect our wall-crossing techniques
to also prove the {\it K-theoretic PT/BS vertex correspondence}
\[ \sV^{\PT,K}_{\vec\lambda}(G) = \sV^{\BS,K}_{\vec\lambda}(G) \sV^{\PT,K}_0(G). \]

PT/BS correspondences, for vertices or otherwise, are an important
step in understanding the DT {\it crepant resolution conjecture (CRC)}
\cite[\S 4]{BrCaYo12}, which relates multi-regular curve counts on
$\fX$ with curve counts on $\tilde X$.

\subsubsection{}

A simpler version of the DT CRC
\[ \sZ_0^{\DT}(\fX) = \frac{\sZ_{\exc}^{\DT}(\tilde{X})\, \sZ^{\DT,\vee}_{\exc}(\tilde{X})}{\sZ_0^{\DT}(\tilde{X})}, \]
relates point counts on $\fX$ with exceptional curve counts on $\tilde
X$, where $\sZ^{\DT,\vee}_{\exc}$ is a formal repackaging of the
invariants in $\sZ^{\DT}_{\exc}$. This (cohomological)
``point-counting'' CRC was proved by Calabrese
\cite{Calabrese2012OnTC} via wall-crossing, by comparing DT invariants
on $\tilde{X}$ with counts of perverse coherent sheaves (so-called
{\it non-commutative DT invariants}) for the resolution $\pi$. The
relation between these non-commutative DT invariants of $\pi$ and the
DT invariants of $\tilde{X}$ is also treated by Toda using
wall-crossing \cite{toda_13_ncdt_ii}. We expect that our techniques
and the wall-crossing set-up of \cite{toda_13_ncdt_ii} will directly
give generalizations of Calabrese's results to K-theory and for
K-theoretic vertices.

\subsubsection{}
\label{sec:DT4-PT4}

If $X$ is a Calabi--Yau $4$-fold instead of a $3$-fold, recent
developments \cite{Borisov2017, Oh2023} in constructing virtual cycles
for moduli of sheaves on $X$ mean that a conjectural {\it $4$-fold
  K-theoretic DT/PT vertex correspondence} exists as well
\cite{Cao2022}:
\[ \sV^{\DT,K}_{\pi^1,\pi^2,\pi^3,\pi^4} = \sV^{\PT,K}_{\pi^1,\pi^2,\pi^3,\pi^4} \sV^{\DT,K}_{\emptyset,\emptyset,\emptyset,\emptyset} \]
where the $\pi^i$ are plane partitions. It subsumes the $3$-fold DT/PT
correspondence because these $4$-fold vertices become equal to their
$3$-fold counterparts under a certain limit when the fourth leg is
trivial. However, significantly less computational evidence is
available: explicit calculation of general $4$-fold PT vertices via
equivariant localization, beyond the case of two non-trivial legs
where all fixed points are isolated, only became possible very
recently \cite{Liu2023}.

We believe that our wall-crossing techniques can be extended to
$4$-fold vertices and partition functions, modulo some (difficult!)
technical issues stemming from the usage of $(-2)$-shifted symplectic
structures in the definition of $4$-fold virtual cycles. Preliminary
results on $4$-fold versions of the DT crepant resolution conjecture
also exist \cite{CaKoMo23}, and so it is not unreasonable to also
expect $4$-fold versions of the PT/BS correspondences of
\S\ref{sec:PT-BS}.

\subsection{Strategy and tools}
\label{sec:intro-strategy}

\subsubsection{}

We give {\it two} proofs of Theorem~\ref{thm:dt-pt} in the hopes of
making this paper twice as exciting. Both proofs involve wall-crossing
with respect to a family of weak stability conditions on an ambient
moduli stack, following ideas of Toda \cite[\S
  3]{toda_dtpt}, but they differ in how the wall-crossing step is
implemented. The Mochizuki-style approach in \S\ref{sec:mochizuki-WCF}
directly crosses the relevant wall, using a somewhat ad-hoc setup
stemming from \cite{Mochizuki2009, Nakajima2011, Kuhn2021}. On the
other hand, the Joyce-style approach in \S\ref{sec:joyce-WCF} is a
simplified application of ideas from Joyce's recent {\it universal}
wall-crossing machinery \cite{joyce_wc_2021}, which crosses the wall
by first moving onto it and then off of it in two steps. We believe
\S\ref{sec:mochizuki-WCF} is a good warm-up for the more sophisticated
setting of \S\ref{sec:joyce-WCF}. The content of \S\ref{sec:joyce-WCF}
may also be viewed as a brief exposition of Joyce's universal
wall-crossing machinery in its simplest form.

In both approaches, the end result is a fairly non-trivial identity
which can be written in the language
(Definition~\ref{def:word-rearrangements}) of word rearrangements. In
\S\ref{sec:prop_comb}, we prove an explicit and fairly-involved
combinatorial result in order to simplify this identity into the
desired DT/PT vertex correspondence. On the other hand, in
\S\ref{sec:joyce-combinatorics-trick}, we provide a trick, stemming
from ideas of Toda, which sidesteps all the combinatorics.

In the remainder of this subsection, we outline the wall-crossing
ingredients which are common to both approaches.

\subsubsection{}

To begin, in \S\ref{sec:moduli-stacks} we construct moduli stacks
$\fN_{(\lambda,\mu,\nu),n}$, and \S\ref{sec:stability-conditions}
equips them with a family $(\tau_\xi)_\xi$ of weak stability
conditions, with exactly one wall $\tau_0$ where there are strictly
semistable objects, such that the semistable loci
\[ \fN_{(\lambda,\mu,\nu),n}^{\sst}(\tau^-), \quad \fN_{(\lambda,\mu,\nu),n}^{\sst}(\tau^+), \]
for weak stability conditions $\tau^\pm$ on the two sides of this
wall, are the DT and PT moduli schemes
\eqref{eq:DT-and-PT-moduli-spaces} respectively. On the wall $\tau_0$,
the strictly semistable objects are sums of the form
\[ [\cO_{\bar X} \to \cE] \oplus [0 \to \cZ_1] \oplus \cdots \oplus [0 \to \cZ_k] \]
where all the $\cZ_i$ have zero-dimensional support; for use in the
wall-crossing argument, we therefore also define moduli stacks $\fQ_n$
of such coherent sheaves $\cZ$. Finally,
\S\ref{sec:enumerative-invariants} equips all these stacks with
symmetric obstruction theories, perfect on stable loci, such that the
enumerative invariants $\sN_{(\lambda,\mu,\nu),n}(\tau^\pm) \coloneqq
\chi(\fN_{(\lambda,\mu,\nu),n}^\sst(\tau^\pm), \hat\cO^\vir)$ form the
desired DT/PT vertices \eqref{eq:DT-and-PT-vertices}.

\subsubsection{}
\label{sec:general-framing-argument}

Geometric wall-crossing arguments typically require that
$\tau^\pm$-stable objects decompose into at most two pieces at
$\tau_0$, i.e. a so-called ``simple'' wall-crossing, so that {\it
  master space} techniques are applicable. This is not the case for
$\fN = \fN_{(\lambda,\mu,\nu),n}$. The general strategy
(\S\ref{sec:quiver-framed-stacks}) is to construct auxiliary {\it
  quiver-framed} stacks $\tilde\fN$ such that:
\begin{enumerate}
\item there are easily-understood forgetful maps $\Pi_\fN\colon
  \tilde\fN \to \fN$ (for us, flag variety fibrations);
\item there are weak stability conditions $\tilde\tau^\pm$ on
  $\tilde\fN$, compatible with $\tau^\pm$ under $\Pi_\fN$, with no
  strictly $\tilde\tau^\pm$-semistable objects and proper
  $\sT$-fixed stable loci;
\item the symmetric obstruction theory on $\fN$ lifts in a compatible
  way along $\Pi_\fN$ to a symmetric obstruction theory on all
  semistable=stable loci in $\tilde\fN$;
\item the wall-crossing problem between $\tau^-$ and $\tau^+$ lifts to
  a wall-crossing problem between $\tilde\tau^-$ and $\tilde\tau^+$,
  which may (and will, for us) have more walls, such that stable
  objects decompose into at most two pieces at each wall.
\end{enumerate}
The same must be done for the stacks $\fQ_n$. Properties 2 and 3 imply
that enumerative invariants
$\tilde\sN_{(\lambda,\mu,\nu),n}(\tilde\tau^\pm) \coloneqq
\chi(\tilde\fN^{\sst}(\tilde\tau^\pm), \hat\cO^\vir)$ are
well-defined, and property 1 means that they relate to the original
enumerative invariants $\sN_{(\lambda,\mu,\nu),n}(\tau^\pm)$ in
explicit ways.

To get property 2, we give a general argument in
\S\ref{sec:properness} to prove properness of stable loci in moduli
stacks like $\fN^Q$, using the notion of elementary modification
originated in \cite{Langton1975}. To get property 3, which in the
terminology of \cite{Liu23} is asking for a {\it symmetrized pullback}
of the obstruction theory on $\fN_{(\lambda,\mu,\nu),n}$, we will
require a generalization of perfect obstruction theories due to Kiem
and Savvas.

\subsubsection{}
\label{sec:symmetrized-pullback-intro}

In \S\ref{sec:obstruction-theory-definitions}, we review obstruction
theories on Artin stacks, and what it means for them to be {\it
  perfect} and/or {\it symmetric} of $\sT$-weight $\kappa$. All
objects and morphisms are $\sT$-equivariant unless stated otherwise.

The strategies in \cite[\S 2]{Liu23} for constructing symmetrized
pullbacks of obstruction theories are only applicable in very special
settings, e.g. if $\fN_{(\lambda,\mu,\nu),n}$ were affine or a shifted
cotangent bundle. To get symmetrized pullbacks in general, we use Kiem
and Savvas' notion of {\it almost-perfect obstruction theory (APOT)}
\cite{kiem_savvas_apot}; we do not know how to prove part 1 of the
following theorem using only perfect obstruction theories (POTs).
Roughly, an APOT is an \'etale-local collection of POTs with the data
of how to glue the local obstruction sheaves into a global obstruction
sheaf. Every POT on a DM stack is an APOT, and every APOT induces a
virtual structure sheaf.

The following theorem is the technical heart of this paper and should
be of independent interest.

\begin{theorem} \label{thm:symmetrized-pullback-summary}
  Let $f\colon \fX \to \fM$ be a smooth morphism of Artin stacks over
  a base Artin stack $\fB$ which is smooth and of pure dimension.
  Suppose $\fX$ is a DM stack and $\fM$ has an obstruction theory
  $\phi_{\fM/\fB}\colon \bE_{\fM/\fB} \to \bL_{\fM/\fB}$ which is
  symmetric of $\sT$-weight $\kappa$.
  \begin{enumerate}[label=(\roman*)]
  \item \label{it:symm-pullback-exists} (Symmetrized pullback,
    Theorem~\ref{thm:symmetric-pullback}) On $\fX$, there exists a
    symmetric APOT $\phi_{\fX/\fB}$ of weight $\kappa$ which is
    compatible under $f$ with $\phi_{\fM/\fB}$ in the sense of
    Definition~\ref{def:smooth-and-symmetrized-pullback}.
  \item \label{it:symm-pullback-localization} (Virtual torus
    localization, Theorem~\ref{thm:symmetrized-pullback-localization})
    Let $\cO^\vir_{\fX}$ be the virtual structure sheaf constructed
    from $\phi_{\fX/\fB}$. If $\fX$ is quasi-separated, locally of
    finite type, and has the resolution property \footnote{The
      condition that $\fX$ has the resolution property can actually be
      removed with more care; see the arguments of \cite[Theorem
        D]{Aranha2022}.}, then
    \begin{equation} \label{eq:symmetrized-virtual-localization}
      \hat\cO^\vir_{\fX} = \iota_* \frac{\hat\cO^\vir_{\fX^\sT}}{\hat\se(\cN^\vir)} \in K_\sT(\fX)_\loc
    \end{equation}
    where $\iota\colon \fX^\sT \hookrightarrow \fX$ is the $\sT$-fixed
    locus, $\cO^\vir_{\fX^\sT}$ is constructed from the $\sT$-fixed part of
    $\phi_{\fX/\fB}$,
    \[ \cN^\vir \coloneqq \iota^*\left(f^*\bE_{\fM/\fB}^\vee + \Omega_f^\vee - \kappa^{-1} \Omega_f\right)^{\text{mov}} \in K_\sT(\fX^\sT) \]
    is the $\sT$-moving part of what the virtual normal bundle would
    be if it existed globally, and $\hat\se(\cN^\vir)$ is its
    symmetrized K-theoretic Euler class
    (\S\ref{sec:symmetrized-pullback-localization}).
  \end{enumerate}
\end{theorem}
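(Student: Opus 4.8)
The plan is to establish the two assertions in the detailed forms stated as Theorem~\ref{thm:symmetric-pullback} and Theorem~\ref{thm:symmetrized-pullback-localization}: for~\ref{it:symm-pullback-exists}, to correct the ``naive'' pullback obstruction theory to a symmetric one over affine \'etale charts, where the correction depends on a non-canonical choice, and then to glue the resulting obstruction sheaves into an APOT; for~\ref{it:symm-pullback-localization}, to deduce the localization formula from $K$-theoretic virtual torus localization for APOTs.

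\emph{The naive pullback and its defect.} Since $f$ is smooth, $\bL_f=\Omega_f$ is a vector bundle in degree $0$, and composing $\phi_{\fM/\fB}$ with $f$ produces in the usual way a \emph{perfect} obstruction theory $\phi^{\mathrm{naive}}_{\fX/\fB}\colon\bE^{\mathrm{naive}}_{\fX/\fB}\to\bL_{\fX/\fB}$ with $\bE^{\mathrm{naive}}_{\fX/\fB}=\cone\big(\Omega_f[-1]\xrightarrow{\delta}f^*\bE_{\fM/\fB}\big)$ lying over the cotangent triangle $f^*\bL_{\fM/\fB}\to\bL_{\fX/\fB}\to\Omega_f$ (here smoothness and pure-dimensionality of $\fB$ keep the cotangent triangle well behaved). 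In $K$-theory $[\bE^{\mathrm{naive}}_{\fX/\fB}]=f^*[\bE_{\fM/\fB}]+[\Omega_f]$, which is not symmetric of weight $\kappa$: the class of a symmetric theory is invariant under $[\bullet]\mapsto-\kappa[\bullet^\vee]$, and the ``correct'' class is $f^*[\bE_{\fM/\fB}]+[\Omega_f]-\kappa[\Omega_f^\vee]$, whose $\sT$-moving part after restriction to $\fX^\sT$ is exactly the $\cN^\vir$ of the statement. So the symmetrized pullback must enlarge $\bE^{\mathrm{naive}}_{\fX/\fB}$ by $\kappa\otimes\Omega_f^\vee$ placed in degree $-1$, thereby ``restoring'' the dual of the relative direction $\Omega_f$.

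\emph{Local symmetric obstruction theories and gluing.} Over an \'etale chart $j\colon U\to\fX$ with $U$ an affine scheme (which exist since $\fX$ is Deligne--Mumford), $\Omega_f|_U$ is free and the self-duality $\theta\colon\bE_{\fM/\fB}\xrightarrow{\sim}\bE_{\fM/\fB}^\vee[1]\otimes\kappa$ lets us splice $\bE^{\mathrm{naive}}_{U/\fB}$ with its shifted twisted dual, building on the affine-case construction of \cite[\S2]{Liu23}: writing $\bE_{\fM/\fB}=[E^{-1}\to E^{0}]$ with $E^{0}\cong\kappa\otimes(E^{-1})^\vee$, one obtains a two-term symmetric complex
\[ \bE_{U/\fB}=\big[\,f^*E^{-1}\oplus\kappa\otimes\Omega_f^\vee\ \xrightarrow{\ d\ }\ f^*E^{0}\oplus\Omega_f\,\big] \]
whose differential assembles $f^*d_{\fM}$, the connecting data of $\delta$, and an auxiliary self-dual form $q\colon\kappa\otimes\Omega_f^\vee\to\Omega_f$; one then checks that $\bE_{U/\fB}$ underlies a genuine symmetric perfect obstruction theory $\phi_{U/\fB}\to\bL_{U/\fB}$ of weight $\kappa$ refining $j^*\phi^{\mathrm{naive}}_{\fX/\fB}$. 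The form $q$ is the only non-canonical input, and different charts yield genuinely different perfect obstruction theories; the crucial point is that the obstruction sheaves $h^1(\bE_{U/\fB}^\vee)$ carry canonical identifications on overlaps, so that $\{\phi_{U/\fB}\}$ assembles, $\sT$-equivariantly, into a symmetric APOT $\phi_{\fX/\fB}$ of weight $\kappa$ in the sense of \cite{kiem_savvas_apot}, together with the compatibility with $\phi_{\fM/\fB}$ demanded by Definition~\ref{def:smooth-and-symmetrized-pullback}. I expect verifying these gluings and tracking $\sT$-weights through them to be the main obstacle; it is also the reason one should not expect a single global symmetric \emph{perfect} obstruction theory.

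\emph{Virtual localization.} For~\ref{it:symm-pullback-localization} the strategy is to invoke $K$-theoretic virtual torus localization for APOTs. The $\sT$-fixed part of each $\phi_{U/\fB}$ is again a perfect obstruction theory, and these glue along the global obstruction sheaf to the fixed APOT defining $\cO^\vir_{\fX^\sT}$; dually, the $\sT$-moving part of $\iota^*\bE_{\fX/\fB}^\vee$ need not be a global perfect complex, but its $K$-theory class is globally well defined and equals $\cN^\vir=\iota^*\big(f^*\bE_{\fM/\fB}^\vee+\Omega_f^\vee-\kappa^{-1}\Omega_f\big)^{\mathrm{mov}}$ by the class computation above. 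Under the stated hypotheses — quasi-separated, locally of finite type, and the resolution property, which are precisely those needed in \cite[Theorem~D]{Aranha2022} — one gets $\cO^\vir_{\fX}=\iota_*\big(\cO^\vir_{\fX^\sT}\otimes\se(\cN^\vir)^{-1}\big)$ in $K_\sT(\fX)_\loc$, where $\se$ denotes the ordinary $K$-theoretic Euler class; passing to the Nekrasov--Okounkov twist and rewriting $\se$ in terms of the symmetrized Euler class $\hat\se$ of \S\ref{sec:symmetrized-pullback-localization} gives~\eqref{eq:symmetrized-virtual-localization}, provided one checks that the virtual square root on $\fX$ is the expected twist of the one on $\fM$ so that $\hat\se(\cN^\vir)$ is $\sT$-equivariantly defined. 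The one genuine subtlety here is the absence of a global moving virtual normal bundle; this is harmless because its only effect on localized $K$-theory is an Euler-class factor depending solely on its class, which may be computed chart by chart from the honest local complexes $\bE_{U/\fB}^\vee$ and then patched.
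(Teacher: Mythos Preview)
Your proposal contains two genuine gaps, one in each part.

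\textbf{Part (i).} You write $\bE_{\fM/\fB}=[E^{-1}\to E^{0}]$ as a two-term complex, but this is false in the setting of the theorem: $\fM$ is an Artin stack, and the symmetric obstruction theory typically has $h^{1}(\bE_{\fM/\fB})\neq 0$ (dually $h^{-2}\neq 0$) over points with positive-dimensional stabilizer. Indeed, the whole point of the symmetrized pullback is to produce, on the DM stack $\fX$, a genuinely perfect theory in amplitude $[-1,0]$ starting from something with larger amplitude on $\fM$. The paper does this not by an explicit two-term model but via a two-step cone/cocone construction on each affine chart $U_i$: first choose a lift $\delta_i\colon\Omega_f|_{U_i}[-1]\to f^*\bE_{\fM/\fB}|_{U_i}$ (whose obstruction lies in $\Ext^1(\Omega_f,f^*\cone(\phi_{\fM/\fB}))$ and vanishes by affineness), then use $\delta_i^\vee[1]\circ\delta_i=0$ (another affine vanishing) to factor a map $\xi_i\colon\cone(\delta_i)\to\Omega_f^\vee[2]\otimes\kappa$, and take $\hat\bE_i\coloneqq\cocone(\xi_i)$. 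Your ``auxiliary self-dual form $q$'' has no analogue of this second step; in particular, nothing in your construction truncates the amplitude. Relatedly, you assert the naive pullback $\cone(\delta)$ exists globally; the paper explicitly notes the obstruction to the lift $\delta$ need not vanish globally, which is why everything must be done chart by chart. Your claim that the obstruction sheaves ``carry canonical identifications on overlaps'' also hides real work: the paper needs two technical lemmas comparing different choices of $\cone(\delta)$ and $\cocone(\xi)$ to produce the required transition isomorphisms.

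\textbf{Part (ii).} You invoke APOT virtual localization as a black box, but the existing result of Kiem--Savvas requires the virtual normal bundle to exist globally as a two-term complex, not merely as a $K$-theory class. The sentence ``this is harmless because its only effect on localized $K$-theory is an Euler-class factor depending solely on its class, which may be computed chart by chart and then patched'' is precisely the statement to be proven, and patching local Euler-class computations across an APOT is not obviously well-defined. The paper's actual argument is quite different: use the resolution property to build an affine bundle $a\colon\fA\to\fX$ on which the obstructions to both $\delta$ and $\delta^\vee[1]\circ\delta=0$ vanish \emph{globally}, so that on $\fA$ the symmetrized pullback can be upgraded to a genuine global perfect obstruction theory with an honest virtual normal bundle; apply Kiem--Savvas localization on $\fA$; then descend via the Thom isomorphism $a^*\colon K_\sT(\fX)\xrightarrow{\sim}K_\sT(\fA)$. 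This affine-bundle trick is the technical heart of the localization result and is missing from your sketch.
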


Part 2 of this theorem generalizes Kiem--Savvas \cite[Theorem
5.13]{kiem_savvas_loc} by relaxing their assumption that $\cN^\vir$
really must exist globally as a two-term complex of vector bundles,
instead of just as a K-theory class.

\subsubsection{}

Finally, with the constructions of
\S\ref{sec:general-framing-argument} and
\S\ref{sec:symmetrized-pullback-intro} finished, we can begin the
actual wall-crossing procedure. To relate
$\tilde\sN_{(\lambda,\mu,\nu),n}(\tilde\tau)$ and
$\tilde\sN_{(\lambda,\mu,\nu),n}(\tilde\tau')$ for $\tilde\tau'$ close
by $\tilde\tau$, the strategy is to construct a {\it master space}, or
possibly a sequence of master spaces, which interpolate between
$\tilde\tau$-stable and $\tilde\tau'$-stable objects, and have
appropriate analogues of properties 1-3 from
\S\ref{sec:general-framing-argument}. ``Interpolation'' here means
there is a $\bC^\times$-action whose fixed loci consists of: a locus
of the $\tilde\tau$-stable objects of interest, a locus of
$\tilde\tau'$-stable objects of interest, and an ``interaction'' locus
which contributes most of the complexity. Then we apply the following
to the master space.

\begin{proposition}[{\cite[Prop. 5.2, Lemma 5.5]{Liu23}}] \label{prop:master-space-relation}
  In the situation of Theorem~\ref{thm:symmetrized-pullback-summary}
  with $\bC^\times \times \sT$-equivariance, suppose in addition that:
  \begin{enumerate}
  \item there is an inclusion of fixed loci
    $\fX^{(x,y)} \subset \fX^y$ for any
    $(x, y) \in \bC^\times \times \sT$, and $\fX^\sT$ is proper;
  \item the $\bC^\times$-moving part of
    $f^*\bE_{\fM/\fB}|_{\fX^{\bC^\times}}$ has the form
    $\cF^\vee - \kappa^{-1} \cF^\vee$ for some
    $\cF \in K_{\bC^\times \times \sT}(\fX^{\bC^\times})$.
  \end{enumerate}
  Let $\cF_{>0}$ and $\cF_{< 0}$ be the parts of $\cF$ with positive
  and negative $\bC^\times$-weight respectively, and set
  $\ind \coloneqq \rank \cF_{> 0} - \rank \cF_{< 0}$. Then
  \begin{equation} \label{eq:master-space-relation}
    0 = \chi\bigg(\fX^{\bC^\times}, \hat\cO^\vir_{\fX^{\bC^\times}} \otimes (-1)^{\ind}(\kappa^{\frac{\ind}{2}} - \kappa^{-\frac{\ind}{2}})\bigg) \in K_{\sT}(\pt)_{\loc}
  \end{equation}
  where $\cO^\vir_{\fX^{\bC^\times}}$ is constructed from the
  $\bC^\times$-fixed part of the APOT $\phi_{\fX/\fB}$.
\end{proposition}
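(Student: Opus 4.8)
The plan is to apply the virtual torus localization formula of Theorem~\ref{thm:symmetrized-pullback-summary}\ref{it:symm-pullback-localization}, but with respect to the additional $\bC^\times$-action rather than $\sT$; the vanishing \eqref{eq:master-space-relation} then emerges because $\fX$ is proper over $\fB$ (so it has no honest $\bC^\times$-fixed-point-free ``boundary'' contribution) yet its $\bC^\times$-localized virtual invariant is manifestly finite. Concretely, I would first apply \eqref{eq:symmetrized-virtual-localization} with the roles of the torus played by $\bC^\times$: since $\fX^\sT$ is proper by hypothesis~(1), the ambient $\fX$ is $\sT$-equivariantly proper enough that $\chi(\fX,\hat\cO^\vir_\fX)$ makes sense in $K_\sT(\pt)_\loc$, and equals $\chi\!\big(\fX^{\bC^\times}, \hat\cO^\vir_{\fX^{\bC^\times}}/\hat\se(\cN^\vir_{\bC^\times})\big)$ where $\cN^\vir_{\bC^\times}$ is the $\bC^\times$-moving part of $f^*\bE_{\fM/\fB}^\vee + \Omega_f^\vee - \kappa^{-1}\Omega_f$ restricted to $\fX^{\bC^\times}$. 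Hypothesis~(2) says this moving part is exactly $\cF - \kappa^{-1}\cF$ after dualizing (the $\Omega_f$ terms, being relative cotangents of a smooth DM morphism, are absorbed into $\cF$), so $\cN^\vir_{\bC^\times} = \cF^\vee - \kappa^{-1}\cF^\vee$ as a $\bC^\times\times\sT$-equivariant K-theory class on $\fX^{\bC^\times}$.

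Next I would compute $\hat\se(\cN^\vir_{\bC^\times})$ explicitly using the symmetrized Euler class formalism of \S\ref{sec:symmetrized-pullback-localization}. Splitting $\cF = \cF_{>0} \oplus \cF_0 \oplus \cF_{<0}$ by $\bC^\times$-weight and noting the weight-$0$ part does not contribute to a \emph{moving} bundle, we get $\cN^\vir_{\bC^\times} = (\cF_{>0}^\vee + \cF_{<0}^\vee) - \kappa^{-1}(\cF_{>0}^\vee + \cF_{<0}^\vee)$. The symmetrized K-theoretic Euler class of a class of the shape $\cG - \kappa^{-1}\cG$ is, by the defining normalization (the square-root twist $\cK_\vir^{1/2}$ is precisely designed so that $\hat\se(\cG - \kappa^{-1}\cG)$ is, up to sign, $\det(\cG)^{\pm 1/2}$ times $\se(\cG)/\se(\kappa^{-1}\cG)$, which telescopes), a monomial in $\kappa^{1/2}$ whose exponent is the difference of ranks weighted by $\bC^\times$-sign; carrying this through, the class $1/\hat\se(\cN^\vir_{\bC^\times})$ becomes $(-1)^{\ind}(\kappa^{\ind/2} - \kappa^{-\ind/2})^{-1} \cdot (\text{something with a pole in the }\bC^\times\text{-equivariant parameter})$. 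Pairing against the structure sheaf of $\fX^{\bC^\times}$, the $\bC^\times$-equivariant parameter $z$ survives only through the factor $(\kappa^{\ind/2} - \kappa^{-\ind/2})^{-1}$ is actually $z$-independent, so the whole expression is a genuine element of $K_\sT(\pt)_\loc$ that is \emph{independent of $z$}.

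The punchline is a rigidity/degree argument: the quantity $\chi(\fX, \hat\cO^\vir_\fX)$, computed via $\bC^\times$-localization, is a Laurent polynomial in $z$ of a controlled form — it is both bounded (hence no pole at $z = 0, \infty$, by properness of $\fX$ over $\fB$, or more precisely of $\fX^\sT$) and, after multiplying by $(\kappa^{\ind/2} - \kappa^{-\ind/2})$, equal to $\pm\chi(\fX^{\bC^\times}, \hat\cO^\vir_{\fX^{\bC^\times}} \otimes (-1)^{\ind})$ times a $z$-independent constant. Since the left side $\chi(\fX,\hat\cO^\vir_\fX)$ has no $z$-dependence by construction but would have to vanish if $\ind \neq 0$ forces the symmetrized Euler class to contribute a net trivial character — this is exactly the mechanism of Kiem--Savvas' and Liu's cited Proposition, and I would simply invoke \cite[Prop. 5.2, Lemma 5.5]{Liu23} once the hypotheses of Theorem~\ref{thm:symmetrized-pullback-summary} have been verified in the $\bC^\times\times\sT$-equivariant setting, checking that our APOT $\phi_{\fX/\fB}$ restricts compatibly to the $\bC^\times$-fixed locus (which follows from functoriality of the symmetrized pullback construction under the base change $\fX^{\bC^\times} \hookrightarrow \fX$).

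The main obstacle I anticipate is not the algebra of symmetrized Euler classes — that is bookkeeping — but rather verifying that the hypotheses of Theorem~\ref{thm:symmetrized-pullback-summary} genuinely hold in the $\bC^\times\times\sT$-equivariant form needed here: namely that the symmetric APOT on $\fX$ is compatible with both torus actions simultaneously and restricts correctly to $\fX^{\bC^\times}$, and that the properness input ``$\fX^\sT$ proper'' (hypothesis~(1)) is enough to make the $\bC^\times$-localized Euler characteristic well-defined in $K_\sT(\pt)_\loc$ even though $\fX$ itself is typically not proper. This is where the subtlety of the resolution-property footnote and the quasi-separatedness hypotheses in Theorem~\ref{thm:symmetrized-pullback-summary}\ref{it:symm-pullback-localization} will need to be tracked carefully; I would handle it by noting $\fX^{(x,y)} \subset \fX^y$ (hypothesis~(1)) lets us first localize at $\sT$ to land on a proper fixed locus, and then localize at $\bC^\times$ there, reducing everything to the honest-scheme case where \cite{Liu23} applies verbatim.
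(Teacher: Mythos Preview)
Your overall strategy---apply $\bC^\times$-equivariant virtual localization and then a residue/rigidity argument in the $\bC^\times$-parameter $z$---is exactly what the paper does (it simply cites \cite[Prop.~5.2, Lemma~5.5]{Liu23} and gives a one-line sketch). However, your explanation of the key computational step is incorrect, and this is where the actual content lies.

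You claim that $\hat\se(\cG - \kappa^{-1}\cG)$ ``telescopes'' to a monomial in $\kappa^{1/2}$. It does not. For each Chern root $w$ of $\cN^\vir_{\bC^\times}$ (each $w$ carries a nontrivial $z$-weight), the symmetrized Euler class contributes a genuine rational function
\[
\frac{(\kappa w)^{1/2} - (\kappa w)^{-1/2}}{w^{1/2} - w^{-1/2}}
\]
in $z$, not a monomial. The crucial observation---which the paper singles out immediately after the statement---is that this rational function has \emph{finite limits} $\pm\kappa^{\pm 1/2}$ as $w \to 0$ and $w \to \infty$ (equivalently $z \to 0, \infty$). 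The ``residue map'' is then the operation $f(z) \mapsto \lim_{z\to 0} f(z) - \lim_{z\to\infty} f(z)$ applied to both sides of the localization identity. The left side $\chi(\fX,\hat\cO^\vir_\fX)$ is $z$-independent (it lies in $K_\sT(\pt)_\loc$, using hypothesis (1) to localize with respect to $\sT$ first), so the residue map kills it, giving the $0$ on the left of \eqref{eq:master-space-relation}. On the right side, the product of the limits over all Chern roots is what produces the factor $(-1)^{\ind}\kappa^{\pm\ind/2}$, and the difference of the two limits yields $(-1)^{\ind}(\kappa^{\ind/2} - \kappa^{-\ind/2})$.

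Your third paragraph also inverts the logic: it is not that $\chi(\fX,\hat\cO^\vir_\fX)$ is a Laurent polynomial in $z$ with controlled degree; it is $z$-independent from the start, and that is precisely why its image under the residue map vanishes. Once you correct this and the Euler-class computation, your sketch aligns with the paper's.
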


We omit the proof because it is identical to that of \cite[Prop. 5.2,
Lemma 5.5]{Liu23}, namely: take the
$\bC^\times \times \sT$-equivariant localization formula on $\fX$,
which exists by
Theorem~\ref{thm:symmetrized-pullback-summary}\ref{it:symm-pullback-localization},
and apply an appropriate residue map in the $\bC^\times$ coordinate.
Symmetry of the (A)POT (and condition 2 above) is crucial here,
because then the contribution $1/\hat\se(\cN^\vir)$ to equivariant
localization \eqref{eq:symmetrized-virtual-localization} is a product
of rational functions of the form
\[ \frac{(\kappa w)^{1/2} - (\kappa w)^{-1/2}}{w^{1/2} - w^{-1/2}}, \]
which have the well-defined limits $\pm \kappa^{\pm 1/2}$ as $w \to 0$
or $w \to \infty$. This elementary but very powerful observation
allows us to explicitly identify the output of the residue map, namely
the right hand side of \eqref{eq:master-space-relation}.

Note that $\kappa$ must be a non-trivial $\sT$-weight in order for
\eqref{eq:master-space-relation} to give a non-trivial relation
between invariants. Additional work is needed to use our wall-crossing
techniques in a setting where $\kappa = 1$; see e.g.
\cite{kiem_li_wall}, who treat the case of a simple wall-crossing in
DT theory using $\bC^\times$-intrinsic blowups and cosection
localization.

\subsubsection{}

After appropriate normalization, the wall-crossing formula
\eqref{eq:master-space-relation} becomes a relation between
enumerative invariants with coefficients which are the (symmetric)
quantum integers
\begin{equation} \label{eq:quantum-integer}
  [n]_\kappa \coloneqq (-1)^{n-1} \frac{\kappa^{\frac{n}{2}} - \kappa^{-\frac{n}{2}}}{\kappa^{\frac{1}{2}} - \kappa^{-\frac{1}{2}}}.
\end{equation}
These include an unconventional sign $(-1)^{n-1}$ in order to save on
signs elsewhere; see
Proposition~\ref{prop:projective-bundle-pushforward} for motivation.
In particular, $\lim_{\kappa \to 1} [n]_\kappa = (-1)^{n-1} n$.

Many previous wall-crossing results in the CY3 setting use Joyce's
machinery of Ringel--Hall algebras and motivic stack functions. We
expect our equivariant techniques to provide quantizations/refinements
of these results, upon working $\kappa$-equivariantly, in the sense
that the original formulas appear in the $\kappa \to 1$ limit. In
particular we expect this to be true for the contents of
\cite{JoyceSong}.

\subsection{Acknowledgements}

We would like to thank J{\o}rgen Rennemo for support and for inviting
H.L. for a very enjoyable visit to Oslo, where this project began.
H.L. would also like to thank Dominic Joyce for stimulating
discussions about \cite{joyce_wc_2021}.

N.K. was supported by the Research Council of Norway grant number
302277 -- ``Orthogonal gauge duality and non-commutative geometry'';
H.L. was supported by the Simons Collaboration on Special Holonomy in
Geometry, Analysis and Physics, and the World Premier International
Research Center Initiative (WPI), MEXT, Japan.

\section{Symmetrized Pullback using APOTs}
\label{sec:symmetrized_pullback}

\subsection{Obstruction theories and pullbacks}
\label{sec:obstruction-theory-definitions}

\subsubsection{}
In this section, we construct a \emph{symmetrized pullback} operation 
for symmetric obstruction theory along a smooth morphism from a 
Deligne--Mumford stack to an Artin stack. The main difficulty is that 
such a symmetrized pullback doesn't generally exist as a perfect 
obstruction theory -- instead we allow the result to be a symmetric 
\emph{almost perfect obstruction theory}, as introduced in 
\cite{kiem_savvas_apot}. We also prove a localization formula for 
almost perfect obstruction theories obtained by symmetrized pullback, 
and establish that the construction is functorial under composition 
of smooth maps.

\subsubsection{}
\begin{definition} \label{def:obstruction-theories}
  Fix a base Artin stack $\fB$ which is smooth and of pure dimension.
  All stacks, morphisms, etc. in this section will be implicitly over
  $\fB$ and also $\sT$-equivariant unless stated otherwise. Let $\fM$
  be an Artin stack and $D^-_{\cat{QCoh},\sT}(\fM)$ be its derived
  category of bounded-above $\sT$-equivariant complexes with
  quasi-coherent cohomology \cite{Olsson2007}. Let $\bL_{\fM/\fB} \in
  D^-_{\cat{QCoh},\sT}(\fM)$ denote the cotangent complex
  \cite{Illusie1971}.

  A $\sT$-equivariant {\it obstruction theory} of $\fM$ is an object
  $\bE \in D^-_{\cat{QCoh},\sT}(\fM)$, together with a morphism
  $\phi\colon \bE\to \bL_{\fM/\fB}$ in $D^-_{\cat{QCoh},\sT}(\fM)$,
  such that $h^{\geq 0}(\phi)$ are isomorphisms and $h^{-1}(\phi)$ is
  surjective. An obstruction theory is:
  \begin{itemize}
  \item {\it perfect} if $\bE$ is a perfect complex of amplitude contained in
    $[-1,1]$;
  \item {\it symmetric}, of $\sT$-weight $\kappa$, if $\bE$ is a perfect complex and there is an
    isomorphism
    $\Theta\colon \bE\xrightarrow{\sim} \kappa \otimes \bE^\vee[1]$
    such that $\Theta^\vee[1]=\Theta$.
  \end{itemize}
\end{definition}

This notion of perfect obstruction theory (POT), and symmetric POT,
coincides with the one in
\cite{behrend_fantechi_intrinsic_normal_cone,
  behrend_fantechi_symmetric_obstruction_theories} if $\fM$ is a
Deligne--Mumford stack, because of the requirement that $h^1(\phi)$ is
an isomorphism.

\subsubsection{}

Symmetric obstruction theories are a hallmark of DT-like theories in
comparison to other enumerative theories of curves. Requiring an
obstruction theory $\bE$ on $\fM$ to be symmetric confers two
important advantages.

First, although $h^{-2}(\bE) \neq 0$ in general, if $\fM$ has a
stability condition with no strictly semistable objects, then $\bE$ is
automatically perfect on the (open) stable locus $\fM^\st = \fM^\sst$.
This is because stable objects have no automorphisms, so
\[ h^1(\bE)\Big|_{\fM^\st} \cong h^1(\bL_\fM)\Big|_{\fM^\st} = h^1(\bL_{\fM^\st}) = 0. \]
But then by symmetry
\[ h^{-2}(\bE)\Big|_{\fM^\st} \cong \kappa \otimes h^1(\bE)^\vee\Big|_{\fM^\st} = 0 \]
as well. Hence $\bE$ induces a virtual cycle on $\fM^\st$, an
important ingredient for enumerative geometry.

Second, the rational functions appearing in the $\sT$-equivariant
localization formula for these virtual cycles are of a very special
form and are amenable to {\it rigidity} arguments. In particular, our
key computational tool, Proposition~\ref{prop:master-space-relation},
would be drastically less effective using a non-symmetric obstruction
theory.

\subsubsection{}

\begin{definition} \label{spb:def:apot}
  Let $\fX$ be a Deligne--Mumford stack of finite presentation over
  $\fB$. Following \cite[Definition 5.1]{kiem_savvas_loc}, a
  $\sT$-equivariant {\it almost-perfect obstruction theory} (APOT) on
  $\fX$ consists of:
  \begin{enumerate}[label=(\alph*)]
  \item a $\sT$-equivariant \'etale cover $\{U_i\to \fX\}_{i \in I}$;
  \item for each $i$, a $\sT$-equivariant POT
    $\phi_i\colon \bE_i \to \bL_{U_i/\fB}$.
  \end{enumerate}
  We often write $\phi = (U_i, \phi_i)_{i \in I}$ to denote this data.
  This data must satisfy the following conditions:
  \begin{enumerate}[label=(\roman*)]
  \item there exist $\sT$-equivariant transition isomorphisms
    \[ \psi_{ij}\colon h^1(\bE_i^\vee)\Big|_{U_{ij}}\to h^1(\bE_j^\vee)\Big|_{U_{ij}} \]
    which give descent data for a sheaf $\cOb_\phi$ on $\fX$, called
    the obstruction sheaf; \footnote{Here, we follow the presentation
      of Kiem--Savvas, although it is slightly inaccurate: the
      obstruction sheaf is really part of the \emph{data} defining the
      almost perfect obstruction theory.}
  \item for each pair $i,j$, there exists a $\sT$-equivariant \'etale
    covering $\{V_k\to U_{ij}\}_{k \in K}$, such that the
    isomorphism $\psi_{ij}$ of the obstruction sheaves lifts to a
    $\sT$-equivariant isomorphism of the perfect obstruction theories
    over each $V_k$.
  \end{enumerate}
  In addition, if the following conditions are satisfied for some
  $\sT$-weight $\kappa$, then the APOT is {\it symmetric} with weight
  $\kappa$:
  \begin{enumerate}[resume,label=(\roman*)]
  \item for each $i$, the POT $\phi_i\colon \bE_i \to \bL_{U_i/\fB}$
    is symmetric with weight $\kappa$;\label{it:sym-apotiii}
  \item there exists a $\sT$-equivariant isomorphism
    $\kappa \otimes \cOb_\phi \cong h^0(\bL_{\fX/\fB})$. \label{it:sym-apotiv}
  \end{enumerate}
\end{definition}

We think of an APOT as an \'etale-local collection of POTs where only
the local obstruction sheaves, not the POTs themselves, glue to form a
global object. By \cite[Theorem 4.3]{alper_hall_rydh_luna_stacks_20},
after possibly passing to a multiple cover of $\sT$, a $\sT$-equivariant \'etale cover $\{U_i \to \fX\}_i$
always exists and moreover the $U_i$ can be taken to be affine.

\subsubsection{}

\begin{theorem}[{\cite[Definition 4.1]{kiem_savvas_apot}}] \label{thm:APOT-virtual-cycle}
  A $\sT$-equivariant APOT on $\fX$ gives rise to an
  $\sT$-equivariant virtual structure sheaf
  $\cO_\fX^\vir\in K_\sT(\fX)$.
\end{theorem}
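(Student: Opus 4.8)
The plan is to carry out, relative to $\fB$ and $\sT$-equivariantly throughout, the standard passage from a perfect obstruction theory to a virtual structure sheaf on each chart of the \'etale cover, and then to glue the pieces; this is exactly the construction of Kiem--Savvas \cite{kiem_savvas_apot}. Since $\fX$ is a DM stack of finite presentation over $\fB$, by \cite[Theorem 4.3]{alper_hall_rydh_luna_stacks_20} we may assume (after possibly passing to a multiple cover of $\sT$) that the charts $U_i$ in the APOT data $\phi = (U_i, \phi_i)_{i \in I}$ are affine $\sT$-schemes.

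First, the local step. For each $i$ the $\sT$-equivariant POT $\phi_i \colon \bE_i \to \bL_{U_i/\fB}$ produces a $\sT$-equivariant virtual structure sheaf $\cO^\vir_{U_i} \in K_\sT(U_i)$ by the usual recipe: the intrinsic normal cone $\mathfrak{C}_{U_i/\fB}$ embeds as a closed $\sT$-substack of the vector bundle stack $h^1/h^0(\bE_i^\vee)$, and $\cO^\vir_{U_i}$ is the class of its derived restriction along the zero section. Since $U_i$ is affine one may instead choose a global two-term locally free resolution $\bE_i^\vee \simeq [F_i^0 \to F_i^1]$, lift $\mathfrak{C}_{U_i/\fB}$ to an honest subcone of the bundle $F_i^1$, and take the K-theoretic Gysin pullback along the zero section of $F_i^1$; a deformation-to-the-normal-cone argument shows the result is independent of the resolution and agrees with the stacky definition. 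All choices here can be made $\sT$-equivariantly.

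Second, the gluing, which is where the real content lies. The intrinsic normal cone $\mathfrak{C}_{U_i/\fB}$ is intrinsic, hence already compatible over $\fX$; conditions (i)--(ii) of Definition~\ref{spb:def:apot} say that the local obstruction sheaves $h^1(\bE_i^\vee)$ glue via the $\psi_{ij}$ to the global sheaf $\cOb_\phi$, and that on an \'etale refinement of each overlap the whole POT, not merely its $h^1$, becomes isomorphic. Consequently the image of each cone in the abelian cone of $\cOb_\phi|_{U_i}$ descends to a single global closed subcone $\mathfrak{c}_\phi$ of the abelian cone of $\cOb_\phi$ (legitimate, since abelian cones of coherent sheaves satisfy fppf descent). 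The virtual structure sheaf is then read off from the pair $(\cOb_\phi, \mathfrak{c}_\phi)$: over an affine $U_i$ choose a surjection $E_i \twoheadrightarrow \cOb_\phi|_{U_i}$ from a vector bundle, pull $\mathfrak{c}_\phi$ back to a cone in $E_i$, and Gysin-pull-back along the zero section of $E_i$; this recovers $\cO^\vir_{U_i}$ and, crucially, produces classes that agree over the common \'etale refinements $\{V_k\}$ of condition (ii). The main obstacle --- and the reason APOTs are genuinely more than a patchwork of POTs --- is that $K_\sT(\fX) = K_0(\cat{Coh}_\sT(\fX))$ is not an \'etale sheaf of abelian groups, so agreement on overlaps does not by itself yield a global class. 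Following \cite{kiem_savvas_apot}, the resolution is to perform the gluing at the level of the representing geometry: one uses that the formation of the Koszul-type resolution above is insensitive to the choice of $E_i$, so that the global cone $\mathfrak{c}_\phi$ together with this compatible system of local resolutions determines a canonical global $\sT$-equivariant class $\cO^\vir_\fX \in K_\sT(\fX)$, $\sT$-equivariance being preserved throughout. I would reproduce that argument, the remainder being routine bookkeeping.
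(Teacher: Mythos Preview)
Your proposal is correct and follows essentially the same approach as the paper's proof sketch, which simply recapitulates Kiem--Savvas. The one framing difference worth noting: rather than first constructing local $\cO^\vir_{U_i}$ and then confronting the non-sheaf nature of K-theory, the paper (following \cite{kiem_savvas_apot}) works directly with the \emph{sheaf stack} $\cOb_\phi$, defines its K-group $K_\sT(\cOb_\phi)$ and a global Gysin map $0^!_{\cOb_\phi}\colon K_\sT(\cOb_\phi)\to K_\sT(\fX)$ by gluing the \emph{homology sheaves} of the local Koszul complexes (these are coherent, hence satisfy descent), and then sets $\cO^\vir_\fX \coloneqq 0^!_{\cOb_\phi}\cO_{\fc_\phi}$. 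This sidesteps the gluing-of-K-classes issue from the outset rather than resolving it after the fact, but the underlying mechanism---descent for the Koszul homology sheaves together with independence of the local presentation $E_i \twoheadrightarrow \cOb_\phi|_{U_i}$---is exactly what you identified.
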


We recall the main ideas from Kiem--Savvas, for completeness and for
the reader's convenience.

\begin{proof}[Proof sketch.]
  Recall \cite[Definition 2.3]{Lee04} \cite[Definition
  3.2.1]{CioKap09} that the usual construction of $\cO_\fX^\vir$ using
  a POT $\bE$ involves: the cone stack
  $\fE \coloneqq h^1/h^0(\bE^\vee)$, an embedding
  $\fC_{\fX} \hookrightarrow \fE$ of the intrinsic normal cone
  $\fC_{\fX}$, and a K-theoretic Gysin map
  $0^!_{\fE}\colon K_\sT(\fE) \to K_\sT(\fX)$.

  For an APOT $\phi$, Kiem and Savvas' idea is to work with the {\it
    sheaf stack} $\cOb_\phi$, which is approximately the coarse moduli
  stack $h^1(\bE^\vee)$ of $\fE$. To be precise, $\cOb_\phi$ is (an
  abuse of notation for) the stack which to a morphism $\rho\colon W
  \to \fX$ from a scheme $W$ assigns the abelian group $H^0(W, \rho^*
  \cOb_\phi)$. This is not an algebraic stack, but its K-group
  $K_\sT(\cOb_\phi)$ and other objects can be defined in terms of what
  Kiem--Savvas call \emph{affine local charts} of the form
  \[ (U \xrightarrow{\rho} \fX, \cE \xrightarrow{r} \rho^* \cOb_\phi). \]
  Here everything is $\sT$-equivariant, $\rho$ is an \'etale map from
  an affine scheme $U$, and $r$ is a surjection from a vector bundle
  $\cE$ on $U$. Working on affine local charts, homology sheaves of
  Koszul complexes on each $U$ glue by standard descent arguments, and
  this suffices construct the desired Gysin map
  \[ 0^!_{\cOb_\phi}\colon K_\sT(\cOb_\phi) \to K_\sT(\fX), \]
  see \cite[Section 2]{kiem_savvas_apot}. Similarly, there is a sheaf
  stack $\fn_{\fX/\fB} \coloneqq h^1(\bL_{\fX/\fB}^\vee)$ and a closed
  substack $\fc_{\fX/\fB} \subset \fn_{\fX/\fB}$ (cf.
  \cite{chang_li_spot}), which Kiem--Savvas call the {\it coarse
    intrinsic normal sheaf} and {\it coarse intrinsic normal cone}
  respectively. Using the transition maps in
  Definition~\ref{spb:def:apot}(ii), the closed embeddings
  $h^1(\phi_i^\vee)\colon \fn_{U_i/\fB}\hookrightarrow \cOb_{\phi_i}$
  glue to form a global closed embedding
  \[ j_\phi\colon \fn_{\fX/\fB} \hookrightarrow \cOb_\phi \]
  of sheaf stacks, and so $\fc_{\fX/\fB} \subset \fn_{\fX/\fB}$ embeds
  as a closed substack $\fc_\phi \coloneqq j_\phi(\fc_{\fX/\fB})$ of
  $\cOb_\phi$ \cite[Theorem 3.4]{kiem_savvas_apot}. Then, using the
  Gysin map, we define
  $\cO^\vir_\fX \coloneqq 0^!_{\cOb_\phi}\cO_{\fc_\phi}$ as usual.
\end{proof}

\subsubsection{}
\label{sec:smooth-and-symmetrized-pullback-definition}

\begin{definition} \label{def:smooth-and-symmetrized-pullback}
  Let $\fN$ be another Artin stack over the base $\fB$, and let
  $f\colon \fM \to \fN$ be a smooth morphism of Artin stacks over
  $\fB$. Suppose
  $\phi_{\fM/\fB}\colon \bE_{\fM/\fB} \to \bL_{\fM/\fB}$ and
  $\phi_{\fN/\fB}\colon \bE_{\fN/\fB} \to \bL_{\fN/\fB}$ are
  obstruction theories.
  \begin{itemize}
  \item The two obstruction theories $\phi_{\fM/\fB}$ and
    $\phi_{\fN/\fB}$ are {\it compatible under $f$} if there is a
    morphism of exact triangles
    \begin{equation} \label{eq:smooth-pullback}
      \begin{tikzcd}
        \bL_f[-1] \ar[equals]{d} \ar{r}{\delta} & f^*\bE_{\fN/\fB} \ar{d}{f^*\phi_{\fN/\fB}} \ar{r} & \bE_{\fM/\fB} \ar{d}{\phi_{\fM/\fB}} \ar{r}{+1} & {} \\
        \bL_f[-1] \ar{r} & f^*\bL_{\fN/\fB} \ar{r} & \bL_{\fM/\fB} \ar{r}{+1} & {}
      \end{tikzcd}.
    \end{equation}
  \item (\cite[Definition 2.3]{Liu23}, cf. \cite[Theorem
    0.1]{Park2021}) Suppose $\phi_{\fM/\fB}$ and $\phi_{\fN/\fB}$ are
    symmetric of weight $\kappa$. Then the two symmetric obstruction
    theories are {\it compatible under $f$} if there are morphisms of
    exact triangles
    \begin{equation} \label{eq:symmetrized-pullback}
      \begin{tikzcd}
        \bL_f[-1] \ar[equals]{d} \ar{r} & \kappa \otimes \bF^\vee[1] \ar{d}{\eta} \ar{r}{\zeta} & \bE_{\fM/\fB} \ar{d}{\zeta^\vee} \ar{r}{+1} & {} \\
        \bL_f[-1] \ar[equals]{d} \ar{r}{\delta} & f^*\bE_{\fN/\fB} \ar{d}{f^*\phi_{\fN/\fB}} \ar{r}{\eta^\vee} & \bF \ar{d} \ar{r}{+1} & {} \\
        \bL_f[-1] \ar{r} & f^*\bL_{\fN/\fB} \ar{r} & \bL_{\fM/\fB} \ar{r}{+1} & {}
      \end{tikzcd}
    \end{equation}
    such that the third column is $\phi_{\fM/\fB}$.
  \end{itemize}
  From a different perspective, if only the (symmetric) obstruction
  theory $\phi_{\fN/\fB}$ is given, then a (symmetric) object
  $\bE_{\fM/\fB}$ along with relevant morphisms fitting into
  \eqref{eq:smooth-pullback} or \eqref{eq:symmetrized-pullback} is
  called a {\it smooth pullback} or {\it symmetrized (smooth)
    pullback} along $f$ of $\phi_{\fN/\fB}$, respectively. This is
  justified as the resulting map $\phi_{\fM/\fB}$ will automatically
  be a (symmetric) obstruction theory. Moreover, in the case of smooth
  pullback, if $\phi_{\fN/\fB}$ is perfect then so is $\phi_{\fM/\fB}$
  by the five lemma. The symmetry of the upper-right square in
  \eqref{eq:symmetrized-pullback} means that the output of symmetrized
  pullback is a symmetric obstruction theory of the same weight.
\end{definition}

\subsubsection{}

Symmetrized pullback was introduced in \cite{Liu23} in order to equip
certain moduli stacks in Vafa--Witten theory with symmetric
obstruction theories. There, the stack $\fN$ was (the classical
truncation of) a $(-1)$-shifted cotangent bundle $T^*[-1]\bar\fN$, and
so symmetrized pullback on $\fN$ was roughly equivalent to $T^*[-1]$
of a smooth pullback on $\bar\fN$, and so the only task in
constructing a symmetrized pullback was to construct the lift $\delta$
in \eqref{eq:smooth-pullback}. This is not difficult in practice
because $\phi_{\fN/\fB}$ arises from functorial constructions like the
Atiyah class.

However, in general, symmetrized pullbacks are rather difficult to
construct. In our situation, we resort to using APOTs, see
\S\ref{sec:symmetrized-pullback-construction}. The \'etale-locally
affine nature of APOTs forces some vanishings that we otherwise do not
have.

\subsection{Smooth pullback of APOTs}
\label{sec:smooth-pullback-APOT}

\subsubsection{}

As a warm-up and for use in
\S\ref{sec:symmetrized-pullback-localization}, we now construct smooth
pullbacks of APOTs. The key idea is that, by working with
APOTs, we no longer require the existence of a lift $\delta$ in the
compatibility diagram \eqref{eq:smooth-pullback}.

\begin{definition} \label{def:compatibility-APOTs}
  Let $f\colon \fX \to \fY$ be a smooth morphism of DM stacks over
  $\fB$. Since $f$ is smooth and of DM-type, $\bL_f=\Omega_{f}$ is a
  locally free sheaf. An APOT $\phi_\fX$ on $\fX$ and an APOT $\phi_\fY$ on
  $\fY$ are {\it compatible under $f$} if:
  \begin{enumerate}
  \item there exists a $\sT$-equivariant \'etale cover $\{U_i \to
    \fY\}_{i \in I}$ and a $T$-equivariant \'etale cover $\{W_j \to
    \fX\}_{j \in J}$ refining $\{f^{-1}(U_i) \to \fX\}_{i \in I}$,
    such that $\phi_\fX|_{W_j}$ and $\phi_\fY|_{U_i}$ are POTs which
    are compatible under $f|_{W_j}:W_j\to U_i$ for each $j$.
  \item $f^*\cOb_{\phi_\fY} = \cOb_{\phi_\fX}$.
  \end{enumerate}
  As in Definition~\ref{def:smooth-and-symmetrized-pullback}, if only
  $\phi_\fY$ is given, we refer to any APOT $\phi_\fX$ satisfying
  these conditions as a {\it smooth pullback} of $\phi_\fY$ along $f$.
\end{definition}

In fact, after sufficient refinement of the \'etale cover, any smooth
pullback APOT is \'etale-locally equivalent to the one constructed in
the following Proposition~\ref{spb:prop:smooth_pullback}, for an
appropriate notion of equivalence of APOTs.

\subsubsection{}

\begin{proposition} \label{spb:prop:smooth_pullback}
  An APOT on $\fY$ admits a smooth pullback to $\fX$ such that
  $\cO^\vir_{\fX} = f^*\cO^\vir_\fY$.
\end{proposition}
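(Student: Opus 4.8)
The plan is to construct the smooth-pullback APOT on $\fX$ explicitly by pulling back each local POT of a given APOT $\phi_\fY$ on $\fY$, and then to check that the Gysin-theoretic construction of $\cO^\vir$ commutes with this pullback chart-by-chart. First I would fix a $\sT$-equivariant \'etale cover $\{U_i \to \fY\}_{i \in I}$ supporting the POTs $\phi_i \colon \bE_i \to \bL_{U_i/\fB}$ of $\phi_\fY$, and choose a $\sT$-equivariant \'etale cover $\{W_j \to \fX\}_{j \in J}$ refining $\{f^{-1}(U_i)\}$; by the Alper--Hall--Rydh result cited after Definition~\ref{spb:def:apot}, after passing to a multiple cover of $\sT$ I may assume all $U_i$ and $W_j$ are affine. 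On each $W_j$, mapping to some $U_{i(j)}$ via a smooth morphism $f_j$, I would take the smooth pullback of $\phi_{i(j)}$ in the sense of \eqref{eq:smooth-pullback}: since $U_{i(j)}$ is affine the lift $\delta$ exists (this is the standard point that on affines there is no obstruction to lifting, which is precisely why APOTs are useful here), giving a perfect $\bE_{W_j/\fB}$ sitting in an exact triangle $f_j^*\bE_{i(j)} \to \bE_{W_j/\fB} \to \bL_{f_j}[1]$ and a POT $\phi_{W_j}\colon \bE_{W_j/\fB}\to \bL_{W_j/\fB}$. Because $\bL_{f_j} = \Omega_{f_j}$ is locally free, $h^1(\bE_{W_j/\fB}^\vee) = f_j^* h^1(\bE_{i(j)}^\vee)$, so the local obstruction sheaves are the $f$-pullbacks of those of $\phi_\fY$; the transition isomorphisms $\psi_{ij}$ of $\phi_\fY$ pull back to transition isomorphisms on $\fX$, and these visibly glue to the sheaf $\cOb_{\phi_\fX} \coloneqq f^*\cOb_{\phi_\fY}$. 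This establishes that the data $(W_j,\phi_{W_j})_{j}$ is a smooth-pullback APOT of $\phi_\fY$ along $f$ in the sense of Definition~\ref{def:compatibility-APOTs}.

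Next I would prove $\cO^\vir_\fX = f^*\cO^\vir_\fY$ by tracking the construction recalled in the proof sketch of Theorem~\ref{thm:APOT-virtual-cycle} through the smooth pullback. The coarse intrinsic normal sheaf satisfies $\fn_{\fX/\fB} = f^* \fn_{\fY/\fB}$ because for a smooth morphism the cotangent complex triangle $f^*\bL_{\fY/\fB} \to \bL_{\fX/\fB}\to \Omega_f[1]$ gives $h^1(\bL_{\fX/\fB}^\vee) \cong f^* h^1(\bL_{\fY/\fB}^\vee)$ (the extra term $\Omega_f^\vee[-1]$ contributes nothing to $h^1$), and similarly the coarse intrinsic normal cone pulls back, $\fc_{\fX/\fB} = f^*\fc_{\fY/\fB}$, by smooth base change for the intrinsic normal cone. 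Under the glued embeddings these identify $\fc_{\phi_\fX} \subset \cOb_{\phi_\fX}$ with $f^*(\fc_{\phi_\fY} \subset \cOb_{\phi_\fY})$ as sheaf stacks over $\fX$. Then, working on affine local charts $(U \xrightarrow{\rho}\fX, \cE \xrightarrow{r}\rho^*\cOb_{\phi_\fX})$ — which can be taken to be pullbacks of affine local charts of $\phi_\fY$ — the Koszul-homology sheaves computing $0^!_{\cOb_{\phi_\fX}}\cO_{\fc_{\phi_\fX}}$ are pulled back along the flat (indeed smooth) map $f$ from those computing $0^!_{\cOb_{\phi_\fY}}\cO_{\fc_{\phi_\fY}}$, so the Gysin pushforwards agree after $f^*$. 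Hence $\cO^\vir_\fX = 0^!_{\cOb_{\phi_\fX}}\cO_{\fc_{\phi_\fX}} = f^*\, 0^!_{\cOb_{\phi_\fY}}\cO_{\fc_{\phi_\fY}} = f^*\cO^\vir_\fY$.

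The main obstacle I expect is not any single step but the bookkeeping of $\sT$-equivariant descent: one must check that all the chart-local constructions (the lifts $\delta$, the pulled-back POTs, the Koszul complexes) can be made compatible over the overlaps $W_{jj'}$ after a further common refinement, so that they glue to genuine global objects rather than merely existing locally. This is exactly the kind of verification Kiem--Savvas carry out in \cite[Section 2]{kiem_savvas_apot}, and I would invoke their framework, noting that smooth pullback is compatible with their notion of equivalence of affine local charts; the flatness of $f$ is what makes the pulled-back Koszul resolutions still resolutions, and smoothness is what keeps $\Omega_f$ locally free so that taking $h^1$ of the relevant duals is exact. A secondary point worth a remark is well-definedness: different choices of $\delta$ on a given affine chart differ by a map $\Omega_{f_j}[-1] \to f_j^*\bE_{i(j)}$, hence change $\bE_{W_j/\fB}$ only by a quasi-isomorphism compatible with the POT map and trivial on $h^1$ of the dual, so the resulting APOT — and in particular its virtual structure sheaf — is independent of these choices up to the appropriate equivalence. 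This also yields the uniqueness statement asserted in the sentence preceding the proposition, that any two smooth-pullback APOTs agree \'etale-locally after refinement.
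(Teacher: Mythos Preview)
Your proposal is correct and follows essentially the same strategy as the paper: pull back the local POTs chart-by-chart using affine vanishing, glue obstruction sheaves to $f^*\cOb_{\phi_\fY}$, and compare virtual structure sheaves via smooth base-change for the coarse intrinsic normal cone and functoriality of the Gysin map.

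Two small points of comparison. First, the paper does not construct a lift $\delta$ and take $\cone(\delta)$; instead it applies Lemma~\ref{lem:affine-vanishing} directly to the cotangent triangle to see that on affine charts $\bL_{\tilde U_i/\fB}\cong f_i^*\bL_{U_i/\fB}\oplus\Omega_{f_i}$ \emph{splits}, and then takes the explicit direct-sum model $\tilde\bE_i\coloneqq f_i^*\bE_i\oplus\Omega_{f_i}$ with $\tilde\phi_i=f_i^*\phi_i\oplus\id$. This is your construction with $\delta=0$, but the splitting makes the verification of APOT condition~(ii) concrete: the transition maps are written down as explicit upper-triangular matrices rather than deferred to bookkeeping. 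Second, a minor inaccuracy: the vanishing that produces the lift $\delta$ (or the splitting) uses affineness of the chart $W_j$ on $\fX$, not of $U_{i(j)}$ on $\fY$, since the relevant $\Ext$-group lives on $W_j$; since you took both affine this does not affect the argument.
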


\begin{proof}
  Let $(U_i,\phi_i\coloneqq \bE_i\to \bL_{U_i/\fB})_{i\in I}$ be the
  given APOT on $\fY$. By \cite[Theorem
    4.3]{alper_hall_rydh_luna_stacks_20}, after possibly passing to a
  multiple cover of $\sT$, there exists a $\sT$-equivariant affine
  \'etale cover $\{\tilde U_j \to \fX\}_{j\in J}$ refining
  $\{f^{-1}(U_i) \to \fX\}_{i \in I}$, and moreover the $\tilde U_j$
  can be taken to be affine. As we may refine the cover $\{U_i\}_{i\in
    I}$ and restrict the $\phi_i$, we may assume $J=I$ and every
  $\tilde U_i$ gets mapped into $U_i$. Let $f_{i}\colon \tilde U_i \to
  U_i$ be the natural maps.
  
  By Lemma~\ref{lem:affine-vanishing} the morphism $\Omega_{f_i}\to
  f_i^*\bL_{U_i/\fB}[1]$ in the distinguished triangle
  \begin{equation*}
    f_i^*\bL_{U_i/\fB}\to \bL_{\tilde U_i/\fB}\to \Omega_{f_i} \xrightarrow{+1}
  \end{equation*}
  vanishes, which gives us a $\sT$-equivariant splitting
  \begin{equation} \label{eq:smooth-pullback-cotangent-complex-splits-locally}
    \bL_{\tilde U_i/\fB} \cong f_i^*\bL_{U_i/\fB}\oplus \Omega_{f_i}.
  \end{equation}
  We fix this splitting for each $i$ and define the smooth pullback
  APOT on $\fX$ by taking the cover $\{\tilde U_i\}_{i\in I}$ above and the
  morphisms
  \[ \tilde\phi_i\colon \tilde{\bE}_i \coloneqq f_i^*\bE_i \oplus \Omega_{f_i} \xrightarrow{f_i^*\phi_i\oplus \id} f_i^*\bL_{U_i/\fB}\oplus \Omega_{f_i} \xrightarrow{\sim} \bL_{\tilde U_i/\fB}. \]
  By construction of $\tilde{\bE}_i$, the local obstruction sheaves
  $h^1(\tilde{\bE}_i^\vee)$ are isomorphic to $h^1(f^*\bE_i^\vee)$ and
  glue to give the obstruction sheaf
  $\cOb_{\tilde\phi}=f^*\cOb_{\phi}$.

  \subsubsection{}

  It remains to verify that $\tilde\phi \coloneqq (\tilde U_i,
  \tilde\phi_i)_{i \in I}$ is indeed an APOT, by checking property
  (ii) in the definition. For a pair $i,j$, we may choose a
  $\sT$-equivariant cover by affine \'etale neighborhoods $\tilde
  V_k\to \tilde U_{i} \times_\fX \tilde U_{j}$ such that $\tilde V_{k}$
  factors through a neighborhood $V_{l}$ of $U_{i}\times_\fY U_{j}$.
  We get a compatible isomorphism
  \[ \tilde{\bE}_{i}\Big|_{\tilde V_{k}}\simeq f^*\bE_{i}\Big|_{\tilde V_k}\oplus \Omega_{\tilde V_{k}/V_{l}}\to f^*\bE_{j}\Big|_{\tilde V_k}\oplus \Omega_{\tilde V_{k}/V_{l}}\simeq \tilde{\bE}_{j}\Big|_{\tilde V_{k}}\]
  as the upper triangular map determined as follows. Along the
  diagonal, we take (a lift of) $f^*\phi_{ij}$ and $\id_{\Omega_f}$.
  This guarantees compatibility with the obstruction sheaf.
  Off-diagonal, the map $\Omega_{\tilde V_{k}/V_l} \to
  f^*\bE_{j}|_{\tilde V_k}$ is determined via
  \[ \Omega_{f_i} \hookrightarrow \bL_{\tilde U_i/\fB} \to h^0(\bL_{\tilde U_i/\fB})\simeq h^0(\tilde{\bE}_j)\to h^0(f^*\bE_j) \]
  where the first inclusion comes from the splitting
  \eqref{eq:smooth-pullback-cotangent-complex-splits-locally}. Here we
  use that for a vector bundle $W$ on an affine scheme $U$ and an
  element $C$ of $D^-_{\cat{QCoh},\sT}(U)$, we have $\Hom(W,C)\simeq
  \Hom(W,\tau_{\geq 0}C)$. This choice guarantees that the local
  isomorphism of obstruction theories is compatible with the maps to
  cotangent complexes.

  \subsubsection{}

  Finally, we prove that the induced virtual structure sheaves satisfy
  $\cO^\vir_\fX = f^*\cO^\vir_\fY$. By construction,
  $\cOb_{\tilde\phi} = f^*\cOb_\phi$, and recall that
  $\cO_\fY^\vir \coloneqq 0_{\cOb_{\phi}}^![\cO_{\fc_\phi}]$. So, we
  want to show
  \[ f^*0_{\cOb_\phi}^!\cO_{\fc_{\phi}} = 0_{\cOb_{\tilde\phi}}^!\cO_{\fc_{\tilde\phi}}. \]
  By the functoriality of $0^!$ \cite[Lemma 3.8]{kiem_savvas_loc}, we
  only need to show that
  $f^*\cO_{\fc_{\phi}} = \cO_{\fc_{\tilde\phi}}$. This reduces to
  proving the equality of closed substacks
  $\fc_{\tilde\phi} = f^{-1}\fc_{\phi}$ in $\cOb_{\tilde\phi}$. Consider the distinguished
  triangle
  \[ f^*\bL_{\fY/\fB}\to \bL_{\fX/\fB}\to \bL_f\xrightarrow{+1}. \]
  Using \cite[Prop. 2.7, Prop.
  3.14]{behrend_fantechi_intrinsic_normal_cone} and smoothness of $f$,
  we obtain the vertical morphisms in the commutative
  diagram
  \[ \begin{tikzcd}
      \fC_{\fX/\fB} \arrow[hookrightarrow]{r} \arrow[twoheadrightarrow]{d} & \fN_{\fX/\fB}\arrow{r}\arrow[twoheadrightarrow]{d} & \fn_{\fX/\fB}\arrow[hookrightarrow]{r}{j_{\tilde\phi}}\arrow[sloped]{d}{\sim} & \cOb_{\tilde\phi}\arrow[equals]{d}\\
      f^*\fC_{\fY/\fB} \arrow[hookrightarrow]{r} & f^*\fN_{\fY/\fB} \arrow{r} & f^*\fn_{\fY/\fB}\arrow[hookrightarrow]{r}{f^*j_{\phi}} & f^*\cOb_{\phi}.
    \end{tikzcd} \]
    Here, the leftmost square is cartesian. By definition, $\fc_{\fX/\fB}$ is the image of $\fC_{\fY/\fB}$. For an affine test-scheme $T$, one has that $\fc_{\fX/\fB}(T)\subset\fn_{\fX/\fB}(T)$ is the set of sections which admit a lift to $\fN_{\fX/\fB}$ that factors through $\fC_{\fX/\fB}$ (see also \cite[Lemma 2.3]{chang_li_spot}). Using this, one checks immediately that $f^{-1}\fc_{\fY/\fB}\subseteq f^*\fN_{\fY/\fB}$ is the image of $f^*\fC_{\fY/\fB}$ and that the latter agrees with $\fc_{\fX/\fB}$ under the identification $\fn_{\fX/\fB}\simeq f^*\fn_{\fY/\fB}$. 
    We conclude that
  \[ \fc_{\tilde\phi} = j_{\tilde\phi}(\fc_{\fX/\fB}) = (f^*j_\phi) (f^{-1}\fc_{\fY/\fB}) = f^{-1}j_\phi(\fc_{\fY/\fB}) = f^{-1}\fc_\phi. \qedhere \]
\end{proof}

\subsection{Symmetrized pullback of obstruction theories}
\label{sec:symmetrized-pullback-construction}

\subsubsection{}

\begin{definition} \label{def:symmetrized-pullback-setup}
  Let $f\colon \fX \to \fM$ be a smooth morphism of Artin stacks over
  $\fB$, where $\fX$ is a DM stack. Since $f$ is smooth and of
  DM-type, $\bL_f=\Omega_{f}$ is a locally free sheaf. A symmetric
  APOT $\phi_\fX$ on $\fX$ and a symmetric obstruction theory
  $\phi_\fM$ on $\fM$, both of weight $\kappa$, are {\it compatible
    under $f$} if there exists a $\sT$-equivariant \'etale cover
  $\{U_i \to \fX\}_{i \in I}$ such that $\phi_\fX|_{U_i}$ and
  $\phi_\fM$ are symmetric obstruction theories which are compatible
  under $f$ for each $i$.
  
  This definition should be compared to
  Definition~\ref{def:compatibility-APOTs} for the compatibility of
  two APOTs. We refer to any symmetric APOT $\phi_\fX$ satisfying
  these conditions as a {\it symmetrized pullback} of $\phi_\fM$ along
  $f$.
\end{definition}

\subsubsection{}

In \cite[\S 2.4]{Liu23}, the following strategy for constructing a
symmetrized pullback of obstruction theories (without using APOTs) was
proposed. Let $\phi_{\fM/\fB}\colon \bE_{\fM/\fB} \to \bL_{\fM/\fB}$
be a symmetric obstruction theory of weight $\kappa$. Recall from
Definition~\ref{def:smooth-and-symmetrized-pullback} that, in order to
construct a {\it smooth} pullback of $\phi_{\fM/\fB}$ along $f$, we
would like to find a dashed map $\delta$ making the square commute in
the following diagram of solid arrows:
\begin{equation}\label{eq:symmetrized-pullback-lifting-diagram}
  \begin{tikzcd}[column sep=4em]
    & \Omega_{f}^{\vee}[2]\otimes \kappa & & \\
    \Omega_{f}[-1]\ar[dashed]{r}{\delta} \ar[equals]{d} \ar[dotted]{ur}{\delta^{\vee}[1] \circ \delta = 0} & f^*\bE_{\fM/\fB} \ar{d}{f^*\phi_{\fM/\fB}} \ar{u}{\delta^\vee[1]} \ar[dashed]{r} & \cone(\delta) \ar[dashed]{d} \ar[dotted]{ul}[swap]{\xi} \ar[dashed]{r}{+1} & {} \\
    \Omega_{f}[-1]\ar{r} & f^*\bL_{\fM/\fB} \ar{r} & \bL_{\fX/\fB}\ar{r}{+1} & {}
  \end{tikzcd}
\end{equation}
This induces $\cone(\delta)$ and all the other
dashed maps, non-canonically. If in addition
$\delta^{\vee}[1] \circ \delta = 0$, then the dotted map $\xi$
exists, and the co-cone of $\xi$ would be a candidate for the {\it
  symmetrized} pullback of $\bE_{\fM/\fB}$ along $f$.


\subsubsection{}
\label{sec:symm-obstr}

In general, without using APOTs, we neither have a lift $\delta$, nor
that $\delta^{\vee}[1] \circ \delta = 0$ if $\delta$ exists. However,
observe that:
\begin{enumerate}[label=(\roman*)]
\item the obstruction to the existence of the lift $\delta$ lies in
  $\Hom(\Omega_f[-1], f^*\cone(\phi_{\fM/\fB})) = \Ext^1(\Omega_f,
  f^*\cone(\phi_{\fM/\fB}))$; \label{it:symm-obstri}
\item if $\delta$ exists, then it is unique up to an element of
  $\Hom(\Omega_f[-1], f^*\cone(\phi_{\fM/\fB})[-1]) =
  \Hom(\Omega_f,f^*\cone(\phi_{\fM/\fB}))$; \label{it:symm-obstrii}
\item for a given $\delta$, the element
  $\delta^{\vee}[1] \circ \delta$ lies in
  $\Hom(\Omega_f[-1], \Omega_f^\vee[2] \otimes \kappa) =
  \Ext^3(\Omega_{f}, \Omega_{f}^{\vee}\otimes
  \kappa)$. \label{it:symm-obstriii}
\end{enumerate}
By the following lemma, if we only want to construct the
symmetrized pullback on \'etale-local affine charts, then all these
Ext groups will be zero.

\begin{lemma} \label{lem:affine-vanishing}
  Let $U$ be an affine scheme with $\sT$-action. Let $V$ be a
  $\sT$-equivariant vector bundle on $U$ and $E\in
  D_{\cat{QCoh},\sT}^{\leq a}(U)$ for some $a\in \bZ$. Then
  $\Ext^i(V,E) = 0$ for $i > a$.
\end{lemma}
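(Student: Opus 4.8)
The plan is to reduce the vanishing to the fact that vector bundles on an affine scheme are projective modules, so that $\Hom(V, -)$ is exact, and then to control the amplitude using the hypercohomology (local-to-global) spectral sequence together with the fact that on an affine scheme higher sheaf cohomology of quasi-coherent sheaves vanishes. First I would note that since $U$ is affine and $V$ is locally free of finite rank, $V$ is a direct summand of a free sheaf $\cO_U^{\oplus N}$; hence it suffices to prove the statement for $V = \cO_U$, i.e. to show that $\Ext^i(\cO_U, E) = H^i(U, E) = 0$ for $i > a$ when $E \in D^{\leq a}_{\cat{QCoh},\sT}(U)$. (Throughout, $\sT$-equivariance is handled by working on the quotient stack $[U/\sT]$, or equivalently by observing that the vanishing of the underlying non-equivariant Ext-groups, together with the fact that taking $\sT$-invariants is exact in characteristic zero, forces the equivariant Ext-groups to vanish as well; alternatively one replaces $U$ by the affine scheme $U \times^{\sT} E\sT$ in the usual Totaro/Borel approximation and uses that the argument only needs affineness in each approximating stage.)

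Next I would compute $H^i(U, E)$ via the hypercohomology spectral sequence $E_2^{p,q} = H^p(U, h^q(E)) \Rightarrow H^{p+q}(U, E)$. Because $E$ has cohomology concentrated in degrees $\leq a$, the sheaves $h^q(E)$ vanish for $q > a$. Because $U$ is affine and each $h^q(E)$ is quasi-coherent, $H^p(U, h^q(E)) = 0$ for $p > 0$ by Serre's vanishing theorem (Grothendieck's theorem on cohomology of affine schemes). Hence the only possibly-nonzero terms of the $E_2$-page sit on the line $p = 0$ with $q \leq a$, so the abutment $H^n(U, E)$ vanishes for $n > a$. This gives $\Ext^i(\cO_U, E) = 0$ for $i > a$, and then the retract argument from the first step gives $\Ext^i(V, E) = 0$ for $i > a$ in general.

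The main obstacle, such as it is, is purely bookkeeping around the $\sT$-equivariant structure: one must make sure that ``$D^{\leq a}_{\cat{QCoh},\sT}(U)$'' is interpreted so that the underlying complex is genuinely bounded above in degree $a$ (which is how the paper uses it), and that the $\sT$-equivariant $\Ext$ can be computed either by Borel approximation by affine schemes or, in the more hands-on approach, by noting that for a linearly reductive $\sT$ the invariants functor is exact so that equivariant sheaf cohomology on an affine $U$ still vanishes in positive degrees. Once that is set up, the spectral-sequence argument is immediate and there are no further subtleties; no projectivity or finiteness hypotheses beyond $U$ affine and $V$ a vector bundle are needed.
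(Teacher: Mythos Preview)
Your argument is correct and follows essentially the same route as the paper. The paper's version is marginally slicker: rather than splitting $V$ off from a free module, it uses that $V$ is locally free to write $\Ext^i(V,E) = h^i(R\Gamma(E\otimes V^{\vee}))$ directly, and then cites \cite[\href{https://stacks.math.columbia.edu/tag/0G9R}{Lemma 0G9R}]{stacks-project} (which says $R\Gamma$ on an affine scheme is computed termwise on a complex of quasi-coherent sheaves) in place of your hypercohomology spectral sequence; both reductions land on Serre vanishing for affine schemes, so the content is the same.
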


\begin{proof}
  One can represent $E$ by a complex of $\sT$-equivariant
  quasi-coherent sheaves in degrees $\leq a$. Then
  \[\Ext^i(V,E) = h^{i}(R\Gamma(E\otimes V^{\vee})),\]
  and the latter vanishes for $i>a$ by
  \cite[\href{https://stacks.math.columbia.edu/tag/0G9R}{Lemma
      0G9R}]{stacks-project}.
\end{proof}

\subsubsection{}

\begin{theorem} \label{thm:symmetric-pullback}
  In the setting of Definition~\ref{def:symmetrized-pullback-setup}, a
  symmetric obstruction theory on $\fM$ admits a symmetrized pullback
  to an APOT on $\fX$.
\end{theorem}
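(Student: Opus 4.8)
The plan is to construct the symmetrized pullback APOT \'etale-locally using the vanishing Lemma~\ref{lem:affine-vanishing}, and then check that the local pieces assemble into a genuine symmetric APOT in the sense of Definition~\ref{spb:def:apot}. First I would choose, by \cite[Theorem 4.3]{alper_hall_rydh_luna_stacks_20} (after possibly passing to a multiple cover of $\sT$), a $\sT$-equivariant \'etale cover $\{U_i \to \fX\}_{i\in I}$ with each $U_i$ affine. Over each $U_i$, we have the locally free sheaf $\Omega_{f_i}$ (where $f_i\colon U_i \to \fM$ is the composite), and I would run the lifting procedure of diagram \eqref{eq:symmetrized-pullback-lifting-diagram}. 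By observation \ref{it:symm-obstri}, the obstruction to a lift $\delta_i\colon \Omega_{f_i}[-1] \to f_i^*\bE_{\fM/\fB}$ lies in $\Ext^1(\Omega_{f_i}, f_i^*\cone(\phi_{\fM/\fB}))$; since $\phi_{\fM/\fB}$ is an obstruction theory, $\cone(\phi_{\fM/\fB})$ has cohomology in degrees $\leq -1$, hence $f_i^*\cone(\phi_{\fM/\fB}) \in D^{\leq -1}_{\cat{QCoh},\sT}(U_i)$, and Lemma~\ref{lem:affine-vanishing} (with $a=-1 < 1$) kills this $\Ext^1$. So $\delta_i$ exists. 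By observation \ref{it:symm-obstriii}, $\delta_i^\vee[1]\circ\delta_i \in \Ext^3(\Omega_{f_i}, \Omega_{f_i}^\vee \otimes \kappa)$, which vanishes by Lemma~\ref{lem:affine-vanishing} since $\Omega_{f_i}^\vee \otimes \kappa$ sits in degree $0$. Hence the dotted map $\xi_i$ in \eqref{eq:symmetrized-pullback-lifting-diagram} exists, and I set $\bE_i \coloneqq \cocone(\xi_i)$ with its induced map $\phi_i\colon \bE_i \to \bL_{U_i/\fB}$; the construction in Definition~\ref{def:smooth-and-symmetrized-pullback} shows $\phi_i$ is a symmetric obstruction theory of weight $\kappa$ on $U_i$ (perfect since $U_i$ is a scheme, so the stable-locus argument or directly the five lemma applies), giving condition \ref{it:sym-apotiii}.

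Next I would produce the transition data. For a pair $i,j$, restricting to $U_{ij} = U_i \times_\fX U_j$, both $\bE_i$ and $\bE_j$ are symmetrized pullbacks of the \emph{same} $\phi_{\fM/\fB}$ along the same map to $\fM$, differing only by the choices of $\delta_i, \delta_j$ and the cone/cocone representatives. On the level of obstruction sheaves, $h^1(\bE_i^\vee)|_{U_{ij}}$ and $h^1(\bE_j^\vee)|_{U_{ij}}$ are each canonically identified, via the defining triangles, with a subquotient built from $h^1(f^*\bE_{\fM/\fB}^\vee)$ and $\Omega_f$; tracing through these identifications gives canonical isomorphisms $\psi_{ij}$ satisfying the cocycle condition, so they glue to an obstruction sheaf $\cOb_\phi$ on $\fX$ — this is condition (i). For condition (ii), I would further refine: on a sufficiently small affine \'etale $V_k \to U_{ij}$, the ambiguity in $\delta_i$ versus $\delta_j$ lies in $\Hom(\Omega_f, f^*\cone(\phi_{\fM/\fB}))$ by observation \ref{it:symm-obstrii}, which is again controlled by Lemma~\ref{lem:affine-vanishing} ($\cone(\phi_{\fM/\fB})$ in degrees $\leq -1$, $\Omega_f$ in degree $0$, so this $\Hom = \Ext^0$ need not vanish but is the genuine space of lifts), and any two choices become homotopic after passing to the cocone; more carefully, one shows the two symmetric POTs $\phi_i|_{V_k}$ and $\phi_j|_{V_k}$ are isomorphic compatibly with $\psi_{ij}$ by an argument as in the proof of Proposition~\ref{spb:prop:smooth_pullback}, using that on an affine scheme $\Hom(W, C) \simeq \Hom(W, \tau_{\geq 0} C)$ for a vector bundle $W$. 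This gives condition (ii).

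It remains to check the symmetric conditions \ref{it:sym-apotiii} (already done above) and \ref{it:sym-apotiv}, the existence of a $\sT$-equivariant isomorphism $\kappa \otimes \cOb_\phi \cong h^0(\bL_{\fX/\fB})$. Locally on $U_i$, from the triangle $f_i^*\bE_{\fM/\fB} \to \bE_i \to \Omega_{f_i}^\vee[1]\otimes\kappa \xrightarrow{+1}$ (the symmetric partner of the $\delta_i$-triangle, i.e. the $\zeta^\vee$ row of \eqref{eq:symmetrized-pullback}) and the fact that $\phi_{\fM/\fB}$ is a symmetric obstruction theory, one computes $h^1(\bE_i^\vee) \cong \kappa^{-1}\otimes h^0(\Omega_{f_i}) \oplus (\text{contribution of } h^{-2}(\bE_{\fM/\fB}^\vee))$, and comparing with the local presentation of $h^0(\bL_{U_i/\fB})$ coming from the splitting $\bL_{U_i/\fB} \cong f_i^*\bL_{\fM/\fB}\oplus \Omega_{f_i}$ yields the desired local isomorphism; one then checks these are compatible with the $\psi_{ij}$, so they glue.

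The main obstacle I expect is condition (ii) together with the gluing in \ref{it:sym-apotiv}: ensuring that the \emph{local} symmetrized pullbacks, constructed via non-canonical choices of $\delta_i$ and of cones, can be glued with transition isomorphisms that are simultaneously compatible with the descended obstruction sheaf $\cOb_\phi$, with the maps to the cotangent complexes $\bL_{U_i/\fB}$, and with the symmetry isomorphisms $\Theta_i$. The key leverage is always Lemma~\ref{lem:affine-vanishing}: restricting to affine charts forces the higher $\Ext$ groups that would obstruct lifting or comparing choices to vanish, so that all the requisite homotopies and isomorphisms exist locally — and then one only needs to verify they are compatible on overlaps, which is the routine-but-delicate bookkeeping. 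I would organize this exactly as in the proof of Proposition~\ref{spb:prop:smooth_pullback}, treating the symmetrized pullback as a ``symmetrization'' of the smooth pullback APOT already constructed there, and checking the extra symmetry data survives the gluing.
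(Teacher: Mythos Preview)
Your overall strategy matches the paper's: construct the local symmetric POTs on affine charts using Lemma~\ref{lem:affine-vanishing}, then glue. Two substantive points deserve attention.

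First, a minor but useful correction: since $\phi_{\fM/\fB}$ is an obstruction theory, $\cone(\phi_{\fM/\fB})$ lies in degrees $\leq -2$, not $\leq -1$. Consequently, on an affine chart the space $\Hom(\Omega_f, f^*\cone(\phi_{\fM/\fB}))$ \emph{does} vanish by Lemma~\ref{lem:affine-vanishing}, so the lift $\delta_i$ is in fact unique, not merely ``unique up to homotopy.'' The paper exploits this: on an overlap $V$, one has $\delta_i=\delta_j$ on the nose. What is \emph{not} unique is the choice of $\cone(\delta)$ together with its map to $\bL_{\fX/\fB}$, and likewise the choice of $\cocone(\xi)$ together with the map $\zeta$ in \eqref{eq:obstr-diagram}. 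The paper proves two short but essential lemmas (Lemma~\ref{lem:cone-delta-iso} and Lemma~\ref{lem:choose_gamma}) showing these auxiliary choices are unique up to isomorphism compatible with all the maps; this is precisely the ``delicate bookkeeping'' you defer, and it is where the actual content of condition~(ii) lies. Your sketch of passing to $\tau_{\geq 0}$ as in Proposition~\ref{spb:prop:smooth_pullback} is not sufficient here, because the ambiguity is in cone representatives rather than in a lift against a truncation.

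Second, the symmetry of the local $\hat\bE_i$ does \emph{not} follow formally from Definition~\ref{def:smooth-and-symmetrized-pullback}: that definition assumes one already has a symmetric object fitting into \eqref{eq:symmetrized-pullback}, whereas here $\hat\bE_i$ is produced as a cocone and one must exhibit the isomorphism $\Theta\colon \hat\bE_i \xrightarrow{\sim} \kappa\otimes\hat\bE_i^\vee[1]$ with $\Theta^\vee[1]=\Theta$. The paper does this via a $3\times 3$ octahedral-axiom diagram \eqref{eq:flip-diagram} which is invariant under $(-)^\vee[1]\otimes\kappa$ composed with reflection across the anti-diagonal; uniqueness of the upper-right corner then forces the symmetry. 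Your argument for condition~\ref{it:sym-apotiv} is also off: the obstruction sheaf is identified directly with $h^0(\bL_{\fX/\fB})\otimes\kappa^{-1}$ via the auxiliary map $\zeta_i$ of \eqref{eq:obstr-diagram}, not via a direct-sum decomposition as you describe.
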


\begin{proof}
  Let $\phi\colon \bE_{\fM/\fB} \to \bL_{\fM/\fB}$ be the symmetric
  obstruction theory on $\fM$. Since $\phi$ is an obstruction theory,
  $\cone(\phi)$ is concentrated in degrees $\leq -2$. Thus,
  Lemma~\ref{lem:affine-vanishing} together with the observations
  \ref{sec:symm-obstr}\ref{it:symm-obstri} and
  \ref{sec:symm-obstr}\ref{it:symm-obstriii} respectively lets us
  conclude that the obstructions to finding $\delta$ and $\xi$ vanish
  if $\fX$ is affine. In particular, on each affine $U_i$ of an affine
  \'etale cover $\{U_i \to \fX\}_{i \in I}$, the obstructions vanish
  and we may choose $\delta_i$ and $\xi_i$ filling in the diagram
  \eqref{eq:symmetrized-pullback-lifting-diagram}

  Let $\hat\bE_{i}\coloneqq \cocone(\xi_{i})$. By construction, this
  comes with a map $\hat\bE_i\to \cone(\delta_i)$ which is an
  isomorphism on $h^1$ and $h^0$ and surjective on $h^{-1}$.
  Composition with the chosen map $\cone(\delta_i) \to \bL_{U_i/\fB}$ 
  yields a map $\hat\phi_i\colon \hat\bE_{i}\to \bL_{U_{i}/\fB}$ which
  makes makes $\hat\bE_i$ into a $\sT$-equivariant POT on $U_i$ over
  $\fB$. Next, we construct transition isomorphisms for these POTs.

  \subsubsection{}

  By the definition of a symmetric APOT (of weight $\kappa$), we must
  construct transition data for the sheaves $h^1(\hat\bE_{i}^{\vee})$
  giving the global obstruction sheaf
  $h^0(\bL_{\fX/\fB}) \otimes \kappa^{-1}$, and construct for each $i$
  an isomorphism
  \[ \psi_i\colon h^1(\hat\bE_i^\vee) \to h^0(\bL_{\fX/\fB})\Big|_{U_i} \otimes \kappa^{-1}. \]
  For a given $i$, we may choose a dashed arrow $\zeta_i$ giving a
  morphism of exact triangles
  \begin{equation}\label{eq:obstr-diagram}
    \begin{tikzcd}
      \cocone(\delta_{i}^{\vee}[1]) \ar{r}\ar[dashed]{d}{\zeta_i} & f^*\bE_{\fM/\fB}\Big|_{U_i} \ar{r}{\delta_{i}^{\vee}[1]} \ar{d} & \Omega_f^\vee\Big|_{U_i}[2]\otimes \kappa \ar[equals]{d} \ar{r}{+1} & {} \\
      \widehat{\bE}_{i}\ar{r} & \cone(\delta_i) \ar{r}{\xi_i} & \Omega_{f}^{\vee}\Big|_{U_i}[2]\otimes \kappa \ar{r}{+1} & {}
    \end{tikzcd}
  \end{equation}
  cf. the upper-right square in \eqref{eq:symmetrized-pullback}. Here,
  we make a distinguished choice of the cocone of $\delta_i^{\vee}[1]$
  by dualizing the dashed row of
  \eqref{eq:symmetrized-pullback-lifting-diagram}, namely
  $\cocone(\delta_i^{\vee}) \coloneqq \cone(\delta_i)^\vee[2]\otimes
  \kappa$ with the induced maps. Dualizing \eqref{eq:obstr-diagram}
  and taking $h^1$, we get a morphism of exact sequences
  \[ \begin{tikzcd}[column sep=1.5em]
      0\ar[r] & h^1(\cone(\delta_{i})^{\vee})\ar[r]\ar[d] & h^1(\widehat{\bE}_{i}^{\vee})\ar[r]\ar{d}{h^1(\zeta_i^\vee)} & \Omega_f\Big|_{U_i}\otimes \kappa^{-1}\ar[d, equals]\ar[r] & h^2(\cone(\delta_{i})^{\vee})\ar[r]\ar[d] & 0 \\
      0\ar[r] & h^1(f^*\bE_{\fM/\fB}^{\vee}\Big|_{U_{i}})\ar[r] & h^1\left(\cocone(\delta^{\vee}_{i}[1])^{\vee}\right)\ar[r] & \Omega_f\Big|_{U_i}\otimes \kappa^{-1}\ar[r] & h^2(f^*\bE_{\fM/\fB}^{\vee}\Big|_{U_{i}})\ar[r] & 0
    \end{tikzcd} \]
  Since $h^j(\cone(\delta_i))^{\vee})\to
  h^j(f^*\bE_{\fM/\fB}^{\vee}|_{U_i})$ are isomorphisms outside of
  $j = -1,0$, it follows by the five lemma that $\zeta_i^{\vee}$
  induces an isomorphism on $h^1$. Thus, we get isomorphisms
  \[ h^1(\hat\bE_i^\vee) \simeq h^1\left(\cocone(\delta^\vee_i[1])^{\vee}\right) \simeq h^0(\cone(\delta_i)) \otimes \kappa^{-1} \simeq h^0(\bL_{\fX/\fB})\Big|_{U_i} \otimes \kappa^{-1}. \]
  We define $\psi_i\colon h^1(\hat\bE_i^\vee) \to
  h^0(\bL_{\fX/\fB})|_{U_i} \otimes \kappa^{-1}$ to be this
  composition.

  \subsubsection{}

  To conclude the construction of the symmetrized pullback APOT, we
  need to show that for any pair $i, j$, we can locally find \'etale
  neighborhoods $V_k \to U_{i}\times_\fX U_{j}$ and isomorphisms of
  obstruction theories
  $\eta_{ijk}\colon \hat\bE_i|_{V_k}\to \hat\bE_j|_{V_k}$ such that
  \[ h^1(\eta_{ijk}^{\vee}) = \psi_{i}^{-1}\Big|_{V_k}\circ \psi_{j}\Big|_{V_k}\colon h^1(\hat\bE_{j}\big|_{V_k}^{\vee})\to h^1(\hat\bE_{i}\big|_{V_k}^{\vee}). \]

  Without loss of generality, we may assume that
  $U_{i} = U_{j} = V_{k} \eqqcolon V$. Note that we have
  $\delta_i=\delta_j\eqqcolon \delta$ since the difference
  $\delta_i-\delta_j$ lies in
  $\Hom_V(\Omega_f, f^*\cone(\phi_{\fM/\fB})) = 0$. This might a
  priori be problematic, since the choice of $\cone(\delta)$ and the
  map to $\bL_{\fX/\fB}$ may be incompatible for different $i$ and
  $j$. The following Lemma~\ref{lem:cone-delta-iso} shows that we may
  safely identify $\cone(\delta_i)$ and $\cone(\delta_j)$.

  \subsubsection{}
  
  \begin{lemma} \label{lem:cone-delta-iso}
    If $\fX$ is affine, then the choice of $\cone(\delta)$ together
    with the morphism $\cone(\delta) \to \bL_{\fX/\fB}$ in
    \eqref{eq:symmetrized-pullback-lifting-diagram} is unique up to
    (not necessarily unique) isomorphism.
  \end{lemma}
  
  \begin{proof}
    Since cones are unique up to isomorphism, we need to show the
    following: for any two choices
    $v_1,v_2\colon \cone(\delta)\to \bL_{\fX/\fB}$ that give an
    morphism of exact triangles in
    \eqref{eq:symmetrized-pullback-lifting-diagram}, there is an
    automorphism $s\colon \cone(\delta)\to \cone(\delta)$ such that
    $v_2 = v_1\circ s$ and such that
    \[ \begin{tikzcd}[row sep=tiny]
        &\cone(\delta)\ar[dd,"s"]\ar[dr,"r"]&\\
        f^*\bE_{\fM/\fB}\ar[ur,"q"]\ar[dr,"q"']& & \Omega_f\\
        &\cone(\delta)\ar[ur,"r"']& 
      \end{tikzcd} \]
    is a commutative diagram, i.e. $s$ is an isomorphism of cones. Now
    consider the commutative diagram of solid arrows
    \begin{equation} \label{eq:obstr-diagram-ambiguity}
      \begin{tikzcd}[column sep=4em]
        f^*\bE_{\fM/\fB} \ar{d}{f^*\phi} \ar{r}{q} & \cone(\delta) \ar[shift left]{d}{v_2} \ar[shift right]{d}[swap]{v_1} \ar{r}[swap]{r} & \Omega_f \ar[equals]{d} \ar[dashed]{dl}{u_0} \ar[dashed, bend right]{l}{u_1} \\
        f^*\bL_{\fM/\fB} \ar{r} & \bL_{\fX/\fB} \ar{r}{r'} & \Omega_f
      \end{tikzcd}
    \end{equation}
    Let $u_v \coloneqq v_2 - v_1$. Since both squares commute, $u_v
    \circ q = 0$ and $r' \circ u_v = 0$. The first vanishing implies
    there exists some $u_0$ such that $u_0 \circ r = u_v$. Since each
    $v_i$ is itself an obstruction theory, we have $h^{\geq -1}(\cone
    v_1) = 0$, and therefore $\Hom(\Omega_f,\cone v_1) = 0$. Hence,
    post-composition with $v_1$ induces an isomorphism
    \[ v_1 \circ\colon \Hom(\Omega_f, \cone(\delta)) \xrightarrow{\sim} \Hom(\Omega_f, \bL_{\fX/\fB}). \]
    This implies that $u_0 = v_1 \circ u_1$ for a unique $u_1$. Let
    $u_2 \coloneqq u_1 \circ r$. We claim that $s \coloneqq \id + u_2$
    has the desired properties. By construction,
    $v_1 \circ s = v_1 + u_v = v_2$. Moreover,
    $s \circ q = q + u_1 \circ r \circ q = q$. Finally,
    $r \circ s = r + r \circ u_2 = r$ because
    $r \circ u_2 = r' \circ v_1 \circ u_1 \circ r = r' \circ u_0 \circ
    r = r' \circ u_v = 0$ as observed earlier.
  \end{proof}

  \subsubsection{}
  \label{sec:compare_xi}

  After using Lemma~\ref{lem:cone-delta-iso} to identify the choice of
  $\cone(\delta)$ for $i$ and $j$, we may assume that all dashed
  arrows in \eqref{eq:symmetrized-pullback-lifting-diagram} are chosen
  identically for $i$ and $j$. Now we must run the ``dualized''
  version of the same argument again for
  $\hat\bE = \cocone(\xi)$. Namely, the lift $\xi$ in
  \eqref{eq:symmetrized-pullback-lifting-diagram} is unique up to an
  element of $\Ext^2(\Omega_f,\Omega_f^{\vee}\otimes \kappa)=0$, just
  like in the observation \ref{sec:symm-obstr} \ref{it:symm-obstrii},
  so we may also assume that $\xi_{i} = \xi_{j} \eqqcolon \xi$. Again,
  this might a priori be problematic, since the choice of
  $\cocone(\xi)$ and the map $\zeta$ in \eqref{eq:obstr-diagram} may
  be incompatible for different $i$ and $j$. For instance, the
  isomorphisms $\psi_i$ depend on the choice of $\zeta_i$. The
  following Lemma~\ref{lem:choose_gamma} shows that we may safely
  identify $\cocone(\xi_i)$ and $\cocone(\xi_j)$.
  \subsubsection{}
  
  \begin{lemma} \label{lem:choose_gamma}
    There exists an isomorphism
    $s\colon \hat\bE_{i}\to \hat\bE_{j}$ that is compatible with
    the composition to $\cone(\delta)$ and such that
    $s \circ \zeta_{i} = \zeta_{j}$.
  \end{lemma}
  
  \begin{proof}
    This is essentially the dual of the argument in
    Lemma~\ref{lem:cone-delta-iso}. Choose an isomorphism of
    $\hat\bE \coloneqq \hat\bE_{i}$ with $\hat\bE_{j}$. By rotating
    \eqref{eq:obstr-diagram}, consider the commutative diagram of
    solid arrows
    \[ \begin{tikzcd}
        \Omega_{f}^{\vee}\Big|_{V}[1]\otimes \kappa \ar{r}{h'} \ar[equals]{d} & \cocone(\delta^{\vee}[1]) \ar{r} \ar[dashed]{dl}{u_0} \ar[shift left]{d}{\zeta_{j}}\ar[shift right]{d}[swap]{\zeta_{i}} & f^*\bE_{\fM/\fB}\Big|_{V} \ar{r}{\delta^{\vee}[1]} \ar{d}  & {} \\
        \Omega_{f}^{\vee}\Big|_{V}[1]\otimes \kappa \ar{r}{h} & \hat\bE \ar[dashed, bend left]{l}[swap]{u_1} \ar{r}{g} & \cone(\delta) \ar{r}{+1} & {}
      \end{tikzcd} \]    cf. \eqref{eq:obstr-diagram-ambiguity}. Let
    $u_\zeta \coloneqq \zeta_j - \zeta_i$. Since $g \circ u_\zeta = 0$,
    there exists some $u_0$ such that $h \circ u_0 = u_\zeta$. On the
    other hand $\cocone(\zeta_1) \simeq \Omega_f|_V[-1]$, so
    pre-composition with $\zeta_1$ induces an isomorphism
    \[ \circ \zeta_1\colon \Hom(\hat\bE_i, \Omega_f^\vee\big|_V[1]) \xrightarrow{\sim} \Hom(\cocone(\delta^\vee[1]), \Omega_f^\vee\big|_V[1]). \]
    This implies that $u_0 = u_1 \circ \zeta_1$ for a unique $u_1$.
    Let $u_2 \coloneqq h \circ u_1$. The desired isomorphism is
    $s \coloneqq \id + u_2$. By construction,
    $g \circ s = g + g \circ h \circ u_1 = g$. Moreover,
    $u_2 \circ \zeta_1 = h \circ u_1 = u_\zeta$ so that
    $s \circ \zeta_1 = \zeta_1 + u_\zeta = \zeta_2$. This finishes the
    proof.
  \end{proof}

  \subsubsection{}

  Finally, we show that each $\hat\bE_i$ is a {\it symmetric} POT. For
  this, we may assume that $\fX = U_i$ is affine and that we have an
  obstruction theory $\hat \bE$ obtained following the strategy of
  filling in \eqref{eq:symmetrized-pullback-lifting-diagram} and
  taking the co-cone of $\xi$. By what we have argued so far, this
  strategy works and the resulting $\hat \bE$ is unique up to
  isomorphism. In particular, after choosing $\delta$, $\cone(\delta)$
  and $\xi$, we may construct $\hat{\bE}$ in the following way: as a
  consequence of the octahedral axiom, we can expand only the upper
  left corner of the following diagram to obtain the full commutative
  $3\times 3$ diagram whose rows and columns are distinguished
  triangles
  \begin{equation}\label{eq:flip-diagram}
  	\begin{tikzcd} 
  		\Omega_f[-1]\ar[r, "\xi^{\vee}"]\ar[d, equals]& \cone(\delta)^{\vee}[-1] \ar[r,"h"]\ar[d]& \hat{\bE}\ar[d, "g"]\ar[r,"+1"]& \,\\
  		\Omega_f[-1]\ar[r, "\delta"] \ar[d] & f^*\bE_{\fM/\fB}\ar[r]\ar[d, "\delta^{\vee}"]& \cone(\delta)\ar[d, "\xi"]\ar[r,"+1"]&\,\\
  		0\ar[r] \ar[d, "+1"] & \Omega_f^{\vee}[2]\otimes \kappa\ar[r, equals]\ar[d,"+1"]& \Omega_f^{\vee}[2]\otimes \kappa\ar[d, "+1"]\ar[r,"+1"]&\,  \\
  			\, & \, &\, 		
  	\end{tikzcd}
  \end{equation}
  Here, we identified the middle right and lower middle entry by
  uniqueness of cones (up to isomorphism), which also identifies the
  lower right entry as $\Omega_f^{\vee}[2]\otimes \kappa$. We also
  recover $\xi$ as the \emph{unique} lift of $\delta^{\vee}$ to a map
  from $\cone(\delta)$. We conclude that the upper right corner of the
  diagram is indeed (isomorphic to) $\hat\bE$.
  
  Applying the functor $- \mapsto (-)^{\vee}[1]\otimes \kappa$ and
  mirroring the diagram along the anti-diagonal transforms the diagram
  into itself, except possibly for the upper right corner.
  
  By the same argument as in Lemma \ref{lem:cone-delta-iso}, we have
  \begin{lemma}
    Consider only the upper two rows in \eqref{eq:flip-diagram}, and
    suppose everything except for $\hat \bE$ is already fixed. The
    choice of $\hat{\bE}$ together with morphisms making the upper two
    rows into a morphism of exact triangles is unique up to
    isomorphism.
  \end{lemma}
  In our situation, this shows that there is a unique isomorphism
  $\Theta:\hat{\bE}\to \hat{\bE}^{\vee}[1]\otimes \kappa $, which
  identifies $g^{\vee}$ with $h$ and $h^{\vee}$ with $g$, and with
  respect to which the whole diagram \eqref{eq:flip-diagram} is
  invariant under $-^{\vee}[1]\otimes \kappa$. Indeed, the remaining
  maps involving $\hat \bE$ are induced from $g$ and $h$ by
  composition.
  
  Furthermore, we see that $\Theta^{\vee}[1]\otimes \kappa$ satisfies
  the same defining property, and so does $\Theta^{s}:=\Theta +
  \Theta^{\vee}[1]\otimes \kappa$. Moreover, the latter is again an
  isomorphism, since $g$ and $h^{\vee}$ are isomorphisms in degrees
  $\geq 0$, while $g^{\vee}$ and $h$ are isomorphisms in degree $\leq
  -1$.
    
  This concludes the proof of Theorem~\ref{thm:symmetric-pullback}.
\end{proof}

\subsubsection{}
\label{sec:symmetrized-pullback-apots}

Theorem~\ref{thm:symmetric-pullback} states that we can pull back a
symmetric obstruction theory along a smooth morphism from a DM stack.
If the base is already DM, then this also works for APOTs. Let
$g\colon \fX\to \fY$ be a smooth morphism of DM stacks over the base
$\fB$. We assume that everything is $\sT$-equivariant.

\begin{proposition} \label{prop:symmetrized-pullback-APOTs}
  A symmetric APOT on $\fY$ admits a symmetric pullback to an APOT on
  $\fX$, such that
  \[ \cO_{\fX}^{\vir} = g^* \cO_{\fY}^{\vir} \otimes \se\left(\Omega_{\fX/\fY}\otimes \kappa^{-1}\right). \]
\end{proposition}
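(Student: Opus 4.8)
The plan is to reduce everything to Proposition~\ref{spb:prop:smooth_pullback} by exploiting the fact that, when the base $\fY$ is already Deligne--Mumford, the symmetrized pullback degenerates on affine charts into the smooth pullback plus a single \emph{trivial} obstruction summand. Concretely, I would fix a $\sT$-equivariant affine \'etale cover $\{V_j \to \fY\}$ on which $\phi_\fY$ is given by symmetric POTs $\phi_j\colon \bE_j \to \bL_{V_j/\fB}$, and (by \cite[Theorem~4.3]{alper_hall_rydh_luna_stacks_20}, after possibly passing to a multiple cover of $\sT$) a $\sT$-equivariant affine \'etale cover $\{U_i \to \fX\}$ refining $\{g^{-1}(V_j)\}$, with maps $f_i\colon U_i \to V_{j(i)}$. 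Since $\fY$ is a DM stack over $\bC$, the complex $\bL_{V_{j(i)}/\fB}$ lies in $D^{\leq 0}_{\cat{QCoh},\sT}$, hence so does $\bL_{U_i/\fB}$ on the affine scheme $U_i$, and $\bE_j$ may be taken to be a two-term complex of vector bundles in degrees $[-1,0]$. Consequently Lemma~\ref{lem:affine-vanishing} forces the connecting map $\Omega_{f_i}[-1] \to f_i^*\bL_{V_{j(i)}/\fB}$, which lives in $\Ext^0(\Omega_{f_i}, f_i^*\bL_{V_{j(i)}/\fB}[1])$, to vanish; thus the (unique) lift $\delta_i$ in \eqref{eq:symmetrized-pullback-lifting-diagram} is $0$, so $\cone(\delta_i) \simeq f_i^*\bE_{j(i)} \oplus \Omega_{f_i}$ with its canonical map to $\bL_{U_i/\fB} \simeq f_i^*\bL_{V_{j(i)}/\fB} \oplus \Omega_{f_i}$; and the (unique) lift $\xi_i$, whose composite with $f_i^*\bE_{j(i)} \to \cone(\delta_i)$ equals $\delta_i^\vee[1] = 0$, therefore factors through $\Omega_{f_i}$, landing in $\Ext^2(\Omega_{f_i}, \Omega_{f_i}^\vee\otimes\kappa) = 0$, so $\xi_i = 0$ too. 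Hence
\[ \hat\bE_i \coloneqq \cocone(\xi_i) \simeq f_i^*\bE_{j(i)} \oplus \Omega_{f_i} \oplus \kappa\otimes\Omega_{f_i}^\vee[1]. \]

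With this local description the APOT $\hat\phi = (U_i, \hat\phi_i)$ is assembled exactly as in the proof of Theorem~\ref{thm:symmetric-pullback}: the local obstruction sheaves $h^1(\hat\bE_i^\vee) \simeq f_i^*\cOb_{\phi_\fY} \oplus (\Omega_{\fX/\fY}\otimes\kappa^{-1})\big|_{U_i} \simeq h^0(\bL_{\fX/\fB})\big|_{U_i}\otimes\kappa^{-1}$ glue to $\cOb_{\hat\phi} \coloneqq h^0(\bL_{\fX/\fB})\otimes\kappa^{-1}$, with transition data induced from that of $\phi_\fY$ and lifting \'etale-locally to isomorphisms of POTs by the (now essentially trivial, as $\delta = \xi = 0$) arguments of Lemmas~\ref{lem:cone-delta-iso} and~\ref{lem:choose_gamma}; each $\hat\bE_i$ is a symmetric POT of weight $\kappa$ as in Theorem~\ref{thm:symmetric-pullback}, and condition~\ref{spb:def:apot}\ref{it:sym-apotiv} holds by construction. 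So $\hat\phi$ is a symmetric APOT which, by construction, is a symmetrized pullback of $\phi_\fY$ along $g$.

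It remains to compute $\cO^\vir_\fX$. I would introduce the auxiliary APOT $\tilde\phi$ with local models $\tilde\bE_i \coloneqq f_i^*\bE_{j(i)} \oplus \Omega_{f_i}$ and transition data induced from $\phi_\fY$: these are exactly the charts of Proposition~\ref{spb:prop:smooth_pullback}, so $\tilde\phi$ is a smooth pullback APOT of $\phi_\fY$ and $\cO^\vir_{\tilde\phi} = g^*\cO^\vir_\fY$. Since $\hat\bE_i = \tilde\bE_i \oplus \kappa\otimes\Omega_{f_i}^\vee[1]$, the APOT $\hat\phi$ is obtained from $\tilde\phi$ by adjoining the trivial obstruction bundle $\cA \coloneqq \Omega_{\fX/\fY}\otimes\kappa^{-1}$: at the level of sheaf stacks there is a closed embedding $\iota\colon \cOb_{\tilde\phi} = g^*\cOb_{\phi_\fY} \hookrightarrow \cOb_{\hat\phi}$ with vector-bundle cokernel $\cA$ (the extension $0 \to \cOb_{\tilde\phi} \to \cOb_{\hat\phi} \to \cA \to 0$ being the twist by $\kappa^{-1}$ of $0 \to g^*h^0(\bL_{\fY/\fB}) \to h^0(\bL_{\fX/\fB}) \to \Omega_{\fX/\fY} \to 0$), and the coarse intrinsic normal cones match, $\fc_{\hat\phi} = \iota(\fc_{\tilde\phi})$, because both are the image of $\fc_{\fX/\fB}$ and the embeddings $j_{\tilde\phi}$, $j_{\hat\phi}$ are compatible with the direct-sum decomposition of the $\hat\bE_i$. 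The step I expect to be the real obstacle is the compatibility of the Kiem--Savvas K-theoretic Gysin map with this embedding, namely $0^!_{\cOb_{\hat\phi}}\circ\iota_* = \se(\cA)\cdot 0^!_{\cOb_{\tilde\phi}}$: on affine local charts, writing $\cOb_{\hat\phi} = \cE/R$ and $\cOb_{\tilde\phi} = \cE'/R'$ with $\cE' \subset \cE$ a subbundle such that $\cE/\cE' \cong \cA$, this is the classical excess-intersection identity for zero-section Gysin maps, and one must check it is compatible with the descent data of \cite[\S 2]{kiem_savvas_apot} used to glue the charts. Granting this,
\[ \cO^\vir_\fX = 0^!_{\cOb_{\hat\phi}}[\cO_{\fc_{\hat\phi}}] = \se(\cA)\cdot 0^!_{\cOb_{\tilde\phi}}[\cO_{\fc_{\tilde\phi}}] = \se(\Omega_{\fX/\fY}\otimes\kappa^{-1})\cdot g^*\cO^\vir_\fY, \]
as claimed.
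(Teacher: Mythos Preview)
Your proposal is correct and follows essentially the same route as the paper. Both construct the local POTs as $f_i^*\bE_{j(i)} \oplus \Omega_{f_i} \oplus \kappa\,\Omega_{f_i}^\vee[1]$ and prove the virtual cycle identity via the short exact sequence $0 \to g^*\cOb_{\phi_\fY} \to \cOb_{\hat\phi} \to \Omega_{\fX/\fY}\otimes\kappa^{-1} \to 0$ of obstruction sheaves, matching the coarse cones under $\iota$ and then applying a self-intersection formula. Two minor differences are worth noting: (i) the paper writes down the local model and the (upper-triangular) transition maps directly rather than deriving them as the degenerate case $\delta=\xi=0$ of Theorem~\ref{thm:symmetric-pullback} as you do---your framing is a nice conceptual observation, though you should still write out the transition maps explicitly rather than deferring to Lemmas~\ref{lem:cone-delta-iso} and~\ref{lem:choose_gamma}; (ii) what you flag as ``the real obstacle'', namely $0^!_{\cOb_{\hat\phi}}\circ\iota_* = \se(\cA)\cdot 0^!_{\cOb_{\tilde\phi}}$, is not an obstacle at all: it is precisely the content of \cite[Lemmas~3.7, 3.8 and (3.10)]{kiem_savvas_loc}, which the paper cites directly. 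So you can simply invoke those results rather than re-proving the excess-intersection identity on affine local charts.
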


\begin{proof}
  We construct the APOT $\hat\phi$ on $\fX$ as follows: take
  $\cOb_{\hat\phi} \coloneqq \Omega_{\fX/\fB} \otimes \kappa^{-1}$,
  and define local charts by applying
  Theorem~\ref{thm:symmetric-pullback} locally on each \'etale chart on
  $\fY$. Then it remains to glue everything together, using the same
  strategy as for Proposition~\ref{spb:prop:smooth_pullback}.

  Explicitly, given a chart $U_i \to \fY$ with symmetric POT
  $\phi_i\colon \bE_i \to \bL_{U_i/\fB}$, we can find charts
  $V_j\to \fX$ that factor through $U_i \times_{\fY} \fX$ on which
  $\bL_{V_j/\fB} \simeq \Omega_{V_j/U_i}\oplus g^*\bL_{U_i/\fB}$. Then
  set
  \[ \hat{\bE}_j \coloneqq \Omega_{V_j/U_i} \oplus g^*\bE_i \oplus \Omega_{V_j/U_i}^{\vee}\otimes \kappa[1], \]
  and take $\hat\phi_j\coloneqq \id \oplus g^*\phi \oplus 0$. Given
  two charts $V_{j_1}$ and $V_{j_2}$ over $U_i$, let $W_k$ be an
  affine \'etale chart factoring through $V_{j_1}\times_\fX V_{j_2}$.
  Let $\eta\colon \Omega_{\fX/\fY}|_{W_k}\to h^0(g^*\bE_i)$ be the
  morphism defined as the composition
  \[ \Omega_{\fX/\fY}\Big|_{V_{j_1}} \to h^0(\hat\bE_{j_1}) \simeq h^0(\bL_{\fX/\fB}) \simeq h^0(\hat\bE_{j_2})\to h^0(g^*\bE_i), \]
  where the first and last morphism are the natural inclusion and
  projection coming from the definition of $\hat\bE_{j_1}$ and
  $\hat\bE_{j_2}$ respectively. Then the transition map between
  $\hat\bE_{j_1}|_{W_k}$ and $\hat\bE_{j_2}|_{W_k}$ is given
  by the matrix
  \[ \begin{bmatrix}
      \id_{\Omega_{W_k/U_i}} & 0 & 0\\
      \eta & \id_{g^*\bE_i} & 0 \\
      0 &  \eta^{\vee}[1]\otimes \kappa & \id_{\Omega^{\vee}_{W_k/U_i}[1]\otimes \kappa}
    \end{bmatrix} \]

  \subsubsection{}
  
  We prove the claimed identity for virtual structure sheaves. By the
  exact sequence of K\"ahler differentials, we have an exact sequence
  of coherent sheaves on $\fX$
  \[ 0\to g^*\cOb_{\phi} \xrightarrow{\iota} \cOb_{\hat\phi} \to \Omega_{\fX/\fY}\otimes \kappa^{-1} \to 0. \]
  We claim that the induced pushforward
  $\iota_*\colon K_\sT(g^*\cOb_{\phi}) \to K_\sT(\cOb_{\hat\phi})$
  \cite[(3.16)]{kiem_savvas_loc} sends $g^*\cO_{\fc_\phi}$ to
  $\cO_{\fc_{\hat\phi}}$. Assuming this is true,
  we have that
  \[ \cO_{\fX}^{\vir} = 0^!_{\cOb_{\hat\phi}}\cO_{\fc_\phi} = 0^!_{\cOb_{\phi}}\iota^*\iota_*\cO_{\fc_\phi} \]
  by Lemmas 3.8 and 3.7 in \cite{kiem_savvas_loc} respectively.
  Moreover, from \cite[(3.10)]{kiem_savvas_loc} one can see that
  $\iota^*\iota_*$ equals multiplication with
  $\se(\Omega_{\fX/\fY}\otimes \kappa^{-1})$ as desired.

  Now to prove our claim, it is enough to show that the closed
  substack $g^{-1}\fc_{\phi}\subseteq g^*\cOb_{\phi}$ is identified
  with $\fc_{\hat\phi}\subseteq \cOb_{\hat\phi}$ when viewed as a
  closed substack of $\cOb_{\bar\phi}$ via $\iota$ (which, as a map of
  sheaf stacks, is a closed immersion). This can be checked locally.
  In particular, by the explicit local description of $\hat\phi$ in
  \S\ref{sec:symmetrized-pullback-apots}, we may assume that the
  obstruction theory on $\fX$ is obtained by first taking the smooth
  pullback of $\phi$ and then modifying it by adding
  $\Omega_{\fX/\fY}\otimes \kappa^{-1}$ to the obstruction sheaf. The
  result follows from this.
\end{proof}

\subsubsection{}

Suppose that, in the setting of
Definition~\ref{def:symmetrized-pullback-setup}, we have a diagram
\[ \fX \xrightarrow{g} \fY \xrightarrow{f} \fM\]
over $\fB$, where $g$ is a smooth morphism of DM stacks. Then we have
the APOTs $\phi_\fX$ on $\fX$ and $\phi_\fY$ on $\fY$ obtained from
symmetrized pullback along $f\circ g$ and $f$ respectively. The
following functoriality statement is very useful and follows from the
same arguments as in the construction of symmetrized pullback.

\begin{lemma} \label{prop:symmetrized-pullback-functoriality}
  The symmetrized pullback of $\phi_\fY$ along $g$, as in
  Proposition~\ref{prop:symmetrized-pullback-APOTs}, agrees with the
  symmetrized pullback $\phi_\fX$ of the symmetric obstruction theory
  on $\fM$ along $f\circ g$.
\end{lemma}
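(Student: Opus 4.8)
The plan is to verify that the two symmetric APOTs on $\fX$ coincide \'etale-locally on affine charts, where the symmetrized pullback construction of Theorem~\ref{thm:symmetric-pullback} is essentially rigid by the vanishings of Lemma~\ref{lem:affine-vanishing}, and then to glue the local identifications by repeating the arguments that already close the proof of Theorem~\ref{thm:symmetric-pullback}. Everything is $\sT$-equivariant and over $\fB$.

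First I would pass, via \cite[Theorem 4.3]{alper_hall_rydh_luna_stacks_20} (after possibly multiplying $\sT$), to a common affine \'etale cover $\{U_i \to \fX\}$ refining both the cover used to build $\phi_\fX$ and the pullback of the cover $\{V_a \to \fY\}$ used to build $\phi_\fY$; say $U_i \to \fY$ factors through $V_{a(i)}$, so $\Omega_{U_i/V_{a(i)}} = \Omega_g|_{U_i}$. Both APOTs are symmetric of weight $\kappa$, so both carry obstruction sheaf $h^0(\bL_{\fX/\fB}) \otimes \kappa^{-1} = \Omega_{\fX/\fB}\otimes\kappa^{-1}$ with identical descent data, forced by Definition~\ref{spb:def:apot}\ref{it:sym-apotiv}. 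Hence it suffices to produce, after further refinement, an isomorphism over each $U_i$ between the two local POTs compatible with the maps to $\bL_{U_i/\fB}$ and inducing the identity on this common obstruction sheaf, and then to glue these --- the gluing step being verbatim the combination of Lemmas~\ref{lem:cone-delta-iso} and~\ref{lem:choose_gamma} with the final symmetry discussion in the proof of Theorem~\ref{thm:symmetric-pullback}, all of whose input $\Ext$-vanishings hold on affine charts by Lemma~\ref{lem:affine-vanishing}.

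For the local comparison, fix $U = U_i$ and $V = V_{a(i)}$. Running Theorem~\ref{thm:symmetric-pullback} for $f$ over $V$ produces lifts $\delta_V,\xi_V$ filling \eqref{eq:symmetrized-pullback-lifting-diagram} and the local POT $\hat\bE_V = \cocone(\xi_V)$ of $\phi_\fY$, so by the explicit local formula in Proposition~\ref{prop:symmetrized-pullback-APOTs} the symmetrized pullback of $\phi_\fY$ along $g$ has local POT $g^*\hat\bE_V \oplus \Omega_{U/V} \oplus \Omega_{U/V}^\vee\otimes\kappa[1]$ over $U$, with the evident map to $\bL_{U/\fB} \cong g^*\bL_{V/\fB}\oplus\Omega_{U/V}$. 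Running Theorem~\ref{thm:symmetric-pullback} for $f\circ g$ over $U$ produces $\hat\bE_U = \cocone(\xi_U)$. Since $\cone(\phi_{\fM/\fB})$ lies in degrees $\le -2$, Lemma~\ref{lem:affine-vanishing} makes the lift $\delta_U$ of the connecting map $\Omega_{f\circ g}[-1] \to (f\circ g)^*\bL_{\fM/\fB}$ unique; by functoriality of the transitivity triangles of $\fX\xrightarrow{g}\fY\xrightarrow{f}\fM$ over $\fB$, that connecting map restricts along the inclusion $g^*\Omega_f[-1] \hookrightarrow \Omega_{f\circ g}[-1]$ coming from $0 \to g^*\Omega_f \to \Omega_{f\circ g} \to \Omega_g \to 0$ to $g^*$ of the connecting map for $f$, and --- since $\fY$ is DM so $\bL_{\fY/\fB}$ sits in non-positive degrees --- it restricts to $0$ on the quotient $\Omega_g[-1] = \Omega_{U/V}[-1]$ for a suitable splitting of this sequence. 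By uniqueness of lifts this forces $\delta_U = g^*\delta_V \oplus 0$, hence dually $\xi_U = g^*\xi_V \oplus 0$, whence $\cone(\delta_U) \cong g^*\cone(\delta_V)\oplus\Omega_{U/V}$ and $\hat\bE_U \cong g^*\hat\bE_V \oplus \Omega_{U/V} \oplus \Omega_{U/V}^\vee\otimes\kappa[1]$; chasing \eqref{eq:symmetrized-pullback-lifting-diagram} and the induced map $\cocone(\xi_U) \to \cone(\delta_U) \to \bL_{U/\fB}$ confirms this isomorphism is compatible with the maps to $\bL_{U/\fB}$ and induces the identity on the obstruction sheaf.

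Feeding these local isomorphisms into the gluing argument of the second paragraph identifies the two symmetric APOTs on $\fX$, which in particular have the same virtual structure sheaf $\cO^\vir_\fX$ (Theorem~\ref{thm:APOT-virtual-cycle}). The hard part will be the bookkeeping in the local comparison: making the functoriality of the derived transitivity triangle precise --- $\sT$-equivariantly --- to the point of pinning down $\delta_U$ and $\xi_U$, and reconciling on overlaps the splitting choices of $0 \to g^*\Omega_f \to \Omega_{f\circ g} \to \Omega_g \to 0$ and of $\bL_{U_i/\fB}$; this is routine but demands exactly the care of Lemmas~\ref{lem:cone-delta-iso} and~\ref{lem:choose_gamma}, which is why the statement does follow ``from the same arguments.''
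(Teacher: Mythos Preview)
Your proposal is correct and is precisely a fleshed-out version of what the paper means by ``follows from the same arguments as in the construction of symmetrized pullback'': the paper gives no further details beyond that one sentence. Your key local step --- that on affine charts the splitting $\bL_{U/\fB}\cong g^*\bL_{V/\fB}\oplus\Omega_g$ forces $\delta_U=g^*\delta_V\oplus 0$ and hence $\hat\bE_U\cong g^*\hat\bE_V\oplus\Omega_g\oplus\Omega_g^\vee[1]\otimes\kappa$ --- together with the gluing via Lemmas~\ref{lem:cone-delta-iso} and~\ref{lem:choose_gamma}, is exactly the intended argument.
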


\subsection{Virtual torus localization}
\label{sec:symmetrized-pullback-localization}



\subsubsection{}

Let $\fX$ be a DM stack with $\sT$-action, and let
$\iota\colon \fX^\sT \hookrightarrow \fX$ be the $\sT$-fixed locus.
Suppose $\fX$ has an (A)POT, and let $\cO_\fX^\vir$ be the induced
virtual structure sheaf. Recall that virtual $\sT$-localization
requires the following two ingredients. 

First, $\iota_*\colon K_\sT(\fX^\sT) \to K_\sT(\fX)$ must become an
isomorphism of $K_\sT(\pt)_\loc$-modules after base change to
$K_\sT(\pt)_\loc$. This holds if $\fX$ is DM of finite type with a
$\sT$-equivariant \'etale atlas \cite[Proposition
5.12]{kiem_savvas_loc}. By \cite[Theorem
4.3]{alper_hall_rydh_luna_stacks_20}, such an atlas will always exist
after possibly passing to a multiple cover of $\sT$.

Second, for locally free sheaves $\cE$ on $\fX^\sT$ of non-trivial
$\sT$-weight, the {\it K-theoretic Euler class}
\[ \se(\cE) \coloneqq \sum_i (-1)^i \wedge^i(\cE^\vee) \in K_\sT(\fX^\sT) \]
must be invertible in $K_\sT(\fX^\sT)_{\loc}$, which is automatic for
for DM stacks. For two such locally free sheaves $\cE_1$ and $\cE_2$,
we write $\se(\cE_1 - \cE_2) \coloneqq \se(\cE_1) / \se(\cE_2)$.
Generally we will only apply $\se$ to the (some notion of the) virtual
normal bundle of $\iota$.

\subsubsection{}

Kiem and Savvas prove a torus localization formula for APOTs on a DM
stack $\fX$ under the assumption that the virtual normal bundle
$\cN_\iota^\vir$ exists globally on $\fX^\sT$ \cite[Assumption
5.3]{kiem_savvas_loc}. We were not able to confirm this assumption in
our setting, and it likely does not hold in general for the
symmetrized pullbacks constructed in
\S\ref{sec:symmetrized-pullback-construction}. However, if a virtual
normal bundle existed in the situation of
Definition~\ref{def:symmetrized-pullback-setup}, it would have
K-theory class
\begin{equation} \label{eq:APOT-Nvir}
  \cN^\vir \coloneqq \left(f^*\bE_\fM^\vee + \Omega_f^\vee - \Omega_f \otimes \kappa^{-1}\right)\Big|_{\fX^\sT}^{\text{mov}}
\end{equation}
where the superscript $\text{mov}$ denotes the $\sT$-moving part. In
this situation, we can show that the localization formula still holds
using $\cN^\vir$, at least if $\fX$ has the resolution property
\cite{totaro_resolution_property}.

\subsubsection{}

\begin{theorem} \label{thm:symmetrized-pullback-localization}
  In the situation of Definition~\ref{def:symmetrized-pullback-setup},
  suppose that $\fX$ has a symmetric APOT $\phi_{\fX/\fB}$ given by
  symmetrized pullback along $f$ of a symmetric obstruction theory on
  $\fM$. If $\fX$ has the resolution property, then
  \begin{equation}\label{eq:symmetrized_pullback_localization}
    \cO_{\fX}^{\vir} = \iota_*\frac{\cO_{\fX^\sT}^{\vir}}{\se(\cN^\vir)} \in K_\sT(\fX)_\loc
  \end{equation}
  where $\cO_{\fX}^{\vir}$ is the virtual structure sheaf for
  $\phi_{\fX/\fB}$, and $\cO_{\fX^\sT}^{\vir}$ is the virtual
  structure sheaf for the $\sT$-fixed part of
  $\phi_{\fX/\fB}|_{\fX^\sT}$.
\end{theorem}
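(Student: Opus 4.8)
The plan is to reduce to the case already handled by Kiem--Savvas \cite[Theorem 5.13]{kiem_savvas_loc}, whose only extra hypothesis is that the virtual normal bundle $\cN^\vir$ in \eqref{eq:APOT-Nvir} exists as an honest two-term complex of $\sT$-equivariant locally free sheaves on $\fX^\sT$, rather than merely as a K-theory class. So the first step is to bootstrap the resolution property into an \emph{approximation by vector bundles}: on $\fX^\sT$ (which inherits the resolution property, being a closed substack of $\fX$), the class $\cN^\vir$ can be written as $[\cN_0] - [\cN_1]$ for $\sT$-equivariant locally free sheaves $\cN_0, \cN_1$ of purely nontrivial $\sT$-weight, since $f^*\bE_\fM^\vee + \Omega_f^\vee - \kappa^{-1}\Omega_f$ restricts on the fixed locus to a perfect complex whose moving part admits such a presentation (resolve coherent quotients by locally frees, truncate, and split off fixed parts). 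This makes $\se(\cN^\vir) = \se(\cN_0)/\se(\cN_1)$ a well-defined invertible element of $K_\sT(\fX^\sT)_\loc$ independent of the presentation.

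Next I would localize the problem: the statement \eqref{eq:symmetrized_pullback_localization} is \'etale-local on $\fX$ in the sense that both sides are compatible with the Gysin maps $0^!_{\cOb}$ and with restriction to the charts of the symmetrized-pullback APOT. On a chart $U_i \to \fX$ with $U_i$ affine, the symmetrized pullback $\hat\bE_i = \cocone(\xi_i)$ \emph{is} an honest perfect obstruction theory (Theorem~\ref{thm:symmetric-pullback}), so the Kiem--Savvas virtual localization theorem for APOTs applies verbatim on $U_i$ once we know the virtual normal bundle there exists globally --- which it does, because on the affine $U_i^\sT$ every perfect complex is quasi-isomorphic to a bounded complex of locally free sheaves. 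Thus locally we get $\cO^\vir_{U_i} = (\iota_i)_* \cO^\vir_{U_i^\sT}/\se(\cN^\vir|_{U_i^\sT})$, with $\cN^\vir|_{U_i^\sT}$ the moving part of $\hat\bE_i^\vee|_{U_i^\sT}$, which matches the restriction of \eqref{eq:APOT-Nvir} by the distinguished triangle $\Omega_f[-1] \to f^*\bE_\fM \to \cone(\delta_i) \to$ together with $\hat\bE_i \simeq \cone(\delta_i)$ on $h^{\le 0}$ (the discrepancy lies in $h^1$, i.e. in the obstruction sheaf, which is $\Omega_f \otimes \kappa^{-1}$ and is accounted for in \eqref{eq:APOT-Nvir}).

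The remaining --- and genuinely delicate --- step is the \textbf{gluing}: the local classes $\cO^\vir_{U_i^\sT}/\se(\cN^\vir|_{U_i^\sT}) \in K_\sT(U_i^\sT)_\loc$ must be shown to glue to a global class on $\fX^\sT$ equal to $\cO^\vir_{\fX^\sT}/\se(\cN^\vir)$. The class $\cN^\vir$ itself is global by construction (it is built from $f^*\bE_\fM$, $\Omega_f$, both of which are global on $\fX$), so $\se(\cN^\vir) \in K_\sT(\fX^\sT)_\loc$ is global; the content is that $\cO^\vir_{\fX^\sT}$, constructed from the $\sT$-fixed part of the \emph{APOT} $\phi_{\fX/\fB}$ via its own sheaf-stack $\cOb_{\phi^\sT}$, restricts on each chart to the virtual structure sheaf of the fixed part of $\hat\bE_i$. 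This is essentially the statement that forming the $\sT$-fixed part commutes with the APOT-gluing data, which follows because the transition isomorphisms $\psi_{ij}$ of obstruction sheaves are $\sT$-equivariant hence restrict to transition isomorphisms of the fixed parts $\cOb_\phi^{\sT,\mathrm{fix}}$, and the decomposition $\cOb_\phi|_{\fX^\sT} = \cOb_\phi^{\mathrm{fix}} \oplus \cOb_\phi^{\mathrm{mov}}$ is canonical. I expect the main obstacle to be precisely this compatibility between the \'etale-local normal-bundle decompositions and the global APOT structure --- tracking that the ambiguity in presenting $\cN^\vir$ as $[\cN_0]-[\cN_1]$ on overlaps does not obstruct the gluing, which one handles by noting $\se$ of an acyclic complex of locally frees is $1$, so any two presentations give the same element of $K_\sT(\fX^\sT)_\loc$. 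Once the local formula and this gluing are in place, \eqref{eq:symmetrized_pullback_localization} follows by applying $0^!_{\cOb_\phi}$ to the coarse intrinsic normal cone and using functoriality of the Gysin map under restriction to the charts, exactly as in \cite[\S 5]{kiem_savvas_loc}.
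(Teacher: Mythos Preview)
Your approach has a genuine gap in the gluing step. The core problem is that equivariant K-theory does \emph{not} satisfy \'etale descent: knowing that the restriction of $\cO_\fX^\vir - \iota_*(\cO_{\fX^\sT}^\vir/\se(\cN^\vir))$ to each chart $U_i$ vanishes in $K_\sT(U_i)_\loc$ does not imply it vanishes in $K_\sT(\fX)_\loc$, because the restriction maps $K_\sT(\fX)_\loc \to \prod_i K_\sT(U_i)_\loc$ need not be jointly injective. You acknowledge the gluing is ``genuinely delicate'' but your proposed resolution --- that both sides are global classes whose restrictions agree --- is precisely what fails without descent. The compatibility of $\se(\cN^\vir)$ across presentations, and the canonical fixed/moving decomposition of $\cOb_\phi$, are true but do not address this.

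The paper circumvents this entirely. Rather than working chart-by-chart, it uses the resolution property to construct a $\sT$-equivariant \emph{affine bundle} $a\colon \fA \to \fX$ on which the obstructions to a global lift $\delta$ and to $\delta^\vee[1]\circ\delta = 0$ both vanish (by iterated application of a splitting lemma for Ext classes). On $\fA$, the symmetrized pullback yields a \emph{global} object $\hat\bE_\fA = \cocone(\xi)$, so the smooth pullback APOT on $\fA$ has a global virtual normal bundle and Kiem--Savvas applies directly on all of $\fA$. The result is then transported back to $\fX$ via the Thom isomorphism $a^*\colon K_\sT(\fX) \xrightarrow{\sim} K_\sT(\fA)$, which \emph{is} an isomorphism --- this is what replaces the descent you were hoping for. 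The resolution property enters not to resolve $\cN^\vir$ on $\fX^\sT$, but to build the affine bundle $\fA$ in the first place.
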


In this setting, let
$\cK_\vir \coloneqq \det(f^*\bE_\fM + \Omega_f - \Omega_f^\vee \otimes \kappa)$
denote the (would-be) virtual canonical bundle on $\fX$. If the
virtual canonical $\det \bE_\fM$ for $\fM$ admits a square root,
possibly up to passing to a multiple cover of $\sT$, then the
Nekrasov--Okounkov {\it symmetrized virtual structure sheaf} is
\[ \hat\cO_\fX^\vir \coloneqq \cO_\fX^\vir \otimes (\det \bE_\fX)^{1/2} \]
and the {\it symmetrized Euler class} is $\hat\se(-) \coloneqq \se(-)
\otimes \det(-)^{1/2}$. Then
Theorem~\ref{thm:symmetrized-pullback-localization} immediately
implies
\[ \hat\cO_\fX^\vir = \iota_*\frac{\hat\cO_{\fX^\sT}^\vir}{\hat\se(\cN^\vir)}. \]

\subsubsection{}

\begin{remark}
  In \cite{kiem_savvas_loc}, the localization formula is only stated
  for the action of a rank one torus. One deduces the general case by
  the following standard strategy: The argument in
  \cite{kiem_savvas_loc} goes through when considering another torus
  $\sT_1$ acting compatibly with the $\bC^*$-action. For general $\sT$,
  one can always choose a generic rank one sub-torus $\sT_0$, such that
  $\fX^T = \fX^{\sT_0}$, and such that $\sT$ splits as $\sT_0\times \sT_1$.
  The localization formula for $\sT$ then follows from the one for $\sT_0$
  while remembering the $\sT_1$-equivariance.
\end{remark}

\subsubsection{}

\begin{proof}[Proof of Theorem~\ref{thm:symmetrized-pullback-localization}.]
  Let $\phi_{\fM/\fB}\colon \bE_{\fM/\fB} \to \bL_{\fM/\fB}$ be the
  original symmetric obstruction theory on $\fM$ over $\fB$. We divide
  the proof of this theorem into three steps. Throughout, we work
  $\sT$-equivariantly.
  \begin{enumerate}
  \item Construct an affine bundle $a\colon \fA \to \fX$ such that, on
    $\fA$, there is a lift
    $\delta\colon a^*\Omega_f[-1] \to a^*f^*\bE_{\fM/\fB}$ for which
    the composition
    $\delta^\vee[1] \circ \delta\colon a^*\Omega_{f}[-1] \to
    a^*f^*\bE_{\fM/\fB} \to a^*\Omega_{f}^{\vee}[2]\otimes \kappa$
    vanishes, as in \eqref{eq:symmetrized-pullback-lifting-diagram}.
    
  \item Show that the $\sT$-localization formula on $\fA$ holds for
    the APOT on $\fA$ obtained from smooth (non-symmetric) pullback
    along $a$ of the APOT on $\fX$.
    
  \item Use the Thom isomorphism
    $a^*\colon K_\sT(\fX) \xrightarrow{\sim} K_\sT(\fA)$ \cite[Theorem
    5.4.17]{ChrGin97} to obtain the desired localization formula on
    $\fX$ from the one on $\fA$.
  \end{enumerate}
	
  \subsubsection{Step 1}
	
  \begin{lemma} \label{lem:splitting-via-affine-fibration}
    Let $\fX$ be a stack with the resolution property. Let
    $\cE \in D^{\leq k_0}\cat{Coh}(\fX)$, let $\cV$ be a vector bundle
    on $\fX$ and let $e \in \Ext^k(\cV, \cE)$ for $k>k_0$. Then there
    exists a surjection of vector bundles
    $p\colon \cV_0 \twoheadrightarrow \cV$ with the following property: if
    $a\colon \fA \to \fX$ denotes the fiber over $\id$ in the induced
    map $\cHom(\cV,\cV_0)\to \cHom(\cV,\cV)$, then $a^*e=0$.
  \end{lemma}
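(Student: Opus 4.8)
The plan is to realize the affine fibration $\fA \to \fX$ as a torsor under a vector bundle, chosen so that the universal section provides exactly the trivialization needed to kill the obstruction class $e$. First I would use the resolution property of $\fX$ to pick a surjection of vector bundles $p\colon \cV_0 \twoheadrightarrow \cV$ such that the kernel $\cK \coloneqq \ker(p)$ is again a vector bundle; replacing $\cV_0$ by a large enough direct sum of duals of ample-type bundles, one arranges that $\Ext^{>0}(\cV, \cK) = 0$ — this is the key flexibility afforded by the resolution property, since $\cHom(\cV,\cK)$ can be made globally generated with vanishing higher cohomology on each affine chart, hence (being a sheaf on a stack with enough locally frees) one can absorb any finite list of Ext-obstructions. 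More precisely, because $\cE \in D^{\leq k_0}$ and $k > k_0$, the class $e \in \Ext^k(\cV,\cE)$ is detected after replacing $\cV$ by any vector-bundle resolution term; I would instead argue directly with the triangle coming from $0 \to \cK \to \cV_0 \to \cV \to 0$.

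Concretely: applying $\Hom(-,\cE)$ to that short exact sequence gives a long exact sequence, and from it a connecting map $\Ext^{k-1}(\cK,\cE) \to \Ext^k(\cV,\cE)$. The middle term $\Ext^k(\cV_0,\cE)$ is where $p^*e$ lives, and I want $p^* e = 0$. So the first step is to choose $\cV_0$ so that $\Ext^k(\cV_0, \cE) = 0$; since $\cV_0$ is a vector bundle and $\cE \in D^{\leq k_0}$ with $k > k_0$, on any affine scheme this Ext vanishes by Lemma~\ref{lem:affine-vanishing}, so in fact $\Ext^k(\cV_0,\cE) = 0$ automatically once we observe it is the $k$-th cohomology of $R\Gamma(\fX, \cE \otimes \cV_0^\vee)$ and use that $\fX$ is covered by such affines — wait, this needs $\fX$ affine or a Čech argument. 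The cleanest route is: the resolution property lets us take $\cV_0$ so that the natural map $\Ext^{k-1}(\cK,\cE) \to \Ext^k(\cV,\cE)$ is surjective (equivalently $\Ext^k(\cV_0,\cE) \to \Ext^k(\cV,\cE)$ is zero); this is possible because we may enlarge $\cK$ freely. Then pick any $\tilde e \in \Ext^{k-1}(\cK,\cE)$ mapping to $e$.

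Now form $\fA$ as stated: it is the fiber over $\id_\cV$ of $\cHom(\cV,\cV_0) \xrightarrow{p \circ -} \cHom(\cV,\cV)$, i.e. the sheaf of splittings $\sigma\colon \cV \to \cV_0$ of $p$. This is a torsor under $\cHom(\cV,\cK)$, hence an affine bundle over $\fX$, and it is non-empty locally (any such $p$ splits locally), so $a\colon \fA \to \fX$ is an affine fibration. Over $\fA$ there is a \emph{universal} splitting $\sigma^{\mathrm{univ}}\colon a^*\cV \to a^*\cV_0$, equivalently a universal retraction $r^{\mathrm{univ}}\colon a^*\cV_0 \to a^*\cV$ with a chosen section, which identifies $a^*\cV$ with a direct summand of $a^*\cV_0$ complementary to $a^*\cK$ in a way that varies universally. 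The point is that pulling back $e$ to $\fA$ factors through $a^*\Ext^k(\cV_0,\cE)$ via $a^*p^*$, but the universal splitting gives a section of $a^* p$, so $a^* e = (a^*\sigma^{\mathrm{univ}})^* (a^* p^* e)$; and I claim $a^*p^* e$ is \emph{canonically} trivialized on $\fA$ because the universal retraction exhibits $a^*e = a^*(\text{image of } \tilde e \text{ under } \cK \hookrightarrow \cV_0)$ composed with a map that the splitting annihilates. The main obstacle — and the step I would spend the most care on — is making this last sentence precise: one must check that the tautological splitting over $\fA$ does kill $a^*e$ and not merely some lift of it, which amounts to a diagram chase with the pulled-back long exact sequence of $0 \to a^*\cK \to a^*\cV_0 \to a^*\cV \to 0$ together with the fact that the universal section splits it, so that the connecting homomorphism $a^*\Ext^{k-1}(\cK,\cE) \to a^*\Ext^k(\cV,\cE)$ becomes zero over $\fA$; since $e$ lies in its image, $a^* e = 0$. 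I would also double-check that $\cHom(\cV,\cK)$, and hence $\fA$, is the right affine bundle (that the fiber description matches the torsor description), which is straightforward linear algebra.
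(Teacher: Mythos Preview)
Your final argument is sound: once you know $p^*e = 0$ in $\Ext^k(\cV_0,\cE)$, i.e.\ once $e$ lies in the image of the connecting map for $0 \to \cK \to \cV_0 \to \cV \to 0$, then on $\fA$ the universal splitting kills the pulled-back connecting map and hence $a^*e$. The genuine gap is in producing such a $\cV_0$. None of your proposed mechanisms work. Arranging $\Ext^{>0}(\cV,\cK)=0$ forces the extension $\cV_0 \to \cV$ to split already on $\fX$, so $\fA \to \fX$ acquires a section and $a^*$ becomes split injective on Ext groups --- then $a^*e=0$ would force $e=0$. ``Enlarging $\cK$ freely'' in the naive sense (taking $\cV_0 = \cV \oplus \cW$ with $p$ the projection, or more generally adding a direct summand) leaves $p^*e$ nonzero whenever $e$ is. And the resolution property only hands you surjections from vector bundles onto coherent sheaves; it does not by itself let you prescribe which Ext classes die under $p^*$. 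What matters is not the size of $\cK$ but the specific extension class in $\Ext^1(\cV,\cK)$, and you give no way to choose it.

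The paper's idea is to build $\cV_0$ directly from $e$. After shifting so that $k = 1$ and $\cE \in D^{\le 0}$, set $\cF \coloneqq \cocone(e\colon \cV \to \cE[1])$. Then $\cF \in D^{\le 0}$, and the triangle $\cE \to \cF \to \cV \xrightarrow{e}$ shows that $e = 0$ iff $\cF \to \cV$ admits a section in the derived category. Now use the resolution property to represent $\cF$ by a complex of vector bundles $\cdots \to \cV_1 \to \cV_0$ in nonpositive degrees, and take $p\colon \cV_0 \twoheadrightarrow \cV$ to be the induced surjection, which by construction factors through $\cF \to \cV$. Any section $\cV \to \cV_0$ of $p$ is then automatically a chain map $\cV \to \cF$ splitting $\cF \to \cV$, so the universal section over $\fA$ forces $a^*e = 0$. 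In your language, this particular $\cV_0$ has $p^*e = 0$ because $\cV_0 \to \cV$ factors through $\cF$ and $\cF \to \cV \xrightarrow{e} \cE[1]$ is a consecutive pair in a triangle.
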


  Note that $a$ is therefore a $\cHom(\cV, \ker p)$-torsor over $\fX$.
  As $\ker p$ is a vector bundle, this is an affine bundle.

  \begin{proof}
    Up to a shift, we may assume that $k_0 = 0$ and $k=1$. Interpret
    $e$ as a morphism $e\colon \cV \to \cE[1]$ and let
    $\cF \coloneqq \cocone(e)$. Then $e=0$ if and only if the natural
    truncation map $\cF \to h^0(\cF) \simeq V$ splits in the derived
    category. By the resolution property, $\cF$ can be represented by
    a complex of vector bundles $\cdots \to \cV_1 \to \cV_0$ in
    non-positive degree, so that the truncation corresponds to a
    surjective map $p\colon \cV_0 \twoheadrightarrow \cV$. By
    construction, on the affine bundle $\fA$ there is a universal
    splitting of this surjection, which induces a splitting of
    $\cF \to \cV$.
  \end{proof}

  \subsubsection{}
  
  Apply Lemma~\ref{lem:splitting-via-affine-fibration} on $\fX$, to
  the class in $\Ext^1(\Omega_f, f^*\cone(\phi_{\fM/\fB}))$ that
  obstructs the existence of a lift $\delta$. This yields an affine
  bundle $a'\colon \fA' \to \fX$ on which the desired lift
  $\delta\colon (a')^*\Omega_f[-1] \to (a')^*f^*\bE_{\fM/\fB}$ exists.

  Apply Lemma~\ref{lem:splitting-via-affine-fibration} again on $\fA'$
  to the class $\delta^{\vee} \circ \delta \in \Ext^3((a')^*\Omega_f,
  (a')^*\Omega_f^{\vee}\otimes \kappa)$. This yields an affine bundle
  $\fA \to \fA'$ on which the pullback of this class vanishes. Let
  $a\colon \fA \to \fA' \to \fX$ denote the induced projection, and
  use $\delta$ again to denote the pullback of $\delta$ to $\fA$.

  \subsubsection{Step 2}\label{proof:apot_vir_loc_step_2}

  Recall the construction of the APOT on $\fX$ from symmetrized
  pullback: on affine \'etale charts $U_i$, fill in the morphisms
  $\delta_i\colon \Omega_f|_{U_i}[-1] \to f^*\bE_{\fM/\fB}$ and
  $\xi_i\colon \cone(\delta_i)|_{U_i} \to
  \Omega_f^\vee|_{U_i}[2] \otimes \kappa$ of
  \eqref{eq:symmetrized-pullback-lifting-diagram}, and take
  $\hat\bE_i \coloneqq \cocone(\xi_i)$. On the other hand, on $\fA$,
  the lift
  $\delta\colon a^*\Omega_f|_{U_i}[-1] \to a^*f^*\bE_{\fM/\fB}$ is
  globally defined, and the condition
  $\delta^\vee[1] \circ \delta = 0$ is satisfied globally and induces
  a global
  $\xi\colon \cone(\delta) \to a^*\Omega_f^\vee[2] \otimes \kappa$.
  Let $\hat\bE_\fA\coloneqq \cocone(\xi)$.

  By the same arguments as in \S\ref{sec:compare_xi}, there are local
  isomorphisms of $\xi$ with the pullbacks of the $\xi_i$ which induce
  isomorphisms of co-cones. In particular, the natural map
  $\hat\bE_{\fA}\to a^*\bL_{\fX}$ is an isomorphism on $h^0$ and
  surjective on $h^{-1}$.

  Now consider the diagram of solid arrows in $D^-_{\cat{QCoh},\sT}(\fA)$
  \[ \begin{tikzcd}
      \Omega_{a}[-1] \ar[r, dashed] \ar[d,equal] & \hat\bE_\fA\ar[d]& \\
      \Omega_{a}[-1]\ar[r] & a^*\bL_{\fX/\fB} \ar[r]&\bL_{\fA/\fB}.
    \end{tikzcd} \]
  If we could find a dashed arrow that makes the diagram commute, then
  the cone would give a perfect obstruction theory on $\fA$. Since
  $a^*\bL_{\fX/\fB}$ is in cohomological degree $\le 0$ and
  $\Omega_{a}[-1]$ is a vector bundle in degree $1$, affine
  \'etale-locally the connecting map $\Omega_a[-1]\to a^*\bL_{\fX}$ is
  zero by Lemma~\ref{lem:affine-vanishing}. Thus, just like in
  \S\ref{sec:smooth-pullback-APOT}, we may choose POTs of the form
  $\Omega_a\oplus \hat\bE_\fA \to \bL_{\fA/\fB}$ \'etale-locally
  on $\fA$, which fit together to form an APOT
  $(U_j, \hat\bF_j\to \bL_{U_j/\fB})_j$ on $\fA$. More precisely, on
  each $U_j$, we choose a section of $\bL_{\fA/\fB} \to \Omega_a$. It
  is straightforward to check that the APOT we obtain in this way
  agrees with the smooth pullback of the APOT on $\fX$ to $\fA$ using
  Proposition~\ref{spb:prop:smooth_pullback}.
	
  With this APOT on $\fA$, we claim that the $\sT$-fixed locus
  $\iota_{\fA}\colon \fA^{\sT} \hookrightarrow \fA$ possesses a global
  virtual normal bundle in the sense of \cite[Assumption
  5.4]{kiem_savvas_loc}. In fact, the virtual normal bundle is
  \[ \cN^{\vir}_{\iota_\fA} \coloneqq \left( \cT_{\fA/\fX} \oplus \hat\bE_\fA \right)\Big|_{\fA^\sT}^{\text{mov}}. \]
  Namely, restricted to each $U_i$, this is isomorphic with the dual
  of the moving part of $\hat\bF_i$, and moreover, on a common
  refinement $V$ of $U_i$ and $U_j$, the isomorphism
  $\hat\bF_i|_V \to \hat\bF_j|_V$ defining the obstruction theory is
  the identity on the $\hat\bE_{\fA}$ summand, and therefore respects
  the obstruction sheaf.

  It follows that the localization formula of \cite[Theorem
  5.15]{kiem_savvas_loc} is applicable to the APOT on $\fA$.

  \subsubsection{Step 3}

  The localization formula \cite[Theorem 5.15]{kiem_savvas_loc} on $\fA$
  gives
  \begin{equation}\label{eq:localize_pullback}
    \cO_{\fA}^{\vir} = (\iota_{\fA})_* \frac{\cO_{\fA^\sT}^{\vir}}{\se(\cN_{\iota_\fA}^{\vir})} = (\iota_{\fA})_*\frac{\cO_{\fA^\sT}^{\vir}}{\se(\cT_{\fA/\fX}\big|_{\fA^\sT}^{\text{mov}}) \, \se(\hat\bE_\fA\big|_{\fA^\sT}^{\text{mov}})}.
  \end{equation}
  Let $\fF_{\fA} \coloneqq a^{-1}(\fX^\sT)$ and factor
  $\iota_\fA = \iota_{\fF} \circ \iota_\fA'$ where
  $\iota_{\fA}'\colon \fA^\sT \hookrightarrow \fF_\fA$ and
  $\iota_\fF\colon \fF_\fA \hookrightarrow \fA$ are the natural closed
  immersions. Note that
  $\se(\hat\bE_{\fA}|_{\fF_\fA}^{\text{mov}}) = a^*\se(\cN^{\vir})$. Using this,
  and the projection formula, we rewrite \eqref{eq:localize_pullback}
  as
  \[ \cO_{\fA}^{\vir} = (\iota_\fF)_*\left( \left((\iota_\fA')_*\frac{\cO_{\fA^\sT}^{\vir}}{\se(\cT_{\fA/\fX}\big|_{\fA^\sT}^{\text{mov}})}\right) \frac{1}{a^*\se(\cN^{\vir})}\right). \]
  On the other hand, one can check that $\iota_\fA'$ is a regular
  closed immersion with normal bundle
  $\cT_{\fA/\fX}|_{\fA^\sT}^{\text{mov}}$. Applying the torus
  localization formula on $\fF_\fA$ gives
  \[ \cO_{\fF_\fA}^{\vir} = (\iota_\fA')_*\frac{\cO_{\fA^\sT}^{\vir}}{\se(\cT_{\fA/\fX}\big|_{\fA^\sT}^{\text{mov}})}. \]
  Combining the last two equalities, and using that the APOTs on $\fA$
  and $\fF_\fA$ are the smooth pullbacks of the APOTs on $\fX$
  and $\fX^\sT$ respectively, we conclude that
  \[ a^*\cO_\fX^{\vir} = \cO_{\fA}^{\vir} = (\iota_\fF)_* \frac{\cO_{\fF_\fA}^\vir}{a^*\se(\cN^{\vir})} = a^* \iota_*\frac{\cO_{\fX^\sT}^{\vir}}{\se(\cN^{\vir})}. \]
  Since $a^*$ is an isomorphism on K-theory, this proves
  \eqref{eq:symmetrized_pullback_localization}.
\end{proof}

\subsection{Comparison of virtual cycles}

\subsubsection{}

By \cite{thomas2020ktheoretic}, the virtual structure sheaf associated
to a POT depends only on the {\it K-theory class} of the POT. In other
words, if a space $Z$ has two possibly-different POTs, it is
sufficient to check that their K-theory classes are equal in order to
conclude that they induce the same $\hat\cO^\vir$. The same should be
true for APOTs and their associated virtual structure sheaves (from
Theorem~\ref{thm:APOT-virtual-cycle}).

In particular, in wall-crossing applications, we will have a master
space $\bM$ with symmetric APOT from symmetrized pullback, and we will
need to identify the induced APOT on various torus-fixed loci $Z
\subset \bM$ (which arise in the virtual localization of
Theorem~\ref{thm:symmetrized-pullback-localization}) with symmetric
APOTs constructed by symmetrized pullback along maps to the actual
moduli spaces of interest.

\subsubsection{}

\begin{proposition}\label{prop:master_space_vir_class_comparison}
  Let $\fM$ and $\fN$ be an Artin stacks with symmetric obstruction
  theories $\phi_\fM\colon \bE_\fM\to \bL_\fM$ and $\phi_\fN\colon
  \bE_\fN\to \bL_\fN$. Consider the following setup
  \begin{equation*}
    \begin{tikzcd}
      Z \ar[d, "f"]\ar[r, hookrightarrow] & \bM \ar[loop right, "\bC^*"]\ar[d, "g"]\\
      \fN & \fM, 
    \end{tikzcd}
  \end{equation*}
  where $f$ and $g$ are smooth morphisms, $\bM$ is an algebraic space
  with $\bC^\times$-action compatible with the $\sT$-action, and $Z
  \subset \bM$ is a $\bC^\times$-fixed component. Then $Z$ has two
  $\sT$-equivariant symmetric APOTs (by
  Theorem~\ref{thm:symmetric-pullback}):
  \begin{enumerate}
  \item the one obtained by symmetrized pullback along $f$;
  \item the $\bC^\times$-fixed part of the restriction of the
    symmetric APOT on $\bM$, obtained by symmetrized pullback along
    $g$.
  \end{enumerate}
  Assume $\bM$ has the $(\sT \times \bC^\times)$-equivariant
  resolution property \cite{totaro_resolution_property}. If we have an
  identity of K-theory classes
  \begin{equation} \label{eq:APOT-k-class-matching}
    f^*\bE_\fN + \Omega_f - \kappa \Omega_f^\vee = g^*\bE_\fM\Big|^f_Z + \Omega_g\Big|_Z^f - \kappa \Omega_g^\vee\Big|_Z^f \in K_\sT(Z),
  \end{equation}
  then the virtual structure sheaves of the two different APOTs on $Z$
  coincide.
\end{proposition}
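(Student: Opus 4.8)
The plan is to reduce the statement to the functoriality of virtual structure sheaves under the operations appearing in the construction of symmetrized pullbacks, combined with the invariance of virtual structure sheaves under K-theory classes. First I would use the result cited just above (from \cite{thomas2020ktheoretic}) together with its expected APOT analogue: the virtual structure sheaf of a symmetric APOT obtained by symmetrized pullback depends only on the K-theory class $f^*\bE_\fN + \Omega_f - \kappa\Omega_f^\vee \in K_\sT(Z)$ of the would-be symmetrized pullback complex $\bE_{Z/\fB}$. Indeed, Theorem~\ref{thm:symmetrized-pullback-localization} computes $\cO^\vir_Z$ in terms of $\cO^\vir_{Z^\sT}$ and $\se(\cN^\vir)$, where $\cN^\vir$ is manifestly determined by this K-theory class; running the same localization for both APOTs on $Z$ reduces the problem to the $\sT$-fixed loci, where one can induct on the "size'' of the moving part. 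So the first step is to set up this K-theory-class reduction carefully, using Theorem~\ref{thm:symmetrized-pullback-localization} (applicable because $\bM$, hence $Z$, has the resolution property, and $Z^\sT = Z^{\sT\times\bC^\times}$-fixed loci inherit it).

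Second, I would unwind what the two APOTs on $Z$ actually are. APOT (1) is the symmetrized pullback of $\phi_\fM$... no --- of $\phi_\fN$ along $f$, with would-be complex of class $f^*\bE_\fN + \Omega_f - \kappa\Omega_f^\vee$. APOT (2) is the $\bC^\times$-fixed part of the symmetrized pullback of $\phi_\fM$ along $g\colon \bM\to\fM$, restricted to $Z$. By the functoriality statement Lemma~\ref{prop:symmetrized-pullback-functoriality}, the symmetrized pullback along $g$ restricted to the DM substack $Z$ (along the smooth --- or at least unobstructed-in-the-relevant-degrees --- inclusion-then-$\bC^\times$-fixed-part) can be reorganized: taking $\bC^\times$-fixed parts of a symmetric APOT is itself an instance of the symmetrized-pullback/fixed-part formalism of \cite{kiem_savvas_loc}, and one checks that the resulting APOT on $Z$ is again a symmetrized pullback, now along the composite $Z\hookrightarrow\bM\xrightarrow{g}\fM$ precomposed appropriately, with would-be complex of class $g^*\bE_\fM|_Z + \Omega_g|_Z - \kappa\Omega_g^\vee|_Z$ (after discarding the purely $\bC^\times$-moving pieces, which is exactly the "fixed part''). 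The point of hypothesis \eqref{eq:APOT-k-class-matching} is precisely that these two would-be complexes have equal K-theory class.

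Third, with both APOTs identified as symmetrized pullbacks (of $\phi_\fN$ along $f$, respectively of $\phi_\fM|_Z$-data) whose defining K-theory classes agree by \eqref{eq:APOT-k-class-matching}, I would invoke the K-theory-class reduction from the first step to conclude $\cO^\vir_Z$ agrees for both. Concretely: by Theorem~\ref{thm:symmetrized-pullback-localization}, $\cO^\vir_Z = \iota_* \cO^\vir_{Z^\sT}/\se(\cN^\vir)$ where $\cN^\vir$ is the moving part of the common K-theory class, so it suffices to match $\cO^\vir_{Z^\sT}$; but $Z^\sT$ is a DM stack with a symmetric APOT whose defining class is the $\sT$-fixed part of the common class, and one repeats the argument --- the induction terminates because the fixed part of the obstruction theory is a genuine perfect obstruction theory (the moving part has been stripped off) and there the statement of \cite{thomas2020ktheoretic} applies verbatim, the APOT being an honest POT on the DM stack $Z^\sT$ after a further étale refinement.

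The main obstacle I expect is the second step: carefully justifying that the $\bC^\times$-fixed part of a symmetric APOT obtained by symmetrized pullback is \emph{itself} a symmetrized pullback (along the appropriate map to $\fM$), and that its defining K-theory class is the one written on the right of \eqref{eq:APOT-k-class-matching}. This requires tracking the étale-local POTs $\hat\bE_i = \cocone(\xi_i)$ through the operation of taking $\bC^\times$-fixed parts, checking that the transition isomorphisms $\psi_{ij}$ and the symmetry isomorphisms $\Theta_i^s$ from the proof of Theorem~\ref{thm:symmetric-pullback} restrict compatibly to fixed parts, and invoking Lemma~\ref{prop:symmetrized-pullback-functoriality} in a slightly generalized form (since $Z\hookrightarrow\bM$ followed by taking the fixed component is not literally a smooth morphism of stacks, though it behaves like one after passing to the fixed locus, where the normal directions are the $\bC^\times$-moving bundle). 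Once that bookkeeping is done, the actual comparison of virtual structure sheaves is the soft argument of the first and third steps.
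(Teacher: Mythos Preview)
Your proposal has a genuine gap. You rely on an ``APOT analogue'' of \cite{thomas2020ktheoretic}, i.e.\ that the virtual structure sheaf of an APOT depends only on its K-theory class, but this is precisely what the paper does \emph{not} have: the sentence just before the proposition says ``the same should be true for APOTs'' as an expectation, not a result. Your attempt to bootstrap this via $\sT$-localization does not close the loop. After applying Theorem~\ref{thm:symmetrized-pullback-localization} you must still compare the two $\cO^\vir_{Z^\sT}$, and the $\sT$-fixed part of an APOT on $Z$ is still only an APOT on $Z^\sT$, not a global POT --- ``after a further \'etale refinement'' does not help, since every APOT is \'etale-locally a POT by definition, and \cite{thomas2020ktheoretic} needs a global POT on a quasi-projective scheme. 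Moreover, the localization formula only gives equality in $K_\sT(Z)_\loc$, not $K_\sT(Z)$.

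The paper's key idea, which your proposal is missing, is the equivariant Jouanolou trick: use the resolution property of $\bM$ to construct a $(\sT\times\bC^\times)$-equivariant affine bundle $b\colon B\to\bM$ with $B$ affine. On an affine base the obstructions in \S\ref{sec:symm-obstr} vanish globally, so the symmetrized pullback APOT becomes a genuine global POT on $B$ (and on the induced affine bundle $a\colon A\to Z$ over the fixed locus). One then checks --- this is the bookkeeping you correctly anticipated in your second step --- that the two routes from $\bM$ to $A$ (first fix then pull back, or first pull back then fix) give locally isomorphic POTs. Now both APOTs on $Z$ pull back along $a$ to honest POTs on the affine scheme $A$ with matching K-theory classes, so \cite{thomas2020ktheoretic} applies directly on $A$, and the Thom isomorphism $a^*\colon K_\sT(Z)\xrightarrow{\sim}K_\sT(A)$ transports the conclusion back to $Z$.
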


A POT is in particular an APOT, by choosing any \'etale atlas and
restricting to each \'etale chart, so this proposition can also be
applied when $Z$ has a POT whose K-theory class equals the right hand
side of \eqref{eq:APOT-k-class-matching}.

Both sides of \eqref{eq:APOT-k-class-matching}, particularly the right
hand side, should be compared with the expression \eqref{eq:APOT-Nvir}
for the virtual normal bundle of an APOT.

\subsubsection{}

\begin{proof}
  The idea of the proof is similar to the one for the localization
  formula. We pass to a $(\sT \times \bC^\times)$-equivariant affine
  bundle $b\colon B \to \bM$, where all obstructions to the
  construction of a {\it global} symmetrized pullback vanish. In fact,
  we can use that $\bM$ is an algebraic space to arrange for $B$ to be
  affine, and for smooth pullback along $b$ to produce a {\it POT} on
  $B$; note that in the more general case of
  Theorem~\ref{thm:symmetrized-pullback-localization}, the smooth
  pullback of an APOT to $B$ only yielded another {\it APOT}. There is
  an induced commutative diagram
  \begin{equation}\label{sym:eq:vir_class_comp_thm_jouanolou_setup}
    \begin{tikzcd}
      A \ar[dr,"a"]\ar[r,hookrightarrow] & B_Z \ar[d]\ar[r, hookrightarrow] & B \ar[loop right, "\sT \times \bC^*"]\ar[d, "b"]\\
      & Z \ar[r, hookrightarrow] & \bM \ar[loop right, "\sT \times \bC^*"]
    \end{tikzcd}
  \end{equation}
  where the square is Cartesian and $A \subset B_Z$ is the
  $\bC^\times$-fixed locus. So $a\colon A \to Z$ is a
  $\sT$-equivariant affine bundle and $A$ is affine. On $A$, we will
  use \cite{thomas2020ktheoretic} to show that the virtual structure
  sheaf of the pullback POTs to $A$ only depend on the K-theory class,
  and pass back to $Z$ via the Thom isomorphism
  $K_\sT(Z)\xrightarrow{\sim} K_\sT(A)$.
  
  The equivariant Jouanolou trick of \cite[\S
    3]{totaro_resolution_property}, on $\bM$, produces the desired $b$
  once we find a $(\sT \times \bC^\times)$-equivariant isomorphism
  $\bM \simeq [\tilde W / \GL(n)]$ where $\tilde W$ is a quasi-affine
  scheme. Because $\bM$ has the $(\sT \times \bC^\times)$-equivariant
  resolution property, by \cite[Theorem A]{gross2015tensor} $[\bM/(\sT
    \times \bC^\times)] \simeq [W / \GL(n)]$ for some quasi-affine
  scheme $W$ and some integer $n \ge 0$. Hence we can construct a
  Cartesian square
  \[ \begin{tikzcd}
    \tilde W \ar{r} \ar{d} & \bM \ar{d} \\
    W \ar{r} & {[\bM/(\sT \times \bC^\times)]}
  \end{tikzcd} \]
  where horizontal (resp. vertical) arrows are torsors for $\GL(n)$
  (resp. $\sT \times \bC^\times$). The upper row gives the desired
  isomorphism $\bM \simeq [\tilde W / \GL(n)]$.

\subsubsection{}

  Now, using $a\colon A \to Z$, we argue as in
  \S\ref{proof:apot_vir_loc_step_2}. Denote the resulting virtual
  structure sheaves of the two APOTs by $\cO^\vir_Z$ and
  $\cO^\vir_{Z\subset\bM}$ respectively.

  First, for the symmetrized pullback along $f$, the obstructions to
  constructing a global symmetrized pullback vanish on $A$. As in
  \S\ref{proof:apot_vir_loc_step_2}, this lets us construct a global
  $\hat\bE^Z_A\to a^*\bL_Z$ which is an isomorphism on $h^0$ and
  surjective on $h^{-1}$. Moreover, because $A$ is affine, any map
  $\Omega_a[-1]\to a^*\bL_Z$ vanishes by degree reasons. So, the
  smooth pullback to $A$ of the symmetrized pullback APOT on $Z$ is
  the POT
  \begin{equation} \label{eq:APOT-comparison-symmetric-pullback}
    \phi_A^Z\colon \hat\bE^Z_A \oplus \Omega_a \to \bL_A.
  \end{equation}

  Analogously, the smooth pullback along $b$ to $B$ of the symmetrized
  pullback APOT on $\bM$ is a POT $\hat\bE^\bM_B \oplus \Omega_b \to
  \bL_B$. Restricting to $A$ and taking the $\bC^\times$-fixed part
  give the POT
  \begin{equation} \label{eq:APOT-comparison-fixed-part}
    \phi_A^B\colon \hat\bE^\bM_B\Big|_A^f \oplus \Omega_b\Big|_A^f \to \bL_A
  \end{equation}
  on $A$. We need to show that $\phi_A^B$ agrees with the APOT $a^* \phi^{\bM}_Z$
  obtained by going the other way from $\bM$ to $A$ in the diagram
  \eqref{sym:eq:vir_class_comp_thm_jouanolou_setup}, namely: restrict
  to $Z$ and take the $\bC^\times$-fixed part of the APOT on $\bM$,
  and then the smooth pullback along $a$ to $A$.

\subsubsection{}

  An affine \'etale cover $\{U_i\}_{i \in I}$ of $B$ induces an affine \'etale cover
  $\{U_i^A \}_{i \in I}$ of $A$ by taking fiber products. There
  are equivariant splittings $\bL_{U_i} \cong b_i^* \bL_{U_i} \oplus
  \Omega_{b_i}$ where $b_i \coloneqq b\big|_{U_i}$, and similarly for
  $U_i^A $ and $a_i \coloneqq a\big|_{U_i^A}$. After possibly
  refining the cover, $a^*\phi^{\bM}_Z$ and $\phi_A^B$ are given on $U_i$ by
  \[ a_i^*\left(\hat \bE_{\fM,i}\big|_Z^f\right) \oplus \Omega_{a_i}, \qquad \left(b_i^* \hat \bE_{\fM,i}\right)\big|_Z^f \oplus \Omega_{b_i}\big|_Z^f \]
  respectively, where $(\hat \bE_{\fM,i})_{i \in I}$ is the
  symmetrized pullback APOT on $\bM$. But these two are equal because
  $A$ is the $\bC^\times$-fixed locus.

\subsubsection{}

  It remains to show that the two POTs $\phi_A^Z$ and $\phi_A^B$, from
  \eqref{eq:APOT-comparison-symmetric-pullback} and
  \eqref{eq:APOT-comparison-fixed-part} respectively, induce the same
  virtual structure sheaf. From the above global description, their
  K-theory classes are
  \[ a^*\left( f^*\bE_\fN + \Omega_f - \kappa \Omega_f^\vee\right) + \Omega_a \]
  and
  \begin{align*}
    & b^*\left(g^*\bE_\fM + \Omega_g - \kappa \Omega_g^\vee \right)\Big|_A^f + \Omega_b\Big|_A^f \\
    & \; = a^*\left(g^*\bE_\fM\big|_Z^f + \Omega_g\big|_Z^f - \kappa\Omega_g^\vee\big|_Z^f\right) + \Omega_a
  \end{align*}
  respectively. These two K-theory classes coincide by hypothesis. By
  \cite{thomas2020ktheoretic}, on a quasi-projective scheme, the
  virtual structure sheaf induced by a POT only depends on the
  K-theory class of the POT. By
  Proposition~\ref{spb:prop:smooth_pullback}, we get
  \[ a^*\cO^\vir_Z = a^*\cO^\vir_{Z\subset\bM}. \]
  The proposition follows as $a^*$ is an isomorphism on K-theory.
\end{proof}

\section{The DT/PT setup}
\label{sec:setup}

\subsection{Moduli stacks}
\label{sec:moduli-stacks}

\subsubsection{}

In this section we set up the moduli stacks, stability conditions, and
finally the wall-crossing problem for the DT/PT vertex correspondence.
For the wall-crossing, we follow \cite{toda_dtpt}, where a
triangulated category with weak stability conditions is defined, such
that the DT and PT stability conditions have exactly one wall (where
there are strictly semistables) separating them. We combine this with
the specific quasi-projective geometry from \cite{Maulik2011}, to give
us a wall-crossing problem for DT and PT {\it vertices} with specified
triples of partitions $(\lambda,\mu,\nu)$.

\subsubsection{}
\label{sec:abelian-category}

\begin{definition}
  Continue with the geometry and notation of
  \S\ref{sec:DT-PT-threefold-geometry}. Let
  $\cat{Coh}_{\le 1}(\bar X)$ be the abelian category of coherent
  sheaves on $\bar X$ of (support of) dimension $\le 1$. Define the
  abelian (resp. triangulated) category
  \begin{align*}
    \cat{A} &\coloneqq \inner*{\cO_{\bar{X}},\cat{Coh}_{\le 1}(\bar{X})[-1]}_{\mathrm{ex}} \subset \cat{D} \\
    \cat{D} &\coloneqq \inner*{\cO_{\bar{X}},\cat{Coh}_{\le 1}(\bar{X})[-1]}_{\mathrm{tr}} \subset D^b\cat{Coh}(\bar{X})
  \end{align*}
  to be the smallest extension-closed (resp. triangulated) subcategory
  containing $\cO_{\bar X}$ and $\cat{Coh}_{\le 1}(\bar{X})[-1]$. It
  is known that $\cat{A}$ is a heart of a bounded t-structure on
  $\cat{D}$ \cite[Lemma 3.5]{toda_dtpt}. Objects $I \in \cat{D}$ have
  {\it numerical class}
  \[ \cl{I} \coloneqq (\rk(I), \ch_2(I), \ch_3(I)) \in \bZ \oplus H_2(\bar{X}) \oplus \bZ. \]
  Let $\fM_\alpha = \fM_{r,\beta_C,n}$ denote the moduli stack of
  objects in $\cat{A}$ of numerical class
  $\alpha = (r, -\beta_C, -n)$. For any $\beta_C$ and $n$, the stacks
  $\fM_{1,\beta_C,n}$ and $\fM_{0,0,n}$ are known to be Artin and
  locally of finite type \cite[Lemma 3.15]{toda_dtpt}. Any given
  instance of DT/PT wall-crossing involves a fixed curve class
  $\beta_C$, with $r \in \{0, 1\}$ and varying $n \in \bZ$. For short,
  let $\fM_{\rank=r} \coloneqq \bigsqcup_{\beta_C,n} \fM_{r,\beta_C,n}$.
\end{definition}

\subsubsection{}

\begin{lemma}[{\cite[Lemma 3.11]{toda_dtpt}}] \label{lem:ambient-moduli-stack}
  Let $[I] \in \fM_{\rank=1}$. Then there is an exact sequence in
  $\cat{A}$,
  \[ 0 \to \cI_C \to I \to \cQ[-1] \to 0, \]
  where $\cI_C$ is the ideal sheaf of a $1$-dimensional subscheme
  $C \subset \bar X$ and $\dim \cQ \le 1$. There is
  an isomorphism
  \begin{equation} \label{eq:pair-as-two-term-complex}
    I \cong [\cO_{\bar X} \xrightarrow{s} \cE],
  \end{equation}
  where $\dim \cE \le 1$ and $\dim \coker s = 0$, if and only if
  $\dim \cQ = 0$.
\end{lemma}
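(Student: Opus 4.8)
The plan is to extract everything from the fact (\cite[Lemma 3.5]{toda_dtpt}) that $\cat{A}$ is a heart of a bounded t-structure on $\cat{D}$, which rigidifies the shape of a rank-one object. First I would describe the ordinary sheaf cohomology $H^0(I)$ and $H^1(I)$ on $\bar X$. Since $[I]\in\fM_{\rank=1}$, the object $I$ is a finite iterated extension of $\cO_{\bar X}$ and objects of $\cat{Coh}_{\le 1}(\bar X)[-1]$; as $\rk$ is additive on triangles, exactly one graded piece is $\cO_{\bar X}$ and the rest lie in $\cat{Coh}_{\le 1}(\bar X)[-1]$. Running the long exact cohomology sequence along such a filtration gives $H^i(I)=0$ for $i\neq 0,1$ and $H^1(I)\in\cat{Coh}_{\le 1}(\bar X)$ (this subcategory is closed under subs, quotients and extensions), while $H^0(I)$ is a rank-one sheaf. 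A short induction on the filtration length --- using that an extension of torsion-free sheaves by torsion-free sheaves on the integral variety $\bar X$ is torsion-free --- shows $H^0(I)$ is torsion-free, and since each graded piece has trivial determinant, $\det H^0(I)=\det I=\cO_{\bar X}$. A rank-one torsion-free sheaf with trivial determinant on the smooth $3$-fold $\bar X$ embeds in its reflexive hull, which is the line bundle $\cO_{\bar X}$, so $H^0(I)=\cI_C$ for a subscheme $C\subset\bar X$ of codimension $\ge 2$.

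Next I would produce the exact sequence. Rotating $0\to\cI_C\to\cO_{\bar X}\to\cO_C\to 0$ gives a triangle $\cO_C[-1]\to\cI_C\to\cO_{\bar X}\xrightarrow{+1}$ with outer terms in $\cat{A}$, so $\cI_C\in\cat{A}$; then the canonical triangle $H^0(I)\to I\to H^1(I)[-1]\xrightarrow{+1}$ has all three vertices in $\cat{A}$, hence is a short exact sequence $0\to\cI_C\to I\to\cQ[-1]\to 0$ in $\cat{A}$ with $\cQ\coloneqq H^1(I)\in\cat{Coh}_{\le 1}(\bar X)$, so $\dim\cQ\le 1$. Taking cohomology shows $\cI_C$ and $\cQ$ are forced to be $H^0(I)$ and $H^1(I)$. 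The direction ``$\Rightarrow$'' of the equivalence is then immediate: if $I\cong[\cO_{\bar X}\xrightarrow{s}\cE]$ with $\cO_{\bar X}$ in degree $0$ and $\cE$ in degree $1$, then $\cQ=H^1(I)=\coker s$ has dimension $0$.

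For ``$\Leftarrow$'', assume $\dim\cQ=0$. The tautological inclusion $q\colon\cI_C\hookrightarrow\cO_{\bar X}$ is an \emph{epimorphism} in $\cat{A}$ --- its $\cat{D}$-cone $\cO_C$ lies in $\cat{A}[1]$, so its $0$-th $\cat{A}$-cohomology, i.e.\ the cokernel of $q$ in $\cat{A}$, vanishes. The key step is to lift $q$ to a map $\bar q\colon I\to\cO_{\bar X}$ restricting to $q$ on the subobject $\cI_C\subset I$. Applying $\Hom_{\cat{D}}(-,\cO_{\bar X})$ to the sequence above, the obstruction lives in $\Ext^1_{\cat{A}}(\cQ[-1],\cO_{\bar X})\hookrightarrow\Hom_{\cat{D}}(\cQ[-1],\cO_{\bar X}[1])=\Ext^2_{\bar X}(\cQ,\cO_{\bar X})$, and this vanishes because a $0$-dimensional sheaf on the smooth $3$-fold $\bar X$ has codimension $3$, whence $\Ext^{\le 2}_{\bar X}(\cQ,\cO_{\bar X})=0$. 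This is precisely where the hypothesis $\dim\cQ=0$ enters, and it is the main obstacle of the proof: without it one cannot globalize $q$, which is exactly the failure reflected in the statement. Any lift $\bar q$ is automatically an epimorphism in $\cat{A}$ (its image contains that of $q$, which is all of $\cO_{\bar X}$), so there is a short exact sequence $0\to\cK\to I\xrightarrow{\bar q}\cO_{\bar X}\to 0$ in $\cat{A}$.

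To conclude I would compute the cohomology of $\cK$: the long exact sheaf-cohomology sequence of this triangle, together with the fact that $H^0(\bar q)=q$ is injective (which uses that $\cI_C\hookrightarrow I$ induces the identity on $H^0$, built into the identification $\cI_C=H^0(I)$ above), gives $H^0(\cK)=0$ and a short exact sequence $0\to\cO_C\to H^1(\cK)\to\cQ\to 0$, so $\dim H^1(\cK)\le 1$. Hence $\cK\cong\cE[-1]$ with $\cE\coloneqq H^1(\cK)$ of dimension $\le 1$, and rotating $\cE[-1]\to I\to\cO_{\bar X}\xrightarrow{+1}$ yields $I\cong\cocone(\cO_{\bar X}\xrightarrow{s}\cE)\cong[\cO_{\bar X}\xrightarrow{s}\cE]$ with $\cO_{\bar X}$ in degree $0$ and $\cE$ in degree $1$, and $\coker s=H^1(I)=\cQ$ is $0$-dimensional as required. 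Apart from the $\Ext^2$-vanishing flagged above, every step is routine bookkeeping with the t-structure.
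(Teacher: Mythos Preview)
Your argument is correct and complete. The paper does not give its own proof here; it simply notes that Toda's argument in \cite[Lemma 3.11]{toda_dtpt} goes through verbatim once the semistability hypothesis is dropped. What you have written is essentially a self-contained reconstruction of that argument: identify $H^0(I)$ as an ideal sheaf via torsion-freeness and the determinant computation, recognize the standard truncation triangle as a short exact sequence in $\cat{A}$, and for the nontrivial direction lift $\cI_C\hookrightarrow\cO_{\bar X}$ through $I$ using the vanishing $\Ext^2_{\bar X}(\cQ,\cO_{\bar X})=0$ forced by $\dim\cQ=0$ on a smooth $3$-fold. This is the same mechanism Toda uses, and you have correctly isolated the $\Ext^2$-vanishing as the one place where the hypothesis $\dim\cQ=0$ is genuinely needed.
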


\begin{proof}
  The proof of \cite[Lemma 3.11]{toda_dtpt} applies verbatim without
  the semistability assumption on $I$.
\end{proof}

\subsubsection{}
\label{sec:ambient-moduli-stack}

\begin{definition}
  In the setting of Lemma~\ref{lem:ambient-moduli-stack}, let
  \[ \fM_{1,\beta_C,n}^\circ \subset \fM_{1,\beta_C,n} \]
  denote the open substack where indeed $\dim \cQ = 0$, and, as
  before, write $\fM_{\rank=1}^\circ \coloneqq \bigsqcup_{\beta_C,n}
  \fM_{1,\beta_C,n}^\circ$. We view points in $\fM_{\rank=1}^\circ$ as
  two-term complexes \eqref{eq:pair-as-two-term-complex}. It is
  $\fM_{\rank=1}^\circ$, not $\fM_{\rank=1}$, which is most relevant
  to us.
\end{definition}

\subsubsection{}
\label{sec:rigidification}

Since $\cat{A}$ is a $\bC$-linear category, every object in
$\fM_\alpha$ has a group $\bC^\times$ of automorphisms by scaling. Let
$\fM_\alpha^\pl$ denote the $\bC^\times$-rigidification of
$\fM_\alpha$ \cite{Abramovich2008}. Roughly, this means to quotient
away the group $\bC^\times$ of scalar automorphisms from all
automorphism groups. Then the canonical map
\[ \Pi^\pl_\alpha\colon \fM_\alpha \to \fM_\alpha^\pl \]
is a principal $[\pt/\bC^\times]$-bundle for all $\alpha \neq 0$. The
notation $\pl$ stands for {\it projective linear}, exemplified by the
moduli stack $[\pt/\GL(n)]^\pl = [\pt/\PGL(n)]$ of vector spaces up to
projective automorphisms. Later, stable loci will be substacks of
$\fM_\alpha^\pl$ instead of $\fM_\alpha$.

\subsubsection{}
\label{sec:rigidification-of-dim-1-pairs}

\begin{lemma} \label{lem:rigidification-of-dim-1-pairs}
  The $\bC^\times$-rigidification map
  \[ \Pi^\pl_1\colon \fM_{\rank=1}^\circ \cong \fM_{\rank=1}^{\circ,\pl} \times [\pt/\bC^\times] \to \fM_{\rank=1}^{\circ,\pl} \]
  is a {\it trivial} $[\pt/\bC^\times]$-bundle.
\end{lemma}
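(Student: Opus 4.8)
The statement is that the $\bC^\times$-rigidification of $\fM_{\rank=1}^\circ$ is a \emph{trivial} gerbe, i.e. admits a section. The standard obstruction to splitting a $[\pt/\bC^\times]$-gerbe lives in $H^2$ (in the fppf or analytic topology) with coefficients in $\bC^\times$, and the most robust way to produce a section is to exhibit a line bundle (or more precisely a $\bC^\times$-torsor) on the total space $\fM_{\rank=1}^\circ$ on which the inertial $\bC^\times$ acts with weight $1$. Concretely, the plan is to use the universal object: on $\fM_{\rank=1}^\circ \times \bar X$ there is a universal complex $\cI^\bullet$ (the two-term complex $[\cO_{\bar X} \xrightarrow{s} \cE]$ of \eqref{eq:pair-as-two-term-complex}), and the scaling $\bC^\times$ of automorphisms acts on $\cI^\bullet$ with weight $1$. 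Because objects of $\fM_{\rank=1}^\circ$ have rank $1$, there is a canonical map $\cO_{\bar X} \to \cI^\bullet$ (the section $s$, viewed as the structural map of the two-term complex), and this exhibits a trivialization: more usefully, $h^0$ of the complex, or the determinant $\det \cI^\bullet$, or simply the line $\Hom(\cO_{\bar X \times \fM}, \cI^\bullet)$-type construction gives a line bundle of weight $1$.

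\subsubsection{}

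Here is the cleanest route. An object $I \in \fM_{\rank=1}^\circ$ comes with a distinguished morphism $\cO_{\bar X} \xrightarrow{s} I$ coming from \eqref{eq:pair-as-two-term-complex} (equivalently, from Lemma~\ref{lem:ambient-moduli-stack}: $\cI_C \hookrightarrow I$ and $\cI_C$ itself receives the canonical surjection from $\cO_{\bar X}$). The scalar automorphism $t \in \bC^\times$ of $I$ sends $s$ to $t\cdot s$, so the assignment $I \mapsto (\text{the line spanned by } s \text{ in } \Hom_{\cat{D}}(\cO_{\bar X}, I))$ defines a $\bC^\times$-equivariant line bundle $\cL$ on $\fM_{\rank=1}^\circ$ on which the inertial $\bC^\times$ acts with weight exactly $1$. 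More precisely, let $p\colon \fM_{\rank=1}^\circ \times \bar X \to \fM_{\rank=1}^\circ$ be the projection and $\cI^\bullet$ the universal complex; then $\cL \coloneqq \cHom_p(\cO, \cI^\bullet)^{\vee\vee}$ — or just the rank-one summand $R^0 p_* \cHom(\cO, \cI^\bullet)$, which is a line bundle since $\Hom(\cO_{\bar X}, I) = \bC\cdot s$ for all $I$ in the rank-one locus — is $\bC^\times$-equivariant of weight $1$. One must check this $\Hom$ is indeed $1$-dimensional fiberwise and forms a line bundle: this is where the hypotheses of $\fM_{\rank=1}^\circ$ (pairs with $\dim\coker s = 0$, not arbitrary rank-one objects) matter, and it follows from Lemma~\ref{lem:ambient-moduli-stack} together with cohomology-and-base-change for $p$.

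\subsubsection{}

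Given such a weight-$1$ line bundle $\cL$, the splitting is formal. The rigidification map $\Pi_1^\pl$ is a $[\pt/\bC^\times]$-gerbe, and a $[\pt/\bC^\times]$-gerbe over a stack $\fY$ is trivial precisely when it admits a line bundle of inertial weight $1$; in that case $\cL$ determines an isomorphism $\fM_{\rank=1}^\circ \cong \fM_{\rank=1}^{\circ,\pl} \times [\pt/\bC^\times]$ under which $\Pi_1^\pl$ becomes the projection. Concretely one can take the associated $\bC^\times$-torsor $\mathrm{Fr}(\cL) \to \fM_{\rank=1}^\circ$; the inertial $\bC^\times$ acts freely on it, so $\mathrm{Fr}(\cL)$ descends to a scheme-like object over $\fM_{\rank=1}^{\circ,\pl}$, and $\fM_{\rank=1}^\circ$ is recovered as $[\mathrm{Fr}(\cL)/\bC^\times] = \fM_{\rank=1}^{\circ,\pl}\times[\pt/\bC^\times]$. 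This is a standard fact about rigidifications by a central $\bC^\times$ (see \cite{Abramovich2008}), so I would cite it rather than reprove it.

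\subsubsection{}

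The main obstacle is the first step: verifying that $I \mapsto \Hom_{\cat{D}}(\cO_{\bar X}, I)$ is genuinely a \emph{line} bundle (locally free of rank $1$) on $\fM_{\rank=1}^\circ$, uniformly in families, and that the weight-$1$ action is as claimed. One needs (a) that $\dim_{\bC}\Hom(\cO_{\bar X}, I) = 1$ for every rank-one $I$ with $\dim\coker s = 0$ — one inclusion is the canonical $s$, and the reverse uses that any $\cO_{\bar X} \to I$ factors through $\cI_C \subset I$ up to a scalar since $I/\cI_C = \cQ[-1]$ is concentrated in degree $1$ (by Lemma~\ref{lem:ambient-moduli-stack}) and $\Hom(\cO_{\bar X}, \cQ[-1]) = 0$, combined with $\Hom(\cO_{\bar X}, \cI_C) = \bC$; and (b) that the formation of this $\Hom$ commutes with base change, so that it glues to a line bundle — this is cohomology and base change applied to $Rp_*R\cHom(\cO, \cI^\bullet)$, whose $h^0$ has constant rank $1$. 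Everything else is bookkeeping about gerbes. Once $\cL$ is in hand the triviality is immediate.
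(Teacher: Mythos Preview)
Your overall strategy --- exhibit a line bundle on $\fM_{\rank=1}^\circ$ of inertial $\bC^\times$-weight $1$, then invoke the standard fact that such a bundle trivializes a $\bC^\times$-gerbe --- is exactly the mechanism behind the paper's proof. But your specific candidate line bundle is wrong. You claim $\Hom_{\cat D}(\cO_{\bar X}, I) = \bC\cdot s$, with $s$ the ``canonical section''. Neither half of this is correct. First, $s$ is the differential of the two-term complex $[\cO_{\bar X}\xrightarrow{s}\cE]$, not a morphism $\cO_{\bar X}\to I$ in $\cat D$; the identity on the degree-$0$ term is not a chain map unless $s=0$. Second, from the exact sequence $0\to\cI_C\to I\to\cQ[-1]\to 0$ one gets $\Hom_{\cat D}(\cO_{\bar X},I)\cong\Hom_{\cat D}(\cO_{\bar X},\cI_C)=H^0(\bar X,\cI_C)$, and on $\bar X=(\bP^1)^3$ this vanishes whenever $C\neq\emptyset$ (the only global function is the constant $1$, which does not vanish on $C$). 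So your ``line bundle'' has rank $0$ over the locus with nontrivial curve class, and step (a) of your final paragraph --- the assertion $\Hom(\cO_{\bar X},\cI_C)=\bC$ --- is false.

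The paper's argument uses a different (and simpler) weight-$1$ line bundle. Writing the universal object as $[\scL\xrightarrow{s}\scE]$ with $\scL$ a line bundle fiberwise isomorphic to $\cO_{\bar X}$, the inertial $\bC^\times$ scales $\scL$ with weight $1$; rigidifying is then literally fixing a trivialization $\scL\xrightarrow{\sim}\cO_{\bar X}$, and forgetting it gives the section. Equivalently, $\det\scI^\bullet$ (which you mention in passing) does the job: it is fiberwise $\cong\cO_{\bar X}$ since $\ch_1(I)=0$, and carries weight $1$ because $\rk I=1$. Either of these replaces your $\Hom(\cO_{\bar X},I)$ and the rest of your argument goes through.
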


\begin{proof}
  By definition, all objects in $\fM_{\rank=1}^\circ$ are pairs of the form
  \[ [\cL \xrightarrow{s} \cE], \qquad \cL \cong \cO_{\bar X}, \; \cE \in \cat{Coh}_{\le 1}(\bar X). \]
  Since $\Aut(\cL) \cong \bC^\times$, the $\bC^\times$-rigidification of
  such a pair is equivalent to fixing the extra data of an isomorphism
  $\phi\colon \cL \xrightarrow{\sim} \cO_X$. Hence $\Pi^\pl_1$ has a
  section given by forgetting this extra data, and this section
  trivializes the bundle $\Pi^\pl_1$ \cite[Lemma 3.21]{Laumon2000}.
\end{proof}

This argument applies equally well to Hilbert schemes. For instance,
$\Hilb(D_i)$ is the $\bC^\times$-rigidification of the associated
Hilbert {\it stack} $\fHilb(D_i) \cong \Hilb(D_i) \times
[\pt/\bC^\times]$ of objects of the form $[\cO_{D_i}
  \twoheadrightarrow \cO_Z]$.

\subsubsection{}
\label{sec:def-open}
\begin{definition}
  For a curve class $\beta_C = (\beta_1,\beta_2,\beta_3) \in
  H_2(\bar{X})$ and an integer $n \in \bZ$, let
  \[ \fN_{\beta_C,n} \subset \fM^\circ_{1,\beta_C,n} \]
  be the open locus of two-term complexes $I$ such that, for each $i =
  1, 2, 3$:
  \begin{enumerate}
  \item $L^k\iota_i^* I = 0$ for $k > 0$, so that $L\iota_i^* =
    \iota_i^*$;
  \item the {\it evaluation map}
    \begin{align*}
      \ev_i\colon \fN_{\beta_C,n} &\to \fHilb(D_i \cap X, \beta_i) \\
      [I] &\mapsto [\iota_i^*I],
    \end{align*}
    lands in the Hilbert stack (see
    \S\ref{sec:rigidification-of-dim-1-pairs}) of $\beta_i$ points on
    $D_i \cap X$.
  \end{enumerate}
  Similarly, viewing $\fM_{0,0,n}$ as a moduli stack of
  zero-dimensional sheaves, let
  \[ \fQ_n \subset \fM_{0,0,n} \]
  be the open locus of sheaves whose restriction to $D$ is zero, so
  that the analogous evaluation maps are zero.
\end{definition}

Condition 1 is an open condition by upper semi-continuity of
cohomology. Condition 2 is also an open condition, since $X \subset
\bar X$ is open and the Hilbert stack is a semistable (and stable)
locus in the moduli stack of {\it all} coherent sheaves on $D_i
\subset \bar X$ of Chern character $(1, 0, -\beta_i) \in H^0 \oplus
H^2 \oplus H^4$. It should be compared with the {\it admissibility}
condition for relative ideal sheaves \cite{Maulik2006a, Li2015}.

\subsubsection{}

To avoid dealing with each $\ev_i$ separately, it is convenient to
introduce the open locus
\[ \fHilb^\circ(D) \subset \fHilb(D) \]
consisting of rank-$1$ torsion-free sheaves on $D$ which are locally
free along the singular locus of $D$. Note the isomorphism
\[ \Hilb^{\circ}(D)\coloneqq (\fHilb^{\circ}(D))^{\pl} \cong \Hilb(D_1) \times \Hilb(D_2) \times \Hilb(D_3). \]
We continue to use the same names to denote the $\pl$ version of all
evaluation maps. The total evaluation map
\[ \ev \coloneqq \ev_1 \times \ev_2 \times \ev_3\colon \fN_{\beta_C,n} \to \fHilb^\circ(D) \]
and its $\pl$ version are compatible with the rigidification maps.

\subsubsection{}\label{sec:def_moduli_subspaces}

\begin{definition}
  For points $p_i \in \Hilb(D_i \cap X)$ for $i = 1, 2, 3$, let
  $|p_i|$ be the lengths of the subschemes they define, set
  $\beta_C = (|p_1|, |p_2|, |p_3|)$, and let
  \[ \fN_{(p_1,p_2,p_3),n}^\pl = \ev^{-1}(p_1, p_2, p_3) \subset \fN_{\beta_C,n}^\pl. \]
  be the closed substack of objects with the prescribed intersection
  with the $D_i$. Set
  \[ \fN_{(p_1,p_2,p_3),n} \coloneqq [\pt/\bC^\times] \times \fN_{(p_1,p_2,p_3),n}^\pl \subset \fN_{\beta_C,n}, \]
  where the $\bC^\times$ is the group of scaling automorphisms, so
  that the $\pl$ superscript makes sense because by
  Lemma~\ref{lem:rigidification-of-dim-1-pairs}. In other words,
  automorphisms of objects $[I] \in \fN_{(p_1,p_2,p_3),n}^\pl$ (resp.
  $\fN_{(p_1,p_2,p_3),n}$) must restrict to the {\it identity} (resp.
  any scaling automorphism) on $I|_D$.
\end{definition}

In what follows, $(p_1,p_2,p_3) = (\lambda, \mu, \nu)$ is a triple of
$\sT$-fixed points in $\Hilb(\bC^2)$, i.e. a triple of integer
partitions. The moduli substacks $\fN_{(\lambda,\mu,\nu),n}$ and
$\fQ_n$ will be the relevant moduli stacks involved in wall-crossing.
Later in \S\ref{sec:enumerative-invariants} we will equip them with
obstruction theories and enumerative invariants.

\subsection{Stability conditions}
\label{sec:stability-conditions}

\subsubsection{}
\label{sec:stab_choices}

We endow $\cat{A}$ with the same weak stability condition as Toda
\cite[\S 3.2]{toda_dtpt}. Recall that a {\it weak stability condition}
on an abelian category, in Joyce's sense \cite[Definition
  3.5]{JoyceSong}, is a function $\tau$ taking numerical classes into
some poset, such that for any short exact sequence $0 \to F \to E \to
G \to 0$, either
\[ \tau(F)\leq \tau(E)\leq \tau(G) \quad \text{or} \quad \tau(F)\geq \tau(E)\geq \tau(G). \]
An object $E$ is {\it $\tau$-semistable} (resp. {\it $\tau$-stable})
if for any short exact sequence $0 \to F \to E \to G \to 0$, we have
$\tau(F) \le \tau(G)$ (resp. $\tau(F) < \tau(G)$). We caution that,
for stable objects $E$, it is possible to have inequalities like
$\tau(F) = \tau(E) < \tau(G)$.

\begin{definition} \label{def:stability-conditions}
  For a parameter $\xi \coloneqq (z_1, z_0) \in \bC^2$ where
  $\arg(z_i) \in (\pi/2, \pi)$, let
  \begin{align*}
    \tau_\xi\colon \bZ \oplus H_2(\bar X) \oplus \bZ &\to \bR \\
    (r, -\beta_C, -n) &\mapsto \begin{cases} \arg z_1 & r \neq 0 \\ \pi/2 & r = 0, \; \beta_C \neq 0 \\ \arg z_0 & r = 0, \; \beta_C = 0. \end{cases}
  \end{align*}
  Let $\fM_\alpha^{\circ,\sst}(\tau_\xi) \subset
  \fM_\alpha^{\circ,\pl}$ be the substack of $\tau_\xi$-semistable
  objects, and similarly for $\fN_{\beta_C,n}^\sst(\tau_\xi)$ and
  $\fQ_n^\sst(\tau_\xi)$.
\end{definition}

The choice of the range $(\pi/2, \pi)$ is purely for the sake of
agreement with Toda's more general construction of weak stability
conditions on triangulated categories, and will not be important for
this paper.

\subsubsection{}

\begin{lemma}\label{setup:lemma_wall_crossing_moduli}
  Consider objects in $\fM_{1,\beta_C,n}^{\circ,\sst}(\tau_\xi)$.
  \begin{enumerate}[label=(\roman*)]
  \item If $\arg(z_0) < \arg(z_1)$, then all $\tau_\xi$-semistable
    objects are $\tau_\xi$-stable, and
    \[ \fM^{\circ,\sst}_{1,\beta_C,n}(\tau_\xi) = \DT_{\beta_C,n}(\bar{X}). \]
    In particular, $\fN_{(\lambda,\mu,\nu),n}^{\sst}(\tau_\xi) =
    \DT_{(\lambda,\mu,\nu),n}$ from
    \eqref{eq:DT-and-PT-moduli-spaces}.
  \item If $\arg(z_0) > \arg(z_1)$, then all $\tau_\xi$-semistable
    objects are $\tau_\xi$-stable, and
    \[ \fM^{\circ,\sst}_{1,\beta_C,n}(\tau_\xi) = \PT_{\beta_C, n}(\bar{X}). \]
    In particular, $\fN_{(\lambda,\mu,\nu),n}^{\sst}(\tau_\xi) =
    \PT_{(\lambda,\mu,\nu),n}$ from
    \eqref{eq:DT-and-PT-moduli-spaces}.
  \item If $\arg(z_0) = \arg(z_1)$, then all objects are semistable,
    i.e. $\fM^{\circ,\sst}_{1,\beta_C,n}(\tau_\xi) =
    \fM^{\circ,\pl}_{1,\beta_C,n}$.
  \end{enumerate}
\end{lemma}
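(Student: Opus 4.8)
The plan is to follow the weak-stability analysis of \cite[\S 3.2]{toda_dtpt}, reducing everything to the behaviour of rank-$0$ versus rank-$1$ sub- and quotient objects in the heart $\cat A$, the one new input being that on $\fM^{\circ}_{\rank=1}$ the sheaf $h^1(I)=\coker(s)$ is $0$-dimensional.

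First I would fix $[I]\in\fM^{\circ,\pl}_{1,\beta_C,n}$, write $I\cong[\cO_{\bar X}\xrightarrow{s}\cE]$ as in \eqref{eq:pair-as-two-term-complex}, and record that $h^0(I)=\ker(s)=\cI_C$ is torsion-free of rank $1$, that $h^1(I)=\coker(s)$ is $0$-dimensional, and that every short exact sequence $0\to F\to I\to Q\to 0$ in $\cat A$ has $\{\rk F,\rk Q\}=\{0,1\}$ (rank is additive and nonnegative on $\cat A$). Running the cohomology long exact sequence $0\to h^0(F)\to h^0(I)\to h^0(Q)\to h^1(F)\to h^1(I)\to h^1(Q)\to 0$, together with the standard description of $\cat A$ (an object of rank $0$ is $T[-1]$ with $T\in\cat{Coh}_{\le 1}(\bar X)$; an object of rank $1$ has $h^0$ with no subsheaf of dimension $\le 1$), gives the key numerology: a rank-$1$ sub or quotient has $\tau_\xi$-value $\arg z_1$; a rank-$0$ quotient $Q$ has $h^1(Q)$ a quotient of the $0$-dimensional sheaf $h^1(I)$, hence $0$-dimensional, so $\tau_\xi(Q)=\arg z_0$; and a rank-$0$ subobject has the form $T[-1]$ with $T$ (isomorphic to) a subsheaf of $\cE$, so $\tau_\xi(T[-1])\in\{\arg z_0,\pi/2\}$ according as $\dim T=0$ or $\dim T=1$. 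I would also extract the two converses needed below: the truncation triangle is a genuine short exact sequence $0\to\cI_C\to I\to(\coker s)[-1]\to 0$ in $\cat A$ (legitimate precisely because $\coker s$ is $0$-dimensional, hence $(\coker s)[-1]\in\cat{Coh}_{\le 1}(\bar X)[-1]$), so $I$ has a proper rank-$1$ subobject iff $\coker s\neq 0$; and $I$ has a rank-$0$ subobject $T[-1]$ with $T\neq 0$ and $\dim T=0$ iff $\cE$ has a nonzero $0$-dimensional subsheaf.

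With this in hand the three cases are bookkeeping. For (iii), if $\arg z_0=\arg z_1$ then every short exact sequence satisfies $\tau_\xi(F)\le\tau_\xi(Q)$ — equality when $\rk F=1$, and $\tau_\xi(F)\in\{\arg z_0,\pi/2\}\le\arg z_1=\tau_\xi(Q)$ when $\rk F=0$, using $\pi/2<\arg z_1$ — so every $[I]$ is semistable, i.e. $\fM^{\circ,\sst}_{1,\beta_C,n}(\tau_\xi)=\fM^{\circ,\pl}_{1,\beta_C,n}$. If instead $\arg z_0\neq\arg z_1$, then no short exact sequence can have $\tau_\xi(F)=\tau_\xi(Q)$ (the values $\arg z_0,\arg z_1$ are distinct and $\pi/2<\arg z_1$), so $\tau_\xi$-semistable $=$ $\tau_\xi$-stable throughout. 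When $\arg z_0<\arg z_1$, the only sequences that can violate semistability have $\rk F=1$; by the above, $I$ avoids all of them iff $\coker s=0$ iff $I\cong\cI_Z$ with $\dim Z\le 1$, and after $\bC^\times$-rigidification (a stable object has automorphism group exactly the scalars) this substack is $\DT_{\beta_C,n}(\bar X)$. When $\arg z_0>\arg z_1$, the only violating sequences have $\rk F=0$ with $\tau_\xi(F)=\arg z_0$, i.e. $F=T[-1]$ with $T$ a nonzero $0$-dimensional subsheaf of $\cE$; $I$ avoids all of them iff $\cE$ is pure of dimension $1$ (or $\cE=0$), i.e. iff $I$ is a stable pair $[\cO_{\bar X}\xrightarrow{s}\cF]$, giving $\PT_{\beta_C,n}(\bar X)$. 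Finally, for the ``in particular'' clauses I would intersect with $\ev^{-1}(\lambda,\mu,\nu)$ inside $\fN^{\pl}_{\beta_C,n}$: since $\fN_{\beta_C,n}$ already enforces $L\iota_i^{*}=\iota_i^{*}$ and $\ev_i\in\fHilb(D_i\cap X)$, the resulting substack is exactly the moduli space $M_{(\lambda,\mu,\nu),n}$ of \eqref{eq:DT-and-PT-moduli-spaces}, with the trivial-bundle bookkeeping of Lemma~\ref{lem:rigidification-of-dim-1-pairs}.

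The step requiring genuine care — as opposed to the numerology above — is the dictionary between sub/quotient objects of $I$ in the heart $\cat A$ and honest sub/quotient \emph{sheaves}: that a subsheaf $T\subseteq\cE$ really defines a monomorphism $T[-1]\hookrightarrow I$ in $\cat A$ with cokernel again in $\cat A$ (and conversely every rank-$0$ subobject arises this way), and the ``$I$ has a proper rank-$1$ subobject iff $\coker s\neq 0$'' claim. These are exactly the t-structure computations underlying Lemma~\ref{lem:ambient-moduli-stack} (i.e. \cite[Lemma 3.11]{toda_dtpt}), whose proof I would cite and adapt; once those are in place the rest of the argument is immediate.
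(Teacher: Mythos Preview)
Your proposal is correct and follows the same approach as the paper: both defer the substance of (i) and (ii) to Toda's weak-stability analysis \cite[Proposition 3.12]{toda_dtpt}, treat (iii) as immediate from the definition of $\tau_\xi$, and obtain the ``in particular'' clauses by intersecting with the fiber of $\ev$. The only difference is that you actually unpack Toda's argument (the rank-$0$/rank-$1$ casework on sub- and quotient objects in $\cat A$), whereas the paper simply cites it; your final paragraph correctly identifies the one nontrivial input --- that rank-$0$ subobjects of $I$ in $\cat A$ are precisely the $T[-1]$ with $T\subset\cE$ a subsheaf --- and appropriately defers it to the t-structure computations underlying Lemma~\ref{lem:ambient-moduli-stack}.
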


Let $\tau^-$, $\tau^+$, and $\tau_0$ denote $\tau_\xi$ for some choice
of $\xi$ in cases (i), (ii), and (iii) above, respectively. Then this
lemma describes a wall in a space of weak stability conditions, and
the moduli spaces on both sides of the wall.

\begin{proof}
  The third statement follows immediately from the definition of the
  stability conditions $\tau_\xi$. The first two statements about
  $\fM^{\circ,\sst}_{1,\beta_C,n}(\tau_\xi)$ were proven in
  \cite[Proposition 3.12]{toda_dtpt}. Since the conditions defining
  $\fN_{(\lambda,\mu,\nu),n}^{\sst}(\tau_\xi)$ inside
  $\fM^{\circ,\sst}_{1,\beta_C,n}(\tau_\xi)$ are the same conditions as
  the ones defining $M_{(\lambda,\mu,\nu),n}$ inside $M_{\beta_C,
    n}(\bar{X})$ in \eqref{eq:DT-and-PT-moduli-spaces}, the
  identifications of $\fM^{\circ,\sst}_{1,\beta_C,n}(\tau_\xi)$ with the
  DT and PT moduli restricts to these loci.
\end{proof}

\subsubsection{}

\begin{lemma}
  For any $\xi$, the semistable loci
  $\fN^\sst_{(p_1,p_2,p_3),n}(\tau_\xi)$ and $\fQ^\sst_n(\tau_\xi)$
  are finite type.
\end{lemma}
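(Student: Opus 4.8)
The plan is to reduce both claims to boundedness statements. Since $\fN^\sst_{(p_1,p_2,p_3),n}(\tau_\xi)$ is a substack of $\fM^{\circ,\pl}_{1,\beta_C,n}$ with $\beta_C=(|p_1|,|p_2|,|p_3|)$, and $\fQ^\sst_n(\tau_\xi)$ is a substack of $\fM_{0,0,n}$, and both ambient stacks are locally of finite type, it is enough to check quasi-compactness, i.e. that the families of objects parametrized are bounded. For $\fQ^\sst_n(\tau_\xi)\subseteq\fQ_n$ this is immediate: its objects are shifts of zero-dimensional sheaves $\cZ$ on $\bar X$ of length $n$, each of which is globally generated by at most $\length(\cZ)=n$ sections and hence a quotient of $\cO_{\bar X}^{\oplus n}$; thus $\fQ_n$, and a fortiori its semistable locus, is covered by the projective scheme $\Quot_{\bar X}(\cO_{\bar X}^{\oplus n},n)$ and is of finite type. (In fact $\fQ^\sst_n(\tau_\xi)=\fQ_n$ for every $\xi$, since all objects of class $(0,0,-n')$ have equal $\tau_\xi$-value so no subobject destabilizes, but this is not needed.)

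For $\fN$ it suffices, by the same substack reasoning, to prove that $\fM^{\circ}_{1,\beta_C,n}$ is itself of finite type; this covers all three positions of $\xi$ uniformly. By Lemma~\ref{lem:ambient-moduli-stack}, every $[I]\in\fM^{\circ}_{1,\beta_C,n}$ fits in an exact sequence $0\to\cI_C\to I\to\cQ[-1]\to 0$ in $\cat A$ with $C\subset\bar X$ a $1$-dimensional subscheme of class $[C]=\beta_C$ and $\cQ$ zero-dimensional of length $n-\chi(\cO_C)$. Now $\chi(\cO_C)\le n$ (as $\length(\cQ)\ge 0$), and $\chi(\cO_C)$ is also bounded below over all subschemes of $\bar X$ in the fixed class $\beta_C$, so only finitely many values $\chi(\cO_C)$ occur; for each value the ideal sheaves $\cI_C$ range over a projective Hilbert scheme $\Hilb(\bar X)$ (with the Hilbert polynomial determined by $\beta_C$ and that value), and the sheaves $\cQ$, having length in a bounded range, form a bounded family. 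Since $I$ is an iterated extension of members of these two bounded families, the $I$'s form a bounded family, so $\fM^{\circ}_{1,\beta_C,n}$ is of finite type and hence so is every $\fN^\sst_{(p_1,p_2,p_3),n}(\tau_\xi)$.

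The step requiring the most care is the boundedness on the wall $\tau_0$: there every object is semistable, so $\fN^\sst_{(p_1,p_2,p_3),n}(\tau_0)=\fN_{(p_1,p_2,p_3),n}$ and one genuinely needs finite-type-ness of $\fM^{\circ}_{1,\beta_C,n}$ — whereas for $\tau^\pm$ the semistable locus is instead the DT or PT moduli scheme of Lemma~\ref{setup:lemma_wall_crossing_moduli}, visibly of finite type as a locally closed subscheme of a projective Hilbert or stable-pairs scheme. Within the wall argument the delicate inputs are: (a) the lower bound on $\chi(\cO_C)$ for subschemes of fixed class, a standard consequence of Grothendieck's boundedness, made especially transparent here by the fact that effective curves of class $\beta_C$ in $(\bP^1)^3$ are unions of fibres of the three projections to $(\bP^1)^2$ (pinned near the $D_i$ by $\ev$, which is harmless for boundedness); and (b) that extensions of a bounded family of objects of $\cat A$ by another bounded family again form a bounded family, which follows by representing the relevant $\Ext^1$-groups by a coherent sheaf over the (stratified) parameter space of the two families.
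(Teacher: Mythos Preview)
Your argument is correct and follows essentially the same route as the paper: reduce to finite-type-ness of the ambient stacks $\fQ_n$ and $\fM^{\circ}_{1,\beta_C,n}$, then observe that the semistable loci are locally closed substacks. The only difference is that the paper simply cites \cite[Lemma~3.15]{toda_dtpt} for the fact that $\fM^{\circ}_{1,\beta_C,n}$ is of finite type, whereas you sketch the boundedness argument directly via the structure lemma; your sketch is fine, though the justification of the lower bound on $\chi(\cO_C)$ via ``unions of fibres'' is a little glib for non-reduced subschemes --- the standard argument (as in Toda or MNOP) passes through boundedness of pure one-dimensional sheaves and the observation that embedded points only raise $\chi$.
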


\begin{proof}
  It is well-known that the moduli stack $\fQ$ of zero-dimensional
  sheaves is finite type, so we focus on $\fN$.

  From \cite[Lemma 3.15]{toda_dtpt}, $\fM^\circ_{1,\beta_C,n}(\tau_\xi)$
  is of finite type. Since
  \[ \fN^\sst_{(p_1,p_2,p_3),n}(\tau_\xi) \subset \fN^\sst_{\beta_C,n}(\tau_\xi) \subset \fM^{\circ,\pl}_{1,\beta_C,n}(\tau_\xi) \] 
  is a closed followed by open immersion, we are done.
\end{proof}

\subsubsection{}
\label{lem:val-crit-semistable-base}

A one-parameter family of our rank-$1$ semistable objects always has a
limit. This will be important to show properness of auxiliary moduli
stacks later.

\begin{lemma}
  For any $\xi$, the stack $\fM_{1,\beta_C,n}^{\circ,\sst}(\tau_\xi)$,
  as well as its pre-image in $\fM_{1,\beta_C,n}^\circ$ under the
  rigidification map, satisfy the existence part of the valuative
  criterion for properness.
\end{lemma}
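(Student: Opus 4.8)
The plan is to verify the existence part of the valuative criterion directly, using the description of rank-$1$ objects from Lemma~\ref{lem:ambient-moduli-stack} together with Langton-type elementary modifications. Let $R$ be a DVR with fraction field $K$, residue field $k$, and let $I_K$ be a $K$-point of $\fM_{1,\beta_C,n}^{\circ,\sst}(\tau_\xi)$; by Lemma~\ref{lem:ambient-moduli-stack} we may write $I_K \cong [\cO_{\bar X_K} \xrightarrow{s_K} \cE_K]$ with $\dim \cE_K \le 1$ and $\dim \coker(s_K) = 0$. First I would spread $I_K$ out over the generic point to an $R$-flat family: choose any coherent extension $\cE$ of $\cE_K$ over $\bar X_R$ which is $R$-flat (pure of relative dimension $\le 1$, e.g. by taking the image of a map from a suitable twisted sheaf, then saturating), and extend $s_K$ to a map $s\colon \cO_{\bar X_R} \to \cE$ after possibly clearing denominators. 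This produces an $R$-point of $\fM^\circ_{1,\beta_C,n}$ whose generic fiber is $I_K$, but whose special fiber $I_k = [\cO_{\bar X_k} \xrightarrow{s_k} \cE_k]$ need not be $\tau_\xi$-semistable, and need not lie in the open locus where $\dim \coker(s_k) = 0$.

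The next step is to correct the special fiber by a sequence of elementary modifications. Since $\tau_\xi$ takes only three values and the rank-$0$ pieces all have $\tau$-value $\ge$ that of rank-$1$ pieces (with strict inequality for the $\beta_C = 0$ summand in the DT chamber and the reverse in the PT chamber), $\tau_\xi$-instability of $I_k$ is witnessed by a canonical maximal destabilizing rank-$0$ subobject or quotient $\cZ \subset I_k$ (resp. $I_k \twoheadrightarrow \cZ$) in $\cat{A}$, which one checks is an honest coherent sheaf of dimension $\le 1$. Following Langton \cite{Langton1975} (in the form used for complexes, cf. the ``elementary modification'' discussion referenced in \S\ref{sec:properness}), replace the family $I$ by the kernel of $I \to (\iota_k)_* \cZ$ (or the corresponding elementary upward modification); this is again $R$-flat with the same generic fiber, and a standard length/Hilbert-polynomial bound shows the process terminates after finitely many steps at a family whose special fiber is $\tau_\xi$-semistable. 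A further finite sequence of modifications supported in the rank-$0$ part pushes the special fiber into the open locus $\fM^\circ$, i.e. arranges $\dim \coker(s_k) = 0$ and $L^k\iota_i^* = \iota_i^*$; here one uses that these are open conditions automatically satisfied on a dense open of any flat family, so only finitely many codimension-$1$ corrections are needed. The resulting $R$-point has generic fiber $I_K$ and special fiber in $\fM_{1,\beta_C,n}^{\circ,\sst}(\tau_\xi)$, giving existence of the limit; the statement for the pre-image in $\fM_{1,\beta_C,n}^\circ$ follows since the rigidification map is a $[\pt/\bC^\times]$-bundle, and $[\pt/\bC^\times]$ satisfies the existence part of the valuative criterion (lift line bundles on $\Spec R$, which is always possible as $\Pic(\Spec R) = 0$).

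The main obstacle I anticipate is the termination of the Langton procedure in this two-term-complex setting: one must produce the right monotone invariant (some truncated Hilbert polynomial of $\cE_k$, or the length of the rank-$0$ torsion, ordered lexicographically) and check it strictly decreases under each elementary modification while staying bounded below, and one must confirm that the maximal destabilizer is well-defined and functorial enough that the modified family stays inside $\cat{A}$ and inside the relevant open locus. This is exactly the kind of argument the authors defer to \S\ref{sec:properness} (``using the notion of elementary modification originated in \cite{Langton1975}''), so I would invoke that general machinery rather than redo it; the content specific to this lemma is just that our stability function $\tau_\xi$ has the simple three-value structure that makes destabilizers coherent sheaves, plus the observation that $\fM^\circ$-membership is an open condition costing only finitely many further modifications.
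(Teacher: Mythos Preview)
Your approach is correct in principle but takes a much longer route than the paper, and misidentifies what needs to be checked.

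The paper's proof is essentially two sentences: extend $\cE_K$ to an $R$-flat sheaf $\cE_R$ with section $\phi_R$, and then perform a \emph{single} modification. If $\cF \coloneqq \coker(\phi_R)|_\xi$ has $1$-dimensional support, pick a quotient $\cF \to \cF'$ with $\cF'|_\eta = 0$ whose kernel has $0$-dimensional support over $\xi$, and replace $\cE_R$ by $\ker(\cE_R \to \cF')$. No iteration, no Langton, no termination argument. The reason this suffices is Lemma~\ref{setup:lemma_wall_crossing_moduli}(iii): for the wall $\tau_0$ one has $\fM^{\circ,\sst}_{1,\beta_C,n}(\tau_0) = \fM^{\circ,\pl}_{1,\beta_C,n}$, so landing in $\fM^\circ$ is already landing in the semistable locus, and this is the only case the paper ever uses (in Propositions~\ref{prop:valuative-criterion-existence} and \ref{prop:properness-quiver}). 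For $\tau^\pm$ the semistable loci are the DT and PT moduli schemes of $\bar X$, whose properness is classical, so the ``for any $\xi$'' is free.

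Your Langton iteration does cover the $\tau^\pm$ chambers uniformly, which is a genuine advantage if one did not want to invoke projectivity of Hilbert/stable-pair schemes. But note that once you achieve $\tau^\pm$-semistability, you are automatically in $\fM^\circ$ (DT-stable means $s$ is surjective; PT-stable includes $\dim\coker s = 0$), so your ``further finite sequence of modifications'' to enter $\fM^\circ$ is never needed in those chambers, and your worry that this second step might undo semistability does not arise. Finally, the condition $L^k\iota_i^* = \iota_i^*$ you mention is not part of the definition of $\fM^\circ$; it cuts out the smaller substack $\fN \subset \fM^\circ$ and is irrelevant to this lemma.
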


\begin{proof}
  It is enough to show the statement for $\fM_{1,\beta_C,n}^{\circ}$.

  Let $R$ be a DVR with generic point $\eta$ and closed point $\xi$.
  Suppose we are given any family of pairs
  $\mathcal{O}_{\bar{X}_\eta}\xrightarrow{\phi_{\eta}}
  \mathcal{E}_{\eta}$ over the generic point of $R$, which is
  semi-stable with respect to $\tau$. Then one can find an $R$-flat
  coherent sheaf $\mathcal{E}_R$ extending $\mathcal{E}_{\eta}$ and an
  extension of the section of $\mathcal{E}_{\eta}$ to a section of
  $\mathcal{E}_{R}$, which gives a pair
  $\mathcal{O}_{\bar{X}_{R}}\xrightarrow{\phi_R} \mathcal{E}_R$ in
  $\fM_{1,\beta_C,n}$.

  We will modify this pair so that fiber over $\xi$ lies in
  $\fM_{1,\beta_C,n}^{\circ}$. Indeed, if it isn't already the case,
  then $\mathcal{F} \coloneqq \coker(\phi_R)|_{\xi}$ has
  one-dimensional support along. We can find a quotient
  $\mathcal{F}\to \mathcal{F}'$, such that $\mathcal{F}'|_{\eta} = 0$,
  and such that the kernel has zero-dimensional support along
  $\xi$.Let $\mathcal{E}_R'$ be the kernel of the composition
  $\mathcal{E}_R\to \mathcal{F}\to \mathcal{F}'$. Then the pair
  $\mathcal{O}_R\to \mathcal{E}_R'$ lies in
  $\fM_{1,\beta_C,n}^{\circ}$.
\end{proof}




\subsection{Enumerative invariants}
\label{sec:enumerative-invariants}

\subsubsection{}

Recall that $\sT = (\bC^\times)^3$ act on $\bar X$ by scaling with
weights denoted $t_1, t_2, t_3 \in K_\sT(\pt)$. Let
\[ \kappa \coloneqq t_1t_2t_3. \]
The goal of this subsection is to establish the following definition
and explain why it produces DT and PT vertices for the corresponding
stability chambers.

\begin{definition}
  Fix integer partitions $\lambda, \mu, \nu$. For stability conditions
  $\tau$ on $\fN_{(\lambda,\mu,\nu),n}^\pl$ with no strictly
  semistables, let
  \[ \sN_{(\lambda,\mu,\nu),n}(\tau) \coloneqq \chi\left(\fN_{(\lambda,\mu,\nu),n}^{\sst}(\tau), \hat\cO^\vir\right) \in K_\sT(\pt)_\loc \]
  where $\cO^\vir$ is defined using the symmetric obstruction theory
  of Proposition~\ref{prop:obstruction-theory-rank-1}, and the
  symmetrized $\hat\cO^\vir$ is well-defined by
  Lemma~\ref{lem:obstruction-theory-square-root}. For the definition
  of (symmetric) obstruction theories on Artin stacks, see
  \S\ref{sec:obstruction-theory-definitions}.

  By Lemma~\ref{lem:obstruction-theory-rank-1-is-vertex}, the series
  $\sum_n Q^n \sN_{(\lambda,\mu,\nu),n}(\tau^\pm)$ are exactly the
  DT and PT vertices \eqref{eq:DT-and-PT-vertices} of interest.
\end{definition}

\subsubsection{}

\begin{proposition} \label{prop:obstruction-theory-rank-1}
  The moduli stack $\fN_{(\lambda,\mu,\nu),n}^\pl$ has an obstruction
  theory given by $(\bF[1])^\vee$, for
  \begin{equation} \label{eq:obstruction-theory-rank-1}
    \bF \coloneqq R\pi_*\cExt(\scI, \scI(-D)) \in D^b\cat{Coh}_\sT(\fN_{(\lambda,\mu,\nu),n}^\pl)
  \end{equation}
  where $\scI$ is the universal family on the source of
  $\pi\colon \fN_{(\lambda,\mu,\nu),n}^\pl \times \bar X \to \fN_{(\lambda,\mu,\nu),n}^\pl$.
\end{proposition}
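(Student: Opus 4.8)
The plan is to build the obstruction theory by the Illusie/Huybrechts--Thomas recipe: produce the map $\phi\colon (\bF[1])^\vee \to \bL_{\fN^\pl_{(\lambda,\mu,\nu),n}}$ from the Atiyah class of the universal complex $\scI$, and verify the cohomological conditions of Definition~\ref{def:obstruction-theories} by comparing with the deformation theory of the pairs parametrised by $\fN^\pl$. A preliminary reduction: by the definition in \S\ref{sec:def_moduli_subspaces}, $\fN^\pl_{(\lambda,\mu,\nu),n}$ is the fibre of $\ev\colon \fN^\pl_{\beta_C,n} \to \Hilb^\circ(D)$ over the point $(\lambda,\mu,\nu)$ of the \emph{smooth} scheme $\Hilb^\circ(D) = \prod_i \Hilb(D_i \cap X)$. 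So it is enough to construct the \emph{relative} obstruction theory for $\ev$ given by the family version $\bigl(R\pi_*\cExt(\scI,\scI(-D))[1]\bigr)^\vee$ and restrict it to the fibre; since the base is smooth, this restriction is again an obstruction theory in the required sense (cf.\ the standard fibrewise specialisation used in \cite[\S4]{Maulik2006}, \cite[\S4]{Pandharipande2009a}).

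For the construction of $\phi$ itself: with $\pi\colon \bar X \times \fN^\pl_{\beta_C,n} \to \fN^\pl_{\beta_C,n}$ and $\scI$ the universal object, the truncated relative Atiyah class of $\scI$ is a class in $\Ext^1(\scI,\scI\otimes \pi^*\bL_{\fN^\pl_{\beta_C,n}})$. Tensoring the exact triangle $\cO_{\bar X}(-D)\to\cO_{\bar X}\to\cO_D$ with $R\cHom(\scI,\scI)$ shows that the component of this class recording the restrictions $\iota_i^*\scI$ --- which are locally constant along $\ev$ --- is killed in $\Ext^1(\scI,\scI\otimes^{L}\cO_D \otimes \pi^*\bL)$, so the Kodaira--Spencer map of $\ev$ lifts to $\Ext^1(\scI,\scI(-D)\otimes\pi^*\bL_{\ev})$. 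Adjunction and Grothendieck--Serre duality along the smooth proper morphism $\pi$ of relative dimension $3$, with $\omega_\pi\simeq\pi^*K_{\bar X}[3]$, then turn this into the desired morphism $\phi\colon (\bF[1])^\vee \to \bL_{\ev}$. Here two elementary facts about the geometry of \S\ref{sec:DT-PT-threefold-geometry} enter: $H^0(\bar X,\cO_{\bar X}(-D)) = 0$, which forces $\Hom_{\bar X}(I,I(-D)) = 0$ for the rank-one objects $I$ and is precisely why no trace-free/determinant correction is needed (so that $\cExt(\scI,\scI(-D))$ appears untwisted); and $K_{\bar X}\otimes\cO_{\bar X}(2D)\cong\cO_{\bar X}$, which makes $R\cHom(\scI,\scI(-D))$ Serre-self-dual up to a shift by $3$ and the equivariant character $\kappa = t_1t_2t_3$, so that $\bF^\vee\simeq\kappa\,\bF[3]$ --- this is also what will make $(\bF[1])^\vee$ symmetric of weight $\kappa$ later.

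The remaining, and genuinely laborious, step is to check that $h^{\ge 0}(\phi)$ is an isomorphism and $h^{-1}(\phi)$ is surjective. Writing $I = [\cO_{\bar X}\xrightarrow{s}\cE]$, this is the identification of $h^0$ and $h^{-1}$ of $(\bF[1])^\vee$ with the truncated cotangent complex of the moduli of such pairs with $\cO_{\bar X}$ rigidified and restriction to $D$ fixed: first-order deformations of the pair form $\Ext^1_{\bar X}(I,I(-D))$ and their obstructions lie in $\Ext^2_{\bar X}(I,I(-D))$. Over the open stable locus this is essentially automatic, because there $(\bF[1])^\vee$ is perfect of amplitude $[-1,1]$ (the argument recalled after Definition~\ref{def:obstruction-theories}), so only the $h^{-1}$-surjectivity remains and is the usual ``obstructions live in $\Ext^2$''. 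Over the whole Artin stack $\fN^\pl$ one must additionally match $h^1((\bF[1])^\vee) = \Hom_{\bar X}(I,I(-D))^\vee$ with $h^1(\bL_{\fN^\pl})$, i.e.\ with infinitesimal automorphisms of the rigidified pair at the strictly semistable points. I expect this last point --- and, more generally, running the pair-vs-complex deformation-theory comparison uniformly over non-stable points --- to be the main obstacle; I would handle it by reducing to the deformation theory of objects of $\cat A$ established in \cite[\S3]{toda_dtpt} and then invoking the relative DT and PT obstruction theories of \cite[\S4]{Maulik2006} and \cite[\S4]{Pandharipande2009a} (resp.\ the pair obstruction theory of \cite{Pandharipande2009}), which apply here once the boundary condition has been repackaged as the twist by $\cO_{\bar X}(-D)$.
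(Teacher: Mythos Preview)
Your outline is essentially correct and is in fact the alternative route the paper itself sketches in the Remark immediately following the proof (``directly twisting the truncated Atiyah class by $\cO_{\bar X}(-D)$''). The paper's actual argument, however, takes a slightly different and cleaner path that sidesteps precisely the difficulty you flag.

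Rather than lifting the Atiyah class by hand and then verifying the cohomological conditions pointwise, the paper constructs two \emph{absolute} obstruction theories---one on $\fN_{\beta_C,n}$ via the Atiyah class of $\scI$ (citing \cite{kuhn_ac_2023} for Atiyah classes on Artin stacks of perfect complexes), and one on $\fHilb^\circ(D)$ via the Atiyah class of $\scI|_D$---and observes that naturality of the Atiyah class under restriction makes them fit into a commutative square over the map $\ev$. The relative obstruction theory is then the cone $\cone(\Xi)$ of the induced comparison map $\Xi$; the obstruction-theory axioms for the cone follow formally from those of the two absolute theories. The explicit identification of this cone with $R\pi_*\cExt(\scI,\scI(-D))$ is exactly your adjunction argument using $\cO(-D)\to\cO\to\cO_D$. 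In particular, the $h^1$-matching at strictly semistable points that you rightly identify as the ``main obstacle'' in a direct approach is never confronted: it is absorbed into the cone construction.

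One correction: your assertion that $H^0(\bar X,\cO_{\bar X}(-D))=0$ forces $\Hom_{\bar X}(I,I(-D))=0$ for all rank-one $I$ is false in general. As the paper notes in \eqref{eq:obstruction-theory-rank-1-LES}, this group is the automorphisms of $I$ that restrict to the identity on $I|_D$, and it is nonzero exactly at strictly semistable points (you yourself return to this later). The genuine reason no traceless correction is needed is that the restriction map $\Hom_{\bar X}(I,I)\to\Hom_D(I|_D,I|_D)$ already carries off the scalar automorphisms; equivalently, the framing at $D$ plays the role of the trace.
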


To be precise, a universal family
$\scI \in D^b\cat{Coh}_\sT(\fM_{\rank=1} \times \bar X)$ certainly exists,
and it descends to a universal family on $\fM_{\rank=1}^\pl$ after a
twist by the weight-$1$ generator of the scaling $\bC^\times$, by
Lemma~\ref{lem:rigidification-of-dim-1-pairs}. Restriction to
$\fN_{(\lambda,\mu,\nu),n}^\pl$ produces the universal family $\scI$.

For any $I \in D^b\cat{Coh}(\bar X)$, relative Serre duality applied
to $R\pi_*\cExt$, along with the $\sT$-equivariant identification
$\cK_{\bar X} \cong \kappa \otimes \cO_{\bar X}(-2D)$, shows that
\[ \Ext_{\bar X}^*(I, I(-D))^\vee \cong \Ext_{\bar X}^{3-*}(I(-D), I \otimes \cK_{\bar X}) = \kappa \otimes \Ext_{\bar X}^{3-*}(I, I(-D)). \]
Hence $\bF$ defines a {\it symmetric} obstruction theory.

\subsubsection{}

\begin{proof}[Proof of Proposition~\ref{prop:obstruction-theory-rank-1}.]
  Recall that
  $\fN_{\beta_C,n} \subset \fM_{1,\beta_C,n}^\circ \subset \fM_{1,\beta_C,n}$
  are open, and $\fM_{1,\beta_C,n}$ is an open substack of a
  moduli stack of perfect complexes on $\bar{X}$ by \cite[\S
  6.3]{toda_dtpt}. By \cite[Theorem 1.4]{kuhn_ac_2023}, the Atiyah
  class of the universal complex gives an obstruction theory
  \[ R\pi_*\cExt(\scI, \scI)^{\vee}[-1] \to \bL_{\fN_{\beta,n}}. \]
  In the same way, the obstruction theory on $\fHilb^{\circ}(D)$ is
  given by the Atiyah class of the universal sheaf. Since the Atiyah
  class is compatible with pullbacks, the evaluation map induces a
  commutative diagram
  \begin{equation} \label{eq:obstruction-theory-rank-1-triangle}
    \begin{tikzcd}
      \ev^*R\pi_*\cExt(\scI|_D, \scI|_D)^{\vee}[-1] \ar{r}{\Xi} \ar{d} & R\pi_*\cExt(\scI, \scI)^{\vee}[-1] \ar{d} \ar[dotted]{r} & ?? \ar[dotted]{r}{[1]} \ar[dotted]{d} & {} \\
      \ev^*\bL_{\fHilb^{\circ}(D)} \ar{r} & \bL_{\fN_{\beta,n}} \ar{r} & \bL_{\fN_{\beta,n}/\fHilb(D)} \ar{r}{[1]} & {}.
    \end{tikzcd}
  \end{equation}
  By standard considerations, $(\bF[1])^\vee \coloneqq \cone(\Xi)$
  fills in $??$ and the dotted arrows and gives a relative obstruction
  theory for $\fN_{\beta,n}$ over $\fHilb^{\circ}(D)$, which pulls
  back to a relative obstruction theory for $\fN_{\beta,n}^{\pl}$ over
  $\Hilb^{\circ}(D)$. To determine the explicit formula
  \eqref{eq:obstruction-theory-rank-1} for $\bF$, we need to
  determine the co-cone for the restriction morphism
  \[ R\pi_*\cExt(\scI, \scI)\to R\pi_*\cExt(\scI\big|_D, \scI\big|_D). \]
  By adjunction, this map is identified with the result of applying
  $R\pi_*\cExt(\scI,-)$ to the co-unit of adjunction
  $\scI\to \iota_*\iota^*\scI$. Thus, the co-cone is precisely
  $R\pi_*\cExt(\scI, \scI(-D))$, as claimed.
\end{proof}

\subsubsection{}

\begin{remark}
  Alternatively, it should be possible to construct the obstruction
  theory \eqref{eq:obstruction-theory-rank-1} by directly
  twisting the construction \cite{Huybrechts2010} of the truncated
  Atiyah class by $\cO_{\bar X}(-D)$ to disallow deformations along
  $D$. More generally, we expect that the technology of
  \cite{Huybrechts1995,Huybrechts2010,Sala2012} can be combined to
  show that enforcing a framing isomorphism $L\iota^*I = F$ for a
  given $F$ changes the obstruction theory from
  $\Ext_{\bar X}(I, I)_0$ into the hyper-Ext group
  $\Ext_{\bar X}(I, I \to F)$. For us, in full agreement with
  Proposition~\ref{prop:obstruction-theory-rank-1},
  \[ \Ext_{\bar X}(I, I \to I|_D) \cong \Ext_{\bar X}(I, I(-D)). \]
  The presence of the framing $F$ removes the need for the
  traceless-ness condition.
\end{remark}

\subsubsection{}

The cohomology long exact sequence of the top row in
\eqref{eq:obstruction-theory-rank-1-triangle} begins as
\begin{equation} \label{eq:obstruction-theory-rank-1-LES}
  0 \to \Hom_{\bar X}(I, I(-D)) \to \Hom_{\bar X}(I, I) \to \Hom_D(I|_D, I|_D) \to \cdots.
\end{equation}
Clearly the first term $\Hom_{\bar X}(I, I(-D))$ is the group of
automorphisms of $I$ which are the identity on $I|_D$. When $I$ is a
stable object, the latter two terms here consist only of scalar
automorphisms and are identified with each other, so
$\Hom_{\bar X}(I, I(-D)) = 0$ as expected. Then Serre duality gives
$\Ext_{\bar X}^3(I, I(-D)) = 0$ as well. Hence
Proposition~\ref{prop:obstruction-theory-rank-1} induces a {\it
  perfect} obstruction theory on stable loci of
$\fN^\pl_{(\lambda,\mu,\nu),n}$.

\subsubsection{}

\begin{lemma} \label{lem:obstruction-theory-rank-1-is-vertex}
  Let $I \in \fN^{\sst}_{(\lambda,\mu,\nu),n}(\tau^\pm)$ be a
  $\sT$-fixed point. Then the virtual tangent space
  \[ T^{\vir}_I \fN^{\sst}_{(\lambda,\mu,\nu),n}(\tau^\pm) \]
  agrees with the vertex contribution $\mathsf{V}_\alpha$ of \cite[\S
  4.9]{Maulik2006} or \cite[\S 4.6]{Pandharipande2009a}.
\end{lemma}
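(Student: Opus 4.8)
The plan is to reduce the statement, chart by chart on the standard toric cover of $\bar X$, to the explicit local vertex computations of \cite[\S 4]{Maulik2006} (for $\tau^-$) and \cite[\S 4]{Pandharipande2009a} (for $\tau^+$). By Lemma~\ref{setup:lemma_wall_crossing_moduli} the semistable locus $\fN^{\sst}_{(\lambda,\mu,\nu),n}(\tau^\pm)$ is the DT or PT moduli scheme $M_{(\lambda,\mu,\nu),n}$ of \eqref{eq:DT-and-PT-moduli-spaces}, and the geometry of \S\ref{sec:DT-PT-threefold-geometry} is taken from \cite{Maulik2011} precisely so that this reduction isolates a single vertex.

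First I would unwind what Proposition~\ref{prop:obstruction-theory-rank-1} gives at a $\sT$-fixed point $I$. The obstruction theory is $(\bF[1])^\vee$, so the virtual tangent complex is $\bF[1]$ and $T^{\vir}_I = [\bF[1]|_I] = -[\bF|_I] = -\chi_{\bar X}(I, I(-D)) \in K_\sT(\pt)$, a genuine finite-dimensional virtual $\sT$-representation since $\bar X$ is proper. From the long exact sequence \eqref{eq:obstruction-theory-rank-1-LES} and the discussion after it, $\Hom_{\bar X}(I, I(-D)) = \Ext^3_{\bar X}(I, I(-D)) = 0$ for stable $I$, so $T^{\vir}_I = \Ext^1_{\bar X}(I, I(-D)) - \Ext^2_{\bar X}(I, I(-D))$. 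I would also observe that $R\Gamma(\bar X, \cO_{\bar X}(-D)) = 0$ because $\cO_{\bar X}(-D) \cong \cO_{\bP^1}(-1)^{\boxtimes 3}$, so the trace summand of $\bF$ vanishes and there is no discrepancy between $\chi_{\bar X}(I, I(-D))$ and the traceless Ext appearing in \cite{Maulik2006, Pandharipande2009a}.

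Second, I would run the \v Cech computation of $\chi_{\bar X}(I, I(-D))$ for the affine cover of $\bar X = (\bP^1)^3$ by its eight $\sT$-invariant charts $\cong \bC^3$, exactly as in \cite[\S 4.7--4.9]{Maulik2006} (resp.\ \cite[\S 4.6]{Pandharipande2009a} for PT): the \v Cech spectral sequence writes $\chi_{\bar X}(I, I(-D))$ as an alternating sum of local equivariant Euler characteristics over charts, double intersections (``edges''), and higher intersections. The hypotheses do their work here: the only chart on which $I$ is not pulled back from $D$ --- hence not determined by $(\lambda,\mu,\nu)$ --- is $\bar X \setminus D \cong \bC^3$, on which $\cO_{\bar X}(-D)$ is moreover trivial, while every term of the spectral sequence attached to a chart, edge, or point meeting $D$ either vanishes after the twist by $\cO(-D)$ or is cancelled against another such term once the prescribed restrictions $\iota_i^* I \cong (\lambda,\mu,\nu)$ are used. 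Concretely, the edge factors $\mathsf E_{\alpha\beta}$ and the contributions of the vertices at infinity in the DT/PT ``redistribution'' of \cite{Maulik2006, Pandharipande2009a} are exactly what disappears on passing from $\chi_{\bar X}(I, I)$, with its free boundary, to $\chi_{\bar X}(I, I(-D))$, with the boundary rigidified, so that only the vertex term at the origin of $\bC^3$ survives.

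Finally I would identify that surviving term with $\mathsf V_\alpha$. On $\bar X \setminus D \cong \bC^3$ one has $I|_{\bC^3} \cong \cI_Z$ for the monomial ideal $Z$ with legs $\lambda,\mu,\nu$ along the coordinate axes, and the residual contribution is precisely the regularized local character $\chi_{\bC^3}(\cO,\cO) - \chi_{\bC^3}(\cI_Z, \cI_Z)$; the leg subtractions that \cite{Maulik2006, Pandharipande2009a} insert by hand to make the sums over infinite legs converge are here supplied automatically by the (finite, well-defined) contributions of the charts adjacent to $\bC^3$. Matching with the closed formulas for $\mathsf V_\alpha$ in \cite[\S 4.9]{Maulik2006} and \cite[\S 4.6]{Pandharipande2009a} --- and, in the PT case, matching the entire two-term complex rather than just its class $h^1 - h^2$, since PT fixed loci can be positive-dimensional --- completes the proof. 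I expect this last step to be the main obstacle: one must verify that the regularization implicitly supplied by the compact geometry $\bar X$ together with the $\cO(-D)$-twist reproduces sign-for-sign and leg-correction-for-leg-correction the ad hoc regularization built into the vertex formulas of \cite{Maulik2006} and \cite{Pandharipande2009a}, rather than some other natural but inequivalent choice, and that the $\sT$-weights of the ``boundary'' charts cancel exactly as claimed.
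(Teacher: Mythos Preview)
Your approach is correct and is essentially the ``alternative'' \v Cech argument the paper also sketches, but the paper's primary route is more economical and directly dissolves the concern you flag at the end. Instead of running the eight-chart \v Cech computation with the $\cO(-D)$ twist from scratch, the paper reads off from the distinguished triangle \eqref{eq:obstruction-theory-rank-1-triangle} the K-theory identity
\[
  T^{\vir}_I \fN^{\pl}_{(\lambda,\mu,\nu),n}
  \;=\;
  T^{\vir}_I \fN^{\pl}_{\beta,n} \;-\; \sum_{i=1}^3 T_{p_i}\Hilb(D_i).
\]
The first term on the right is the ordinary (untwisted) virtual tangent space, whose vertex/edge decomposition is already computed in \cite{Maulik2006,Pandharipande2009a}; for the specific geometry $\bar X = (\bP^1)^3$ the edge contributions collapse to exactly $F_i \coloneqq T_{p_i}\Hilb(D_i)$, so the subtraction cancels them and leaves the vertex $\mathsf V_\alpha$. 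This sidesteps any need to verify that the $\cO(-D)$-regularization agrees sign-for-sign with the ad hoc one in the vertex formulas: both sides are being compared to the \emph{same} untwisted quantity. Your direct computation buys a more self-contained argument, independent of quoting the untwisted decomposition from the literature, at the cost of the cancellation bookkeeping you anticipated; the paper's shortcut trades that bookkeeping for a one-line appeal to the exact triangle already constructed in Proposition~\ref{prop:obstruction-theory-rank-1}.
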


\begin{proof}
  The following computation is independent of stability condition and
  applies to the whole stack. For $\bar X$, the original vertex
  contribution is by definition the Laurent polynomial $V$ in
  \begin{equation} \label{eq:Tvir-old-vertex-edge-redistribution}
    T^\vir_I\fN^\pl_{\beta,n} = V + \sum_{i=1}^3 \left(\frac{F_i}{1 - t_i} + \frac{F_i}{1 - t_i^{-1}}\right) = V + F_1 + F_2 + F_3
  \end{equation}
  where $F_i = T_{p_i}\Hilb(D_i) \in K_\sT(\pt)$; the terms in the
  brackets are edge contributions and correspond to deformations of
  $I|_D$. (Note that the equalities in
  \eqref{eq:Tvir-old-vertex-edge-redistribution} take place in
  $K_\sT(\pt)_{\loc}$ but clearly both the left and right hand sides
  lie in $K_\sT(\pt)$.) On the other hand, the top row in
  \eqref{eq:obstruction-theory-rank-1-triangle} gives
  \[ T^\vir_I\fN^\pl_{(\lambda,\mu,\nu),n} = T^\vir\fN^\pl_{\beta,n} - \sum_i T_{p_i}\Hilb(D_i) \]
  in K-theory, thereby removing the edge contributions. Alternatively,
  the same \v Cech cohomology computation which gives
  \eqref{eq:Tvir-old-vertex-edge-redistribution} also gives
  \begin{equation} \label{eq:Tvir-new-vertex-edge-redistribution}
    T^\vir_I\fN^\pl_{(\lambda,\mu,\nu),n} = V + \sum_{i=1}^3 \left(\frac{F_i}{1 - t_i} + \frac{t_i^{-1} F_i}{1 - t_i^{-1}}\right) = V
  \end{equation}
  where now the edge terms in the brackets are zero. Note that the
  twist by $D$ does not affect the $\bC^3$ chart where terms in $V$
  originate, so $V$ is unchanged between
  \eqref{eq:Tvir-old-vertex-edge-redistribution} and
  \eqref{eq:Tvir-new-vertex-edge-redistribution}.
\end{proof}

\subsubsection{}

\begin{lemma}[{\cite[\S 6]{Nekrasov2016}}] \label{lem:obstruction-theory-square-root}
  $\det \bF$ admits a square root, after a base change
  $\kappa \leadsto \kappa^{1/2}$.
\end{lemma}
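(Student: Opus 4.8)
The plan is to restate the claim as the existence of the Nekrasov--Okounkov twist — a square root of the virtual canonical bundle $\cK_\vir = \det\bF$ of the symmetric obstruction theory $(\bF[1])^\vee$ — and to produce such a square root from the fact that $\bar X$ itself carries an equivariant square root of its canonical bundle after adjoining $\kappa^{1/2}$. Indeed, the identification $\cK_{\bar X}\cong\kappa\otimes\cO_{\bar X}(-2D)$ used just above shows that $\theta \coloneqq \cO_{\bar X}(-D)\otimes\kappa^{1/2}$ satisfies $\theta^{\otimes 2}\cong\cK_{\bar X}$ once $\kappa\leadsto\kappa^{1/2}$. Since $\cExt(\scI,\scI(-D)) \cong \cExt(\scI,\scI)\otimes p_{\bar X}^*\cO_{\bar X}(-D)$ (twisting by a line bundle), the first step is to re-twist by the \emph{whole} of $\theta$:
\[ \bF \cong \kappa^{-1/2}\otimes\cC, \qquad \cC \coloneqq R\pi_*\bigl(\cExt(\scI,\scI)\otimes p_{\bar X}^*\theta\bigr). \]
As $\bF$ has virtual rank $0$ — the obstruction theory has virtual dimension $0$ by Lemma~\ref{lem:obstruction-theory-rank-1-is-vertex}, or directly by Riemann--Roch since $\chi(\bar X,\cO_{\bar X}(-D))=0$ — so does $\cC$, and therefore $\det\bF = \det\cC$. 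It now suffices to find a square root of $\det\cC$.

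The second step is to note that $\cC$ is \emph{self-dual up to the shift $[3]$}: the Yoneda composition makes $\cExt(\scI,\scI)$ a symmetric self-dual complex with trace map to $\cO$, and $\theta$ pairs with itself into $\cK_{\bar X}$, so relative Grothendieck--Serre duality along the proper smooth map $\pi$ of relative dimension $3$ supplies a nondegenerate symmetric isomorphism $\cC\cong\cC^\vee[3]$. Thus $\cC$ is a perfect complex carrying an ``odd symmetric'' self-duality, and a square root of $\det\cC$ is exactly the Pfaffian line bundle attached to such a structure. (Equivalently, $\cC$ is the virtual tangent complex of $\fN_{(\lambda,\mu,\nu),n}^\pl$ re-twisted by $\kappa^{1/2}$, and one wants a square root of its determinant.)

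This last step is the main obstacle, and I would not reprove it: it is precisely the content of \cite[\S 6]{Nekrasov2016}. The mechanism there is to exhibit a \emph{polarization} of the self-dual complex, i.e. a $K$-theory class $T^{1/2}$ with $[\cC] = [T^{1/2}] - [T^{1/2}]^\vee$, built from the distinguished affine chart $\bar X\setminus D \cong \bC^3$ in which the universal complex $\scI$ is supported, together with the arms-and-legs combinatorics of the box configurations labelling the $\sT$-fixed points; one then takes $(\det\cC)^{1/2} \coloneqq \det(T^{1/2})$. The subtle point handled there is compatibility of these pointwise choices over the $\sT$-fixed loci, which for $M = \PT$ with three nontrivial legs are genuinely positive-dimensional; over localized equivariant $K$-theory this is all that is needed to make $\hat\cO^\vir$ well-defined. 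Combining the three steps yields a square root of $\det\bF$ after $\kappa\leadsto\kappa^{1/2}$, as claimed.
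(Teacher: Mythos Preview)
Your route is genuinely different from the paper's (and from \cite[\S 6]{Nekrasov2016}, which the paper simply specializes). The paper does not use self-duality of a single complex or Pfaffians. Instead it passes to the \emph{product} $\fN^\pl_{(\lambda,\mu,\nu),n}\times\fN^\pl_{(\lambda,\mu,\nu),n}$, forms the line bundle $\bL\coloneqq\det R\pi_*\cExt(\scI_1,\scI_2(-D))$, and observes that the swap $(12)$ fixes $\bL$: by Serre duality $(12)^*\bL=\kappa^{\rank}\bL=\bL$ since the rank is zero. The argument of \cite[Lemma~6.1]{Nekrasov2016} then says that $(12)$-invariance of $\bL$ forces $c_1(\bL|_\Delta)=c_1(\det\bF)$ to be even, hence a global line-bundle square root exists. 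This is quick, global, and needs no fixed-point analysis.

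Your first two steps (rewriting $\bF=\kappa^{-1/2}\cC$ with $\cC\cong\cC^\vee[3]$) are fine, but your third step mischaracterizes the content of \cite[\S 6]{Nekrasov2016}: that section is precisely the swap-on-the-product argument just described, \emph{not} a construction of polarizations $T^{1/2}$ via box combinatorics at $\sT$-fixed points. What you describe is closer to how one explicitly computes $\hat\cO^\vir$ at fixed points elsewhere in \cite{Nekrasov2016}, but it is not the abstract existence proof. Moreover, a choice of polarization fixed-locus by fixed-locus yields a square root only in $K_\sT(\pt)_\loc$ and requires checking compatibility across components; the paper's statement and its uses (e.g.\ the symmetrized pullback of Theorem~\ref{thm:symmetrized-pullback-summary} and the master-space computations) want an honest line bundle $\cK_\vir^{1/2}$ on the moduli space. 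So as written, your step~3 appeals to the wrong argument and proves something weaker. If you want to salvage the self-dual/Pfaffian viewpoint, you would need an independent argument that the odd self-duality $\cC\cong\cC^\vee[3]$ produces a global Pfaffian line; but this is essentially equivalent in difficulty to the swap argument, and the latter is what \cite{Nekrasov2016} actually supplies.
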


Throughout, we implicitly take a double cover of $\sT$ so that
$\kappa^{1/2}$ exists. The Nekrasov--Okounkov symmetrization
$\hat\cO^\vir \coloneqq \cO^\vir \otimes (\det \bF)^{-1/2}$ is
therefore well-defined on any locus where $\cO^\vir$ can be
constructed.

\begin{proof}[Proof sketch.]
  On
  $\fN^\pl_{(\lambda,\mu,\nu),n} \times \fN^\pl_{(\lambda,\mu,\nu),n}
  \times \bar X$, let $\scI_1$ and $\scI_2$ be universal
  families for the first and second factors, and consider
  \[ \bL \coloneqq \det R\pi_*\cExt(\scI_1, \scI_2(-D)) \]
  where $\pi$ is projection onto the first two factors. The desired
  $\det \bF$ is the restriction of $\bL$ to the diagonal $\Delta$. By
  the argument in \cite[Lemma 6.1]{Nekrasov2016}, it suffices to show
  that $c_1(\bL\big|_\Delta)$ is divisible by two. This follows from
  \[ (12)^*\bL = \det R\pi_*\cExt(\scI_2, \scI_1(-D)) = \kappa^{\rank} R\pi_*\cExt(\scI_1, \scI_2(-D)) = \bL, \]
  where $(12)$ swaps the two copies of
  $\fN^\pl_{(\lambda,\mu,\nu),n}$, and $\rank = 0$ is the rank of
  $R\pi_*\cExt(\scI_2, \scI_1(-D))$.
\end{proof}

\subsubsection{}
\label{sec:moduli-stack-of-dim-0-sheaves}

For completeness, we mention here that the moduli stacks $\fQ_n$
are well-known to carry a symmetric obstruction theory given by
$(\bF[1])^\vee$ where
\begin{equation} \label{eq:obstruction-theory-rank-0}
  \bF \coloneqq R\pi_*\cExt(\scE, \scE) \in D^b\cat{Coh}_\sT(\fQ_n).
\end{equation}
Here, since Lemma~\ref{lem:rigidification-of-dim-1-pairs} is
unavailable, the universal family $\scE$ on $\fQ_n$ does not descend
to $\fQ_n^\pl$, but $\cExt(\scE, \scE)$ has trivial
$\bC^\times$-weight and therefore does descend. The symmetric
obstruction theory on $\fQ_n^\pl$ which is compatible with the one on
$\fQ_n$, in the sense of
Definition~\ref{def:smooth-and-symmetrized-pullback}, is therefore
given by $(\bF_{\perp}[1])^\vee$ where
\begin{equation} \label{eq:obstruction-theory-rank-0-pl}
  \bF_\perp \coloneqq R\pi_*\cExt(\scE, \scE)_\perp \in D^b\cat{Coh}_\sT(\fQ_n^\pl)
\end{equation}
is the ``symmetrically traceless'' version of $\bF$ in the sense of
\cite[Theorem 6.1]{Tanaka2020}. It is obtained from $\bF$ by removing a copy of $\bC$ in
$H^0$ and, by Serre duality, a copy of $\bC \otimes \kappa$ in $H^3$.

Viewing objects in $\fQ_n$ or $\fQ_n^\pl$ as pairs $[0 \to \cE]$, and
recalling that they have zero intersection with $D \subset \bar X$,
\eqref{eq:obstruction-theory-rank-0} is exactly the same formula as
\eqref{eq:obstruction-theory-rank-1}. For instance,
Lemma~\ref{lem:obstruction-theory-square-root} applies equally well
for $\fQ_n^\pl$.

Note that, unlike $\fN^{\sst}_{(\lambda,\mu,\nu),m}(\tau^\pm)$, the
moduli stacks $\fQ^{\sst}_m(\tau_\xi)$ have strictly semistable
objects for any $\xi$ and any $m > 1$, and so there is no direct
analogue of the enumerative invariants
$\fN_{(\lambda,\mu,\nu),m}(\tau^\pm)$. One would have to follow the
strategy of \cite[Theorem 5.7]{joyce_wc_2021} \cite[\S 6]{Liu23} in
order to define {\it semistable invariants} $\sQ_m(\tau_\xi)$
intrinsic to the stack $\fQ^{\sst}_m(\tau_\xi)$.

\section{Ingredients for wall-crossing}

\subsection{Quiver-framed stacks}
\label{sec:quiver-framed-stacks}

\subsubsection{}

We first build auxiliary moduli stacks following the general strategy
outlined in \S\ref{sec:general-framing-argument}, for our moduli
stacks $\fN_{(\lambda,\mu,\nu),n}$ of pairs and $\fQ_n$ of
zero-dimensional sheaves as described in
\S\ref{sec:def_moduli_subspaces}. This involves the following framing
functors.

\begin{definition} \label{def:framing-functor}
  Let $\cO_{\bar X}(1)$ denote $\cO_{\bP^1}(1)^{\boxtimes 3}$, or any
  other ample line bundle on $\bar X$. For $k \in \bZ_{> 0}$, let
  ${}_k \fN_{(\lambda,\mu,\nu),n} \subset \fN_{(\lambda,\mu,\nu),n}$
  be the open locus of pairs $[\cO_{\bar X} \otimes L \to \cE]$ where
  $\cE$ is $k$-regular \cite[\S 1.7]{Huybrechts2010}, i.e. such that
  $H^i(\cE(k-i)) = 0$ for all $i > 0$. For $p \in \bZ_{> 0}$, define
  the functor
  \[ F_{k,p}\colon [\cO_{\bar X} \otimes L \to \cE] \mapsto \chi(\cE(k)) \oplus L^{\oplus p} \]
  which sends $\bC$-valued objects of ${}_k \fN_{(\lambda,\mu,\nu),n} \subset \fN_{(\lambda,\mu,\nu),n}$ to vector spaces.
  The sheaves $\cE$ parameterized by $\fQ_n$ are zero-dimensional and
  therefore automatically $k$-regular; we extend $F_{k,p}$ to objects
  in $\fQ_n$ as
  \[ F_{k,p}\colon [0 \to \cE] \mapsto \chi(\cE(k)) = \chi(\cE), \]
  which is independent of both $k$ and $p$. Then $F_{k,p}$ becomes a
  functor which is exact on short exact sequences involving only
  objects in $\fQ$ and ${}_k \fN_{(\lambda,\mu,\nu)}$. Set $f_{k,p}(I)
  \coloneqq \dim F_{k,p}(I)$ and note that it only depends on the
  numerical class $\cl I$.

  The term $L^{\oplus p}$ ensures that the induced map $\Hom(-, -) \to
  \Hom(F_{k,p}(-), F_{k,p}(-))$ is an inclusion on ${}_k
  \fN_{(\lambda,\mu,\nu)}$, especially when $\lambda = \mu = \nu =
  \emptyset$. For most of the paper, we fix $p = 1$, but in
  \S\ref{sec:joyce-WCF-generating-functions} we take advantage of the
  freedom in choosing $p$ in order to get a nice factorization of some
  generating series.
\end{definition}

\subsubsection{}

\begin{definition} \label{def:auxiliary-stacks}
  Let $(\lambda, \mu, \nu)$ be integer partitions, which specify
  $\beta_C = (\abs{\lambda}, \abs{\mu}, \abs{\nu})$, and let $\alpha =
  (1, -\beta_C, -n)$ be a numerical class. In the wall-crossing
  arguments of \S\ref{sec:mochizuki-WCF} and \S\ref{sec:joyce-WCF}, we
  always begin by fixing $(\lambda, \mu, \nu)$ and $n$, which
  determines the class $\alpha$; choices of parameters below will
  depend on $\alpha$. Choose $k \gg 0$ such that ${}_k\fN_{\beta_C,m}
  = \fN_{\beta_C,m}$ for all $m \le n$. Choose $p > 0$ and set
  \[ N \coloneqq f_{k,p}(\alpha). \]
  For $m \le n$ and a dimension vector $\vec d = (d_1, \ldots, d_N)$,
  let $\fN_{(\lambda,\mu,\nu),m,\vec d}^{Q(N)}$ be the moduli stack of
  triples $(I, \vec V, \vec \rho)$ where:
  \begin{itemize}
  \item $I$ is an object in ${}_k\fN_{(\lambda,\mu,\nu),m} =
    \fN_{(\lambda,\mu,\nu),m}$;
  \item $\vec V = (V_i)_{i=1}^N$ are vector spaces with
    $\dim V_i = d_i$; set $V_{N+1} \coloneqq F_{k,p}(I)$; 
  \item $\vec \rho = (\rho_i)_{i=1}^N$ are linear maps $\rho_i\colon
    V_i \to V_{i+1}$.
  \end{itemize}
  Similarly define $\fQ_{m,\vec d}^{Q(N)}$ using objects in
  ${}_k \fQ_{m,\vec d}$. In other words, these moduli stacks
  parameterize objects of the underlying moduli stack
  $\fN_{(\lambda,\mu,\nu),m}$ or $\fQ_m$ along with quiver
  representations
  \[ \begin{tikzcd}
      \overset{V_1}{\blacksquare} \ar{r} & \overset{V_2}{\blacksquare} \ar{r} & \cdots \ar{r} & \overset{V_{N-1}}{\blacksquare} \ar{r} & \overset{V_N}{\blacksquare} \ar{r} & \overset{V_{N+1} = F_{k,p}(I)}{\bigbullet}.
    \end{tikzcd} \]
  Denote this quiver by $Q(N)$. Consequently, the forgetful maps
  \begin{equation} \label{eq:framed-stack-forgetful-maps}
    \Pi_\fN\colon \fN_{(\lambda,\mu,\nu),m,\vec d}^{Q(N)} \to \fN_{(\lambda,\mu,\nu),m}, \qquad \Pi_\fQ\colon \fQ_{m,\vec d}^{Q(N)} \to \fQ_m
  \end{equation}
  are smooth with fiber $[\bigoplus_{i=1}^N \Hom(V_i, V_{i+1}) /
    \prod_{i=1}^N \GL(V_i)]$. Write $(\beta, \vec d)$ for the
  numerical class of $(I, \vec V, \vec \rho)$ where $\beta = \cl(I)$
  and $\vec d \coloneqq \dim \vec V$.
\end{definition}

\subsubsection{}
\label{sec:quiver-spaces-compactified}
The same construction works to give relative quiver spaces $\Pi_{\fM}:
\fM_{1,\beta_C,m,\vec d}^{\circ,Q(N)}\to \, \fM_{1,\beta_C,m}^{\circ}$
and $\Pi_{\fM}: \fM_{0,0,m,\vec d}^{\circ,Q(N)}\to \,
\fM_{0,0,m}^{\circ}$. The maps $\Pi_{\fM}$ base change to $\Pi_{\fN}$
over $\fN_{(\lambda,\mu,\nu), m}\subseteq \fM_{1,\beta_C,m}^{\circ}$
to $\Pi_{\fQ}$ over $\fQ_{m}\subseteq \fM_{0,0,m}$ respectively.

\subsubsection{}
\label{sec:framing-quiver-notation}

Let $\scI$ and $(\scV_i)_{i=1}^{N+1}$ denote the universal families
associated to $I$ and $(V_i)_{i=1}^{N+1}$ in \ref{def:auxiliary-stacks} respectively, on both
$\fN_{(\lambda,\mu,\nu),n,\vec d}^{Q(N)}$ and $\fQ_{n,\vec d}^{Q(N)}$.
The relative cotangent complexes $\bL_{\Pi_\fN}$ and
$\bL_{\Pi_\fQ}$ are both given by the restriction
to the diagonal of $\bF^\vee$, where
\begin{equation} \label{eq:quiver-bilinear-obstruction}
  \bF \coloneqq \bigg[\bigoplus_{i=1}^N \scV_i^\vee \boxtimes \scV_i \to \bigoplus_{i=1}^N \scV_i^\vee \boxtimes \scV_{i+1}\bigg]
\end{equation}
with second term in degree $0$. The morphism is the derivative of the
natural $\prod_{i=1}^N \GL(V_i)$ action. Note that the universal
bundles $\scV_i$ may not descend to
$\fN^{Q(N),\pl}_{(\lambda,\mu,\nu),n,\vec d}$ or
$\fQ^{Q(N),\pl}_{n,\vec d}$, but bilinear quantities like $\bF$ do.
For later use, denote the rank of $\bF$ by
\[ \chi_Q\left(\vec e, \vec f\right) \coloneqq \sum_{i=1}^N (e_i f_{i+1} - e_i f_i), \]
where $f_{N+1} \coloneqq \rank \scV_{N+1}$. Similarly let
$\chi(\gamma, \delta) \coloneqq \rank \Ext_{\bar X}(I_\gamma,
I_\delta(-D))$ be the analogous Euler pairing on the original moduli
stacks $\fN$ and $\fQ$, where $I_\gamma$ and $I_\delta$ denote any
element $I$ of numerical class $\gamma$ and $\delta$ respectively.
Later in wall-crossing formulas,
\[ c_Q(\vec e,\vec f) \coloneqq \chi_Q(\vec e,\vec f) - \chi_Q(\vec f,\vec e) = \sum_{i=1}^N (e_i f_{i+1} - f_i e_{i+1}) \]
will appear as part of the quantity $\ind$ in
Proposition~\ref{prop:master-space-relation} for the complicated
$\bC^\times$-fixed locus in the master space.

\subsubsection{}

\begin{remark} \label{rem:quiver-framing-isomorphisms}
  The length of the quiver $Q(N)$ may actually be chosen to be greater
  than $N$: there is obviously an isomorphism of stacks (with
  obstruction theory)
  \[ \fQ^{Q(N)}_{m,\vec d} \cong \fQ^{Q(N+1)}_{m,(0,\vec d)}. \]
  Similarly but less obviously, if $d_a = d_{a+1}$ for some $a$, then
  there is an isomorphism of stacks (with obstruction theory)
  \[ \fQ^{Q(N)}_{m,\vec d} \supset \{\rho_a\colon V_a \xrightarrow{\sim} V_{a+1}\} \cong \fQ^{Q(N-1)}_{m, (d_1, \ldots, d_a, d_{a+2}, \ldots, d_N)} \]
  for the open substack where $\rho_a$ is an isomorphism. One can
  check that the isomorphism $\rho_a$ splits off a two-term complex
  $[\scV_a^\vee \boxtimes \scV_a^\vee \to \scV_a^\vee \boxtimes \scV_{a+1}^\vee]$
  in \eqref{eq:quiver-bilinear-obstruction}, and this complex is
  quasi-isomorphic to $0$. 
\end{remark}

\subsection{Full flags and words}

\subsubsection{}

\begin{definition} \label{def:full-flag}
  A class $(\beta, \vec d)$ is a {\it full flag} if:
  \begin{itemize}
  \item $\beta \neq 0$ and $\vec d \neq \vec 0$;
  \item $d_1 \le 1$, and $d_N = d_{N+1} \coloneqq f_{k,p}(\beta)$, and
    $d_a \leq d_{a+1}\leq d_a +1$ for all $1 \leq a < N$.
  \end{itemize}
  Given a quiver-framed stack $\fX^{Q(N)}_{\beta,\vec d}$, let
  \[ \fX^{\fl}_{\beta, \vec d} \subset \fX^{Q(N), \pl}_{\beta,\vec d}\]
  be the open substack for which all maps in the quiver representation
  are injective. As in Remark~\ref{rem:quiver-framing-isomorphisms},
  we have a canonical isomorphism
  \[ \fX^{\fl}_{\beta, \vec d} \simeq \fX^{\fl}_{\beta, (1, 2, \ldots, d_N)}. \]
  
  In both wall-crossing approaches, the only relevant classes which
  appear are full flags with $\beta = (1, -\beta_C, -m)$ and $\beta =
  (0, 0, -m)$. At walls, a full flag splits into two smaller full
  flags. We view objects of $\fN_{(\lambda,\mu,\nu),m,\vec d}^{\fl}$
  or $\fQ_{m,\vec d}^{\fl}$ as objects $I$ in
  $\fN_{(\lambda,\mu,\nu),m}$ or $\fQ_m$ together with a full flag of
  sub-spaces in $F_{k,p}(I)$.
\end{definition}

\subsubsection{}

\begin{proposition} \label{prop:projective-bundle-pushforward}
  Let $\pi\colon \fX \to \fY$ be a $\bP^N$-bundle. Suppose both $\fX$ and
  $\fY$ have symmetric APOTs, related by symmetrized pullback. Then the
  associated virtual cycles satisfy
  \[ \chi(\hat\cO^\vir_{\fX}) = [N+1]_\kappa \cdot \chi(\hat\cO^\vir_{\fY}). \]
\end{proposition}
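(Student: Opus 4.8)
The statement is a $\bP^N$-bundle formula for symmetrized virtual structure sheaves, so the natural strategy is to reduce everything to the torus-equivariant localization formula of Theorem~\ref{thm:symmetrized-pullback-localization} and then compute the fiberwise contribution explicitly. First I would introduce an auxiliary $\bC^\times$-action scaling the fibers of the $\bP^N$-bundle $\pi\colon \fX \to \fY$: concretely, pick a one-dimensional subtorus acting on $\bP^N$ with $N+1$ isolated fixed points $p_0, \dots, p_N$ and distinct weights. Then the $\bC^\times$-fixed locus of $\fX$ is $\bigsqcup_{j=0}^N \fY_j$, where each $\fY_j \cong \fY$ is the section of $\pi$ through $p_j$. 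Since the APOT on $\fX$ is a symmetrized pullback of the one on $\fY$ along the smooth map $g = \pi$ (composed with whatever $\fM$-structure is in play), Proposition~\ref{prop:symmetrized-pullback-APOTs} and the functoriality Lemma~\ref{prop:symmetrized-pullback-functoriality} identify the restriction of the APOT on $\fX$ to each $\fY_j$ with (a symmetrized pullback of) the APOT on $\fY$, up to the Euler-class twist $\se(\Omega_{\fX/\fY}\otimes\kappa^{-1})$.

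**Key steps in order.** (1) Apply Theorem~\ref{thm:symmetrized-pullback-localization} with respect to this $\bC^\times$: this gives $\hat\cO^\vir_{\fX} = \sum_{j=0}^N (\iota_j)_* \hat\cO^\vir_{\fY_j} / \hat\se(\cN^\vir_j)$, where $\cN^\vir_j$ is the moving part of the (would-be) virtual normal bundle of $\fY_j \hookrightarrow \fX$. (2) Identify $\cN^\vir_j$: because the APOT on $\fX$ is pulled back from $\fY$, the only moving directions come from the relative tangent complex $\Omega_{\fX/\fY}^\vee$ and its symmetric dual $\kappa^{-1}\Omega_{\fX/\fY}$, restricted to $\fY_j$. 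The fiber of $\Omega_{\fX/\fY}^\vee$ at $p_j$ is $T_{p_j}\bP^N$, which decomposes under $\bC^\times$ into $N$ nonzero weights $w_{j,1}, \dots, w_{j,N}$. So $\cN^\vir_j = \bigoplus_\ell (w_{j,\ell}^{-1} - \kappa w_{j,\ell})$ as a virtual $\bC^\times$-representation (rank zero), and its symmetrized Euler class is a product of the elementary factors $\frac{(\kappa w)^{1/2}-(\kappa w)^{-1/2}}{w^{1/2}-w^{-1/2}}$ discussed after Proposition~\ref{prop:master-space-relation}. (3) Push forward to a point and recognize the sum: $\chi(\hat\cO^\vir_{\fX}) = \sum_j \chi(\hat\cO^\vir_{\fY}) \cdot \prod_\ell \frac{w_{j,\ell}^{1/2}-w_{j,\ell}^{-1/2}}{(\kappa w_{j,\ell})^{1/2}-(\kappa w_{j,\ell})^{-1/2}}$, and the weights $w_{j,\ell}$ are exactly the differences of the $\bC^\times$-weights on $\bC^{N+1}$; the combinatorial identity $\sum_{j} \prod_{\ell \neq j} \frac{1}{(\kappa)^{1/2} x_\ell/x_j \text{-type factor}} = [N+1]_\kappa$ is precisely the character of $\bP^N$ computed via Atiyah--Bott/Lefschetz with the $\hat\cO$-twist, i.e. $\chi(\bP^N, \hat\cO) = [N+1]_\kappa$ up to the sign convention \eqref{eq:quantum-integer}. (4) Factor $\chi(\hat\cO^\vir_{\fY})$ out of the sum (it is $\bC^\times$-independent, so the localization is applied only in the fiber direction) to conclude.

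**Alternative and the main obstacle.** A cleaner variant avoids introducing a new $\bC^\times$ altogether: one directly computes $R\pi_* \hat\cO^\vir_{\fX}$ using the projection formula and the identity $\hat\cO^\vir_{\fX} = \pi^*\hat\cO^\vir_{\fY} \otimes \hat\se(\text{something involving }\Omega_{\fX/\fY})^{-1}$ coming from Proposition~\ref{prop:symmetrized-pullback-APOTs}; then one is reduced to computing $R\pi_*$ of a single K-theory class on the $\bP^N$-bundle, which by the projective bundle formula in K-theory is a polynomial in the hyperplane class whose pushforward is readily evaluated. The main obstacle in either route is bookkeeping the square-root/Nekrasov--Okounkov twists: one must check that the symmetrized twist $\det(-)^{1/2}$ on $\fX$ restricts compatibly to the one on $\fY$ plus the fiberwise $\det(\Omega_{\fX/\fY})^{1/2}$ contribution, so that the leftover fiber integral is genuinely $\chi(\bP^N, \hat\cO_{\bP^N})$ and not some twisted variant; getting the sign $(-1)^{N}$ in \eqref{eq:quantum-integer} to come out correctly (this is exactly the motivation alluded to in the remark after \eqref{eq:quantum-integer}) is where care is needed. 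Once the twist is pinned down, the final combinatorial identity $\chi(\bP^N,\hat\cO) = [N+1]_\kappa$ is a standard $q$-integer computation.
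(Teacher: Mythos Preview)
Your \emph{alternative} route --- computing $R\pi_*\hat\cO^\vir_{\fX}$ via the projection formula and the relation $\cO^\vir_{\fX} = \pi^*\cO^\vir_{\fY}\otimes\se(\Omega_\pi\otimes\kappa^{-1})$ from Proposition~\ref{prop:symmetrized-pullback-APOTs} --- is precisely what the paper does. The paper then observes that the pushforward is a combination of the bundles $R^i\pi_*\Omega_\pi^j$, which are canonically trivial for a $\bP^N$-bundle (powers of the hyperplane class), so the computation reduces to a single fiber. There the answer is read off from the Hodge numbers of $\bP^N$:
\[
\chi\!\left(\bP^N,\se(\Omega_{\bP^N}\otimes\kappa^{-1})\otimes K_{\bP^N}\right)\kappa^{-N/2}
=(-1)^N\kappa^{-N/2}\sum_{i,j}(-1)^{i+j}\dim H^{i,j}(\bP^N)\,\kappa^j
=[N+1]_\kappa.
\]
So your alternative is correct and matches the paper.

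Your \emph{main} approach via an auxiliary $\bC^\times$ acting on the fibers has a genuine gap: a global fiberwise $\bC^\times$-action with isolated fixed points need not exist on a $\bP^N$-bundle. Such an action requires the structure group to reduce from $\PGL_{N+1}$ to a maximal torus, i.e.\ that the bundle is $\bP$ of a direct sum of line bundles. The $\bP^N$-bundles appearing in the paper (e.g.\ in Lemma~\ref{lem:off-wall-flag-invariants}) are projectivizations of vector bundles like $F_{k,p}(I)/\im(V_{a-1})$ with no reason to split, so you cannot in general produce the sections $\fY_j$. The paper's direct pushforward avoids this entirely: the triviality of $R^i\pi_*\Omega_\pi^j$ holds for \emph{any} $\bP^N$-bundle and needs no torus action on the fibers. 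Your identification of the square-root bookkeeping as the delicate point is apt, but the more fundamental obstacle to your localization route is the existence of the action itself.
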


In particular, let $(\beta, \vec d)$ be a full flag, and take
$\pi\colon \fX^{\fl}_{\beta,\vec d} \to \fX_\beta$ to be the forgetful
map. Supposing that it lands in a stable locus $\fX_\beta^\st \subset
\fX_\beta$, since full flag bundles are iterated projective bundles,
\begin{equation} \label{eq:full-flag-bundle-pushforward}
  \chi(\fX^{\fl}_{\beta,\vec d}, \hat\cO^\vir) = [d_N]_\kappa! \cdot \chi(\fX^{\st}_\beta, \hat\cO^\vir).
\end{equation}

\begin{proof}
  Apply Proposition~\ref{prop:symmetrized-pullback-APOTs}. It remains
  to compute the pushforward
  \[ \pi_*(\se(\Omega_\pi \otimes \kappa^{-1})) \otimes \kappa^{-\frac{\dim \pi}{2}} K_\pi \]
  where $K_\pi \coloneqq \det \Omega_\pi$ is the relative canonical.
  This is a standard computation \cite[Prop. 5.5]{Thomas2020}
  \cite[Prop. 4.6]{Liu23} which we briefly review. The pushforward is
  some combination of relative cohomology bundles
  $R^i\pi_*\Omega_\pi^j$, which are all canonically
  trivialized by powers of the hyperplane class. So it is enough to
  compute on fibers:
  \[ \chi\left(\bP^N, \se\left(\Omega_{\bP^N} \otimes \kappa^{-1}\right) \otimes K_{\bP^N}\right) \kappa^{-\frac{N}{2}} = (-1)^N \kappa^{-\frac{N}{2}} \sum_{i,j=0}^N (-1)^{i+j} \dim H^{i,j}(\bP^N) t^j = [N+1]_t \]
  using that the modules $H^{i,j}(\bP^N) \subset H^{i+j}(\cO)$ are
  $\sT$-equivariantly trivial by Hodge theory. This computation
  motivates the sign $(-1)^{N-1}$ in the definition
  \eqref{eq:quantum-integer} of $[N]_t$.
\end{proof}

\subsubsection{}
\label{sec:on-wall-invariants-Q}

Let $\vec 1$ be the framing dimension vector $(1, 1, \ldots, 1)$, and
\[ \fQ^{\fl,\st}_{m, \vec 1} \subset \fQ^{\fl}_{m, \vec 1} \]
be the open substack of triples $([0 \to \cE], \vec V, \vec \rho)$
where the composition $V_1 \to \cdots \to V_N \to F_{k,p}(\cE)
= H^0(\cE)$ induces a surjection $V_1 \otimes \cO_{\bar X}
\twoheadrightarrow \cE$. We call such triples {\it stable}.

\begin{proposition} \label{prop:on-wall-invariants-Q}
  There is an isomorphism
  \[ \fQ^{\fl,\st}_{m, \vec 1} \cong \Hilb(\bC^3, m) \]
  which identifies the (symmetrized) virtual structure sheaves.
\end{proposition}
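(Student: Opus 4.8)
The plan is to split the statement into an identification of moduli functors and a comparison of obstruction theories, and to handle the latter at the level of K-theory classes so that \cite{thomas2020ktheoretic} (or directly Proposition~\ref{prop:master_space_vir_class_comparison}) finishes the argument. For the first part I would unwind the definition of $\fQ^{\fl,\st}_{m,\vec 1}$: a point is a length-$m$ sheaf $\cE$ with $\cE|_D = 0$, hence supported on $\bar X \setminus D \cong \bC^3$, together with a chain of injections of one-dimensional spaces $V_1 \hookrightarrow \cdots \hookrightarrow V_N \hookrightarrow F_{k,p}(\cE) = H^0(\cE)$, taken up to $\bC^\times$-rigidification and the $\prod_i\GL(V_i)$-action. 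Since all intermediate maps are isomorphisms, Remark~\ref{rem:quiver-framing-isomorphisms} collapses this data to a single nonzero map $\bC \to H^0(\cE)$, i.e.\ a section $s \in H^0(\cE)$ up to scale, and the stability condition is exactly that $s$ generates $\cE$ over $\cO_{\bC^3}$. Such data is the same as a surjection $\cO_{\bC^3}\twoheadrightarrow\cE$ modulo isomorphism of the target, i.e.\ a colength-$m$ ideal; conversely the universal quotient on $\Quot_{\bC^3}(\cO,m) = \Hilb(\bC^3,m)$ produces such data with all $V_i \cong \cO$ and $\rho_i = \id$. I would check these constructions are inverse isomorphisms of functors, so in particular $\fQ^{\fl,\st}_{m,\vec 1}$ is the scheme $\Hilb(\bC^3,m)$; note that under this identification the tautological line bundle $\scV_1$ is canonically trivial, and twisting the universal sheaf by a line bundle does not change $\cExt(\scE,\scE(-D))$, so the two descriptions of the universal object agree for the purpose of obstruction theories.

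For the comparison of obstruction theories, recall that the virtual structure sheaf on $\fQ^{\fl,\st}_{m,\vec 1}$ is induced by the symmetrized pullback (Theorem~\ref{thm:symmetric-pullback}) along the smooth forgetful map $g\colon \fQ^{\fl,\st}_{m,\vec 1}\to\fQ_m^\pl$ of the symmetric obstruction theory $(\bF_\perp[1])^\vee$ on $\fQ_m^\pl$, so by the computation underlying Proposition~\ref{prop:symmetrized-pullback-APOTs} its K-theory class is $g^*(\bF_\perp[1])^\vee + \Omega_g - \kappa\,\Omega_g^\vee$; here $\Omega_g$ is the relative cotangent of the fibrewise $\bP^{m-1}$ of generating sections, of class $[\pi_*\scE]^\vee - [\cO]$ after using $\scV_1 \cong \cO$. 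On $\Hilb(\bC^3,m)$ the relevant obstruction theory is the symmetric one $(\bF_\fN[1])^\vee$ with $\bF_\fN = R\pi_*\cExt(\scI,\scI(-D))$, via the identification $\Hilb(\bC^3,m) = \fN^{\sst}_{\emptyset,\emptyset,\emptyset,m}(\tau^-)$ of Lemma~\ref{setup:lemma_wall_crossing_moduli} and Proposition~\ref{prop:obstruction-theory-rank-1}, pulled back along the open immersion into $\fN^\pl_{\emptyset,\emptyset,\emptyset,m}$. I would compare the two classes by expanding $R\pi_*\cExt(\scI,\scI(-D))$ along the universal triangle $\scI\to\cO_{\bar X}\to\scE$ into the four terms $R\pi_*\cExt(\cO,\cO(-D))$ (which vanishes since $H^\bullet(\bP^1,\cO(-1)) = 0$), $R\pi_*\cExt(\cO,\scE(-D)) \simeq R\pi_*\scE$, $R\pi_*\cExt(\scE,\cO(-D))$, and $R\pi_*\cExt(\scE,\scE(-D))$, and then rewriting the third term by relative Serre duality on $\bar X$ (using $\cK_{\bar X}\cong\kappa\otimes\cO_{\bar X}(-2D)$ and $\scE|_D = 0$) so that everything is expressed in terms of $[\bF_\perp]$, $[\pi_*\scE]$ and $\kappa$. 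The resulting identity is precisely the hypothesis \eqref{eq:APOT-k-class-matching} for the diagram with $\fN = \fN^\pl_{\emptyset,\emptyset,\emptyset,m}$, $\fM = \fQ_m^\pl$, $\bM = \fQ^{Q(N),\pl}_{m,\vec 1}$ and $Z = \fQ^{\fl,\st}_{m,\vec 1}$.

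With the two K-theory classes matched, Proposition~\ref{prop:master_space_vir_class_comparison} applied with trivial $\bC^\times$-action, so that $Z$ is itself the fixed locus (or, equivalently, \cite{thomas2020ktheoretic} together with the APOT refinement discussed in \S\ref{sec:symmetrized_pullback}), shows the induced virtual structure sheaves coincide; since the determinants of the two obstruction theories agree as part of the K-theory class, the Nekrasov--Okounkov twists $\hat\cO^\vir$ agree as well, Lemma~\ref{lem:obstruction-theory-square-root} providing the square roots. The main obstacle is the middle step: the K-theoretic bookkeeping is elementary term by term, but there are several pieces --- the role of the $(-D)$-twist in making both obstruction theories symmetric of weight exactly $\kappa$, the symmetrically-traceless normalization $\bF\rightsquigarrow\bF_\perp$, the contribution of the tautological bundle $\scV_1$, and the equivariant weights in relative Serre duality --- and they must be assembled with the correct weights for \eqref{eq:APOT-k-class-matching} to hold on the nose.
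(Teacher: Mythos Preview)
Your proposal is correct and follows essentially the same approach as the paper: establish the isomorphism of moduli by collapsing the framing data via Remark~\ref{rem:quiver-framing-isomorphisms}, then match K-theory classes of the two obstruction theories and invoke Proposition~\ref{prop:master_space_vir_class_comparison} with trivial $\bC^\times$-action. The only organizational difference is that the paper writes the $\Hilb$ virtual tangent space in its standard form $\Ext_{\bar X}(\cO,\cO) - \Ext_{\bar X}(I,I) = \Ext_{\bar X}(\cO,\cO_Z) + \Ext_{\bar X}(\cO_Z,\cO) - \Ext_{\bar X}(\cO_Z,\cO_Z)$ and compares directly, whereas you expand $R\pi_*\cExt(\scI,\scI(-D))$ along the universal triangle and use Serre duality; both routes lead to the same termwise match.
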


Here, we use Theorem~\ref{thm:symmetric-pullback} to equip the left
hand side with a symmetric APOT by symmetrized pullback along the
forgetful map to $\fQ_m$, which is clearly smooth. We view
$\Hilb(\bC^3, m)$ as the moduli scheme of pairs $[\cO_{\bar X}
  \twoheadrightarrow \cO_Z]$ where $Z \subset \bC^3 = \bar X \setminus
D$ is a zero-dimensional subscheme of length $m$ and the map is the
natural surjection.

More generally, if $((0, 0, -m), \vec e)$ is a full flag, then it is
clear that $\fQ_{m,\vec{e}}^{\fl,\st}$ is naturally isomorphic to the full
flag bundle over $\Hilb(\bC^3, m)$ associated to the vector bundle
$H^0(\cE)/\cO_{\Hilb(\bC^3, m)}$, where the quotient is taken with
respect to the universal section. Consequently
\[ \chi(\fQ_{m,\vec{e}}^{\fl,\st}, \hat\cO^\vir) = [m-1]_\kappa! \cdot \chi(\Hilb(\bC^3, m), \hat\cO^\vir). \]

\subsubsection{}

\begin{proof}[Proof of Proposition~\ref{prop:on-wall-invariants-Q}.]
  It is clear from the moduli description of $\fQ^{\fl,\st}_{m, \vec
    1}$ that the desired isomorphism exists, namely, if $N > 1$ then
  use Remark~\ref{rem:quiver-framing-isomorphisms} to shorten the
  chain of isomorphisms $V_1 \xrightarrow{\sim} \cdots
  \xrightarrow{\sim} V_N \to H^0(\cE)$ to the single map $V_1 \to
  H^0(\cE)$. To match virtual cycles, recall that virtual tangent
  space of $\Hilb(\bC^3)$, i.e. the K-theory class of the dual of the
  symmetric obstruction theory, is
  \begin{align*}
    &\Ext_{\bar X}(\cO_{\bar X}, \cO_{\bar X}) - \Ext_{\bar X}([\cO_{\bar X} \twoheadrightarrow \cO_Z], [\cO_{\bar X} \twoheadrightarrow \cO_Z]) \\
    &= \Ext_{\bar X}(\cO_{\bar X}, \cO_Z) + \Ext_{\bar X}(\cO_Z, \cO_{\bar X}) - \Ext_{\bar X}(\cO_Z, \cO_Z)
  \end{align*}
  where $\Ext_{\bar X}(-, -) \coloneqq \sum_i (-1)^i \Ext^i_{\bar X}(-, -)$. On
  the other hand, the APOT on stable loci of $\fQ^{Q(N),\pl}_{m,\vec
    1}$ comes from symmetrized pullback, with virtual tangent space
  \[ -\Ext_{\bar X}(\cO_Z, \cO_Z)_\perp + \left(\Hom_{\bar X}(\cO_{\bar X}, \cO_Z) - \bC\right) - \left(\Hom_{\bar X}(\cO_{\bar X}, \cO_Z) - \bC \right)^\vee \kappa^{-1} \]
  coming from \eqref{eq:obstruction-theory-rank-0-pl} and
  \eqref{eq:quiver-bilinear-obstruction}. The $\bC = \Ext^0_{\bar
    X}(\cO_{\bar X}, \cO_{\bar X})$ and $\bC^\vee \otimes \kappa^{-1}$
  terms remove the $\perp$ from the first term; see
  \S\ref{sec:moduli-stack-of-dim-0-sheaves}. Since $Z$ is affine,
  $\Hom_{\bar X}(\cO_{\bar X}, \cO_Z) = \Ext_{\bar X}(\cO_{\bar X},
  \cO_Z)$. So the two virtual tangent spaces match in K-theory. By
  Proposition~\ref{prop:master_space_vir_class_comparison} (with $X =
  \bM$ and a trivial $\bC^\times$-action), the virtual cycles match
  too.
\end{proof}

\subsubsection{}

For later use, we collect here some notation, which make handling the
combinatorial expressions in the upcoming wall-crossing formulas of
\S\ref{sec:mochizuki-WCF} and \S\ref{sec:joyce-WCF} a bit simpler.
By iterated wall-crossing, we obtain decompositions of the full flag dimension
vector $(1, 2, \ldots, N)$, of the form
\begin{equation} \label{eq:decomposition-of-full-flag}
  (1, 2, \ldots, N) = \sum_{i=1}^k \vec e_i
\end{equation}
where $e_{i,a} \le e_{i,a+1} \le e_{i,a}+1$ for all $i$ and
$1 \le a < N$. This makes contact with the combinatorial subject of
{\it word rearrangements}.

\begin{definition} \label{def:word-rearrangements}
  For $\ell \ge 1$ and $\vec m \in \bZ_{> 0}^\ell$ such that
  $N = |\vec m| \coloneqq \sum_{i=1}^\ell m_i$, let
  \[ R(\vec m) \coloneqq \{\text{rearrangement of the word } 1^{m_1} 2^{m_2} \cdots \ell^{m_\ell}\}. \]
  Given a word $w \in R(\vec m)$, let
  $O_i(w) \subset \{1, 2, \ldots, N\}$ be the set of indices where the
  letter $i$ occurs in $w$, and let $o_i(w) \coloneqq \min O_i(w)$ be
  the index of the first occurrence. There is a bijection between
  $R(\vec m)$ and decompositions \eqref{eq:decomposition-of-full-flag}
  such that $m_i = e_{i,N}$, where
  \[ e_{i,a} \coloneqq \#\{b \in O_i(w) : b \le a\}. \]
  Under this bijection,
  \begin{equation} \label{eq:word-symmetrized-inversion-number}
    \begin{aligned}
      c_{i,j}(w) \coloneqq c_Q(\vec e_i, \vec e_j)
      = &\;\#\left\{(a, b) \in O_i(w) \times O_j(w) : a < b\right\} \\
      &- \#\left\{(a, b) \in O_i(w) \times O_j(w) : a > b\right\}.
    \end{aligned}
  \end{equation}
  The equality is easy to check: it holds for
  $w = 1^{m_1} 2^{m_2} \cdots \ell^{m_\ell}$, and then each inversion
  reduces both sides by $2$. So $\sum_{j>i} c_Q(i, j)$ is related to
  the inversion number for $i$. It can be used to quantize the
  combinatorial identity $|R(\vec m)| = \binom{N}{\vec m}$, see
  Remark~\ref{rem:q-multinomial}.
\end{definition}

\subsection{Properness of moduli spaces}
\label{sec:properness}
We describe a general strategy for proving the valuative criteria for properness and separatedness for moduli spaces of (semi-) stable objects originally due to Langton \cite{Langton1975} and nicely explained in \cite{huybrechts_lehn_geom}. This will be applied in \S \ref{sec:mochizuki-WCF} and \S \ref{sec:joyce-WCF}. 

Throughout, we let $R$ denote an arbitrary DVR over $\bC$ with generic point $\eta$, closed point $\xi$, fraction field $K$ and uniformizer $\pi$. 
\subsubsection{}
We consider the following general (although slightly imprecise) set-up.

	Let $\fM_{\alpha}$ be a moduli stack of objects in an abelian category $\cat{A}$ with a given numerical class $\alpha$.
	We let $\tau$ be a weak stability condition on $\cat{A}$, and let $\fM_{\alpha}^{\st}(\tau)\subseteq \fM_{\alpha}^{\sst}(\tau)\subseteq \fM_{\alpha}^{\pl}$ be the moduli stacks of stable and semi-stable objects. We assume that $\cat{A}$ is noetherian and  \emph{$\tau$-artinian} \cite[3.1]{joyce_wc_2021} and that $\fM_{\alpha}^{\sst}$ is of finite type. 
\begin{remark}
For us, we will take the spaces $\fM^{\circ,Q}_{\alpha,\vec{d}}$ parametrizing pairs with quiver representations of \ref{sec:quiver-spaces-compactified}, as well as some variants of them. 
\end{remark}
\subsubsection{}
Quite generally, one has the following consequences of the properties of stable objects:
\begin{lemma}\label{lem:separated-algebraic-space}
  The stack $\fM_{\alpha}^{\st}(\tau)$ is a separated algebraic space. 
\end{lemma}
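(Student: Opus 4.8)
The plan is to prove this via the standard characterization of algebraic spaces among Artin stacks: a stack is an algebraic space precisely when it has no non-trivial automorphisms, i.e. the diagonal is a monomorphism. So first I would observe that $\fM_\alpha^\st(\tau)$ is already known to be an Artin stack locally of finite type (it is an open substack of $\fM_\alpha^\pl$), so the content is entirely about the diagonal. For the \emph{algebraic space} claim, the key input is that $\tau$-\emph{stable} objects $E$ have $\Aut(E) = \bC^\times$ (only scalars), because any endomorphism of a stable object is either zero or an isomorphism (a Schur-type argument using that $\tau$-stability forbids sub/quotient objects of equal phase that split off), hence $\End(E)$ is a finite-dimensional division algebra over $\bC$, so $\End(E) = \bC$. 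Passing to the $\bC^\times$-rigidification $\fM_\alpha^{\st}(\tau) \subset \fM_\alpha^\pl$ exactly quotients out these scalar automorphisms, so objects of the rigidified stack have trivial automorphism group; equivalently, the inertia stack is trivial, and therefore the stack is an algebraic space.

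For \emph{separatedness}, I would invoke the Langton-style valuative criterion machinery that this very subsection (\S\ref{sec:properness}) is setting up. Concretely: let $R$ be a DVR over $\bC$ with fraction field $K$, and suppose we are given two families of $\tau$-semistable objects over $\Spec R$ agreeing over $\Spec K$; we must show they agree over $\Spec R$, or rather (for the uniqueness/separatedness half) that any two extensions of a $K$-point are canonically identified. The argument is the classical one: given two flat extensions $\cE_R$, $\cE_R'$ of $\cE_K$, the identity over $K$ extends (after clearing denominators by the uniformizer $\pi$) to a non-zero map $\cE_R \to \cE_R'$; looking at its restriction to the closed fibre, stability of both central fibres forces this map to be either $0$ — impossible since it is an isomorphism generically and both are $R$-flat — or an isomorphism. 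This is where the hypotheses that $\cat A$ is noetherian and $\tau$-artinian, and that $\fM_\alpha^\sst$ is of finite type, get used: they guarantee the relevant extensions and Harder--Narasimhan/Jordan--Hölder-type finiteness needed to run the elementary-modification argument cleanly. Since the paper says this strategy "will be applied in \S\ref{sec:mochizuki-WCF} and \S\ref{sec:joyce-WCF}", here one only needs the separatedness (uniqueness) part, which is the easier direction and does not require constructing limits.

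The main obstacle, and the step I would spend the most care on, is the Schur-lemma step for weak stability conditions: with an ordinary stability condition $\End(E) = \bC$ for stable $E$ is immediate, but with a \emph{weak} stability condition one can have $\tau(F) = \tau(E) < \tau(G)$ for a stable $E$ with a proper sub-object $F$ (this is explicitly flagged right after Definition~\ref{def:stability-conditions}), so a non-zero endomorphism $f\colon E \to E$ need not a priori be injective or surjective. One must argue that $\im f$, $\ker f$, $\coker f$ all have the same numerical class as constrained by $\tau$-stability of $E$, and rule out non-trivial factorizations — typically by noting that in our situation the relevant objects $I$ of numerical class $(1,-\beta_C,-n)$ or $(0,0,-n)$, after rigidification, have $\Hom_{\bar X}(I, I(-D)) = 0$, which is exactly the computation already recorded in \eqref{eq:obstruction-theory-rank-1-LES} and the subsequent discussion. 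So in the write-up I would reduce the general claim to that concrete vanishing, making Lemma~\ref{lem:separated-algebraic-space} essentially a corollary of the obstruction-theory analysis in \S\ref{sec:enumerative-invariants} together with the Langton separatedness argument.
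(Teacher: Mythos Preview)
Your approach is essentially the paper's: Schur lemma plus rigidification for the algebraic-space claim, and the $\pi^k$-scaling trick plus Schur on the closed fibre for separatedness. A few corrections are in order, though.

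First, your ``main obstacle'' is not actually an obstacle. The Schur lemma \emph{does} hold for two $\tau$-stable objects $E,E'$ of the \emph{same} numerical class, even for a weak stability condition: if $f\colon E\to E'$ is nonzero and not an isomorphism, write $K=\ker f$, $I=\im f$, $C=\coker f$. Stability gives $\tau(K)<\tau(I)$ and $\tau(I)<\tau(C)$, hence $\tau(K)<\tau(C)$; but $\cl(K)=\alpha-\cl(I)=\cl(C)$, so $\tau(K)=\tau(C)$, a contradiction. (The degenerate cases $K=0$ or $C=0$ force $\cl(K)=0$ or $\cl(C)=0$, which in any reasonable setup means the object is zero.) The caution after Definition~\ref{def:stability-conditions} about $\tau(F)=\tau(E)<\tau(G)$ concerns subobjects whose class is \emph{different} from $\alpha$; it does not interfere here. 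So your proposed reduction to the specific vanishing $\Hom_{\bar X}(I,I(-D))=0$ is unnecessary and, worse, would make the lemma hostage to the DT/PT obstruction-theory analysis when it is meant to hold in the general setup of \S\ref{sec:properness}.

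Second, you misattribute the hypotheses: noetherianity, $\tau$-artinianity, and finite-typeness are used for the \emph{existence} half of the valuative criterion (to guarantee Harder--Narasimhan filtrations and termination of the elementary-modification procedure in \S\ref{sec:properness-strategy}), not for separatedness. The separatedness argument needs only the Schur lemma.

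Third, you skip a point the paper makes explicit: the $\pi^k$-scaling produces an isomorphism $\pi^k f$ extending over $R$, but the valuative criterion asks that the \emph{given} generic isomorphism $[f]$ extend. The passage to the rigidified stack $\fM_\alpha^\pl$ is exactly what identifies $[f]$ with $[\pi^k f]$, so this is where the extension becomes the one you want.
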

\begin{proof}
The automorphism group of a stable object is given by the scaling automorphisms, which are removed by rigidification. To see separatedness, let $\mathcal{E}_R$ and $\mathcal{E}_R'$ be two families of stable objects over the DVR $R$ and $f:\mathcal{E}_{\eta}\to \mathcal{E}'_{\eta}$ an isomorphism over the generic point. Then there exists a unique integer $k$ such that $\pi^k f$ extends to an morphism over $\Spec R$ whose restriction to the special fiber is non-zero.  In particular, $\pi^k f$ induces a non-zero morphism $\mathcal{E}_{\xi}\to \mathcal{E}'_{\xi}$ of stable objects over the closed point of $R$. Since any morphism of stable objects in the same numerical class is an isomorphism, this shows that $\pi^k f$ extends to an isomorphism of the families $\mathcal{E}_{R}$. 

Now suppose we are given two $R$-valued points $x_R,y_R$ of $\fM^{\st}_{\alpha}(\tau)$, and an isomorphism $[f]: x_{\eta} \to y_{\eta}$. We want to check that $[f]$ extends uniquely to an isomorphism $x_R\to y_R$. After possibly passing to an \'etale cover of $R$, we may assume that $x_R$ and $y_R$ are represented by genuine families of stable objects $\mathcal{E}_R$ and $\mathcal{E}'_R$, and that $[f]$ is represented by a generic isomorphism $f:\mathcal{E}_{\eta}\to \mathcal{E}'_{\eta}$. 
By what we said above, there is a unique $k$ for which $\pi^k f$ extends to an isomorphism $\mathcal{E}_R \to \mathcal{E}'_R$ (again uniquely). Passing to the projective linear stack exactly has the effect of identifying the generic isomorphisms $f$ and $\pi^k f$. Thus $[\pi^k f]$ gives the desired extension of $[f]$ to an isomorphism over $R$.   
\end{proof}

\subsubsection{}
We recall the notion of elementary modification originated in \cite{Langton1975}. 
We will use the following definition, which is somewhat imprecise in general, but makes sense when working with some abelian category of quasi-coherent sheaves or complexes with extra structure.

Let $\cat{A}$ be an abelian category of sheaves or complexes, possibly with extra structure. Suppose we are given a family of objects $A_R$ in some over the DVR $R$. Let $j:\xi \to \Spec R$ denote the inclusion of the closed point. 
\begin{definition}
	Let $A_{\xi}$ denote the restriction of $A_R$ to the closed fiber. Let $0\to B \to A_{\xi} \to C \to 0$ be an exact sequence in $\mathcal{A}$, defined either by the subobject $B$ or the quotient $C$ of $A_{\xi}$. Then we call the family of objects $A'_R$ obtained as the co-cone of the composition
	\[A_{R}\to j_*A_{\xi}\to j_*C\]
	the \emph{elementary modification of $A_R$ along $B$ or $C$} respectively.
\end{definition}

\subsubsection{}
\label{sec:properness-strategy}
We describe a strategy to check the existence part of the valuative criterion for properness for $\fM_{\alpha}^{\sst}$, and we record under which assumptions this strategy works. Let $\mathcal{E}_K$ be a semi-stable object of $\fM_{\alpha}$ over $\Spec K$. We make the first assumption

\begin{itemize}
\item[(A0)] There is an extension of $\mathcal{E}_K$ to a family $\mathcal{E}_R^{(0)}$ in $\fM_{\alpha}$, possibly after a base change on $R$ and possibly with additionally desirable properties.  
\end{itemize}

Now, suppose one has a family $\mathcal{E}_R^{(n)}$ extending $\mathcal{E}_K$ over $R$. If the central fiber $\mathcal{E}^{(n)}_{\xi}$ is semi-stable, we are finished. If not, by \cite[Theorem 3.3]{joyce_wc_2021} there exists a Harder-Narasimhan filtration of $\mathcal{E}^{(n)}_{\xi}$:
\[0 = F_0 \subset F_1 \subset \cdots \subset F_{L-1} \subset F_L = \mathcal{E}^{(n)}_{\xi},\]
such that the subquotients $\bar{F}_i := F_{i}/F_{i-1}$ are semi-stable for $1\leq i\leq L$ with $\tau(\bar{F}_1) > \cdots >\tau(\bar{F}_L)$. Let $\tau_{\max}^{n}:=\tau_{\max}(\mathcal{E}^{(n)}_{\xi}):=\tau(\bar{F}_1)$ and $\tau_{\min}^{n}:=\tau_{\min}(\mathcal{E}^{(n)}_{\xi}) := \tau(\bar{F}_L)$. Then $\tau_{\max}^{n} \geq \tau(\alpha) = \tau(\mathcal{E}^{(n)}_{\xi}) \geq \tau_{\min}^{n}$, and at least one of the inequalities is strict. 

If $\tau_{\max}^{n} > \tau(\alpha)$, let $G^{(n)} \coloneqq \mathcal{E}_\xi^{(n)}/F_1$. Otherwise, let $G^{(n)} \coloneqq \mathcal{E}_\xi^{(n)}/F_{L-1}$. We let $\mathcal{E}^{(n+1)}_R$ be the elementary modification along $G^{(n)}$. In other words, this is either the elementary modification along the maximal destabilizing sub-object of $\mathcal{E}^{(n)}_{\xi}$, or along a maximally destabilizing quotient. 

This inductively defines a sequence of objects $\mathcal{E}_R^{(n)}$ with restrictions $\mathcal{E}_{\xi}^{(n)}$. We also have a natural map $\varphi^{(n)}: G^{(n)}\to G^{(n+1)}$ given by the composition $G^{(n)}\to \mathcal{E}_{\xi}^{(n+1)}\to G^{(n+1)}$. One checks the following 
\begin{lemma}
	Unless $\mathcal{E}^{(n)}_{\xi}$ is already semi-stable, we have: $\tau_{\max}^{n}\geq \tau_{\max}^{n+1}$ and $\tau_{\min}^n\leq \tau_{\min}^{n+1}$. 
\end{lemma}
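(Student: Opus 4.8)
The plan is to pin down the closed fibre $\mathcal{E}^{(n+1)}_\xi$ by a single short exact sequence coming from the elementary modification, and then bound its $\tau_{\max}$ and $\tau_{\min}$ by the standard ``seesaw'' behaviour of the Harder--Narasimhan filtration under extensions. First I would record the exact sequence. Write $C \coloneqq G^{(n)}$, a quotient of $\mathcal{E}^{(n)}_\xi$ with kernel $B$ (so $B = F_1$ when $\tau^n_{\max} > \tau(\alpha)$, and $B = F_{L-1}$ otherwise). By definition $\mathcal{E}^{(n+1)}_R$ is the kernel of the surjection $\mathcal{E}^{(n)}_R \to j_*C$, so
\[ 0 \to \mathcal{E}^{(n+1)}_R \to \mathcal{E}^{(n)}_R \to j_*C \to 0 \]
is exact, and $\mathcal{E}^{(n+1)}_R$ is again $R$-flat as a subobject of the torsion-free $\mathcal{E}^{(n)}_R$. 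Tensoring this sequence with $R/\pi$, using flatness of $\mathcal{E}^{(n)}_R$ and the identification $\mathrm{Tor}^R_1(j_*C, R/\pi) \cong C$ (from the resolution $0 \to R \xrightarrow{\pi} R \to R/\pi \to 0$), and noting that the induced map $\mathcal{E}^{(n)}_\xi \to C$ is the original quotient with kernel $B$, yields the exact sequence
\[ 0 \to C \to \mathcal{E}^{(n+1)}_\xi \to B \to 0 \]
on the closed fibre.

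Next I would invoke two elementary inequalities valid for any weak stability condition on a noetherian, $\tau$-artinian category, so that HN filtrations exist by \cite[Theorem 3.3]{joyce_wc_2021}: for every short exact sequence $0 \to A \to E \to G \to 0$,
\[ \tau_{\max}(E) \le \max\big(\tau_{\max}(A), \tau_{\max}(G)\big), \qquad \tau_{\min}(E) \ge \min\big(\tau_{\min}(A), \tau_{\min}(G)\big). \]
For the first: the maximal destabilising subobject $F \subseteq E$ either lands in $A$, whence $\tau_{\max}(E) = \tau(F) \le \tau_{\max}(A)$, or it surjects onto a nonzero subobject $Q = F/(F \cap A) \hookrightarrow G$; since $Q$ is a quotient of the semistable $F$ one has $\tau(Q) \ge \tau(F)$, so $\tau_{\max}(G) \ge \tau(Q) \ge \tau(F) = \tau_{\max}(E)$. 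The statement for $\tau_{\min}$ is dual.

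Finally I would run the case analysis on the sequence $0 \to C \to \mathcal{E}^{(n+1)}_\xi \to B \to 0$. If $\tau^n_{\max} > \tau(\alpha)$, then $C = \mathcal{E}^{(n)}_\xi/F_1$ has HN factors $\bar{F}_2, \dots, \bar{F}_L$, so $\tau_{\max}(C) = \tau(\bar{F}_2) < \tau^n_{\max}$ and $\tau_{\min}(C) = \tau^n_{\min}$, while $B = \bar{F}_1$ is semistable of phase $\tau^n_{\max}$; the two inequalities give $\tau^{n+1}_{\max} \le \max(\tau(\bar{F}_2), \tau^n_{\max}) = \tau^n_{\max}$ and $\tau^{n+1}_{\min} \ge \min(\tau^n_{\min}, \tau^n_{\max}) = \tau^n_{\min}$. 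If instead $\tau^n_{\max} = \tau(\alpha)$, hence $\tau^n_{\min} < \tau(\alpha)$, then $C = \bar{F}_L$ is semistable of phase $\tau^n_{\min}$ and $B = F_{L-1}$ has HN factors $\bar{F}_1, \dots, \bar{F}_{L-1}$, so $\tau_{\max}(B) = \tau^n_{\max}$ and $\tau_{\min}(B) = \tau(\bar{F}_{L-1}) > \tau^n_{\min}$; the same inequalities give $\tau^{n+1}_{\max} \le \tau^n_{\max}$ and $\tau^{n+1}_{\min} \ge \tau^n_{\min}$. In both cases the claimed monotonicity holds. The only mildly delicate point is the identification of the closed fibre of the elementary modification in the appropriate abelian category of sheaves/complexes with extra structure; the rest is formal manipulation of HN filtrations.
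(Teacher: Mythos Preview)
Your proof is correct and is precisely the standard Langton-style argument the paper has in mind; the paper itself omits the proof entirely, stating only ``One checks the following'' before the lemma. Your derivation of the closed-fibre sequence $0 \to C \to \mathcal{E}^{(n+1)}_\xi \to B \to 0$ and the subsequent HN bounds under extension are the expected steps, so there is nothing to compare.
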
 

It follows that we are in exactly one of the following situations:
\begin{enumerate}[label = \arabic*)]
\item We have $[\tau_{\min}^{n+1},\tau_{\max}^{n+1} ] \subsetneq [\tau_{\min}^n,\tau_{\max}^n]$
\item We have $[\tau_{\min}^{n+1},\tau_{\max}^{n+1} ]  =  [\tau_{\min}^n,\tau_{\max}^n ]$ and $\varphi^{(n)}: G^{(n)}\to G^{(n+1)}$ \emph{is not} an isomorphism, i.e. it has a non-trivial kernel or co-kernel.
\item We have $[\tau_{\min}^{n+1},\tau_{\max}^{n+1} ]  =  [\tau_{\min}^n,\tau_{\max}^n ]$ and $\varphi^{(n)}: G^{(n)}\to G^{(n+1)}$ \emph{is} an isomorphism.
\end{enumerate}

Consequently, one needs to check the following assumptions for the existence part of the valuative criterion to hold:
\begin{itemize}
	\item[(A1)] There is no infinite sequence of consecutive steps in which situation 3) occurs.
	\item[(A2)] There is no infinite sequence of consecutive steps in which either only situation 2) or 3) occur, and in which 2) occurs infinitely many times.
	\item[(A3)] The ascending sequence of values $\tau_{\min}^n$ has to stabilize. So does the  descending sequence $\tau_{\max}^n$.   
\end{itemize}
\subsubsection{}
\begin{remark}\label{rem:properness-assumptions}
	\begin{enumerate}
	\item In practice, assumption (A3) is often automatically satisfied, e.g. if the set of possible values that $\tau$ takes in the interval $[\tau_{\min}^{(0)}, \tau_{\max}^0] = [\tau_{\min}(\mathcal{E}_{\xi}),\tau_{\max}(\mathcal{E}_{\xi})]$ when evaluated on classes which appear in effective decompositions of $\alpha$ is a-priori finite.  
	\item Similarly, assumption (A1) is often automatic when working with a moduli space parametrizing configurations of coherent sheaves as we do here. The reason is (exactly as in \cite[Theorem 2.B.1]{Huybrechts1995}) that an infinite sequence of steps 3) gives rise to a section of a relative Quot-scheme in a formal neighborhood of $\xi$, which can be algebraized, contradicting the semi-stability of the generic fiber. 
	\item If $\tau$ is a (non-weak) stability condition, then (A2) holds. This is not clear if one has only a weak stability condition, and requires an additional argument.  
	\end{enumerate} 
\end{remark}

\subsubsection{}

\begin{remark} \label{rem:fixed-loci-are-components-in-proper}
  For us, we only want to show properness of $\sT$-fixed loci of
  moduli spaces $\fN_{\alpha}^{\sst}$ when there are no properly
  semi-stable objects. We also have an open embedding
  $\fN_{\alpha}^{\sst}\subseteq \fM_{\alpha}^{\sst}$ (cf.
  \ref{sec:def-open}). In our situation, it is clear that when
  semi-stability equals stability for $\alpha$ (so that $\fM_{\alpha}$
  is a separated algebraic space), then the inclusion on $\sT$-fixed
  loci $\left(\fN_{\alpha}^{\sst}\right)^\sT\subseteq
  \left(\fM_{\alpha}^{\sst}\right)^\sT$ is an inclusion of connected
  components. Thus, properness of $\sT$-fixed loci follows from
  properness of $\fM_{\alpha}$.
\end{remark}

\section{DT/PT via Mochizuki-style wall-crossing}
\label{sec:mochizuki-WCF}

\subsection{Flag-framed invariants}

\subsubsection{}

In this section, we give a proof of the K-theoretic DT/PT vertex
correspondence, using a somewhat ad-hoc wall-crossing set-up similar
to the one used in \cite{Nakajima2011, Kuhn2021}. Since this sort of
set-up ultimately stems from work of Mochizuki \cite{Mochizuki2009},
we refer to it as {\it Mochizuki-style} wall-crossing. In contrast, in
\S\ref{sec:joyce-WCF}, we provide a different proof of the K-theoretic
DT/PT vertex correspondence using a vastly more general and
complicated wall-crossing setup due to Joyce. Both proofs crucially
require the techniques developed in \S\ref{sec:symmetrized_pullback}. 
We also give a formulation of the Mochizuki-style approach in terms 
of a weak stability condition on the quiver which, as far as we know, 
is new. 

\subsubsection{}

Throughout this section, fix the integer partitions
$(\lambda,\mu,\nu)$ and therefore the class $\beta_C = (\abs{\lambda},
\abs{\mu}, \abs{\nu})$. Furthermore, fix a class $\alpha = (1,
-\beta_C, -n)$, for $n \in \bZ$, and denote by
\[ \fN_m \coloneqq \fN_{(\lambda,\mu,\nu), m} \]
the moduli stack parametrizing pairs $[\cO_{\bar{X}}\to \cE]$ with
prescribed restrictions to the boundary divisors. For short, let
\begin{align*}
  \fN^{\DT}_m &\coloneqq \fN^{\sst}_{(\lambda,\mu,\nu),m}(\tau^-) \\
  \fN^{\PT}_m &\coloneqq \fN^{\sst}_{(\lambda,\mu,\nu),m}(\tau^+)
\end{align*}
denote the moduli stacks parametrizing $\PT$ and $\DT$-stable pairs
respectively, where $\tau^\pm$ are as in
Lemma~\ref{setup:lemma_wall_crossing_moduli}.

Fix $k\gg 0$ large enough (depending on $\alpha$) as described in
\ref{def:auxiliary-stacks}, and $p\geq 1$ arbitrary. The choice of $p$
is unimportant in this section; it will only matter in
\S\ref{sec:joyce-WCF}. Let $N \coloneqq f_{k,p}(\alpha)$. We will
consider the quiver framed stacks $\fN_{m,\vec{d}}^{Q(N)}$ and
$\fQ_{m, \vec{d}}^{Q(N)}$ of Definition~\ref{def:auxiliary-stacks} for
various $m \in \bZ$. The wall-crossing will begin with a framing
dimension vector $\vec d = (d_i)_{i=1}^N$ such that $(\alpha, \vec d)$
is a full flag, in the sense of Definition~\ref{def:full-flag}.

\subsubsection{}

We will set up the wall-crossing step such that the full flag
$(\alpha, \vec d)$ splits into smaller full flags. Hence, for the
remainder of this subsection, fix a class $\beta = (1, -\beta_C, -m)$
with $m \le n$, and a dimension vector $\vec e \le \vec d$ such that
$(\beta, \vec e)$ is a full flag. Let $M \coloneqq f_{k,p}(\beta)$,
and for $\ell\in \{0,1,\ldots,M\}$, let $0 \le i_{\vec e}(\ell) \le N$ be the
minimal index for which $e_{i_{\vec e}(\ell)} = \ell$, with the convention that
$V_0 = 0$ and $e_0 = 0$. Finally, let
\[ \fN^{\fl,\DT}_{m,\vec{e}}, \; \fN^{\fl,\PT}_{m,\vec{e}} \subset \fN^{\fl}_{m, \vec e} \]
denote the preimages of $\fN^{\DT}_m$ and $\fN^{\PT}_m$ under the
forgetful projection $\Pi_{\fN}\colon \fN_{m,\vec{e}}^{\fl}\to
\fN_{m}^\pl$. Objects in these stacks are triples $(I, \vec V, \vec
\rho)$ where every map in $\vec\rho = (\rho_i)_i$ is injective, and $I
= [\cO_{\bar X} \to \cE]$ is a DT- or PT-stable pair.

\subsubsection{}

We introduce stability conditions on the stacks
$\fN_{m,\vec{e}}^{\fl}$.

\begin{definition} \label{def:ellstable}
  Let $\fN_{m,\vec{e}}^{\fl, \ell\text{-}\st} \subseteq
  \fN_{m,\vec{e}}^{\fl}$ be the open substack of objects $(I, \vec V,
  \vec \rho)$ satisfying
  \begin{enumerate}[label = (\roman*)]
  \item \label{it:ellstablei} for a non-zero injection of pairs
    $[0 \to \cZ] \hookrightarrow I$ with $\cZ$ zero-dimensional, we have
    \[ V_{i_{\vec e}(\ell)} \cap F_{k,p}(\cZ) = \{0\} \subset F_{k,p}(I), \]
    where we identify $V_{i_{\vec e}(\ell)}$ with its image in $F_{k,p}(I)$;
  \item \label{it:ellstableii} for a non-zero surjection of pairs $I
    \twoheadrightarrow [0\to \cZ]$ with $\cZ$ zero-dimensional, the
    composition
    \[ V_{i_{\vec e}(\ell)} \to F_{k,p}(I) \twoheadrightarrow F_{k,p}(\cZ)\]
    is non-zero.
  \end{enumerate}
  We say the objects in $\fN_{m,\vec{e}}^{\fl, \ell\text{-}\st}$ are
  {\it $\ell$-stable}. The name is justified by
  Proposition~\ref{prop:flag-stacks-are-algebraic}. By construction,
  note that
  \[\fN_{m,\vec{e}}^{\fl, 0\text{-}\st} = \fN^{\fl,\DT}_{m,\vec{e}} \qquad \fN_{m,\vec{e}}^{\fl, M\text{-}\st} = \fN^{\fl,\PT}_{m,\vec{e}}.\]
\end{definition}

\subsubsection{}

\begin{proposition} \label{prop:flag-stacks-are-algebraic}
  The stacks $\fN_{m,\vec{e}}^{\fl, \ell\text{-}\st}$ are separated
  algebraic spaces with proper $\sT$-fixed loci.
\end{proposition}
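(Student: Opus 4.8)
The plan is to realize $\ell$-stability as the stable locus for an explicit \emph{weak} stability condition on the abelian category of quiver-framed pairs, and then to run the general machinery of \S\ref{sec:properness}. First I would introduce the abelian category $\cat{A}^{Q(N)}$ whose objects are tuples $(I,\vec V,\vec\rho)$ with $I \in \cat{A}$ of rank $\le 1$ and $(\vec V,\vec\rho)$ a representation of $Q(N)$ with $V_{N+1} = F_{k,p}(I)$; a nonzero subobject of a full-flag object $(I,\vec V,\vec\rho)$ is a sub-pair $J \hookrightarrow I$ in $\cat{A}$ together with the induced flag inside $F_{k,p}(J)$, and dually for quotients. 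I would extend the DT/PT weak stability $\tau$ on $\cat{A}$ (which separates rank $1$ from rank $0$ from zero-dimensional) to a weak stability $\tau_\ell$ on $\cat{A}^{Q(N)}$ by appending to the $\cat{A}$-data, in lexicographic order, first an indicator of whether a subobject meets the distinguished subspace $V_{i_{\vec{e}}(\ell)}$, and then a generic King-type slope on the remaining quiver dimension vector. A direct computation then shows that, restricted to a full-flag class $(\beta,\vec{e})$: the semistability inequalities against sub-pairs $[0\to\cZ]\hookrightarrow I$ and quotient pairs $I\twoheadrightarrow[0\to\cZ]$ with $\cZ$ zero-dimensional are exactly conditions \ref{it:ellstablei} and \ref{it:ellstableii} of Definition~\ref{def:ellstable}; the inequalities against purely quiver-theoretic sub/quotients force all $\rho_i$ injective; and there are no strictly $\tau_\ell$-semistable objects, since a proper nonzero subobject has strictly smaller quiver dimension vector and the King slope is generic. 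Hence $\fN_{m,\vec{e}}^{\fl, \ell\text{-}\st} = \fM^{\st}_{(\beta,\vec{e})}(\tau_\ell)$, and for $\ell = 0, M$ one recovers the DT and PT moduli, consistent with Lemma~\ref{setup:lemma_wall_crossing_moduli}.

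Granting this reformulation, $\cat{A}^{Q(N)}$ is noetherian and $\tau_\ell$-artinian, inherited from $\cat{A}$ together with finite-dimensionality of the quiver part, and the $\tau_\ell$-semistable locus of class $(\beta,\vec{e})$ is of finite type: by Lemma~\ref{lem:ambient-moduli-stack} the underlying $I$ is squeezed between the DT and PT conditions, whose moduli are proper hence bounded, while the quiver data varies in a finite-dimensional space. Since semistability equals stability here, Lemma~\ref{lem:separated-algebraic-space} applies and gives that $\fN_{m,\vec{e}}^{\fl, \ell\text{-}\st}$ is a separated algebraic space. For properness of $\sT$-fixed loci I would run the same construction on the compactified quiver spaces $\fM^{\circ,Q(N)}_{1,\beta_C,m,\vec{e}}$ of \S\ref{sec:quiver-spaces-compactified}, with no boundary constraint, obtaining a separated finite-type algebraic space $\fM^{\circ,Q(N),\sst}_{1,\beta_C,m,\vec{e}}(\tau_\ell)$ into which $\fN_{m,\vec{e}}^{\fl, \ell\text{-}\st}$ embeds openly. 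One proves this ambient space proper via the strategy of \S\ref{sec:properness-strategy}: assumption (A0) holds by extending the underlying family of pairs using the valuative criterion of \S\ref{lem:val-crit-semistable-base} and extending the quiver data freely over the DVR; (A3) holds because the values of $\tau_\ell$ on classes in effective decompositions of $(\beta,\vec{e})$ form a finite set once $\beta_C$, $m$, $N$ are fixed; and (A1) holds by the relative-Quot-scheme algebraization argument of Remark~\ref{rem:properness-assumptions}(2), available since our objects are configurations of sheaves with extra linear-algebra data. Finally, as semistability equals stability for $\tau_\ell$, the inclusion $(\fN_{m,\vec{e}}^{\fl, \ell\text{-}\st})^\sT \hookrightarrow (\fM^{\circ,Q(N),\sst}_{1,\beta_C,m,\vec{e}}(\tau_\ell))^\sT$ is an inclusion of connected components exactly as in Remark~\ref{rem:fixed-loci-are-components-in-proper}, so the left-hand side is proper.

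The hard part will be verifying assumption (A2): there should be no infinite chain of elementary modifications in which the interval $[\tau^n_{\min},\tau^n_{\max}]$ is constant and the transition maps $\varphi^{(n)}$ are never isomorphisms. For an honest stability condition this is automatic (Remark~\ref{rem:properness-assumptions}(3)), but $\tau_\ell$ is only weak. The plan is to use the structure of $\cat{A}^{Q(N)}$: along such a chain the rank-$1$ part of $I$ and the distinguished subspace $V_{i_{\vec{e}}(\ell)}$ stay essentially fixed, so only finite-length data (zero-dimensional sheaves and finite-dimensional quiver representations) is being rearranged, and one should be able to extract a discrete monovariant --- for instance a lexicographic refinement of the weight vector, or the dimension of $\Hom$ out of a fixed weakly destabilizing sub-configuration --- which strictly decreases at each such step and forces termination. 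Identifying the correct monovariant and checking that it is monotone under elementary modification along a weakly destabilizing sub- or quotient object is where the genuine work lies; the remaining verifications (exactness of $F_{k,p}$ on the relevant sequences, and the translation of conditions \ref{it:ellstablei}--\ref{it:ellstableii} into slope inequalities) are routine but must be done carefully.
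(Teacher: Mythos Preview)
Your strategy---recast $\ell$-stability as a weak stability condition on quiver-framed pairs and then invoke Lemma~\ref{lem:separated-algebraic-space} together with the Langton machinery of \S\ref{sec:properness}---is exactly the paper's. Two places where the paper's execution is sharper than your sketch. First, the weak stability condition (Lemma~\ref{lem:weak-stability}) is simply
\[
\tau_\mu^{\fl}(\gamma,\vec f) \;=\; \bigl(\tau_0(\gamma),\ \mu - i_{\min}(\vec f)\bigr)\ \text{for } \gamma=(0,0,-\star),\qquad \bigl(\tau_0(\gamma),\,0\bigr)\ \text{otherwise},
\]
with $i_{\vec e}(\ell) < \mu < i_{\vec e}(\ell+1)$; no indicator and no auxiliary generic King slope are needed, see-saw follows from $i_{\min}(\vec f'+\vec f'') = \min(i_{\min}(\vec f'),\,i_{\min}(\vec f''))$, and the absence of strict semistables is immediate since $\mu$ is not an integer. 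Your ``indicator of whether a subobject meets $V_{i_{\vec e}(\ell)}$'', as phrased, is not a function of numerical class, though for full-flag subobjects it can be so read. The paper also does a direct automorphism check rather than going through stable $=$ semistable: any automorphism on the rigidified stack is $\id_I + \delta_1[-1]$ with $\im\delta_1$ zero-dimensional, and conditions \ref{it:ellstablei}--\ref{it:ellstableii} applied to $\im\delta_1$ force $\delta_1 = 0$. Second, for (A2), which you correctly flag as the crux, the monovariant in Proposition~\ref{prop:valuative-criterion-existence} is concrete: along a chain of elementary modifications at constant maximal slope via rank-zero subobjects $(\cZ[-1],\vec W,\vec\sigma)$, the index $i_0 = i_{\min}(\vec W)$ is fixed by the slope, $\dim W_{i_0}=1$, and stability of the subobject forces $W_{i_0}\otimes\cO_{\bar X}\twoheadrightarrow\cZ$; from this one shows that at each step either $\length(\cZ)$ strictly increases or some $\dim W_i$ strictly decreases, so $\length(\cZ)-\sum_i\dim W_i$ strictly increases and is bounded above by $N$. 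Your suggested monovariants are in the right spirit but this one falls out directly from the form of $\tau_\mu^{\fl}$.
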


Consequently, Theorem~\ref{thm:symmetric-pullback} endows each
$\ell$-stable locus with a symmetric APOT, by symmetrized pullback
along the forgetful map to $\fN_m$.

\begin{proof}
  Let $(I = [\cO_{\bar{X}} \xrightarrow{s} \cE], \vec V, \vec \rho)$
  be an object of $\fN_{m,\vec{e}}^{\fl,\ell\text{-}\st}$. We first
  show that it has no non-trivial automorphisms.

  Consider the zero-dimensional sheaf $\cY \coloneqq \coker s$, and
  let $\cZ \subset \cE$ be the largest zero-dimensional sub-sheaf.
  Since we are on the rigidified stack, any automorphism $\varphi$ of
  the pair $I$ induces the identity on $\cO_{\bar{X}}$ (see the proof
  of Lemma~\ref{lem:rigidification-of-dim-1-pairs}), and therefore is
  of the form $\id_I +\,  \delta_1[-1]$, where $\delta_1\colon \cE \to \cE$
  factors through a map $\cY \to \cZ$ via the natural compositions. In
  particular, $\im(\delta_1)$ is a zero-dimensional sheaf.

  Suppose that $\im \delta_1 \neq 0$, so that $I \to [0 \to \im \delta_1]$
  is a non-zero morphism. By
  Definition~\ref{def:ellstable}\ref{it:ellstableii}, the image of
  $V_{i_{\vec e}(\ell)}$ in $F_{k,p}(\im(\delta_1))$ is non-zero. But the
  automorphism of $F_{k,p}(I)$ induced by $\varphi$ must respect the
  full flag $(\vec V, \vec \rho)$, so in particular $V_{i_{\vec e}(\ell)}$ is
  mapped to itself. This contradicts
  Definition~\ref{def:ellstable}\ref{it:ellstablei}. Hence $\delta_1=0$
  and therefore $\varphi = \id_I$ is a trivial automorphism. Since
  every object has only trivial automorphisms, we see that
  $\fN_{m,\vec{e}}^{\fl,\ell\text{-}\st}$ is an algebraic space.

  The statements about separatedness and properness can be checked via
  the strategy of \S\ref{sec:properness-strategy} using the
  description via weak stability conditions in
  Lemma~\ref{lem:weak-stability}. This is the content of
  Proposition~\ref{prop:valuative-criterion-existence}, in view of
  Remark~\ref{rem:fixed-loci-are-components-in-proper}.
\end{proof}

\subsubsection{}

The spaces $\fN_{m,\vec{e}}^{\fl,\ell\text{-}\st}$ for varying $\ell$
are connected by spaces of ``semi-stable'' objects, as follows.

\begin{definition} \label{def:ellsemistable}
  For $\ell \in \{0, 1, \ldots, M-1\}$, let
  $\fN_{m,\vec{e}}^{\fl,(\ell,\ell+1)\text{-}\sst} \subseteq
  \fN_{m,\vec{e}}^{\fl}$ denote the open sub-stack parametrizing
  objects $(I, \vec V, \vec \rho)$ satisfying
  \begin{enumerate}[label = (\roman*)]
  \item \label{it:ellsemistablei} for a non-zero injection of pairs
    $[0 \to \cZ] \hookrightarrow I$ with $\cZ$ zero-dimensional, we
    have
    \[ V_{i_{\vec e}(\ell)} \cap F_{k,p}(\cZ) = \{0\} \subset F_{k,p}(I); \]
  \item \label{it:ellsemistableii} for a non-zero surjection of pairs
    $I \twoheadrightarrow [0\to \cZ]$ with $\cZ$ zero-dimensional, the
    composition
    \[ V_{i_{\vec e}(\ell+1)} \to F_{k,p}(I) \twoheadrightarrow F_{k,p}(\cZ)\]
    is non-zero.
  \end{enumerate}
  In other words, (i) imposes the corresponding condition from
  $\fN_{m,\vec e}^{\fl,\ell\text{-}\st}$ and (ii) the condition from
  $\fN_{m,\vec e}^{\fl,(\ell+1)\text{-}\st}$. This leads to a
  stability condition which is weaker than both $\ell$- and
  $(\ell+1)$-stability. Consequently, the stack $\fN_{m,\vec{e}}^{\fl,
    (\ell,\ell+1)\text{-}\sst}$ contains both $\fN_{m,\vec
    e}^{\fl,\ell\text{-}\st}$ and $\fN_{m,\vec
    e}^{\fl,(\ell+1)\text{-}\st}$ as open sub-stacks. We say the
  objects in $\fN_{m,\vec{e}}^{\fl, (\ell,\ell+1)\text{-}\sst}$ are
  {\it $(\ell, \ell+1)$-semistable}.
\end{definition}

\subsubsection{}

Through the stacks $\fN_{m,\vec{e}}^{\fl,(\ell,\ell+1)\text{-}\sst}$,
we have reduced to a simple wall-crossing, meaning that semistable
objects split into at most two pieces and the stabilizer dimension of
semistable objects is at most one. Recall the locus $\fQ_{m,\vec
  e}^{\fl,\st}$ from \S\ref{sec:on-wall-invariants-Q}.

\begin{lemma} \label{lem:pos-stab}
  An object $(I, \vec V, \vec \rho)$ in $\fN_{m,\vec
    e}^{\fl,(\ell,\ell+1)\text{-}\sst}$ has non-trivial automorphisms
  if and only if
  \[ (I, \vec V, \vec \rho) = (I', \vec V', \vec \rho') \oplus (\cZ[-1], \vec V'', \vec \rho'') \]
  where, for some full flag $((1, -\beta_C, -m'), \vec e')$ such that
  the first non-zero entry in $\vec e - \vec e'$ occurs at $i_{\vec e}(\ell+1)$,
  \[ [(I', \vec V', \vec \rho')] \in \fN_{m',\vec{e}'}^{\fl, \ell\text{-}\st}, \qquad (\cZ[-1], \vec V'', \vec \rho'') \in \fQ_{m-m', \vec e - \vec e'}^{\fl,\st}. \]
  Each such ``strictly poly-stable'' object has stabilizer group
  $\bC^\times$ given by scaling $\cZ$.
\end{lemma}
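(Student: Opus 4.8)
The plan is to follow the same strategy as in the proof of Proposition~\ref{prop:flag-stacks-are-algebraic}: analyze an automorphism $\varphi$ of an $(\ell,\ell+1)$-semistable object $(I,\vec V,\vec\rho)$ and show it forces a direct-sum decomposition of the stated form. Write $I = [\cO_{\bar X}\xrightarrow{s}\cE]$, let $\cY \coloneqq \coker(s)$, and let $\cZ_{\max}\subset\cE$ be the largest zero-dimensional subsheaf. Exactly as before, since we work on the rigidified stack $\varphi = \id_I + \delta_1[-1]$ for some $\delta_1\colon\cE\to\cE$ factoring through a map $\cY\to\cZ_{\max}$, so $\cW \coloneqq \im(\delta_1)$ is zero-dimensional, and $\varphi$ must preserve the full flag $(\vec V,\vec\rho)$ inside $F_{k,p}(I)$.

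First I would show: if $\cW\neq 0$ then $\varphi$ is non-trivial \emph{and} a decomposition exists. The surjection $I\twoheadrightarrow[0\to\cW]$ together with Definition~\ref{def:ellsemistable}\ref{it:ellsemistableii} shows $V_{i_{\vec e}(\ell+1)}$ surjects onto $F_{k,p}(\cW)$; on the other hand, the composition $[0\to\cW']\hookrightarrow I$ for the kernel-type subobject $\cW' \coloneqq \ker(\cE\to\cE/\delta_1(\cY))\cap\cZ_{\max}$ — more precisely, the fact that $\delta_1^2 = 0$ forces $\cW\subseteq\ker\delta_1$ — combined with Definition~\ref{def:ellsemistable}\ref{it:ellsemistablei} pins down how the flag interacts with $\cW$. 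The key point is that an automorphism of the form $\id + \delta_1[-1]$ with $\delta_1$ nilpotent acts unipotently on $F_{k,p}(I)$, and preserving the flag forces each $V_i$ to be a sum of a piece in the image subobject and a piece in a complement; this is only possible if the short exact sequence $0\to[0\to\cW]\to I\to I/[0\to\cW]\to 0$ (or rather the maximal such) splits compatibly with the flag, which is exactly the claimed $(I',\vec V',\vec\rho')\oplus(\cZ[-1],\vec V'',\vec\rho'')$. The constraint ``the first non-zero entry of $\vec e-\vec e'$ occurs at $i_{\vec e}(\ell+1)$'' comes from tracking which flag steps land in the $\cZ$-summand: condition \ref{it:ellsemistablei} at level $\ell$ forbids any earlier flag step from meeting $F_{k,p}(\cZ)$, while condition \ref{it:ellsemistableii} at level $\ell+1$ forces the step $i_{\vec e}(\ell+1)$ to do so. One then checks that $(I',\vec V',\vec\rho')$ inherits $\ell$-stability (both conditions of Definition~\ref{def:ellstable} for the class $((1,-\beta_C,-m'),\vec e')$) and $(\cZ[-1],\vec V'',\vec\rho'')$ is a stable flag-framed zero-dimensional sheaf in the sense of \S\ref{sec:on-wall-invariants-Q} — here stability of $\cZ$ is automatic since any proper quotient flag would violate \ref{it:ellsemistableii} or the corresponding destabilizing-subobject condition. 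Conversely, for an object of this split form, scaling the $\cZ[-1]$ summand by $\bC^\times$ gives a non-trivial automorphism, and one checks (again using $\ell$-stability of $I'$ and stability of the $\cZ$-flag, which kill all Hom's between the two summands except scalars) that these scalings are the \emph{only} automorphisms, so the stabilizer is exactly $\bC^\times$.

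The main obstacle I expect is the bookkeeping in the forward direction: showing that a flag-preserving unipotent automorphism genuinely splits the object, rather than just giving a filtration. This requires care because $\delta_1$ need not itself be an idempotent-up-to-scalar, so one must argue that after passing to the \emph{maximal} zero-dimensional quotient $I\twoheadrightarrow[0\to\cZ]$ compatible with the flag data, the induced extension class vanishes; the vanishing should follow from semistability forcing $\Hom([0\to\cZ], I') = 0$ for the complementary $\ell$-stable $I'$, but isolating the right $\cZ$ and verifying the flag decomposes accordingly is the delicate step. Everything else — the converse, the stabilizer computation, and the identification of the two factors' moduli — is routine given the definitions.
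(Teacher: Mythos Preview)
Your setup mirrors the paper's, but the step you yourself flag as the ``main obstacle'' is where the proposal breaks down. You assert that $\delta_1$ ``need not itself be an idempotent-up-to-scalar'' and propose splitting the extension $0\to I'\to I\to\cZ[-1]\to 0$ via the vanishing $\Hom([0\to\cZ],I')=0$. Two problems: that Hom-vanishing controls morphisms, not the class in $\Ext^1_{\cat A}(\cZ[-1],I')$, so it would not split anything; and your repeated description of $\varphi$ as ``unipotent'' is false in general (take $\cE=\cO_p^{\oplus 2}$, $s=(1,0)$, $\varphi_\cE=\mathrm{diag}(1,b)$ with $b\neq 1$). This last point is not incidental---the paper's argument hinges precisely on $\delta$ having a nonzero eigenvalue.

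The idea you are missing is that $\delta=\varphi-\id_I$ \emph{is} idempotent up to a nonzero scalar. With $\cZ[-1]\coloneqq\im\delta$ and $I'\coloneqq\ker\delta$, set $W\coloneqq V_{i_{\vec e}(\ell+1)}\cap F_{k,p}(\cZ)\subset F_{k,p}(I)$. Flag-preservation plus condition~\ref{it:ellsemistablei} give $\delta(V_{i_{\vec e}(\ell)})\subset V_{i_{\vec e}(\ell)}\cap F_{k,p}(\cZ)=0$, hence $V_{i_{\vec e}(\ell)}\subset F_{k,p}(I')$; condition~\ref{it:ellsemistableii} gives $\delta(V_{i_{\vec e}(\ell+1)})\neq 0$; and flag-preservation again gives $\delta(V_{i_{\vec e}(\ell+1)})\subset W$. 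Since $\dim V_{i_{\vec e}(\ell+1)}/V_{i_{\vec e}(\ell)}=1$, these force $\dim W=1$, $V_{i_{\vec e}(\ell+1)}=V_{i_{\vec e}(\ell)}\oplus W$, and $\delta|_W=t\cdot\id_W$ for some $t\in\bC^\times$. Now $\epsilon\coloneqq\delta^2-t\delta$ is a flag-preserving endomorphism with $\epsilon|_{V_{i_{\vec e}(\ell+1)}}=0$; if $\im\epsilon\neq 0$ then condition~\ref{it:ellsemistableii} applied to the surjection $I\twoheadrightarrow\im\epsilon$ would demand a nonzero map $V_{i_{\vec e}(\ell+1)}\to F_{k,p}(\im\epsilon)$, a contradiction. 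Hence $\delta/t$ is an honest idempotent in $\End(I)$, so $I=I'\oplus\cZ[-1]$ directly, and the flag splits via $V_i'=V_i\cap F_{k,p}(I')$, $V_i''=\delta(V_i)/t$. The verification that the summands are $\ell$-stable and stable respectively then goes exactly as you outline.
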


\begin{proof}
  It is straightforward to check using the definition of
  $\ell$-stability that any direct sum of the indicated form has
  automorphisms given by the scaling action on $\cZ$.

  The only thing left to prove is that given an object $(I, \vec V,
  \vec \rho)$ with non-trivial automorphism group, it is of the
  claimed form. Indeed, let $\varphi\colon I\to I$ be a non-trivial
  automorphism compatible with the flag structure, and let $\delta
  \coloneqq \varphi-\id_I$. We claim that this induces the desired
  decomposition of $I$ with
  \begin{alignat*}{3}
    I' &\coloneqq \ker \delta \qquad &&\cZ[-1] &&\coloneqq \im \delta, \\
    V_i' &\coloneqq V_i \cap F_{k,p}(I') \qquad &&V_i'' &&\coloneqq \im\left(V_i \to F_{k,p}(I) \to F_{k,p}(\cZ)\right)
  \end{alignat*}
  and the obvious induced quiver maps $\vec\rho'$ and $\vec\rho''$. We
  showed in the proof of
  Proposition~\ref{prop:flag-stacks-are-algebraic} that $\cZ$ has
  zero-dimensional support. One then concludes that:
  \begin{itemize}
  \item the composition $V_{i_{\vec e}(\ell+1)} \to
    F_{k,p}(I)\twoheadrightarrow F_{k,p}(\cZ)$ is non-zero (by
    Definition~\ref{def:ellsemistable}\ref{it:ellsemistableii});
  \item $V_{i_{\vec e}(\ell)}$ factors through $F_{k,p}(I')$ (by
    Definition~\ref{def:ellsemistable}\ref{it:ellsemistablei});
  \item $V_{i_{\vec e}(\ell+1)} \cap F_{k,p}(\cZ)\neq 0$ (since
    $\delta(V_{i_{\vec e}(\ell+1)})\subseteq F_{k,p}(\cZ)$).
  \end{itemize}
  Thus, $W \coloneqq V_{i_{\vec e}(\ell+1)}\cap F_{k,p}(\cZ)$ is
  one-dimensional, and $\delta$ induces an isomorphism on $W$ which is
  scaling by some $t \in \Aut(W) = \bC^\times$. Since $t$ is a
  non-zero element of $\End(W)$ and $\varphi$ was arbitrary, the
  stabilizer group of $(I, \vec V, \vec \rho)$ injects into $\End(W)$.
  Now $\delta/t$ is an idempotent in $\Aut(W)$, so $\delta/t$ must be
  an idempotent projection onto $\cZ$ which gives the desired
  splitting. To conclude:
  \begin{itemize}
  \item $(\cZ[-1], \vec V'', \vec \rho'')$ is stable in the sense of
    \S\ref{sec:on-wall-invariants-Q}, namely the composition
    $V_{i_{\vec e}(\ell+1)}/V_{i_{\vec e}(\ell)} \otimes \cO_{\bar X} \to
    V_{i_{\vec e}(\ell+1)+1}/V_{i_{\vec e}(\ell)} \otimes \cO_{\bar X} \to \cdots \to
    \cZ$ is surjective (by applying
    Definition~\ref{def:ellsemistable}\ref{it:ellsemistableii} to
    quotients of $\cZ$);
  \item $(I', \vec V', \vec \rho')$ is $\ell$-stable (by comparing
    Definitions~\ref{def:ellstable} and \ref{def:ellsemistable}, using
    the splitting). \qedhere
  \end{itemize}
\end{proof}

\subsubsection{}
\label{sec:ellstability-as-weak-stability}

We can characterize $\ell$- and $(\ell,\ell+1)$-stability as weak
stability conditions. Given a non-zero class $(\gamma, \vec{f})$ which is a full flag, let
$i_{\min}(\vec{f})$ be the minimal index for which $f_i > 0$. For a
parameter $\mu \in \bR$, define
\[ \tau_\mu^{\fl}(\gamma,\vec{f}) \coloneqq \begin{cases}
  \left(\tau_0(\gamma), \mu - i_{\min}(\vec{f}) \right) & \gamma = (0,0,-\star), \\
  \left(\tau_0(\gamma),\,  0\right) & \text{otherwise}
\end{cases} \]
where $\star$ denotes a positive integer. One checks that
$\tau_\mu^{\fl}$ is a weak stability condition for any $\mu \in \bR$
using that $i_{\min}(\vec f' + \vec f'') = \min(i_{\min}(\vec f'),
i_{\min}(\vec f''))$. The following is straightforward to verify from
the definitions.

\begin{lemma} \label{lem:weak-stability}
  Let $(I, \vec V, \vec \rho)$ be an object of
  $\fN^{\fl}_{m,\vec{e}}$.
  \begin{enumerate}[label = \roman*)]
  \item Let $0 \leq \ell \leq M$. Choose $i_{\vec e}(\ell) < \mu <
    i_{\vec e}(\ell+1)$. Then $(I, \vec V, \vec \rho)$ is
    $\tau_{\mu}^{\fl}$-stable if and only if it is $\ell$-stable.
  \item Let $0 \leq \ell \leq M - 1$. Choose $\mu = i_{\vec
    e}(\ell+1)$. Then $(I, \vec V, \vec \rho)$ is
    $\tau_\mu^{\fl}$-stable if and only if it is $(\ell,
    \ell+1)$-stable.
  \end{enumerate}
\end{lemma}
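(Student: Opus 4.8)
The plan is to unwind the definition of $\tau^{\fl}_\mu$-(semi)stability on the flag-framed abelian category and identify it term by term with Definitions~\ref{def:ellstable} and \ref{def:ellsemistable}. First I would note that a sub-object of $(I,\vec{V},\vec{\rho})\in\fN^{\fl}_{m,\vec{e}}$ is the same datum as a sub-object $F\hookrightarrow I$ in $\cat{A}$ equipped with the induced flag $(V_i\cap F_{k,p}(F))_i$ inside $F_{k,p}(F)\subseteq F_{k,p}(I)$ (using exactness of $F_{k,p}$ on the relevant sequences), and dually for quotients; one checks directly that the induced flag classes are again full flags, so $\tau^{\fl}_\mu$ applies to them. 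The next step is to cut down the sub-objects and quotients that must be tested. Since $\fN_m\subseteq\fM^{\circ}_{1,\beta_C,m}$ and, by Lemma~\ref{setup:lemma_wall_crossing_moduli}(iii), every object of $\fM^{\circ}$ is $\tau_0$-semistable, no $I$ appearing here admits a rank-$1$ sub-object whose quotient is supported in dimension one (such a quotient would have $\tau_0$-value $\pi/2<\arg z_1$, violating semistability); consequently every rank-$0$ quotient of such an $I$ is zero-dimensional. A rank-$0$ sub-object $[0\to\cZ]\hookrightarrow I$ with $\dim\cZ=1$ has $\tau_0$-value $\pi/2<\arg z_1$, hence has strictly smaller $\tau^{\fl}_\mu$-value than $I$ and never obstructs (semi)stability. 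So the only data to test are the zero-dimensional sub-objects $[0\to\cZ]\hookrightarrow I$ and quotients $I\twoheadrightarrow[0\to\cZ']$ that already appear in Definition~\ref{def:ellstable}.

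For these I would carry out the $\tau^{\fl}_\mu$-comparison explicitly. On the DT/PT wall $\arg z_0=\arg z_1=:a$, so $I$ and every zero-dimensional sub or quotient have equal first $\tau^{\fl}_\mu$-coordinate; the (semi)stability inequality therefore lives in the second coordinate, reading $\mu\le i_{\min}(\vec{f}_F)$ (resp.\ $<$) for a zero-dimensional sub $[0\to\cZ]$ with induced class $\vec{f}_F$, and $i_{\min}(\vec{f}_G)\le\mu$ (resp.\ $<$) for a zero-dimensional quotient $[0\to\cZ']$ with induced class $\vec{f}_G$. The combinatorial point is that the flag $(V_i)_i$ only jumps at the occurring indices, i.e.\ $i_{\min}(\vec{f}_F)$ is necessarily one of the indices $i_{\vec{e}}(\ell')$, equal to the least $i$ with $V_i\cap F_{k,p}(\cZ)\ne 0$, while $i_{\min}(\vec{f}_G)$ is the least $i$ for which the composition $V_i\to F_{k,p}(I)\twoheadrightarrow F_{k,p}(\cZ')$ is nonzero.

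Then it remains to substitute the prescribed $\mu$, using that $V_i=V_{i_{\vec{e}}(\ell)}$ for $i_{\vec{e}}(\ell)\le i<i_{\vec{e}}(\ell+1)$. For part (i), with $i_{\vec{e}}(\ell)<\mu<i_{\vec{e}}(\ell+1)$, the sub-object inequality $\mu<i_{\min}(\vec{f}_F)$ becomes $V_{i_{\vec{e}}(\ell)}\cap F_{k,p}(\cZ)=0$ and the quotient inequality $i_{\min}(\vec{f}_G)<\mu$ becomes nonvanishing of the composition through $V_{i_{\vec{e}}(\ell)}$, which are precisely conditions (i) and (ii) of $\ell$-stability; moreover $\mu$ is not an occurring index, so no equality $\tau^{\fl}_\mu(F)=\tau^{\fl}_\mu(G)$ is possible, matching the coincidence of $\ell$-stability with $\ell$-semistability. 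For part (ii), with $\mu=i_{\vec{e}}(\ell+1)$, the sub-object inequality $\mu\le i_{\min}(\vec{f}_F)$ becomes $V_{i_{\vec{e}}(\ell+1)-1}\cap F_{k,p}(\cZ)=V_{i_{\vec{e}}(\ell)}\cap F_{k,p}(\cZ)=0$ and the quotient inequality $i_{\min}(\vec{f}_G)\le\mu$ becomes nonvanishing of the composition through $V_{i_{\vec{e}}(\ell+1)}$, which are conditions (i) and (ii) of Definition~\ref{def:ellsemistable}; this yields the asserted identification of $\tau^{\fl}_\mu$-(semi)stability with $(\ell,\ell+1)$-(semi)stability.

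I expect the only genuinely delicate step to be the first reduction: one must be sure that over $\fM^{\circ}$ a rank-$1$ sub-object always has zero-dimensional quotient, so that Definition~\ref{def:ellstable}(i)--(ii) really capture all destabilizers, and that one-dimensional sub-objects are always irrelevant. Both follow from Lemma~\ref{setup:lemma_wall_crossing_moduli}(iii) together with $\arg z_1>\pi/2$; everything after that is bookkeeping with the occurring indices $i_{\vec{e}}(\ell')$ and the induced flag classes.
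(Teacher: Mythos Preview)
The proposal is correct; the paper gives no proof beyond asserting that the lemma is ``straightforward to verify from the definitions'', and your argument is precisely that verification. Your reduction via Lemma~\ref{setup:lemma_wall_crossing_moduli}(iii) to exclude one-dimensional quotients and the subsequent bookkeeping with the jump indices $i_{\vec e}(\ell')$ are exactly the natural steps; note only that in part~(ii) the equivalence is really between $\tau^{\fl}_\mu$-semistability and $(\ell,\ell+1)$-semistability (as you in effect prove), the word ``stable'' in the paper's statement being loose.
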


\subsubsection{}
\label{sec:mochizuki-properness-flag-stacks}

From Remark~\ref{rem:fixed-loci-are-components-in-proper}, properness
of the $\sT$-fixed loci in the (semi)stable loci of $\fN^{\fl}$
follows immediately from properness of the same (semi)stable loci in
the ambient stack $\fM^{\circ,\fl}$ (see
\S\ref{sec:ambient-moduli-stack}). Recall that $\fM^\circ$ is the
analogue of $\fN$ except one does not impose any conditions on the
restriction of objects to the boundary of $\bar{X}$.
Definitions~\ref{def:ellstable} and \ref{def:ellsemistable} apply
equally well to $\fM^{\circ,\fl}$.

\begin{proposition}\label{prop:valuative-criterion-existence}
  The stacks $\fM_{\beta, \vec{e}}^{\circ,\fl,\ell\text{-}\sst}$ and
  $\fM_{\beta, \vec{e}}^{\circ,\fl,(\ell,\ell+1)\text{-}\sst}$ satisfy
  the existence part of the valuative criterion of properness. In
  particular, the stacks $\fM_{\beta,
    \vec{e}}^{\circ,\fl,\ell\text{-}\sst}$ are proper.
\end{proposition}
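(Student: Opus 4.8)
The plan is to apply the general Langton-type machinery of \S\ref{sec:properness-strategy} to the weak stability condition $\tau_\mu^{\fl}$ on the category of pairs equipped with a quiver flag. By Lemma~\ref{lem:weak-stability}, for the appropriate choice of $\mu$ the $\tau_\mu^{\fl}$-stable objects are precisely the $\ell$-stable ones and the $\tau_\mu^{\fl}$-semistable objects are precisely the $(\ell,\ell+1)$-semistable ones, so it suffices to verify assumptions (A0)--(A3) of \S\ref{sec:properness-strategy} for $\tau_\mu^{\fl}$ on the ambient stack $\fM^{\circ,\fl}$ (which, unlike $\fN^{\fl}$, imposes no condition along the boundary $D$); the statements for $\fN^{\fl}$, and for its $\sT$-fixed loci, then follow from Remark~\ref{rem:fixed-loci-are-components-in-proper}. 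Granting (A0)--(A3), \S\ref{sec:properness-strategy} gives the existence part of the valuative criterion. To upgrade this to properness of $\fM^{\circ,\fl,\ell\text{-}\sst}_{\beta,\vec e}$, I would combine it with separatedness: by the argument in the proof of Proposition~\ref{prop:flag-stacks-are-algebraic}, $\ell$-stable objects have only scalar automorphisms and any morphism between two $\ell$-stable objects of the same class is an isomorphism, so Lemma~\ref{lem:separated-algebraic-space} applies; together with the finite-type property recalled earlier, this yields a proper algebraic space.

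For assumption (A0) I would first extend the underlying family of pairs $[\cO_{\bar X}\to\cE_K]$ over $\Spec R$ using the lemma of \S\ref{lem:val-crit-semistable-base}, whose proof in fact produces a family already lying in $\fM^{\circ}$, i.e.\ with zero-dimensional cokernel on the special fibre. Since $k$ was chosen $\gg 0$, the relative version of $F_{k,p}$ applied to the extended family is locally free of the expected rank on every fibre (after, if necessary, a further elementary modification making $\cE_\xi$ $k$-regular), and one extends the generically given full flag by taking its saturations inside this bundle; the saturated flag restricts to an honest full flag on the special fibre, so the resulting family lands in $\fM^{\circ,\fl}$. Assumption (A3) is automatic by Remark~\ref{rem:properness-assumptions}(1): for the fixed full-flag class $(\beta,\vec e)$ there are only finitely many classes occurring in effective decompositions, hence only finitely many possible values of $\tau_\mu^{\fl}$ on the relevant subquotients.

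Assumption (A1) should follow from the classical Langton--Huybrechts--Lehn algebraization argument of Remark~\ref{rem:properness-assumptions}(2): an infinite run of type-3) steps assembles the destabilizing subquotients into a section of a relative flag-Quot scheme over the formal disk at $\xi$, which algebraizes to a genuine subfamily over $R$ contradicting $\tau_\mu^{\fl}$-semistability of the generic fibre. The main obstacle is assumption (A2): since $\tau_\mu^{\fl}$ is only a \emph{weak} stability condition, Remark~\ref{rem:properness-assumptions}(3) does not apply, and type-2) steps must be excluded by hand. The plan here is to exploit the explicit structure of $\tau_\mu^{\fl}$-destabilizing sub- and quotient objects, analogous to the analysis behind Lemma~\ref{lem:pos-stab}: such a piece of the special fibre is governed either by a zero-dimensional sub- or quotient sheaf $\cZ$ of the pair together with how the flag at position $i_{\vec e}(\ell)$ or $i_{\vec e}(\ell+1)$ meets $F_{k,p}(\cZ)$, or else is a purely quiver-theoretic subrepresentation. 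In each case one attaches a non-negative integer secondary invariant --- essentially $\length(\cZ)$ together with the relevant flag jump, or the total dimension of the destabilizing quiver representation --- that is non-increasing along the modification sequence and strictly decreases at every step where $\varphi^{(n)}\colon G^{(n)}\to G^{(n+1)}$ fails to be an isomorphism, forcing type-2) steps to occur only finitely often. Establishing this monotonicity, and checking that it is compatible with the weak (rather than strict) seesaw inequalities controlling which subquotients can destabilize, is where the genuine work lies.
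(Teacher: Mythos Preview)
Your approach is essentially the paper's: reduce to the weak stability condition $\tau_\mu^{\fl}$ via Lemma~\ref{lem:weak-stability}, produce an initial extension via \S\ref{lem:val-crit-semistable-base} and extend the flag by saturation, and note that (A1) and (A3) are automatic. You are also right that (A2) is the only real issue.

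Where you remain vague, the paper is precise. The destabilizing subobject in situation~2) has class $((0,0,-m'),\vec W)$ with $\cZ$ zero-dimensional, and the crucial point is that since $(\beta,\vec e)$ is a full flag, the first nonzero piece $W_{i_0}$ of $\vec W$ is one-dimensional; moreover, $\tau_\mu^{\fl}$-semistability forces $W_{i_0}\otimes\cO_{\bar X}\to\cZ$ to be surjective. Because the induced map $W'_{i_0}\to W_{i_0}$ is an isomorphism (both are lines with the same $i_{\min}$, hence the same slope), the map $\cZ'\to\cZ$ is automatically \emph{surjective}. Thus either $\length(\cZ')>\length(\cZ)$, or $\cZ'\to\cZ$ is an isomorphism and then each $W_i'\to W_i$ is injective with at least one strict inclusion. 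The paper's secondary invariant is $\length(\cZ)-\sum_i\dim W_i$, which strictly \emph{increases} at each type-2) step and is bounded above by $N$. Your sketch gestures at ``$\length(\cZ)$ together with the relevant flag jump'' decreasing, but without the rank-one/surjectivity observation there is no mechanism forcing monotonicity, and the direction you state is inverted. That observation is the content of (A2); once you have it, the rest is exactly your outline.
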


\begin{proof}
  Apply Lemma~\ref{lem:weak-stability} to express $\ell$- and
  $(\ell,\ell+1)$-(semi)stability using a weak stability condition
  $\tau_\mu^{\fl}$ for some $\mu$, and follow the general strategy
  outlined in \S\ref{sec:properness-strategy}.

  We first note that for any object in
  $\fM_{\beta,\vec{e}}^{\circ,\fl}$, the sub-quotients in a
  Harder--Narasimhan filtration with respect to $\tau_\mu^{\fl}$ have
  numerical classes which are again full flags, and lie in either the
  $\tau_\mu^{\fl}$-semistable locus of
  $\fM_{\gamma,\vec{f}}^{\circ,\fl}$ or in
  $\fQ_{m',\vec{f}}^{\fl,\st}$.

  Suppose that $(I_K, \vec V_K, \vec \rho_K)$ is a one-parameter
  family of $\ell$- or $(\ell,\ell+1)$-(semi)stable objects over the
  generic point $\Spec K$ of a DVR $R$. By
  Lemma~\ref{lem:val-crit-semistable-base}, we can extend $I_K$ to a
  family of objects $I_R$ in $\fM_\beta^{\circ,\sst}(\tau_0) =
  \fM_\beta^{\circ,\pl}$ over $R$. The quiver data is equivalent to a
  full flag in $F_{k,p}(I_K)$, which we can extend to a full flag in
  $F_{k,p}(I_R)$. This guarantees that the slopes of a
  Harder--Narasimhan filtration of $(I_{\xi}, \vec V_{\xi}, \vec
  \rho_\xi)$ are of the form $(\tau_0(\beta), a)$ for $a \in
  \{\mu-i\}_{0\leq i\leq N}\cup \{0\}$. In particular, we only need to
  check assumption (A2) in \S\ref{sec:properness-strategy}; (A1) and
  (A3) are automatic.

  Thus, suppose we have constructed a sequence $(I_R^{(n)}, \vec
  V_R^{(n)}, \vec \rho_R^{(n)})$ as in
  \S\ref{sec:properness-strategy}. Say, to go from $n$ to $n+1$, we
  are doing an elementary modification along a (semi)stable sub-object
  \[ (\cZ[-1], \vec W, \vec\sigma) \subset (I_{\xi}^{(n)}, \vec V_\xi^{(n)}, \vec\rho_\xi^{(n)}) \text{ such that } \tau_{\mu}^{\fl}((\cZ[-1], \vec W, \vec\sigma)) > (\tau_0(\beta), 0), \]
  with $\cZ$ zero-dimensional, and say that we are in situation 2),
  i.e. the maximally-destabilizing sub-object $(\cZ'[-1], \vec W',
  \vec \sigma')$ of $(I_{\xi}^{(n+1)}, \vec V_\xi^{(n+1)}, \vec
  \rho_\xi^{(n+1)})$ has the same slope as $(\cZ[-1], \vec W,
  \vec\sigma)$. By the definition of $\tau_\mu$, if $i_{0}$ is the
  minimal index such that $W_{i_{0}}\neq 0$, it must also be the
  minimal index $i$ such that $W'_i\neq 0$. In particular $W_{i_0}$
  and $W'_{i_0}$ both have rank one. The induced map $(\cZ'[-1], \vec
  W', \vec\sigma') \to (\cZ[-1], \vec W, \vec\sigma)$ induces an
  isomorphism $W'_{i_0}\to W_{i_0}$. Hence:
  \begin{itemize}
  \item The map $\cZ' \to \cZ$
  must be surjective, since the induced maps $W_{i_0} \otimes
    \cO_{\bar X} \to \cZ$ and $W'_{i_0} \otimes \cO_{\bar X} \to \cZ'$
    are both surjective by (semi)stability;
  \item if $\cZ' \to \cZ$ is an isomorphism, then all induced maps $W'_{i}\to W_i$ are injective.
\end{itemize}
Consequently, exactly one of the two possibilites occurs:
\begin{itemize}
	\item The map $\cZ'\to \cZ$ has non-trivial kernel, and $\length(\cZ')> \length(\cZ)$.
  \item The map $\cZ'\to \cZ$ is an isomorphism, and there exists $i> i_0$, such that $\dim W'_{i} < \dim W_i$. 
 \end{itemize}
  Hence the quantity $\length(\cZ)- \sum_i \dim W_i$ increases at each such step. But it is a-priori
  upper-bounded by $N$, hence such a situation can only occur finitely
  many times, proving (A2) in this case. The case of a quotient object
  with slope lower than $(\tau_0(\beta), 0)$ is analogous.
\end{proof}

\subsection{Wall-crossing formula}

\subsubsection{}

Recall that we fixed a full flag $(\beta, \vec e)$ with $\beta = (1,
-\beta_C, -m)$, and let $M \coloneqq f_{k,p}(\beta)$. Fix $0 \leq \ell
\leq M-1$. We construct a master space $\bM^{\ell,\ell+1}$ relating
$\fN_{m,\vec{e}}^{\fl,\ell\text{-}\st}$ and
$\fN_{m,\vec{e}}^{\fl,(\ell+1)\text{-}\st}$.

\begin{definition} \label{def:mochizuki-master-space-ambient-stack}
  For a full flag $(\gamma, \vec f)$ with $\gamma = (1, -\beta_C,
  -m')$, and an integer $f_{-1} \ge 0$, consider the (unrigidified)
  moduli stack parametrizing tuples $(I, (\hat V_{-1}, \vec V),
  (\hat\rho_{-1}, \hat\rho_0, \vec\rho))$, where:
  \begin{itemize}
  \item $(I, \vec V, \vec \rho)$ is an object of the unrigidified
    stack $\fN_{m',\vec{f}}^{\fl,(\ell,\ell+1)\text{-}\sst} \times
    [\pt/\bC^\times]$ (see
    Lemma~\ref{lem:rigidification-of-dim-1-pairs});
  \item $\hat V_{-1}$ is an $f_{-1}$-dimensional vector space;
  \item $\hat\rho_{-1}$ and $\hat\rho_0$ are linear maps
    \[ V_{i_{\vec e}(\ell+1)}/V_{i_{\vec e}(\ell)} \xrightarrow{\hat\rho_{-1}} \hat V_{-1} \xleftarrow{\hat\rho_0} L \]
    where $I = [\cO_{\bar X} \otimes L \to \cE]$.
  \end{itemize}
  The group $\bC^\times$ of scaling automorphisms acts diagonally on
  $I$, $\vec V$, and $\hat V_{-1}$. Write $(\beta, \vec f, f_{-1})$
  for the numerical class of $(I, (\hat V_{-1}, \vec V),
  (\hat\rho_{-1}, \hat\rho_0, \vec\rho))$ where $\beta \coloneqq \cl
  I$ and $\vec f \coloneqq \dim \vec V$.

  Let $\hat\fN_{m',\vec f, f_{-1}}^{\fl,\ell,\ell+1}$ be the
  $\bC^\times$-rigidified moduli stack, and let $\bM^{\ell,\ell+1}
  \subset \hat\fN_{m,\vec e, 1}^{\fl,\ell,\ell+1}$ denote the open
  substack of objects satisfying the following conditions:
  \begin{enumerate}[label = \roman*)]
  \item if $\hat\rho_{-1} = 0$, then $(I, \vec V, \vec \rho)$ belongs to
    $\fN_{m,\vec{e}}^{\fl,(\ell+1)\text{-}\st}$;
  \item if $\hat\rho_0 = 0$, then $(I, \vec V, \vec \rho)$ belongs to
    $\fN_{m,\vec{e}}^{\fl,\ell\text{-}\st}$;
  \item at least one of $\hat\rho_{-1}$ and $\hat\rho_0$ is non-zero.
  \end{enumerate}
  In the literature, this construction of $\bM^{\ell,\ell+1}$ is also
  called an {\it enhanced master space} \cite[\S\S 1.6.3,
    4.3]{Mochizuki2009}.
\end{definition}

\subsubsection{}
\label{sec:masterspace_alt}

Here is an alternate description of $\bM^{\ell,\ell+1}$. Let
$\scL \coloneqq \scV_{i_{\vec e}(\ell+1)}/\scV_{i_{\vec e}(\ell)}$ on
$\fN_{m,\vec{e}}^{\fl,(\ell,\ell+1)\text{-}\sst}$, and let
$\scL^\times$ denote the associated principal $\bC^\times$-bundle. The
line bundle $\scL$ has the following useful properties.
\begin{enumerate}[label = \roman*)]
\item The stabilizer groups at the stacky points of
  $\fN_{m,\vec{e}}^{\fl,(\ell,\ell+1)\text{-}\sst}$ act non-trivially
  (in fact freely) on $\scL$. In particular, the total space of
  $\scL^\times$ is a Deligne--Mumford stack (in fact an algebraic
  space).
\item Let $x$ be a point in $\scL^\times$ lying over an object $(I,
  \vec V, \vec \rho)$, normalized so that $I = [\cO_{\bar X} \to
    \cE]$. Then $\lim_{t \to 0} tx$ exists if and only if there exists
  a zero-dimensional quotient $\mathcal{E}\twoheadrightarrow \mathcal{Z}$, such that $V_{i_{\vec e}(\ell)}$ lies in the kernel of the map $V_{i_{\vec e}(\ell+1)}\to F_{k,p}(\mathcal{Z})$, i.e. if and only if the
  pair $(I, \vec V, \vec\rho)$ does \emph{not} lie in $\fN_{m,\vec{e}}^{\fl,
    \ell\text{-}\st}$.
\item Similarly, $\lim_{t \to \infty} tx$ exists if and only if $(I,
  \vec V, \vec\rho)$ does not lie in
  $\fN_{m,\vec{e}}^{\fl,(\ell + 1)\text{-}\st}$.
\end{enumerate}
Then $\bM^{\ell,\ell+1}$ is constructed from $\scL^\times$ by gluing
in the total space of
$\scL\big|_{\fN_{m,\vec{e}}^{\fl,\ell\text{-}\st}}$ and the total
space of $\scL^{\vee}\big|_{\fN_{m,\vec{e}}^{\fl,(\ell + 1)\text{-}\st}}$ to
$\scL^{\times}$, thereby adding in the missing limits of $tx$ for
$t \to 0$ and for $t\to \infty$ respectively.

Via this description, we see that $\bM^{\ell,\ell+1}$ is smooth over
$\fN_{m,\vec{e}}^{\fl,(\ell,\ell+1)\text{-}\sst}$ and hence over $\fN_m$.

\subsubsection{}
\label{sec:masterspace-stability}

Here is yet another description of $\bM^{\ell,\ell+1}$ in terms of a
weak stability condition, building on
\S\ref{sec:ellstability-as-weak-stability}. For objects in the moduli
stack $\hat\fN^{\fl,\ell,\ell+1}$ from
Definition~\ref{def:mochizuki-master-space-ambient-stack}, define
\[ \hat\tau_{i_{\vec e}(\ell+1)}^{\fl}(\gamma,\vec{f}, f_{-1}) \coloneqq \begin{cases}
  \left((-\infty,-\infty), \, 0\right) & \gamma = 0,\, \vec{f} = \vec 0, \, f_{-1} > 0, \\
  \left(\tau_{i_{\vec e}(\ell+1)}^{\fl}(\gamma, \vec{f}), \, 1/2 - f_{-1} \right) & \gamma = (0,0, -\star), \, i_{\min}(\vec{f}) = i_{\vec e}(\ell+1), \\
  \left(\tau_{i_{\vec e}(\ell+1)}^{\fl}(\gamma,\vec{f}), \, 0 \right) & \text{otherwise}  \\
\end{cases} \] 
where $(\gamma, \vec{f})$ is a full flag and $\star$ denotes a
positive integer. One checks that $\hat\tau_{i_{\vec
    e}(\ell+1)}^{\fl}$ is a weak stability condition for any $0 \le
\ell \le M-1$, using that it refines the weak stability condition
$\tau^{\fl}_{i_{\vec e}(\ell+1)}$ and some casework.

\begin{lemma}
  The master space $\bM^{\ell,\ell+1}$ is canonically isomorphic to
  the substack of $\hat\fN^{\fl,\ell,\ell+1}_{m, \vec{e}, 1}$ of
  $\hat\tau_{i_{\vec e}(\ell+1)}^{\fl}$-stable objects.
\end{lemma}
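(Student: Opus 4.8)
The plan is to prove the claimed isomorphism by exhibiting, in both directions, a morphism of moduli problems and checking they are mutually inverse; the real content is matching the open conditions i)--iii) in Definition~\ref{def:mochizuki-master-space-ambient-stack} with the stability inequalities coming from $\hat\tau^{\fl}_{i_{\vec e}(\ell+1)}$. First I would recall that, by the alternate description in \S\ref{sec:masterspace_alt}, $\bM^{\ell,\ell+1}$ is glued from $\scL^\times$, the total space of $\scL|_{\fN^{\fl,\ell\text{-}\st}}$, and the total space of $\scL^\vee|_{\fN^{\fl,(\ell+1)\text{-}\st}}$; correspondingly, a point of $\bM^{\ell,\ell+1}$ lying over $(I,\vec V,\vec\rho)$ records exactly the extra datum of a map $\hat\rho_{-1}\colon V_{i_{\vec e}(\ell+1)}/V_{i_{\vec e}(\ell)}\to\hat V_{-1}$ and $\hat\rho_0\colon L\to\hat V_{-1}$ with $\hat V_{-1}$ one-dimensional and not both zero, so $\bM^{\ell,\ell+1}$ is literally an open substack of $\hat\fN^{\fl,\ell,\ell+1}_{m,\vec e,1}$. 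Thus the isomorphism reduces to an equality of open substacks: one must show that the three conditions defining $\bM^{\ell,\ell+1}$ inside $\hat\fN^{\fl,\ell,\ell+1}_{m,\vec e,1}$ are precisely $\hat\tau^{\fl}_{i_{\vec e}(\ell+1)}$-stability.

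Next I would unwind $\hat\tau^{\fl}_{i_{\vec e}(\ell+1)}$-stability. An object of $\hat\fN^{\fl,\ell,\ell+1}_{m,\vec e,1}$ is $\hat\tau$-stable iff for every nonzero proper sub/quotient the relevant strict inequality holds. The possible destabilizers are: sub- or quotient-pairs $[0\to\cZ]$ with $\cZ$ zero-dimensional (carrying some flag data and possibly part of $\hat V_{-1}$), and the ``pure framing'' object with $\gamma=0$, $\vec f=\vec 0$, $f_{-1}>0$ (i.e.\ the summand $\hat V_{-1}$ with $\hat\rho_{-1}=\hat\rho_0=0$). The first row of the definition of $\hat\tau$ assigns this pure-framing object the value $((-\infty,-\infty),0)$, which is strictly minimal; hence no sub of that form can destabilize, but a \emph{quotient} to it is forbidden, which is exactly condition iii): at least one of $\hat\rho_{-1},\hat\rho_0$ is nonzero. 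For the $\cZ$-type destabilizers, the extra coordinate $1/2-f_{-1}$ in the second row of the definition is what detects whether $\hat V_{-1}$ (which has total dimension $1$) is ``used up'' by the sub/quotient: if $\hat\rho_{-1}=0$ then the flag on $\cE$ can ignore the $(\ell,\ell+1)$-level quotient constraint and one is thrown back to $(\ell+1)$-stability of $(I,\vec V,\vec\rho)$ — this is condition i); dually $\hat\rho_0=0$ throws one back to $\ell$-stability — condition ii). Concretely I would, for each of the finitely many shapes of HN-type sub/quotient with numerical class a full flag (plus the pure-framing class), write the $\hat\tau$-inequality and observe it is vacuous except in the boundary cases $\hat\rho_{-1}=0$ or $\hat\rho_0=0$, where it specializes to the $(\ell,\ell+1)$-semistable object $(I,\vec V,\vec\rho)$ being $\ell$- or $(\ell+1)$-stable respectively via Lemma~\ref{lem:weak-stability}.

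Having shown the two open substacks of $\hat\fN^{\fl,\ell,\ell+1}_{m,\vec e,1}$ coincide, the isomorphism is the identity and is automatically canonical. I would also note in passing that this simultaneously reproves smoothness of $\bM^{\ell,\ell+1}$ over $\fN_m$ (already observed in \S\ref{sec:masterspace_alt}) and gives it the structure used later: being a $\hat\tau$-stable locus, it is a separated algebraic space with proper $\sT$-fixed locus by the same argument as Proposition~\ref{prop:flag-stacks-are-algebraic} together with the properness strategy of \S\ref{sec:properness-strategy}, once one records that the relevant abelian category is noetherian and $\hat\tau$-artinian and the semistable locus is finite type.

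The main obstacle I anticipate is bookkeeping rather than conceptual: verifying that $\hat\tau^{\fl}_{i_{\vec e}(\ell+1)}$ really is a weak stability condition (the seesaw inequality across the three cases of its definition, using $i_{\min}(\vec f'+\vec f'')=\min(i_{\min}(\vec f'),i_{\min}(\vec f''))$ and additivity of $f_{-1}$), and then carefully enumerating which sub/quotient classes of a full flag can have $\hat\tau$-value $\geq$ that of the whole object, so that one is sure conditions i)--iii) exhaust the stability constraints and do not under- or over-count. In particular one must be careful that the coordinate $1/2-f_{-1}\in\{1/2,-1/2\}$ (since $f_{-1}\le 1$ on $\bM^{\ell,\ell+1}$) correctly separates the ``$\hat V_{-1}$ in the sub'' case from the ``$\hat V_{-1}$ in the quotient'' case without introducing spurious strictly-semistable loci — which is why the $+1/2$ offset is there. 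Once that casework is laid out, each individual check is immediate from the definitions.
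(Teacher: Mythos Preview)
Your proposal is correct and follows essentially the same route as the paper: both arguments recognize that $\bM^{\ell,\ell+1}$ is by definition an open substack of $\hat\fN^{\fl,\ell,\ell+1}_{m,\vec e,1}$ and reduce to matching the conditions i)--iii) with $\hat\tau^{\fl}_{i_{\vec e}(\ell+1)}$-stability via casework on destabilizing sub/quotient classes. The paper's sketch organizes this casework around three structural observations (a quotient of class $(0,\vec 0,1)$ exists iff both $\hat\rho$'s vanish; $\hat\rho_0\neq 0$ forces rank-zero sub-objects with $f_{-1}=0$; $\hat\rho_{-1}\neq 0$ forces $f_{i_{\vec e}(\ell+1)}=f_{i_{\vec e}(\ell)}$ for such sub-objects), which is exactly the content of your enumeration of pure-framing and $\cZ$-type destabilizers, together with the appeal to Lemma~\ref{lem:weak-stability}.
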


\begin{proof}[Proof sketch.]
  Straightforward to verify from the definitions, using the
  observations that for any object in class $(\beta,\vec{e},1)$, we have:
  \begin{itemize}
  \item if $\hat\rho_{-1} = 0$ and $\hat\rho_0 = 0$, then there is a
    quotient-object of numerical class $(0, \vec 0, 1)$;
  \item if $\hat\rho_0 \neq 0$, then all sub-objects of class
    $(\gamma, \vec f, 0)$ must have $\gamma = (0, 0, -\star)$;
  \item if $\hat\rho_{-1} \neq 0$, then all sub-objects of class $(\gamma, \vec f, 0)$ must have $f_{i_{\vec e}(\ell+1)} =
    f_{i_{\vec e}(\ell)}$. \qedhere
  \end{itemize}
\end{proof}

\subsubsection{}

In addition to the $\sT$-action that it inherits from $\fN_m$, the
master space $\bM^{\ell,\ell+1}$ has a natural $\bC^\times$-action
given by scaling $\hat\rho_{-1}$ with weight $z^{-1}$.

\begin{proposition} \label{prop:mochizuki-master-space-properness}
  The master space $\bM^{\ell,\ell+1}$ is a separated algebraic space
  with an induced $\bC^\times \times \sT$ action. The $\sT$-fixed
  locus is proper.
\end{proposition}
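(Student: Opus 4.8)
The plan is to reduce everything to statements that have already been established for the stacks $\fN^{\fl,\ell\text{-}\st}_{m,\vec e}$ and $\fN^{\fl,(\ell,\ell+1)\text{-}\sst}_{m,\vec e}$. For the algebraicity and separatedness, I would use the weak-stability description of $\bM^{\ell,\ell+1}$ from \S\ref{sec:masterspace-stability}: it is the $\hat\tau^{\fl}_{i_{\vec e}(\ell+1)}$-stable locus inside $\hat\fN^{\fl,\ell,\ell+1}_{m,\vec e,1}$. As in Lemma~\ref{lem:separated-algebraic-space}, it suffices to check that $\hat\tau^{\fl}_{i_{\vec e}(\ell+1)}$-stable objects have only scaling automorphisms (which are then removed by the rigidification built into $\hat\fN^{\fl,\ell,\ell+1}$) and that the valuative criterion for separatedness holds. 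The automorphism computation follows the same pattern as Proposition~\ref{prop:flag-stacks-are-algebraic}: an automorphism $\varphi = \id + \delta[-1]$ must preserve the full flag $\vec V$ and the maps $\hat\rho_{-1},\hat\rho_0$, and condition iii) in Definition~\ref{def:mochizuki-master-space-ambient-stack} (at least one of $\hat\rho_{-1},\hat\rho_0$ nonzero) together with the two stability conditions i) and ii) forces $\delta = 0$; separatedness is the standard Langton-type argument of Lemma~\ref{lem:separated-algebraic-space}, now for the weak stability condition $\hat\tau^{\fl}_{i_{\vec e}(\ell+1)}$.

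The $\bC^\times\times\sT$-action is immediate: the $\sT$-action is inherited from $\fN_m$ through the smooth structure map of \S\ref{sec:masterspace_alt}, and the extra $\bC^\times$ scales $\hat\rho_{-1}$ with weight $z^{-1}$, commuting with $\sT$ since $\sT$ acts trivially on the quiver vector spaces $\hat V_{-1}, \vec V$. One should note that this $\bC^\times$-action has the three fixed loci promised in \S\ref{sec:intro-strategy}: a copy of $\fN^{\fl,\ell\text{-}\st}_{m,\vec e}$ where $\hat\rho_{-1}=0$, a copy of $\fN^{\fl,(\ell+1)\text{-}\st}_{m,\vec e}$ where $\hat\rho_0 = 0$, and an interaction locus where both are nonzero, which by Lemma~\ref{lem:pos-stab} decomposes the underlying object of $\fN^{\fl,(\ell,\ell+1)\text{-}\sst}_{m,\vec e}$ into a piece in $\fN^{\fl,\ell\text{-}\st}_{m',\vec e'}$ and a piece in $\fQ^{\fl,\st}_{m-m',\vec e-\vec e'}$.

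For properness of $\bM^{\ell,\ell+1,\sT}$, I would invoke Remark~\ref{rem:fixed-loci-are-components-in-proper}: since $\bM^{\ell,\ell+1}$ is already shown to be a separated algebraic space, it is enough to exhibit it inside an ambient moduli stack $\hat\fM^{\circ,\fl,\ell,\ell+1}$ (defined exactly as in Definition~\ref{def:mochizuki-master-space-ambient-stack} but with no boundary constraint on $I$, i.e.\ over $\fM^\circ$ rather than $\fN$) whose $\sT$-fixed locus is proper, and observe that the $\sT$-fixed locus of $\bM^{\ell,\ell+1}$ is a union of connected components of the $\sT$-fixed locus of the ambient space. The ambient space's properness follows from the strategy of \S\ref{sec:properness-strategy} applied to the weak stability condition $\hat\tau^{\fl}_{i_{\vec e}(\ell+1)}$, just as in Proposition~\ref{prop:valuative-criterion-existence}: Lemma~\ref{lem:val-crit-semistable-base} provides the starting extension (A0), the full-flag and $\hat V_{-1}$ data are extended across the special fiber as before so that only finitely many slope values occur (giving (A1), (A3)), and (A2) is the Langton termination argument — the extra piece $\hat V_{-1}$ only enlarges the finite bookkeeping quantity $\length(\cZ) - \sum_i\dim W_i$ is replaced by a similar bounded quantity, so the sequence of elementary modifications terminates. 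The main obstacle I anticipate is checking assumption (A2) carefully: one must verify that $\hat\tau^{\fl}_{i_{\vec e}(\ell+1)}$-destabilizing sub/quotient objects again have full-flag numerical classes and land in the expected loci, and that the monovariant controlling the Langton iteration still decreases when one also tracks the behavior of $\hat\rho_{-1}$ and $\hat\rho_0$ — the interplay of the two ``framing'' maps with the flag is the genuinely new combinatorial point compared to Proposition~\ref{prop:valuative-criterion-existence}.
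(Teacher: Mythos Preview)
Your proposal is correct and follows essentially the same route as the paper: invoke Lemma~\ref{lem:separated-algebraic-space} via the weak-stability description of \S\ref{sec:masterspace-stability} for the separated-algebraic-space claim, then pass to the ambient $\fM^\circ$-version and run the Langton strategy of \S\ref{sec:properness-strategy} for properness, with (A2) being the only nontrivial check (the paper simply says it ``follows in the same way as in Proposition~\ref{prop:valuative-criterion-existence}''). One small slip in your aside on fixed loci: by Definition~\ref{def:mochizuki-master-space-ambient-stack}, the locus $\{\hat\rho_{-1}=0\}$ is $\fN^{\fl,(\ell+1)\text{-}\st}_{m,\vec e}$ and $\{\hat\rho_0=0\}$ is $\fN^{\fl,\ell\text{-}\st}_{m,\vec e}$, not the other way around.
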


\begin{proof}
  It follows directly from Lemma~\ref{lem:separated-algebraic-space}
  that the master space is a separated algebraic space.

  To show properness of $\sT$-fixed loci, we can argue, like we did in
  \S\ref{sec:mochizuki-properness-flag-stacks}, that they are
  components of the $\sT$-fixed loci of a proper space
  $\bar{\bM}^{\ell,\ell+1}$ obtained by working with $\fM^\circ$
  instead of $\fN$ in the constructions of
  Definition~\ref{def:mochizuki-master-space-ambient-stack} and
  \S\ref{sec:masterspace-stability}. In particular we can realize
  $\bar{\bM}^{\ell,\ell+1}$ as an open sub-stack of
  $\hat\tau_{i_{\vec{e}}(\ell+1)}^{\fl}$-stable objects in an ambient stack
  $\hat\fM^{\circ,\fl,\ell,\ell+1}_{\beta,\vec{e},1}$.

  It remains to check the valuative criterion of properness for
  $\bar{\bM}$. Note that any family of $\hat\tau_{i_{\vec{e}}(\ell+1)}^{\fl}$-stable
  objects has an underlying family of $(\ell,\ell+1)$-semistable
  objects. Thus, by
  Proposition~\ref{prop:valuative-criterion-existence}, we can
  complete a family over the generic point of a DVR to some family of
  objects in $\hat\fM^{\circ,\fl,\ell,\ell+1}_{\beta,\vec{e},1}$,
  which yields a $(\ell,\ell+1)$-semistable family under the forgetful
  map to $\fM^{\circ,\fl}_{\beta,\vec{e}}$. Applying the strategy of
  \S\ref{sec:properness-strategy} using the description of
  \S\ref{sec:masterspace-stability}, it remains to show that
  assumption (A2) holds. This follows in the same way as in
  Proposition~\ref{prop:valuative-criterion-existence}.
\end{proof}

\subsubsection{}

From \S\ref{sec:masterspace_alt}, $\bM^{\ell,\ell+1}$ is smooth over
$\fN_m$, and Proposition~\ref{prop:mochizuki-master-space-properness}
shows it is a separated algebraic space with proper $\sT$-fixed loci.
Then Theorem~\ref{thm:symmetric-pullback} endows $\bM^{\ell,\ell+1}$
with a symmetric APOT by symmetrized pullback along the forgetful map
to $\fN_m$. By Theorem~\ref{thm:symmetrized-pullback-localization} and
Lemma~\ref{lem:obstruction-theory-square-root}, we obtain a
symmetrized virtual structure sheaf $\hat\cO^\vir$ amenable to
$(\bC^\times \times \sT)$-equivariant localization.

\subsubsection{}

\begin{proposition}\label{prop:masterspace-fixedloci}
  The $\bC^\times$-fixed locus of $\bM^{\ell,\ell+1}$ is the disjoint
  union of the following pieces.
  \begin{enumerate}
  \item Let $Z_{\hat\rho_{-1}=0} \coloneqq \{\hat\rho_{-1}=0\} \subset
    \bM^{\ell,\ell+1}$. By definition, $\hat\rho_0 \neq 0$. There is a
    natural isomorphism of stacks
    \[ Z_{\hat\rho_{-1}=0} \xrightarrow{\sim} \fN_{m,\vec e}^{\fl,(\ell+1)\text{-}\st} \qquad \Big[(I, (\hat V_{-1}, \vec V), (0, \hat\rho_0, \vec\rho))\Big] \mapsto [(I, \vec V, \vec\rho)] \]
    which identifies the $\hat\cO^\vir$. The virtual normal bundle is
    $z \otimes \scL^\vee$.

  \item Let $Z_{\hat \rho_0=0} \coloneqq \{\hat \rho_0 = 0\} \subset
    \bM^{\ell,\ell+1}$. By definition, $\hat \rho_{-1} \neq 0$. There is a
    natural isomorphism of stacks
    \[ Z_{\hat\rho_0=0} \xrightarrow{\sim} \fN_{m,\vec e}^{\fl,\ell\text{-}\st} \qquad \Big[(I, (\hat V_{-1}, \vec V), (\hat\rho_{-1}, 0, \vec\rho))\Big] \mapsto [(I, \vec V, \vec\rho)] \]
    which identifies the $\hat\cO^\vir$. The virtual normal bundle is
    $z^{-1} \otimes \scL$.

  \item For each splitting $(\beta, \vec e) = (\gamma, \vec f) +
    (\delta, \vec g)$ appearing in Lemma~\ref{lem:pos-stab}, let
    \[ Z_{(\gamma,\vec f), (\delta,\vec g)} \coloneqq \left\{ \Big[(I' \oplus I'', (\hat V_{-1}, \vec V' \oplus \vec V''), (\hat\rho_{-1}', \hat\rho_0', \vec\rho') \oplus (\hat\rho_{-1}'', \hat\rho_0'', \vec\rho''))\Big] : \begin{array}{c} \hat\rho_{-1} \neq 0 \\ \hat\rho_0 \neq 0\end{array}\right\} \subset \bM^{\ell,\ell+1} \]
    where $\cl((I', \vec V', \vec \rho')) = (\gamma, \vec f)$ and
    $\cl((I'', \vec V'', \vec \rho'')) = (\delta, \vec g)$. If $\delta
    = (0, 0, -m')$ then there is a natural isomorphism of stacks
    \begin{align*}
      Z_{(\gamma,\vec f), (\delta,\vec g)} &\xrightarrow{\sim} \fN_{m-m', \vec f}^{\fl,\ell\text{-}\st} \times \fQ_{m', \vec g}^{\st} \\
      \Big[(I, (\hat V_{-1}, \vec V), (\hat\rho_{-1}, \hat\rho_0, \vec\rho))\Big] &\mapsto \left([(I', \vec V', \vec\rho')], [(I'', \vec V'', \vec\rho'')]\right)
    \end{align*}
    which identifies the $\hat\cO^\vir$. Under this isomorphism, the
    virtual normal bundle is
    \begin{align*}
      \cN^\vir_{(\gamma,\vec f),(\delta,\vec g)}
      &= \left(z^{-1} \bF(\vec f, \vec g) + z \bF(\vec g, \vec f)\right) - \kappa \cdot (\cdots)^\vee \\
      &\quad - \left(z^{-1} R\pi_*\cExt(\scI_\gamma, \scI_\delta(-D)) + z R\pi_*\cExt(\scI_\delta, \scI_\gamma(-D))\right)
    \end{align*}
    where $\bF(\vec f, \vec g)$ is the restriction of
    \eqref{eq:quiver-bilinear-obstruction} to numerical class $(\vec
    f, \vec g)$, and $\scI_\gamma$ and $\scI_\delta$ are pullbacks of
    universal families for numerical classes $\gamma$ and $\delta$
    respectively. The $(\cdots)^\vee$ indicates the dual of the
    preceding bracketed term.
  \end{enumerate}
\end{proposition}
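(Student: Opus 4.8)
The plan is to identify the $\bC^\times$-fixed locus by analyzing when $\lim_{t\to 0}$ and $\lim_{t\to\infty}$ of the scaling action on $\hat\rho_{-1}$ (weight $z^{-1}$) fix a point, then to match virtual structure sheaves on each component via Proposition~\ref{prop:master_space_vir_class_comparison}, and finally to compute the virtual normal bundles from the defining triangle of symmetrized pullback together with the explicit formulas \eqref{eq:obstruction-theory-rank-1} and \eqref{eq:quiver-bilinear-obstruction}. First I would use the alternate description in \S\ref{sec:masterspace_alt}: $\bM^{\ell,\ell+1}$ is built from $\scL^\times$ by gluing in the total spaces of $\scL|_{\fN^{\fl,\ell\text{-}\st}}$ and $\scL^\vee|_{\fN^{\fl,(\ell+1)\text{-}\st}}$. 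Since the $\bC^\times$-action scales the fibers of $\scL$ (equivalently, scales $\hat\rho_{-1}$), a point of $\scL^\times$ is never fixed, so the only fixed points are: the zero section of $\scL|_{\fN^{\fl,\ell\text{-}\st}}$ (which is $\{\hat\rho_{-1}=0\}$ in the master-space coordinates, hence $\hat\rho_0\neq 0$, and maps isomorphically to $\fN^{\fl,(\ell+1)\text{-}\st}_{m,\vec e}$ --- wait, one must be careful about which end of $\scL^\times$ is which; the component $Z_{\hat\rho_{-1}=0}$ is the glued-in copy of $\fN^{\fl,(\ell+1)\text{-}\st}$), the zero section of $\scL^\vee|_{\fN^{\fl,(\ell+1)\text{-}\st}}$, and the fixed points of the residual $\bC^\times$-action inside $\scL^\times$ coming from objects with extra automorphisms. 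But the total space of $\scL^\times$ is an algebraic space (no stacky points, property (i) of \S\ref{sec:masterspace_alt}), so the residual fixed points come precisely from the strictly polystable objects of Lemma~\ref{lem:pos-stab}: over such an object $(I'\oplus I'', \ldots)$ the extra $\bC^\times$ scaling $\cZ = I''$ combined with the master-space $\bC^\times$ has a one-dimensional subtorus fixing the point. This gives exactly the three families of components claimed.

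Next I would establish the isomorphisms of stacks. For (1) and (2) these are tautological from the gluing construction: the zero section of a line bundle over $\fN^{\fl,(\ell+1)\text{-}\st}$ (resp. $\fN^{\fl,\ell\text{-}\st}$) is that stack. For (3), on $Z_{(\gamma,\vec f),(\delta,\vec g)}$ the object splits as $I'\oplus I''$ with $\delta = (0,0,-m')$; forgetting $(\hat V_{-1},\hat\rho_{-1},\hat\rho_0)$ and remembering that $\hat\rho_0\neq 0$ rigidifies the $L$-factor while stability of $(I'',\vec V'',\vec\rho'')$ in the sense of \S\ref{sec:on-wall-invariants-Q} comes from Lemma~\ref{lem:pos-stab}, one reads off the isomorphism to $\fN^{\fl,\ell\text{-}\st}_{m-m',\vec f}\times\fQ^{\st}_{m',\vec g}$. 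To match $\hat\cO^\vir$ on all three components I invoke Proposition~\ref{prop:master_space_vir_class_comparison}: $\bM^{\ell,\ell+1}$ is smooth over $\fN_m$ (so $g$ exists), each component $Z$ is smooth over $\fN_{m-m'}\times\fQ_{m'}$ (or over $\fN_m$ in cases (1),(2)) via the product of evaluation/forgetful maps (so $f$ exists), and it remains to verify the K-theory matching \eqref{eq:APOT-k-class-matching}. This is a bookkeeping computation: $g^*\bE_\fM|_Z$ restricted to the $\bC^\times$-fixed part, plus the relative cotangent terms $\Omega_g$ (from the quiver fibers \eqref{eq:quiver-bilinear-obstruction} and the line-bundle fiber) minus $\kappa\Omega_g^\vee$, must equal the corresponding expression for $f$. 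Here one uses that $\fN_m$-obstruction $\bE_\fM = (R\pi_*\cExt(\scI,\scI(-D))[1])^\vee$ additively decomposes along $\scI = \scI_\gamma\oplus\scI_\delta$ into self-pairings plus the cross-terms $R\pi_*\cExt(\scI_\gamma,\scI_\delta(-D))$ and $R\pi_*\cExt(\scI_\delta,\scI_\gamma(-D))$, with the cross-terms carrying nontrivial $z$-weight and hence being part of the virtual normal bundle rather than the fixed part.

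Finally, the virtual normal bundle of each component is $\left(g^*\bE_\fM + \Omega_g - \kappa\Omega_g^\vee\right)|_Z^{\text{mov}}$ in the sense of \eqref{eq:APOT-Nvir}, where ``mov'' is the $\bC^\times$-moving part. For (1) the only moving piece is the fiber of $\scL$, twisted by the $\bC^\times$-weight, giving $z\otimes\scL^\vee$ (the sign of the weight is fixed by the convention that $\bC^\times$ scales $\hat\rho_{-1}$ with weight $z^{-1}$); for (2) symmetrically $z^{-1}\otimes\scL$. For (3) the moving part of $g^*\bE_\fM$ consists of the cross Ext-terms $-\big(z^{-1}R\pi_*\cExt(\scI_\gamma,\scI_\delta(-D)) + zR\pi_*\cExt(\scI_\delta,\scI_\gamma(-D))\big)$ (the sign because $\bE_\fM$ is a shifted dual of $R\pi_*\cExt$), the moving part of $\Omega_g$ from \eqref{eq:quiver-bilinear-obstruction} contributes $z^{-1}\bF(\vec f,\vec g) + z\bF(\vec g,\vec f)$, and $-\kappa\Omega_g^\vee$ contributes the indicated $-\kappa(\cdots)^\vee$; together these give the stated $\cN^\vir_{(\gamma,\vec f),(\delta,\vec g)}$. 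The main obstacle I anticipate is the careful sign and weight bookkeeping in case (3) --- in particular, correctly tracking which Ext-groups land in the moving part versus the fixed part, getting the $z$ versus $z^{-1}$ assignment consistent with the action on $\hat\rho_{-1}$, and confirming that the symmetric self-dual structure of the obstruction theory (which pairs the $z$ and $z^{-1}$ halves via $\kappa$) produces exactly the $-\kappa(\cdots)^\vee$ shape --- whereas the identifications of stacks and the application of Proposition~\ref{prop:master_space_vir_class_comparison} are essentially formal once the fixed-locus analysis is in place.
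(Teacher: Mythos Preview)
Your proposal is correct and follows essentially the same approach as the paper: identify the fixed loci via the master-space construction (cases (1),(2) from the gluing description of \S\ref{sec:masterspace_alt}, case (3) from Lemma~\ref{lem:pos-stab}), then invoke Proposition~\ref{prop:master_space_vir_class_comparison} to match $\hat\cO^\vir$ by matching K-theory classes, and compute the virtual normal bundles from the symmetrized-pullback formula \eqref{eq:APOT-Nvir}. The paper's own proof is only a sketch that points to \cite{Kuhn2021} and to the parallel Joyce-style computation in Proposition~\ref{prop:joyce-master-space-fixed-loci} for the detailed K-theory bookkeeping; your plan simply fills in those details, with the same caveat the paper gives about the sign and weight conventions in case (3).
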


\begin{proof}
  The proof is analogous to the one in \cite{Kuhn2021}. We will sketch
  the main parts. The descriptions of the first and second types of
  fixed loci are immediate from the construction. The identification
  of normal bundles is also straightforward: the induced APOT on these
  fixed loci agrees with the natural ones on
  $\fN_{m,\vec{e}}^{\fl,\ell\text{-}\st}$ and
  $\fN_{m,\vec{e}}^{\fl,(\ell+1)\text{-}\st}$ respectively induced by
  symmetrized pullback from $\fN_m$.

  The description of the third type of fixed locus is essentially
  Lemma~\ref{lem:pos-stab} and an argument that this gives the right
  sub-scheme structure on the fixed locus; see \cite[Prop. 4.59 and
    Lemma 4.28]{Kuhn2021}. By Proposition
  \ref{prop:master_space_vir_class_comparison}, to show the
  identification of virtual structure sheaves, it is enough to check
  that the K-theory class of the fixed obstruction theory is the one
  on the target. This, and showing the virtual normal bundle is of the
  claimed form are slightly tedious, but straightforward checks. We do
  this in \S\ref{prop:joyce-master-space-fixed-loci} in detail.
\end{proof}

\subsubsection{}

By Propositions~\ref{prop:valuative-criterion-existence} and
\ref{prop:mochizuki-master-space-properness}, the enumerative
invariants
\begin{align*}
  \tilde \sN_{m,\vec e}(\ell) &\coloneqq \chi\left(\fN_{m,\vec e}^{\fl,\ell\text{-}\st}, \hat\cO^\vir\right) \\
  \tilde \sQ_{m,\vec e} &\coloneqq \chi\left(\fQ_{m,\vec e}^{\fl,\st}, \hat\cO^\vir\right),
\end{align*}
and also $\chi(\bM^{\ell,\ell+1}, \hat\cO^\vir)$, are well-defined by
$\sT$-equivariant localization. Using
Proposition~\ref{prop:masterspace-fixedloci} and applying appropriate
residues to the $\bC^\times$-localization formula on
$\bM^{\ell,\ell+1}$, as written in
Proposition~\ref{prop:master-space-relation}, we immediately obtain
the ``single-step'' wall-crossing formula
\begin{align*}
  0 &= (-1)\left(\kappa^{1/2}-\kappa^{-1/2}\right) \tilde \sN_{m, \vec e}(\ell+1) + (-1)\left(\kappa^{-1/2}-\kappa^{1/2}\right) \tilde \sN_{m, \vec e}(\ell) \\
  &\quad +\sum_{\substack{(\beta, \vec e) = (\gamma, \vec f) + (\delta, \vec g)\\ \gamma \eqqcolon (1,-\beta_C,-m+m')\\ \delta \eqqcolon (0,0,-m')\\i_{\min}(\vec{g}) = i_{\vec e}(\ell+1)}} (-1)^{\ind} (\kappa^{\frac{\ind}{2}}-\kappa^{-\frac{\ind}{2}}) \tilde \sN_{m-m', \vec f}(\ell) \tilde \sQ_{m',\vec g}
\end{align*}
where, according to Proposition~\ref{prop:master-space-relation},
\[\ind = -\chi\left((1, -\beta_C,-m+m'), (0,0,-m')\right) + \chi_Q(\vec{f}, \vec{g}) - \chi_{Q}(\vec{g}, \vec{f}) = m' + c_{Q}(\vec{f}, \vec{g}). \]
Rearranging and rewriting in terms of the quantum integers
\eqref{eq:quantum-integer}, we get
\begin{equation} \label{eq:mochizuki-single-step-WCF}
  \tilde \sN_{m, \vec e}(\ell+1) - \tilde \sN_{m, \vec e}(\ell) = -\sum_{\substack{(\beta, \vec e) = (\gamma, \vec f) + (\delta, \vec g)\\ \gamma \eqqcolon (1,-\beta_C,-m+m')\\ \delta \eqqcolon (0,0,-m')\\i_{\min}(\vec{g}) = i_{\vec e}(\ell+1)}} [m' + c_Q(\vec f, \vec g)]_\kappa \cdot \tilde \sN_{m-m', \vec f}(\ell) \tilde \sQ_{m',\vec g}.
\end{equation}

\subsubsection{}

Recall the full flag $(\alpha, \vec d)$, where $\alpha = (1, -\beta_C,
-n)$ was the original class of interest and $\vec d = (1, 2, \ldots,
N)$. We now iteratively apply \eqref{eq:mochizuki-single-step-WCF} to
express $\tilde \sN_{n, \vec d}(N)$ in terms of invariants $\tilde
\sN_{m, \vec e}(0)$ for varying $m \le n$ and full flags $((1,
-\beta_C, -m), \vec e)$. The resulting wall-crossing formula is
\begin{equation} \label{eq:mochizuki-WCF}
  \tilde \sN_{n, \vec d}(N) = \hspace{-4em}\sum_{\substack{j>0, \, (\alpha,\vec d) = \sum_{i=1}^j (\gamma_i, \vec e_i)\\\{\gamma_i \eqqcolon (0,0,-m_i)\}_{i=1}^{j-1}, \, \gamma_j \eqqcolon (1, -\beta_C, -m_j)\\ \forall i: \; (\gamma_i, \vec e_i) \text{ full flag}\\\vec e_1 < \cdots < \vec e_j}} \hspace{-4em} (-1)^j \tilde \sN_{m_j, \vec e_j}(0) \prod_{i=1}^{j-1} \Big[ m_i - c_Q(\vec{e}_i, \sum_{\ell=i+1}^j \vec{e}_{\ell})\Big]_\kappa \tilde\sQ_{m_i, \vec e_i}.
\end{equation}

Recall that $\sN_{(\lambda,\mu,\nu),m}^{*} \coloneqq \chi(\fN_m^*,
\hat\cO^\vir)$, for $* \in \{\DT, \PT\}$, were the original DT and PT
invariants of interest. Apply \eqref{eq:full-flag-bundle-pushforward}
and Proposition~\ref{prop:on-wall-invariants-Q} to get
\begin{gather*}
  \tilde \sN_{n, \vec d}(N) = [N]_\kappa! \cdot \sN_{(\lambda,\mu,\nu),n}^\PT, \qquad \tilde \sN_{m, \vec e}(0) = [N-n+m]_\kappa! \cdot \sN_{(\lambda,\mu,\nu),m}^\DT \\
  \tilde \sQ_{m, \vec e} = [m-1]_\kappa! \cdot \sN_{(\emptyset,\emptyset,\emptyset),m}^\DT.
\end{gather*}
In the word notation of Definition~\ref{def:word-rearrangements}, the
wall-crossing formula \eqref{eq:mochizuki-WCF} then becomes
\begin{align*}
  \sN_{(\lambda,\mu,\nu),n}^\PT = \sum_{\substack{j\ge 0\\\vec m \in \bZ_{>0}^j}} (-1)^j &\frac{[N-|\vec m|]_\kappa! \, \prod_{i=1}^j [m_i-1]_\kappa! }{[N]_\kappa!} c_>'(m_1, \ldots, m_j, N-|\vec m|) \\[-1.5em]
  &\cdot \sN_{(\lambda,\mu,\nu), n-|\vec m|}^\DT \prod_{i=1}^j \sN_{(\emptyset,\emptyset,\emptyset),m_i}^\DT
\end{align*}
where
\[ c_>'(m_1,\ldots,m_{j+1}) \coloneqq \sum_{\substack{w \in R(m_1,\ldots,m_{j+1})\\o_1(w) > \cdots > o_j(w)}} \prod_{i=1}^j \Big[m_i - \sum_{\ell=i+1}^{j+1} c_{i,j}(w)\Big]_\kappa. \]
Applying Proposition~\ref{prop:mochizuki-combinatorics} below, which
we will prove below using purely combinatorial methods, this becomes
\[ \sN_{(\lambda,\mu,\nu),n}^\PT = \sum_{\substack{j\ge 0\\\vec m \in \bZ_{>0}^j}} (-1)^j \sN_{(\lambda,\mu,\nu), n-|\vec m|}^\DT \prod_{i=1}^j \sN_{(\emptyset,\emptyset,\emptyset),m_i}^\DT, \]
which translates into the identity of generating series
\[ \sum_n Q^n \sN_{(\lambda,\mu,\nu),n}^\PT = \bigg(\sum_n Q^n \sN_{(\lambda,\mu,\nu),n}^\DT\bigg) \bigg(\sum_n Q^n \sN_{(\emptyset,\emptyset,\emptyset),n}^\DT\bigg)^{-1}. \]
This concludes the Mochizuki-style wall-crossing proof of the DT/PT
vertex correspondence (Theorem~\ref{thm:dt-pt}). \qed

\subsection{Combinatorial reduction}
\label{sec:prop_comb}

\subsubsection{}

\begin{proposition} \label{prop:mochizuki-combinatorics}
  For integers $\ell \ge 0$ and $N > 0$, and $\vec m = (m_1, \ldots,
  m_\ell) \in \bZ_{>0}^\ell$,
  \begin{equation} \label{eq:wall_comb}
    \sum_{\sigma \in S_\ell} c'_>(m_{\sigma(1)}, \ldots, m_{\sigma(\ell)}, N-|\vec m|) = \frac{[N]_\kappa!}{[N - |\vec m|]_\kappa! \prod_i [m_i-1]_\kappa!} \cdot \ell!.
  \end{equation}
\end{proposition}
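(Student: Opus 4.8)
The plan is to peel the letters of the word off one at a time, converting each peeling step into a $q$-binomial summation, and then to induct on $\ell$.

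First I would reorganize the left-hand side so that the sum over $\sigma\in S_\ell$ is absorbed into the combinatorics. Group the terms according to which weight $m_a$ the permutation $\sigma$ attaches to the letter $1$. For a word $w$ contributing to $c'_>(m_a,m_{\sigma(2)},\dots,m_{\sigma(\ell)},N-|\vec m|)$, write $\bar w$ for the word over $\{2,\dots,\ell+1\}$ obtained by deleting the $m_a$ copies of the letter $1$. The weight factor $[\,m_i-\sum_{\ell'>i}c_{i,\ell'}(w)\,]_\kappa$ for $i\ge 2$ involves only letters $>i\ge 2$, hence depends only on $\bar w$; and $a_1(w):=m_a-\sum_{\ell'\ge 2}c_{1,\ell'}(w)$ depends only on how the $m_a$ letters $1$ interleave with the remaining $N-m_a$ letters. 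Moreover the constraint $o_1(w)>o_2(w)>\dots>o_\ell(w)$ becomes the analogous constraint $o_2(\bar w)>\dots>o_\ell(\bar w)$ on $\bar w$ together with the requirement that every inserted $1$ lie to the right of the first occurrence of the letter $2$ in $w$. In this way the left-hand side becomes $\sum_{a=1}^\ell$ of a sum over such $\bar w$ of $\bigl(\prod_{i\ge 2}[a_i(\bar w)]_\kappa\bigr)$ times an inner sum over the admissible insertions of the $1$'s.

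The key computation is the evaluation of that inner sum. If $\bar w$ has length $K=N-m_a$ and its first letter $2$ sits in position $o_2(\bar w)$, then writing $h_g$ for the number of $1$'s placed in the $g$-th admissible gap one gets $a_1(w)=m_a(1-K)+2\sum_g g\,h_g$, so the inner sum is a power of $\kappa$ times $\sum_{\sum h_g=m_a}\kappa^{\sum_g g\,h_g}$. The latter is a Gaussian binomial coefficient by the standard identity $\sum_{h_1+\dots+h_r=m}x^{\sum_j (j-1)h_j}=\binom{m+r-1}{m}_x$; combining this with the reflection $\binom{n}{k}_{x^{-1}}=x^{-k(n-k)}\binom{n}{k}_x$ collapses the inner sum to a sign, a half-integral power of $\kappa$, $[m_a]_\kappa$, and a balanced $\kappa$-binomial coefficient in $N-o_2(\bar w)$ and $m_a$. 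Thus peeling off the letter $1$ multiplies the running weight by $[m_a]_\kappa$ and by a $\kappa$-binomial that, inconveniently, depends on $\bar w$ through $o_2(\bar w)$.

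I would then induct on $\ell$. The subtlety, which I expect to be the main obstacle, is precisely this residual dependence on $o_2(\bar w)$: the peeling does not factor cleanly, so one cannot simply multiply the $\ell$-letter answer out of single-letter pieces. The remedy is to carry out the remaining sum over $\bar w$ grouped by the value of $o_2(\bar w)$ (equivalently, the number of letters $\ell+1$ preceding it) and to recognize the resulting combination of the position-dependent $\kappa$-binomials with the factorials produced by the inductive hypothesis as an instance of the $q$-Chu--Vandermonde summation, which telescopes those factors into $[N]_\kappa!/[N-m_a]_\kappa!$. Summing over the $\ell$ choices of $a$, the prefactor $\ell$ combines with the inductive $(\ell-1)!$ to give $\ell!$, and the remaining $\kappa$-factorials assemble into $[N]_\kappa!/\bigl([N-|\vec m|]_\kappa!\,\prod_i[m_i-1]_\kappa!\bigr)$. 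Finally one checks that all the signs $(-1)^{n-1}$ built into $[n]_\kappa$ and all the half-integral powers of $\kappa$ coming from the reflection formula cancel; consistency is already forced by the cases $\ell=0$ and $\ell=1$, which can be verified directly. Coincidences among the $m_i$ cause no trouble, since none of these manipulations is sensitive to them. The $q$-Chu--Vandermonde reorganization is the technical core of this combinatorial reduction.
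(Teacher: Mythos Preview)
Your strategy is genuinely different from the paper's, and there is a concrete error in your evaluation of the inner sum that undermines the rest.

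You assert that the inner sum over insertions of the $m_a$ copies of letter~$1$ collapses to a sign, a half-integral power of $\kappa$, $[m_a]_\kappa$, and a balanced $\kappa$-binomial in $N-o_2(\bar w)$ and $m_a$. This is not what one gets. The weight $[a_1(w)]_\kappa$ is a \emph{difference} of two monomials, so the inner sum is a difference of two Gaussian binomials; after the reflection formula, the surviving ``odd'' factor is proportional to $\kappa^{m_a(1+o_2(\bar w))/2}-\kappa^{-m_a(1+o_2(\bar w))/2}$, i.e.\ essentially $[m_a(1+o_2(\bar w))]_\kappa$ rather than $[m_a]_\kappa$. A direct check with $m_a=1$, $K=N-1=4$ already shows this: the inner sum $\sum_{g=o_2}^{K}[1-K+2g]_\kappa$ equals $[2]_\kappa[4]_\kappa$ for $o_2=1$ and $[3]_\kappa^2$ for $o_2=2$, neither of which is $[1]_\kappa$ times the balanced binomial $\begin{bmatrix}N-o_2\\1\end{bmatrix}_\kappa=[N-o_2]_\kappa$. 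For $m_a\ge 2$ the factor $[m_a(1+o_2)]_\kappa$ does not split off $[m_a]_\kappa$ leaving a clean binomial, so the setup for your proposed $q$-Chu--Vandermonde telescoping is not available as stated.

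There is also a structural issue. Even with a correct inner-sum formula, the inductive hypothesis is a sum over \emph{all} words $\bar w$ with the ordering constraint $o_2(\bar w)>\cdots>o_\ell(\bar w)$; grouping by $o_2(\bar w)=p$ does not produce lower-$\ell$ instances of the proposition but rather constrained versions of them, and you have not explained why those constrained sums admit closed forms amenable to Chu--Vandermonde.

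The paper avoids both problems by a different decomposition. Rather than peeling off one letter, it uses a quantum-integer ``Jacobi identity'' $[A+C]_\kappa[B]_\kappa=[A]_\kappa[B-C]_\kappa+[A+B]_\kappa[C]_\kappa$ iteratively to expand the \emph{whole} product $\prod_{i=1}^\ell[\,\cdot\,]_\kappa$ as a sum over subsets $\Omega\subset[2,\ell]$. Each summand then factors into three pieces, each of which is \emph{directly} an instance of the proposition at strictly smaller $\ell$ (the $\ell=1$ case with $m$ replaced by $|\vec m_\Omega|$; an instance inside $\Omega\cup\{1\}$; and an instance on the complement). No residual position-constraint like $o_2(\bar w)=p$ ever appears, which is what makes the induction close.
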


We will prove this combinatorial result by induction on $\ell$. Note
that the symmetrization is really necessary in order to obtain a nice
formula; the individual terms $c'_>(m_{\sigma(1)}, \ldots,
m_{\sigma(\ell)}, N-|\vec m|)$ are significantly more complicated.

\subsubsection{}
\label{sec:mochizuki-base-case}

\begin{proof}[Proof of Proposition~\ref{prop:mochizuki-combinatorics}.]
  
The base case is $\ell = 1$ with $m \coloneqq m_1$. The left hand side
of \eqref{eq:wall_comb} becomes
\[ \sum_{w \in R(m, N-m)} [m - c_{1,2}(w)]_\kappa. \]
Our quantum integers satisfy $[a - b]_\kappa = (-1)^b
\kappa^{\frac{b}{2}} [a]_\kappa - (-1)^a \kappa^{\frac{a}{2}} [b]_\kappa$,
so this sum becomes
\[ \sum_{w \in R(m, N-m)} (-1)^{c_{1,2}(w)} \kappa^{\frac{c_{1,2}(w)}{2}} [m]_\kappa + (-1)^m \kappa^{\frac{m}{2}} [c_{1,2}(w)]_\kappa. \]
The sum over the second term is zero: there is an involution on $R(m,
N-m)$ which takes a word and reverses it, and this acts as $c_{1,2}
\mapsto -c_{1,2}$. In the first term, the quantity $c_{1,2}(w)
\bmod{2}$ is constant on $R(m, N-m)$ because, by
\eqref{eq:word-symmetrized-inversion-number}, every inversion changes
$c_{1,2}(w)$ by $2$. So $(-1)^{c_{1,2}(w)}$ factors out of the sum; it
is enough to evaluate it on the word $w = 1^m 2^{N-m}$, for which
$c_{1,2}(w) = m(N-m)$. We conclude the base case by applying the
following result with $n \coloneqq N-m$.

\begin{proposition} \label{prop:q-binomial}
  For integers $m, n > 0$, 
  \[ (-1)^{mn} \sum_{w \in R(m, n)} \kappa^{\frac{c_{1,2}(w)}{2}} = \frac{[m+n]_\kappa!}{[m]_\kappa! [n]_\kappa!}. \]
\end{proposition}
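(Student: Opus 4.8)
The plan is to reduce the sum to the classical Gaussian $q$-binomial coefficient and then translate between sign/normalization conventions. First I would rewrite the statistic $c_{1,2}$ in terms of inversions. For $w \in R(m,n)$, set $\mathrm{inv}(w) \coloneqq \#\{i < j : w_i = 2,\ w_j = 1\}$. The two terms in $c_{1,2}(w) = \#\{(a,b)\in O_1(w)\times O_2(w): a<b\} - \#\{(a,b): a>b\}$ count, respectively, the non-inversions and the inversions among the $mn$ pairs consisting of one position of a $1$ and one position of a $2$, so $c_{1,2}(w) = mn - 2\,\mathrm{inv}(w)$ and hence $\kappa^{c_{1,2}(w)/2} = \kappa^{mn/2}\,\kappa^{-\mathrm{inv}(w)}$.

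Next I would use the word-reversal involution $w \mapsto w^{\mathrm{rev}}$ on $R(m,n)$ (which sends the multiset permutation $1^m2^n$ to itself and satisfies $\mathrm{inv}(w^{\mathrm{rev}}) = mn - \mathrm{inv}(w)$, since it exchanges inversions and non-inversions) together with MacMahon's/Gauss's theorem $\sum_{w\in R(m,n)} q^{\mathrm{inv}(w)} = \binom{m+n}{m}_q \coloneqq \frac{[m+n]^\circ_q!}{[m]^\circ_q!\,[n]^\circ_q!}$, where $[k]^\circ_q \coloneqq 1+q+\cdots+q^{k-1}$ is the ordinary $q$-integer. Combining these gives $\sum_{w\in R(m,n)} \kappa^{-\mathrm{inv}(w)} = \kappa^{-mn}\binom{m+n}{m}_\kappa$, hence $\sum_{w} \kappa^{c_{1,2}(w)/2} = \kappa^{-mn/2}\binom{m+n}{m}_\kappa$, and the left-hand side of the proposition equals $(-1)^{mn}\kappa^{-mn/2}\binom{m+n}{m}_\kappa$.

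Finally I would match this with the right-hand side by converting the paper's signed symmetric quantum factorials. From $\kappa^{k/2}-\kappa^{-k/2} = \kappa^{-(k-1)/2}(\kappa-1)[k]^\circ_\kappa$ one gets $[k]_\kappa = (-1)^{k-1}\kappa^{-(k-1)/2}[k]^\circ_\kappa$, and therefore $[k]_\kappa! = (-1)^{\binom{k}{2}}\kappa^{-\binom{k}{2}/2}[k]^\circ_\kappa!$. Dividing, $\frac{[m+n]_\kappa!}{[m]_\kappa!\,[n]_\kappa!} = (-1)^{e}\kappa^{-e/2}\binom{m+n}{m}_\kappa$ with $e = \binom{m+n}{2}-\binom{m}{2}-\binom{n}{2} = mn$, which is exactly $(-1)^{mn}\kappa^{-mn/2}\binom{m+n}{m}_\kappa$, so the two sides agree. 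I do not expect a genuine obstacle here: the mathematical content is just the classical $q$-binomial theorem, and the only thing demanding care is tracking the signs and half-integer powers of $\kappa$ introduced by the unconventional normalization \eqref{eq:quantum-integer}. As a self-contained alternative to citing MacMahon, one could run an induction on $m+n$: conditioning on the first letter of $w$ yields the recursion $S(m,n) = \kappa^{n/2}S(m-1,n) + \kappa^{-m/2}S(m,n-1)$ for $S(m,n) \coloneqq \sum_{w\in R(m,n)}\kappa^{c_{1,2}(w)/2}$, with $S(m,0) = S(0,n) = 1$, and one checks that $(-1)^{mn}$ times the right-hand side satisfies the same recursion and base cases.
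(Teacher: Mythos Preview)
Your proposal is correct. Your main argument---rewriting $c_{1,2}(w) = mn - 2\,\mathrm{inv}(w)$, invoking MacMahon's formula $\sum_w q^{\mathrm{inv}(w)} = \binom{m+n}{m}_q$, and then carefully converting between the classical $q$-integers $[k]^\circ_q$ and the paper's signed symmetric quantum integers $[k]_\kappa$---is a genuinely different route from the paper's. The paper instead proves the identity directly by induction on $m+n$: conditioning on the first letter of $w$ yields the recursion $f(m,n) = (-1)^n\kappa^{n/2}f(m-1,n) + (-1)^m\kappa^{-m/2}f(m,n-1)$ for $f(m,n) = (-1)^{mn}\sum_w \kappa^{c_{1,2}(w)/2}$, and one checks that the right-hand side satisfies the same recursion with base cases $f(m,0)=f(0,n)=1$. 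This is precisely the ``self-contained alternative'' you describe at the end (your recursion for $S(m,n)$ becomes theirs after multiplying through by $(-1)^{mn}$). Your reduction is perhaps more conceptual in that it identifies the sum as a renormalized Gaussian binomial, while the paper's induction is fully self-contained and avoids citing the classical result; both are short and standard.
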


\begin{proof}
  Let $f(m, n)$ denote the left hand side. By induction on $m+n$, it
  suffices to verify the recursion
  \[ f(m, n) = (-1)^n \kappa^{\frac{n}{2}} f(m-1, n) + (-1)^m \kappa^{-\frac{m}{2}} f(m, n-1). \]
  It is an easy algebraic exercise to verify that the right hand side
  satisfies the same recursion. Clearly $f(m, 0) = 1 = f(0, n)$ for
  any $m, n > 0$, so the necessary base cases hold.

  View $w \in R(m, n)$ as a string of $m$ ones and $n$ twos. Let $w'$
  be $w$ without the first letter $w_1$, so that $w' \in R(m-1, n)$
  if $w_1 = 1$ or $w' \in R(m, n-1)$ if $w_1 = 2$. Using
  \eqref{eq:word-symmetrized-inversion-number}, the contribution of
  the first letter to $c_{1,2}$ is
  \[ c_{1,2}(w) - c_{1,2}(w') = \begin{cases} n & w_1 = 1 \\ -m & w_1 = 2. \end{cases} \]
  The desired recursion follows immediately.
\end{proof}

\subsubsection{}

\begin{remark} \label{rem:q-multinomial}
  Proposition~\ref{prop:q-binomial} is a special case of a more
  general model for $\kappa$-multinomial coefficients:
  \[ (-1)^{\sum_{i=1}^k m_i \sum_{j=1}^{i-1} m_j} \sum_{w \in R(m_1, \ldots, m_k)} \kappa^{\frac{c_{i,j}(w)}{2}} = \frac{[\sum_{i=1}^k m_i]_\kappa!}{\prod_{i=1}^k [m_i]_\kappa!} \]
  for integers $k > 0$ and $m_1, \ldots, m_k > 0$. This can be proved
  using the explicit bijection
  \[ R(m_1, \ldots, m_k) \xrightarrow{\sim} R(m_1 + \cdots + m_{k-1}, m_k) \times R(m_1, \ldots, m_{k-1}), \quad w \mapsto (w', w''), \]
  where $w'$ arises from $w$ by replacing all the letters $< k$ (resp.
  $=k$) by $1$ (resp. $2$), and $w''$ is the subword of $w$ given by
  deleting all the letters $k$. We leave the details to the reader.

  Note, however, that this identity is not so useful for proving
  Proposition~\ref{prop:q-binomial} when $k > 2$, because the ordering
  condition $o_1(w) > \cdots > o_\ell(w)$ becomes a non-trivial
  constraint and the symmetrization becomes important.
\end{remark}
        
\subsubsection{}\label{sec:flag_lemma}

\begin{lemma}
  Let $\{m_i\}_{i=1}^\ell$ and $\{c_{i,j}\}_{i,j=1}^{\ell+1}$ be
  arbitrary sets of integers satisfying $c_{i,j} = -c_{j,i}$. Fix
  $\sigma \in S_\ell$, viewed as an element $\sigma \in S_{\ell+1}$
  such that $\sigma(\ell+1) = \ell+1$, and let $i_0 \coloneqq
  \sigma^{-1}1$. Then
  \begin{align*}
    &\prod_{i=1}^\ell \Big[m_i + \sum_{\substack{j \in [1,\ell+1]\\\sigma^{-1}i< \sigma^{-1}j}}c_{j,i}\Big]_\kappa \\
    =&\sum_{\Omega \subset \sigma[1,i_0-1]} \prod_{i \in [2,\ell]\setminus \Omega} \Big[m_i + \sum_{\substack{j \in [2,\ell+1]\setminus \Omega\\\sigma^{-1}i < \sigma^{-1}j}} c_{j,i}\Big]_\kappa \prod_{i \in \Omega} \Big[\sum_{\substack{j \in \Omega \cup \{1\}\\\sigma^{-1}i<\sigma^{-1}j}} c_{j,i}\Big]_\kappa \Big[\sum_{i\in \Omega \cup \{1\}} m_i +\sum_{\substack{\ell\in [2,\ell+1]\setminus \Omega}} c_{j,i}\Big]_\kappa.
  \end{align*}
\end{lemma}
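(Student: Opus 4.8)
The identity is a purely formal "factoring out the first letter with value $1$" statement for products of quantum integers indexed by a permutation $\sigma\in S_\ell$. The plan is to prove it by induction on $i_0:=\sigma^{-1}1$, i.e. on the position at which the symbol $1$ sits in the word determined by $\sigma$. The quantum integers $[n]_\kappa$ of \eqref{eq:quantum-integer} satisfy the one-line "splitting" identity
\[
  [a+b]_\kappa = (-1)^{b}\kappa^{b/2}[a]_\kappa + (-1)^{a}\kappa^{-a/2}[b]_\kappa,
\]
which I would record first (it is the same relation already used in \S\ref{sec:mochizuki-base-case}); every step of the proof is an application of this together with bookkeeping of the sign/twist factors. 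In fact, inspecting the right-hand side, the factors $(-1)^?\kappa^{?}$ must all cancel, so the cleanest route is to prove the slightly stronger bilinear statement in the group ring, keeping the twists visible, and check at the end that they telescope to $1$; but if the paper's conventions have been arranged so the twists are absorbed (as the clean statement suggests), one can work directly with $[\cdot]_\kappa$.

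**Key steps.** First I would set up notation: write $a_i := m_i + \sum_{j:\,\sigma^{-1}i<\sigma^{-1}j} c_{j,i}$ for the argument of the $i$-th factor on the left, and isolate the factor $i_0=\sigma^{-1}1$, i.e. the factor indexed by the symbol $1$, whose argument is $a_{1} = m_1 + \sum_{j:\,\sigma^{-1}1<\sigma^{-1}j} c_{j,1} = m_1 + \sum_{j\notin\sigma[1,i_0-1]\setminus\{1\}} c_{j,1}$ (using $c_{j,1}$ appears only from symbols lying to the right of $1$). The base case is $i_0 = 1$: then $\Omega$ must be empty, the symbol $1$ contributes no $c_{j,1}$-corrections to any other factor lying to its left (there are none), and both sides are literally the product over $i\in[2,\ell]$ with the $1$-factor removed — an identity, once one observes that removing the symbol $1$ from the word $\sigma$ does not change any of the remaining arguments $a_i$ for $i\ge 2$, because $1$ sits leftmost. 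For the inductive step, I would let $i_1 = \sigma^{-1}(2)$ or more precisely consider the symbol immediately to the left of $1$ in the word $\sigma$, call it $i_0-1$ slot holding symbol $s := \sigma(i_0-1)$. Apply the splitting identity to the factor $a_s$ — whose argument contains the term $c_{1,s}$ coming from the symbol $1$ sitting to its right — writing $a_s = (a_s - c_{1,s}) + c_{1,s}$ and splitting $[a_s]_\kappa$ into a term with $[a_s - c_{1,s}]_\kappa$ (which is the argument of that factor "as if $1$ were deleted") and a term with $[c_{1,s}]_\kappa$. In the first term, symbols $1$ and $s$ have effectively been transposed (so $i_0$ decreases by one), and the inductive hypothesis applies. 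In the second term, symbol $s$ joins the block that eventually becomes $\Omega\cup\{1\}$; I would then merge $s$ with the symbol $1$ and recurse, recognizing that the new factor $[c_{1,s}]_\kappa[m_1+m_s+\cdots]_\kappa$ matches exactly the shape of the two factors attached to $\Omega$ on the right-hand side. Carefully tracking which subsets $\Omega\subseteq\sigma[1,i_0-1]$ arise (namely: all of them, by choosing at each step whether the next symbol to the left goes into $\Omega$ or gets transposed past) produces precisely the sum on the right.

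**Main obstacle.** The real difficulty is purely combinatorial bookkeeping: verifying that the sign twists $(-1)^{\bullet}$ and the $\kappa^{\bullet/2}$ powers generated by repeated use of the splitting identity cancel in every term so that only the "untwisted" products $[\cdot]_\kappa$ survive, and simultaneously that the recursion generates each subset $\Omega\subseteq\sigma[1,i_0-1]$ exactly once with the correct argument $\sum_{i\in\Omega\cup\{1\}}m_i + \sum_{j\in[2,\ell+1]\setminus\Omega}c_{j,i}$ in the last factor. The antisymmetry $c_{i,j}=-c_{j,i}$ is what makes the twist cancellation work — each time a symbol is transposed past another, a $c_{i,j}$ moves from one argument to the other with a sign flip, and the $\kappa^{c_{i,j}/2}$ and $\kappa^{-c_{i,j}/2}$ factors pair up. I expect this cancellation to be routine but tedious; the conceptual content is entirely in the base case and the single-step splitting, so once those are pinned down the induction is a matter of careful indexing. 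If the twist-cancellation turns out not to be automatic with the paper's sign conventions for $[\cdot]_\kappa$, the fallback is to prove the statement with all twists retained (an identity in $\mathbb{Z}[\kappa^{\pm 1/2}]$-coefficients) and then specialize.
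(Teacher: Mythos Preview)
Your inductive strategy---peeling off the symbol immediately to the left of $1$ and deciding whether it joins the $\Omega$-block or not---is exactly the structure of the paper's proof. The paper phrases this as descending induction on an auxiliary parameter $i_1\in[1,i_0]$ (proving a family of identities interpolating between the LHS at $i_1=i_0$ and the RHS at $i_1=1$), but the single step is the same: process the symbol at position $i_1$ and split into two branches.

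The substantive difference is the identity used for the split. You propose the \emph{linear} splitting
\[
  [a+b]_\kappa=(-1)^b\kappa^{b/2}[a]_\kappa+(-1)^a\kappa^{-a/2}[b]_\kappa
\]
applied to a single factor, which is why you worry about twist cancellation. The paper instead uses the \emph{bilinear} ``Jacobi'' identity
\[
  [A+C]_\kappa\,[B]_\kappa \;=\; [A]_\kappa\,[B-C]_\kappa \;+\; [A+B]_\kappa\,[C]_\kappa,
\]
applied to the product of the new factor (with argument $A+C$, where $C$ is the contribution of the current $\Omega\cup\{1\}$-block) and the accumulator factor currently carrying $m_1$ (with argument $B$). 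This identity is twist-free, so your ``main obstacle'' simply evaporates. It also handles a point your sketch glosses over: when you transpose $1$ past $s$, \emph{both} $a_s$ and $a_1$ change (by $\mp c_{1,s}$), not just $a_s$; your Term~1 as written does not yet match the LHS for the transposed permutation. The Jacobi identity acts on both factors at once, so the first branch $[A]_\kappa[B-C]_\kappa$ automatically updates the accumulator as well, and the second branch $[A+B]_\kappa[C]_\kappa$ directly produces the merged $\Omega$-factor you want.

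In short: the induction is the same, but replace your one-variable splitting by the two-variable Jacobi identity and the bookkeeping you flag as the obstacle disappears.
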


\begin{proof}
  The identity is the case $i_1=1$ of the formula
  \begin{align*}
    &\prod_{i=1}^\ell \Big[m_i + \sum_{\substack{j \in [1,\ell+1]\\\sigma^{-1}i< \sigma^{-1}j}} c_{j,i}\Big]_\kappa \\
    =& \prod_{1 \leq \sigma^{-1} i < i_1}\Big[m_i +\sum_{\substack{j \in [1,\ell+1]\\\sigma^{-1}i< \sigma^{-1}j}} c_{j,i}\Big]_\kappa\\
    &\sum_{\Omega \subset \sigma[i_1,i_0-1]} \prod_{\substack{i\in [2,\ell]\setminus \Omega\\ i_1\leq \sigma^{-1} i}}\Big[m_i + \sum_{\substack{j \in [2,\ell+1]\setminus\Omega \\ \sigma^{-1}i < \sigma^{-1}j}} c_{j,i}\Big]_\kappa \prod_{i\in \Omega} \Big[\sum_{\substack{j \in \Omega \cup \{1\}\\\sigma^{-1}i<\sigma^{-1}j}} c_{j,i}\Big]_\kappa \Big[\sum_{i\in \Omega \cup \{1\}}  m_i +\sum_{\substack{j\in [2,\ell+1]\setminus \Omega\\i_1 \le \sigma^{-1}j}} c_{j,i}\Big]_\kappa
  \end{align*}
  which holds for all $1\leq i_1\leq i_0$, and which one shows by
  descending induction on $i_1$. To go from $i_1$ to $i_1-1$, one
  takes the term
  \[ \left[m_{\sigma i_1} + (c_{\sigma(i_1+1), \sigma i_1} + \cdots + c_{\sigma i_0, \sigma i_1}) + c_{\sigma(i_0+1), \sigma i_1} + \cdots + c_{\ell+1, \sigma i_1}\right]_\kappa, \]
  denoting the bracketed terms by $C$, and applies the quantum integer
  ``Jacobi identity''
  \[ [A+C]_\kappa [B]_\kappa = [A]_\kappa [B-C]_\kappa + [A+B]_\kappa [C]_\kappa \]
  which holds for arbitrary $A, B, C \in \bZ$. Here $[B]_\kappa$ is
  whichever quantum integer currently contains $m_1$.
\end{proof}

\subsubsection{}
\label{sec:sum_rw}

We begin the inductive proof of
Proposition~\ref{prop:mochizuki-combinatorics}. The case $\ell=0$ is
trivially true, and $\ell=1$ was already done in
\S\ref{sec:mochizuki-base-case}. Suppose that $\ell>1$. Applying
Lemma~\ref{sec:flag_lemma} with $w \in R(m_1, \ldots, m_\ell,
m_{\ell+1})$ and $c_{i,j} = c_{i,j}(w)$, and doing some re-ordering,
the left hand side of Proposition \ref{prop:mochizuki-combinatorics}
becomes
\[ \sum_{\Omega \subset [2,\ell]} \sum_{\substack{w \in R(m_1, \ldots, m_{\ell+1})\\\forall i \in \Omega: \; o_i(w) > o_1(w)}} \Big[\sum_{i \in \Omega \cup \{1\}} m_i + \sum_{j \in [2,\ell+1] \setminus \Omega} c_{j,i}\Big]_\kappa \prod_{i \in [2,\ell] \setminus \Omega} \Big[m_i + \sum_{\substack{j \in [2,\ell+1] \setminus \Omega\\(j=\ell+1 \text{ or}\\o_i(w) > o_j(w))}} c_{j,i}\Big]_\kappa \prod_{i \in \Omega} \Big[\sum_{\substack{j \in \Omega \cup \{1\}\\ o_i(w) > o_j(w)}} c_{j,i}\Big]_\kappa. \]
Given $\Omega \subset [2,\ell]$, let $\vec m_\Omega \coloneqq \{m_i : i \in
\Omega \cup \{1\}\}$ and $\vec m_{\bar\Omega} \coloneqq \{m_i : i \in [2,\ell]
\setminus \Omega\}$. Using the linearity of $c_{j,i} = c_Q(\vec e_j,
\vec e_i)$ in both $\vec e_j$ and $\vec e_i$, we can rewrite this sum
as
\begin{align}
  &\sum_{\Omega\subseteq [2,j]} \sum_{w' \in R(|\vec m_\Omega|, N-|\vec m_\Omega|)} \Big[\sum_{i \in \Omega \cup \{1\}} m_i + c_{2,1}(w')\Big]_\kappa \label{eq:flagsum1}\\
  &\sum_{\substack{w_\Omega \in R(\vec m_\Omega)\\\forall i \in \Omega: \; o_i(w_\Omega) > o_1(w_\Omega)}} \prod_{i \in \Omega} \Big[\sum_{\substack{j \in \Omega \cup \{1\}\\ o_i(w_\Omega) > o_j(w_\Omega)}} c_{j,i}(w_\Omega)\Big]_\kappa \label{eq:flagsum2}\\
  &\sum_{w_{\bar\Omega} \in R(\vec m_{\bar\Omega}, N-|\vec m|)} \prod_{i \in [2,\ell] \setminus \Omega} \Big[m_i + c_{\ell+1,i}(w_{\bar\Omega}) + \sum_{\substack{j \in [2,\ell] \setminus \Omega\\o_i(w_{\bar\Omega}) > o_j(w_{\bar\Omega})}} c_{j,i}(w_{\bar\Omega})\Big]_\kappa. \label{eq:flagsum3}
\end{align}
Here we are treating elements of $R(\vec m_\Omega, m_1)$ words on the
alphabet $\Omega \cup \{1\}$, i.e. rearrangements of the word with
$m_j$ instances of the letter $j$ for $j \in \Omega \cup \{1\}$, and
similarly for $R(\vec m_{\bar\Omega}, N-|\vec m|)$. Evidently $w
\mapsto (w', w_\Omega, w_{\bar\Omega})$ is a bijection.

\subsubsection{}

We analyze the three lines \eqref{eq:flagsum1}, \eqref{eq:flagsum2},
and \eqref{eq:flagsum3}. First, \eqref{eq:flagsum3} is an instance of
the left hand side of \eqref{eq:wall_comb} for a lower value of $\ell$.
By inductive assumption, it is equal to
\[(\ell-|\Omega|-1)! \cdot \frac{[N- |\vec m_\Omega|]_\kappa!}{[N-|\vec m|]_\kappa! \prod_{i\in [2,\ell]\setminus \Omega} [m_i-1]_\kappa!}.\]
Second, for the sum \eqref{eq:flagsum2}, since $o_1(w_\Omega)$ is
minimal among the $o_i(w_\Omega)$, the word $w_\Omega$ begins with the
letter $1$. Removing this $1$ changes the terms $c_{1,i}(w_\Omega)$ to
$m_i + c_{1,i}(w_\Omega')$ and also removes the ordering condition
$o_i(w_\Omega) > o_1(w_\Omega)$, for all $i \in \Omega$. This exhibits
\eqref{eq:flagsum2} as another instance of the left hand side of
\eqref{eq:wall_comb}, so by induction it equals
\[|\Omega|! \cdot \frac{[|\vec m_\Omega|-1]_\kappa!}{\prod_{i\in \Omega \cup \{1\}} [m_i-1]_\kappa!}.\]
Finally, note that from what we have shown, \eqref{eq:flagsum2} and
\eqref{eq:flagsum3} are independent of the choice of $w \in R(|\vec
m_\Omega|, N-|\vec m_\Omega|)$ in \eqref{eq:flagsum1}. Therefore, we
may evaluate the sum in \eqref{eq:flagsum1} directly, which is the
special case $\ell=1$ of Proposition \ref{sec:prop_comb}, and therefore
equals
\[\frac{[N]_\kappa!}{[N-|\vec m_\Omega|]_\kappa! \; [|\vec m_\Omega| - 1]_\kappa!}.\]
In total, and taking the sum over $\Omega$, the left hand
side of \eqref{eq:wall_comb} equals
\[ \sum_{\omega = 0}^{\ell-1} \binom{\ell-1}{\omega} \omega! (\ell-1-\omega)! \frac{[N]_\kappa!}{[N-|\vec m|]_\kappa! \, \prod_i [m_i -1]_\kappa!} = j(j-1)!\frac{[N]_\kappa!}{[N-|\vec m|]_\kappa! \prod_i [m_i -1]_\kappa !}. \]
This concludes the proof of Proposition~\ref{prop:mochizuki-combinatorics}.

\end{proof}

\section{DT/PT via Joyce-style wall-crossing}
\label{sec:joyce-WCF}

\subsection{Quiver-framed invariants}

\subsubsection{} 

In this section, we give a proof of the K-theoretic DT/PT vertex using
Joyce's so-called {\it dominant wall-crossing} setup \cite[Theorem
  5.8]{joyce_wc_2021}, which relates invariants within a stability
chamber with invariants on a wall. We avoid most of the generalities
of Joyce's machine, \footnote{In fact, as written in
  \cite{joyce_wc_2021}, Joyce did not have our technology of
  symmetrized pullback using APOTs and therefore does not deal with
  wall-crossing in the CY3 setting.} including all appearances of
vertex and/or Lie algebras, and we hope the content of this section
serves as a concrete illustration of Joyce's machinery in its simplest
form.

The content of this section differs from the Mochizuki-style
wall-crossing of \S\ref{sec:mochizuki-WCF} in two important ways.

First, the family $(\tau_\xi)_\xi$ of weak stability conditions is
genuinely lifted to a family of weak stability conditions on auxiliary
stacks, so that it becomes possible to study enumerative invariants
{\it on} the wall at $\tau_0$. We will consider two separate but
similar setups: wall-crossing from $\tau^-$ to $\tau_0$, and
wall-crossing from $\tau_0$ to $\tau^+$. Both of these wall-crossings
are of individual interest, see \S\ref{sec:invariants-at-the-wall}.
This is in contrast to \S\ref{sec:mochizuki-WCF} where the
``intermediate'' stability conditions have no relation to $\tau_0$,
and the wall-crossing from $\tau^-$ to $\tau^+$ is done all in one go.

Second, in \S\ref{sec:joyce-combinatorics-trick}, we explain a trick
to avoid the combinatorics that appeared in \S\ref{sec:prop_comb}. The
idea is, after proving a wall-crossing formula for a fixed $\alpha =
(1, -\beta_C, -n)$, to sum over all $n$ to get a wall-crossing formula
for generating series. The unknown combinatorial quantity in this
formula for $(\lambda,\mu,\nu)$ can be made equal (up to arbitrary
order, as generating series) to a similar quantity in the same formula
for $(\emptyset,\emptyset,\emptyset)$, using the extra freedom to
choose the framing parameter $p$ in the abstract wall-crossing setup
of \S\ref{sec:quiver-framed-stacks}. But on the other hand every piece
of the formula for $(\emptyset,\emptyset,\emptyset)$ is explicit and
well-understood, so we have avoided any actual combinatorial work.

\subsubsection{}

As in \S\ref{sec:quiver-framed-stacks}, fix the integer partitions
$(\lambda, \mu, \nu)$ and therefore the class $\beta_C =
(\abs{\lambda}, \abs{\mu}, \abs{\nu})$. Furthermore, fix a class
$\alpha = (1, -\beta_C, -n)$ with $\fN_{(\lambda,\mu,\nu),n} \neq
\emptyset$. We will prove a wall-crossing formula for objects in class
$\alpha$. In particular, in what follows, several choices of
parameters depend on $\alpha$.

Recall the family of stability conditions on
$\fN_{(\lambda,\mu,\nu),n}$ from \S\ref{sec:stability-conditions}; let
$\tau^-, \tau^+$ be weak stability conditions in the DT and PT
stability chambers respectively, and $\tau_0$ be on the (only) wall
separating the two chambers.

\subsubsection{}

\begin{definition}[cf. {\cite[Definition 5.5]{joyce_wc_2021}}] \label{def:joyce-auxiliary-stability}
  Fix $k\gg 0$ large enough (depending on $\alpha = (1,-\beta_C,-n)$)
  as described in \S\ref{def:auxiliary-stacks}, and $p \geq 1$
  arbitrary. Let $N \coloneqq f_{k,p}(\alpha)$. For $m \le n$ and a
  dimension vector $\vec e = (e_i)_{i=1}^N$, recall that the auxiliary
  moduli stacks $\fN_{(\lambda,\mu,\nu),m,\vec e}^{Q(N)}$ and
  $\fQ_{m,\vec e}^{Q(N)}$ from Definition~\ref{def:auxiliary-stacks}
  are defined using a quiver of length $N$. We will put a family of
  weak stability conditions on these stacks. Let $n_0$ be the maximal
  integer such that $\fN_{(\lambda,\mu,\nu),m'} = \emptyset$ for all
  $m' \le n_0$, and assume that $m > n_0$ when considering classes
  $(1, -\beta_C, -m)$. Fix the following parameters.
  \begin{itemize}
  \item Pick a generic $\vec \mu = (\mu_i)_{i=1}^N \in \bR^N$ such
    that such that $1 > \mu_1 \gg \mu_2 \gg \cdots \gg \mu_r > 0$.
    Genericity means that the $\mu_i$ must be $\bZ$-linearly
    independent, so that if $\vec\mu \cdot \vec f = 0$ for an integer
    vector $\vec f$ then $\vec f = 0$. The symbols $\gg$ mean that the
    ratios $1/\mu_1$ and $\mu_i/\mu_{i+1}$ for $1 \le i < r$ must
    satisfy finitely many lower bounds.

  \item Pick an additive function $\lambda^\pm$ on numerical classes
    such that $\lambda^\pm(\alpha) = 0$ and $\lambda^-(0, 0, -1) \ll
    0$ (for DT wall-crossing) and $\lambda^+(0, 0, -1) \gg 0$ (for PT
    wall-crossing). \footnote{In \cite[Assumption
        5.2(d)]{joyce_wc_2021}, Joyce requires $\lambda^\pm(\beta) >
      0$ (resp. $\lambda^\pm(\beta) < 0$) iff $\tau^\pm(\beta) >
      \tau^\pm(\alpha)$ (resp. $\tau^\pm(\beta) < \tau^\pm(\alpha)$),
      but this is not satisfied by our choice of $\lambda^\pm$. Joyce
      uses this assumption to prove
      Lemma~\ref{lem:off-wall-invariants} in a more uniform way; see
      \cite[Prop. 10.5]{joyce_wc_2021} which is the only place where
      he uses this assumption. Our proof is rather hands-on but does
      not rely on this assumption.} The symbols $\ll$ and $\gg$ mean
    the finitely many upper/lower bounds such that the proofs of
    Lemma~\ref{lem:off-wall-invariants} and Lemma~\ref{lem:off-wall-flag-invariants} hold.
    
  \item Pick an additive function $r$ on numerical classes such that
    $r(1, -\beta_C, -n_0) = 0$ and $r(0, 0, -1) = 1$, so that $r(1,
    -\beta_C, -m) = m - n_0 > 0$.
  \end{itemize}

  For $1 \le a, b \le N$, let $\vec 1_{[a,b]} \coloneqq (0, \ldots, 0,
  1, \ldots, 1, 0, \ldots, 0)$ where the $1$'s appear exactly in the
  interval $[a, b]$. For brevity, write $\vec 1 \coloneqq \vec
  1_{[1,N]}$. For effective classes $(\beta, \vec e)$ with $\beta \le
  \alpha$, define the family of weak stability conditions
  \begin{equation} \label{eq:joyce-framed-stack-stability}
    \tau_0^{\pm}(s,x)\colon (\beta,\vec e) \mapsto \begin{cases}
      \left(\tau_0(\beta), \frac{s\lambda^\pm(\beta)+(\vec\mu+x\vec 1)\cdot \vec e}{r(\beta)}\right), &\beta = (1, -\beta_C, -\star) \text{ or } (0, 0, -\star), \\
      \left(\tau_0(\beta), \infty\right) & \beta = (1, -\beta_C', -\star), \; \beta_C' \neq \beta_C,\\
      \left(\tau_0(\beta), -\infty\right) & \beta = (0, -\beta_C', -\star), \; \beta_C' \neq 0, \\
      \left(\infty, \frac{(\vec\mu+x\vec 1)\cdot \vec e}{\vec 1\cdot \vec e}\right), & \beta=0,(\vec\mu+x\vec 1)\cdot \vec e>0,\\
      \left(-\infty, \frac{(\vec\mu+x\vec 1)\cdot \vec e}{\vec 1\cdot \vec e}\right), &\beta=0,(\vec\mu+x\vec 1)\cdot \vec e\leq 0
    \end{cases}
  \end{equation}
  for $s \in [0, 1]$ and $x \in [-1, 0]$. Here $\star$ stands for an
  arbitrary integer. This depends continuously on $s$ but possibly
  {\it discontinuously} on $x$ because of the transition between $\pm
  \infty$ in the last two cases. \footnote{The $x$-dependence of
    $\tau_0^\pm(s, x)$ is a remnant from Joyce's machine, particularly
    his construction of {\it semistable invariants}. We only really
    use it in the proof of Lemma~\ref{lem:off-wall-flag-invariants}.
    In principle, with a bit more work, we could have remained at
    $x=0$ throughout this paper.}

  Take the lexicographic ordering on $(\bR \cup \{\pm \infty\})^2$. We
  leave it to the reader to verify that $\tau_0^\pm(s, x)$ is a weak
  stability condition, using the additivity of $\lambda^\pm$ and $r$.
  (The second and third cases in
  \eqref{eq:joyce-framed-stack-stability} are there to ensure that
  this claim is true.) Let
  \begin{equation} \label{eq:joyce-WCF-sst-loci}
    \fN_{(\lambda,\mu,\nu),m,\vec e}^{Q(N),\sst}(\tau_0^\pm(s,x)) \subset \fN_{(\lambda,\mu,\nu),m,\vec e}^{Q(N),\pl}, \qquad \fQ_{m,\vec e}^{Q(N),\sst}(\tau_0^\pm(s,x)) \subset \fQ_{m,\vec e}^{Q(N),\pl}
  \end{equation}
  be the semistable loci, following the notation in
  \S\ref{sec:rigidification} for $\bC^\times$-rigidified stacks. One
  checks easily that the forgetful maps $\Pi_\fN$ and $\Pi_\fQ$ of
  \eqref{eq:framed-stack-forgetful-maps} take
  $\tau_0^\pm(s,x)$-semistable loci to $\tau_0$-semistable loci.
\end{definition}

\subsubsection{}

\begin{lemma} \label{lem:semistable-flag-framing-must-be-injective}
  A $\tau_0^\pm(s, x)$-semistable object of class $(\beta, \vec e)$
  with $\beta \neq 0$ must have all framing maps $\rho_i$ injective.
\end{lemma}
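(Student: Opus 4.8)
The plan is to show that if some framing map $\rho_a\colon V_a \to V_{a+1}$ fails to be injective, then its kernel produces a destabilizing subobject, contradicting semistability. Concretely, suppose $(I, \vec V, \vec\rho)$ has class $(\beta, \vec e)$ with $\beta \neq 0$, and let $a$ be minimal with $\ker \rho_a \neq 0$. Consider the sub-quiver-representation $(0, \vec W, \vec\sigma)$ of $(I, \vec V, \vec\rho)$ where $W_i = 0$ for $i \le a$ and $W_i = \ker(\rho_{a}$ composed through to step $i)$ for $i > a$; more carefully, one takes $\vec W$ to be the largest sub-representation of $\vec V$ supported in degrees $> a$ on which the quiver maps eventually land in $\ker$ of the map to $V_{N+1} = F_{k,p}(I)$, so that $(0, \vec W, \vec\sigma)$ is a genuine subobject in the abelian category $\cat{A}$ with quiver framing. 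Its class is $(0, \vec f)$ for some nonzero effective $\vec f \le \vec e$ with $f_i = 0$ for $i \le a$.

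Next I would compute stability values. Since $\beta \neq 0$, the object $(I, \vec V, \vec\rho)$ itself has $\tau_0^\pm(s,x)$-value of the form $(\tau_0(\beta), \ast)$ with finite second coordinate (first case of \eqref{eq:joyce-framed-stack-stability}), whereas the subobject $(0, \vec f)$ has value either $(\infty, \cdot)$ or $(-\infty, \cdot)$ (last two cases). For semistability we need $\tau_0^\pm(s,x)(0,\vec f) \le \tau_0^\pm(s,x)(\beta,\vec e)$ in the lexicographic order, which forces the $(-\infty, \cdot)$ case, i.e. $(\vec\mu + x\vec 1)\cdot \vec f \le 0$. But $x \in [-1,0]$ and $1 > \mu_1 \gg \mu_2 \gg \cdots$, so $\vec\mu + x\vec 1$ has all entries in a controlled range; the key point is that the genericity of $\vec\mu$ (its $\bZ$-linear independence, hence $\vec\mu + x\vec 1$ never annihilates a nonzero integer vector for generic small $x$ — or more simply, arranging the hierarchy $\mu_i/\mu_{i+1} \gg 0$ so that the sign of $(\vec\mu + x\vec 1)\cdot \vec f$ is the sign of its first nonzero coordinate $f_{a'}$ where $a' = \min\{i : f_i \neq 0\}$) gives $(\vec\mu + x\vec 1)\cdot \vec f > 0$ since $f_{a'} > 0$. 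This contradicts $(\vec\mu + x\vec 1)\cdot \vec f \le 0$, so no such $a$ exists.

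The main obstacle I anticipate is getting the combinatorial sub-representation $(0, \vec W, \vec\sigma)$ exactly right and confirming it is genuinely a subobject in $\cat{A}$ with the quiver data (rather than just a subspace): one must check that a nonzero kernel of $\rho_a$ really propagates to a nonzero quiver sub-representation whose $I$-component is $0$, using that $V_{N+1} = F_{k,p}(I)$ is fixed and the maps $\rho_i$ for $i \ge a$ are the only way the kernel interacts with $I$. This is where one uses that the kernel, once it appears, can be pushed forward along the remaining $\rho_i$'s and intersected with successive kernels to land in a well-defined sub-representation — essentially the observation that for quiver representations over a linear chain, any subspace of some $V_a$ generates a sub-representation. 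Once that bookkeeping is in place, the stability inequality argument above is short. One should also double-check the edge case $a = N$ (so the kernel sits in $V_N$ mapping to $V_{N+1} = F_{k,p}(I)$), which is handled identically with $\vec f = \vec 1_{[N,N]}$ and $(\vec\mu + x\vec 1)\cdot \vec f = \mu_N + x > 0$ for $x \in [-1,0]$ provided $\mu_N$ is chosen with $\mu_N > 1$ — wait, $\mu_N < 1$, so one instead relies on $x$ being generic and small in absolute value, or simply notes $\mu_N + x$ can be made positive by the genericity/smallness constraints already imposed on $x$; this is exactly the kind of finite constraint folded into the choice of parameters.
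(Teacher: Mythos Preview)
Your construction of the sub-object is confused: the kernel of $\rho_a$ lives in $V_a$, so the natural sub-representation it generates has $W_j = \ker(\rho_a \circ \cdots \circ \rho_j) \subset V_j$ for $j \le a$ and $W_j = 0$ for $j > a$, not the other way around. (Your ``more carefully'' clause does not help: if $a$ is minimal, the kernel does not propagate forward past degree $a$.)

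More seriously, even with the correct sub-object $(0,\vec f)$ in hand, your destabilization argument breaks. The lemma is asserted for all $x \in [-1,0]$, and since $0 < \mu_i < 1$, the vector $\vec\mu + x\vec 1$ can genuinely have negative entries when $x$ is near $-1$; in particular $(\vec\mu + x\vec 1)\cdot \vec f$ can be $\le 0$, placing the sub-object in the $(-\infty,\cdot)$ case of \eqref{eq:joyce-framed-stack-stability}. In that case $\tau_0^\pm(s,x)(0,\vec f) \le \tau_0^\pm(s,x)(\beta,\vec e)$ holds and the sub-object does \emph{not} destabilize. Your closing paragraph recognizes exactly this problem but does not resolve it --- appealing to ``genericity/smallness constraints'' on $x$ is not allowed, since $x$ ranges over the full interval $[-1,0]$.

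The paper's proof closes this gap by observing that the short exact sequence actually \emph{splits}: one can choose compatible complements to the $V''_j$'s inductively along the linear chain (since each $V''_j$ is the preimage of $V''_{j+1}$), giving a direct sum decomposition
\[ (I,\vec V,\vec\rho) \;=\; (I,\vec V',\vec\rho') \oplus (0,\vec V'',\vec\rho''). \]
Then \emph{both} summands are sub-objects. The summand $(0,\vec V'',\vec\rho'')$ has $\tau_0^\pm(s,x)$-value with first coordinate $\pm\infty$, while $(I,\vec V',\vec\rho')$ has finite first coordinate $\tau_0(\beta)$; these cannot be equal, so for the direct sum to be semistable is impossible (one of the two short exact sequences $0 \to (I,\vec V') \to (I,\vec V) \to (0,\vec V'') \to 0$ or its flip must violate $\tau(\text{sub}) \le \tau(\text{quot})$). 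No control over the sign of $(\vec\mu + x\vec 1)\cdot \vec f$ is needed.
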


\begin{proof}
  Let $(I, \vec V, \vec \rho)$ be the object. If $\ker\rho_i \neq 0$
  for some $i$, then it induces a non-trivial sub-object and one has
  a splitting
  \[ (I, \vec V, \vec \rho) = (I, \vec V', \vec \rho') \oplus (0, \vec V'', \vec \rho'') \]
  where
  $V''_j \coloneqq \ker(\rho_i \circ \cdots \circ \rho_{j+1} \circ \rho_j)$
  for $j \le i$. Comparing $\tau_0^\pm(s, x)$ for the two summands, it
  is clear that one of the two summands must be destabilizing, a
  contradiction.
\end{proof}

\subsubsection{}

\begin{proposition}\label{prop:properness-quiver}
  Suppose there are no strictly semi-stable objects in $\fN^{Q(N),
    \sst}_{\beta_C, m, \vec{e}}(\tau_0^{\pm}(s,x))$. Then,
  \[ \fN^{Q(N), \sst}_{(\lambda,\mu,\nu), m, \vec{e}}(\tau_0^{\pm}(s,x)) \subset \fN^{Q(N), \sst}_{\beta_C, m, \vec{e}}(\tau_0^{\pm}(s,x)) \]
  is a separated algebraic space with proper $\sT$-fixed loci.
\end{proposition}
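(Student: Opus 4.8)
The plan is to combine the general machinery already set up in the paper: the algebraic-space claim will follow exactly as in Proposition~\ref{prop:flag-stacks-are-algebraic} and Lemma~\ref{lem:separated-algebraic-space}, while properness of the $\sT$-fixed locus will follow from Remark~\ref{rem:fixed-loci-are-components-in-proper} together with a valuative-criterion argument for the ambient stack run through the strategy of \S\ref{sec:properness-strategy}. Concretely, I would first argue that a $\tau_0^\pm(s,x)$-stable object of class $(\beta,\vec e)$ has no nontrivial automorphisms: by Lemma~\ref{lem:semistable-flag-framing-must-be-injective} all framing maps are injective, and an automorphism fixing $\cO_{\bar X}$ that acted nontrivially would, as in the proof of Proposition~\ref{prop:flag-stacks-are-algebraic}, produce a nonzero idempotent endomorphism factoring through a zero-dimensional subquotient, contradicting stability once one notes (using genericity of $\vec\mu$) that stability strictly separates the induced sub- and quotient-objects. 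Hence the stable locus is an algebraic space, and separatedness follows verbatim from Lemma~\ref{lem:separated-algebraic-space} since any morphism between stable objects of the same numerical class is an isomorphism.

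For properness of $\sT$-fixed loci, I would follow the pattern of \S\ref{sec:mochizuki-properness-flag-stacks} and Proposition~\ref{prop:valuative-criterion-existence}. First replace $\fN$ by the ambient stack $\fM^\circ$ of pairs without boundary conditions, obtaining $\fM^{\circ,Q(N),\sst}_{\beta_C,m,\vec e}(\tau_0^\pm(s,x))$; by Remark~\ref{rem:fixed-loci-are-components-in-proper} the $\sT$-fixed locus of the $\fN$-version is a union of connected components of the $\sT$-fixed locus of the $\fM^\circ$-version, so it suffices to prove the latter is proper. Given a one-parameter family of $\tau_0^\pm(s,x)$-semistable objects over $\Spec K$, use Lemma~\ref{lem:val-crit-semistable-base} to extend the underlying pair to a family in $\fM^{\circ,\pl}_{1,\beta_C,m}$ over the DVR $R$, and extend the quiver data (equivalently the injective flag in $F_{k,p}$) by taking saturations; this produces a family in $\fM^{\circ,Q(N),\pl}_{\beta_C,m,\vec e}$. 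Then run the Langton-type iteration of \S\ref{sec:properness-strategy}: assumption (A3) holds because the relevant values of $\tau_0^\pm(s,x)$ on classes appearing in effective decompositions of $(\beta,\vec e)$ lie in an \emph{a priori} finite set (the second coordinate is $(s\lambda^\pm+(\vec\mu+x\vec1)\cdot\vec e)/r(\beta)$ with bounded numerators and finitely many denominators, plus the $\pm\infty$ cases), and (A1) is automatic by Remark~\ref{rem:properness-assumptions}(2) since the zero-rank modifications amount to sections of a relative Quot-scheme.

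The main obstacle is assumption (A2): since $\tau_0^\pm(s,x)$ is a genuinely \emph{weak} stability condition, an infinite sequence of elementary modifications along destabilizing zero-dimensional subquotients that leave $[\tau_{\min},\tau_{\max}]$ fixed is not ruled out by abstract nonsense, and one must exhibit a strictly monotone integer invariant. I would argue as in Proposition~\ref{prop:valuative-criterion-existence}: when the destabilizing subobject is of class $(0,0,-m')$ with flag dimension vector $\vec g$, genericity of $\vec\mu$ forces the minimal nonzero index $i_0$ of $\vec g$ to be determined, and the induced map $\cZ'\to\cZ$ on successive modifications is either non-injective (so $\length\cZ$ strictly increases, which is bounded since $\beta$ is fixed) or an isomorphism with some $\dim W'_i<\dim W_i$ for $i>i_0$ (so $\sum_i\dim W_i$ strictly decreases, bounded below by $0$); hence the quantity $\length\cZ - \sum_i\dim W_i$, bounded above by $N$, strictly increases at each such step, giving (A2). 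The case of a destabilizing quotient of lower slope is symmetric. Putting these together, $\fM^{\circ,Q(N),\sst}_{\beta_C,m,\vec e}(\tau_0^\pm(s,x))$ satisfies the existence part of the valuative criterion, is of finite type, and (being a separated algebraic space under the stated no-strictly-semistables hypothesis) is proper, whence so is the $\sT$-fixed locus of $\fN^{Q(N),\sst}_{(\lambda,\mu,\nu),m,\vec e}(\tau_0^\pm(s,x))$. \qed
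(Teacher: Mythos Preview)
Your overall architecture is correct and matches the paper's: separatedness and the algebraic-space claim follow directly from Lemma~\ref{lem:separated-algebraic-space} (the paper cites it in one line rather than rerunning the automorphism argument), and properness is reduced, via Remark~\ref{rem:fixed-loci-are-components-in-proper}, to the valuative criterion for the ambient $\fM^{\circ,Q(N),\sst}$ using the Langton strategy of \S\ref{sec:properness-strategy}.

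The gap is in your verification of (A2). You have imported the argument from Proposition~\ref{prop:valuative-criterion-existence} essentially verbatim, but that argument relied on features of the Mochizuki stability $\tau_\mu^{\fl}$ that do not transfer to $\tau_0^\pm(s,x)$. In the Mochizuki case the second slope coordinate was literally $\mu - i_{\min}(\vec f)$, so equal slopes forced equal $i_0$; the destabilizing pieces were automatically full flags so $W_{i_0}$ had rank one; and the specific $\fl,\st$ stability on $\fQ$ gave surjectivity of $W_{i_0}\otimes\cO\to\cZ$, which is what makes ``$\cZ'\to\cZ$ surjective, hence either an isomorphism or $\length\cZ'>\length\cZ$'' work. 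For $\tau_0^\pm(s,x)$ the second coordinate is $(s\lambda^\pm(\gamma)+(\vec\mu+x\vec 1)\cdot\vec g)/r(\gamma)$: genericity of $\vec\mu$ only gives proportionality of $\vec g$'s, after elementary modifications the framing need not stay injective (so pieces need not be full flags and $W_{i_0}$ need not have rank one), and $\tau_0^\pm(s,x)$-semistability of a class $((0,0,-m'),\vec g)$ does not by itself force the single line $W_{i_0}$ to generate $\cZ$. Your monotone quantity $\length\cZ - \sum_i\dim W_i$ therefore is not shown to move. You also do not treat the pure-framing classes $(0,\vec f)$ at all; these have $\tau$-value $(\pm\infty,\star)$ and \emph{do} appear as destabilizing sub/quotients in the iteration (injectivity of framing is not preserved under elementary modification), so $[\tau_{\min},\tau_{\max}]$ can sit with an endpoint at $\pm\infty$.

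The paper handles both issues by \emph{modifying} the Langton procedure rather than applying \S\ref{sec:properness-strategy} as written. It first disposes of pure-framing sub- and quotient-objects in two preliminary phases (steps 1 and 2), and then in the main phase chooses a destabilizer that maximizes not just the slope but the lexicographic tuple $(\text{slope},\,r(B),\,\vec 1\cdot\vec f')$. The (A2) argument is then that the auxiliary tuple $(r(B^{(n)}),\,\vec 1\cdot\vec f'(B^{(n)}))$ strictly decreases under successive equal-slope modifications, which is a different and more robust invariant than the one you propose.
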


\begin{proof}
  By Lemma~\ref{lem:separated-algebraic-space}, we only need to
  address properness of $\sT$-fixed loci. As in
  \S\ref{sec:mochizuki-properness-flag-stacks}, it is enough to show
  that
  \[ \fM_{1, \beta_C, m, \vec{e}}^{\circ, Q(N), \sst}(\tau_0^\pm(s,x)) \supset \fN_{\beta_C,m, \vec{e}}^{Q(N), \sst}(\tau_0^\pm(s,x)) \]
  is proper; this open inclusion induces an inclusion of connected
  components of $\sT$-fixed loci.

  We follow the strategy of \S\ref{sec:properness-strategy}: let $R$
  be a DVR with fraction field $K$ and closed point $\xi$, and assume
  we are given a family $A_K \coloneqq (I_K, \vec V_K, \vec \rho_K)$
  of $\tau_0^\pm(s,x)$-semistable objects in $\fM_{1,\beta_C,m,
    \vec{e}}^{\circ, Q(N), \pl}$. Since the underlying object $I_K$ is
  $\tau_0$-semistable, it can be extended to a family of objects
  $I_R^{(0)}$ in $\fM_{1,\beta_C,m}^\circ$. Since the quiver $Q(N)$
  has no oriented cycles, the quiver data can also be extended
  compatibly. So we obtain an extension $A_R^{(0)} = (I_R^{(0)}, \vec
  V_R^{(0)}, \vec \rho_R^{(0)})$.

  This addresses (A0) in \S\ref{sec:properness-strategy}. By our
  choice of $I^{(0)}_{\xi}$, all destabilizing sub- or quotient-
  objects of $A_{\xi}^{(0)}$ have numerical class of the form
  $(\gamma, \vec{f})$ with either $\gamma = (1, -\beta_C, -\star)$ or
  $\gamma = (0,0,-\star)$. Throughout, $\star$ stands for an arbitrary
  integer.
	
  Define a sequence of families $A_R^{(n)}$ by following the following
  procedure, which is a slight variation on
  \S\ref{sec:properness-strategy}.
  \begin{enumerate}[label = \arabic*)]
  \item As long as $\tau_{\max}(A_{\xi}^{(n)}) = (\infty, \star)$,
    i.e. $A^{(n)}_{\xi}$ has a sub-object of class $(0, \vec f)$ and
    $(\vec \mu + x \vec 1) \cdot \vec f > 0$, perform an elementary
    modification along a maximally-destabilizing sub-object $B^{(n)}$
    of $A_{\xi}^{(n)}$ to obtain $A_{R}^{(n+1)}$. This means a
    sub-object whose class $(0, \vec f')$ maximizes $(\vec \mu + x
    \vec 1) \cdot \vec f' / \vec 1 \cdot \vec f'$ and, among such
    sub-objects, maximizes $\vec 1 \cdot \vec f'$.
    
  \item Once $\tau_{\max}(A^{(n)}_{\xi}) < (\infty, \star)$, if
    $\tau_{\min}(A^{(n)}_{\xi}) = (-\infty, \star)$, then perform the
    dual procedure to 1), i.e. an elementary modification along a
    maximally-destabilizing quotient object $C^{(n)}$ of $A^{(n)}_\xi$
    to obtain $A_{R}^{(n+1)}$. This means a quotient object whose
    class $(0, \vec f')$ minimizes $(\mu + x \vec 1) \cdot \vec f' /
    \vec 1 \cdot \vec f'$ and, among such quotient objects, maximizes
    $\vec 1 \cdot \vec f'$.

  \item Once $\tau_{\min}(A_{\xi}^{(n)}),
    \tau_{\max}(A_{\xi}^{(n)})\in\{ \tau_0(\beta)\}\times\mathbb{R}$,
    but $A_{\xi}^{(n)}$ is not yet stable, perform an elementary
    modification along a maximally-destabilizing sub-object $B^{(n)}$
    of $A^{(n)}_{\xi}$ to obtain $A_R^{(n+1)}$. This means a
    sub-object whose class $(\gamma, \vec f')$ maximizes the tuple
    \[ \left(\frac{\lambda(B^{(n)}) + (\vec{\mu} + x \vec 1) \cdot \vec f'}{r(B^{(n)})}, r(B^{(n)}), \vec{1} \cdot \vec f'\right) \]
    with respect to the lexicographic ordering.
  \end{enumerate}
  Then, as in Remark~\ref{rem:properness-assumptions}, assumption (A1)
  holds for this sequence. Assumption (A3) holds, since the possible
  values of the quantities $\lambda(\beta), r(\beta),
  \vec{1}\cdot\vec{e}$ and $\vec{\mu}\cdot \vec f'$ that appear for
  effective classes $(\gamma, \vec{f}') \leq (\beta, \vec{e})$ are a
  priori bounded in terms of $N$.
	
  It remains to check (A2). In case 1), one checks that if
  $\tau_0^\pm(s,x)$ is equal for both $B^{(n+1)}$ and $B^{(n)}$, of
  classes the induced map $B^{(n+1)}\to B^{(n)}$ must be injective,
  thus either an isomorphism or we have $\vec{1}\cdot \vec
  f'(B^{(n+1)}) < \vec{1}\cdot \vec f'(B^{(n)})$, which can only
  happen finitely many times. An analogous argument works in case 2)
  (using the quotients $C^{(n)}$), and in case 3), where one shows
  that the tuple $(r(B^{(n)}), \vec{1}\cdot \vec{f}'(B^{(n)}))$
  decreases.
\end{proof}

\subsubsection{}

We now define enumerative invariants for the semistable loci
\eqref{eq:joyce-WCF-sst-loci} for any $s, x$ such that there are no
strictly $\tau_0^\pm(s, x)$-semistables. Under this assumption,
Theorem~\ref{thm:symmetric-pullback} yields a symmetric APOT on the
semistable (= stable) loci by symmetrized pullback along $\Pi_\fN$ or
$\Pi_\fQ$. By Theorem~\ref{thm:symmetrized-pullback-localization} and
Lemma~\ref{lem:obstruction-theory-square-root}, we obtain a
symmetrized virtual structure sheaf $\hat\cO^\vir$ amenable to
$\sT$-equivariant localization. This allows us to define the
enumerative invariants
\[ \tilde \sZ^\pm_{\beta,\vec e}(s,x) \coloneqq \begin{cases}
    \tilde \sN^\pm_{(\lambda,\mu,\nu),m,\vec e}(s,x) \coloneqq \chi\left(\fN_{(\lambda,\mu,\nu), m,\vec e}^{Q(N),\sst}(\tau^\pm_0(s,x)), \hat\cO^\vir\right) & \beta = (1,-\beta_C,-m)\\
    \tilde\sQ^\pm_{m,\vec e}(s,x) \coloneqq \chi\left(\fQ_{m,\vec e}^{Q(N),\sst}(\tau^\pm_0(s,x)), \hat\cO^\vir\right) & \beta=(0,0,-m)
  \end{cases} \]
by $\sT$-equivariant localization, because
Proposition~\ref{prop:properness-quiver} guarantees that the
$\sT$-fixed loci are proper.

To prove the desired DT/PT vertex correspondence, we will obtain
wall-crossing formulas between $\tilde \sZ^\pm_{\beta,\vec e}(s,x)$
with varying parameters $(s,x)$. The core wall-crossing result
(\S\ref{sec:horizontal_wall_crossing}) will be for
$\tilde \sN^\pm_{(\lambda,\mu,\nu), m, \vec e}(s, 0)$ for
$s \in [0, 1]$ and appropriate $\vec e$.

\subsubsection{}

Joyce's strategy for wall-crossing between $\tau_0$ and $\tau^\pm$,
using the auxiliary invariants $\tilde \sZ^\pm_{\beta,\vec e}(s, x)$,
consists of the steps illustrated in Figure~\ref{fig:joyce-WCF}.

\begin{figure}[!ht]
  \centering
  \includegraphics{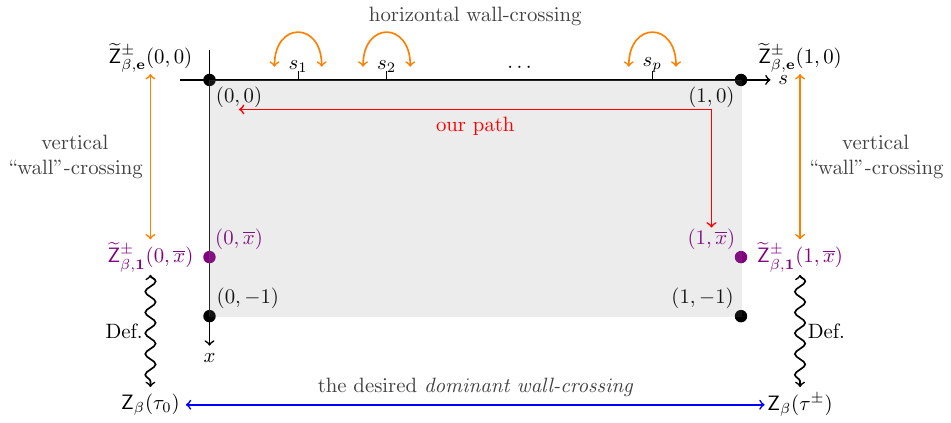}
  \caption{The steps involved in Joyce's dominant wall-crossing}
  \label{fig:joyce-WCF}
\end{figure}

In Joyce's original setup, there is a special $\bar x \in (-1, 0)$
where the auxiliary invariants
$\tilde \sZ^\pm_{\beta,\vec 1}(s, \bar x)$ for $s = 0, 1$ are used to
formally define {\it semistable invariants} $\sZ_\beta(\tau_0)$ and
$\sZ_\beta(\tau^\pm)$ for the original moduli stack. These semistable
invariants are well-defined regardless of whether $\beta$ has strictly
$\tau_0$- or $\tau^\pm$-semistable objects, and Joyce's wall-crossing
formulas are written in terms of them. For their definition in our
situation, see \cite[\S 6]{Liu23}. However, semistable invariants are
irrelevant for this paper because Lemma~\ref{lem:off-wall-invariants}
directly relates
$\tilde \sN^\pm_{(\lambda,\mu,\nu),m,\vec 1}(1, \bar x)$ to the DT/PT
invariants $\sN^\pm_{(\lambda,\mu,\nu),m}$ in a simple way.

For us, the first step (Lemma~\ref{lem:off-wall-flag-invariants}) is
to move from $(s, x) = (1, \bar x)$ to $(s, x) = (1, 0)$. This gives
formulas relating $\tilde\sZ^\pm_{\alpha,\vec d}(1,0)$ and
$\tilde\sZ^\pm_{\alpha,\vec 1}(1,\bar x)$ for a framing dimension
$\vec d$ such that $(\alpha, \vec d)$ is a full flag. The argument
here is not geometric and only involves some fiddling around with the
weak stability condition $\tau_0^\pm(s, x)$.

The second step (\S\ref{sec:horizontal_wall_crossing}) is to vary the
parameter $s$ with $x=0$. This is a truly geometric wall-crossing
argument. The advantage of wall-crossing at $x=0$, instead of $x=\bar
x$ or even $x=-1$, is that objects framed by full flags destabilize
into at most two pieces at each of the walls $s_i$
(Lemma~\ref{lem:joyce-full-flag-splitting-lemma}), which makes master
space techniques applicable.

\subsubsection{}
\label{sec:ss_inv}

For integers $1 \le a \le N$, let
\[ x_0(a) \coloneqq -\frac{1}{N-a+1}\sum_{i=a}^N\mu_i \in (-1, 0). \]
This is the point where $\tau_0^\pm(s, x)$ has a discontinuity in $x$
for classes $(0,{\vec 1}_{[a,N]})$. In particular, for $x \le x_0(a)$,
$(0,{\vec 1}_{[a,N]})$ becomes a $\tau_0^\pm(s, x)$-destabilizing
quotient class for any numerical class $(\beta,\vec e)$ with $\beta
\neq 0$; cf. \cite[(10.30)]{joyce_wc_2021}.

\begin{lemma} \label{lem:off-wall-invariants}
  Let $1 \le a \le N$ and $\bar x \in (x_0(a), 1]$. Then
  \[ [(I, \vec V, \vec \rho)] \in \fN^{Q(N)}_{(\lambda,\mu,\nu),m,\vec 1_{[a,N]}} \]
  is $\tau_0^\pm(1, \bar x)$-semistable if and only if $I$ is
  $\tau^\pm$-semistable and
  \begin{equation} \label{eq:off-wall-rank-1-framing}
    V_a \xrightarrow{\sim} V_{a+1} \xrightarrow{\sim} \cdots \xrightarrow{\sim} V_N \hookrightarrow V_{N+1} = F_{k,p}(I)
  \end{equation}
  is a chain of isomorphisms followed by an inclusion.
\end{lemma}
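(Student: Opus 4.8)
The plan is to prove the equivalence by unpacking what $\tau_0^\pm(1,\bar x)$-semistability means for an object $(I,\vec V,\vec\rho)$ of class $(1,-\beta_C,-m,\vec 1_{[a,N]})$, using the fact that sub- and quotient-objects of $(I,\vec V,\vec\rho)$ in the category of quiver-framed pairs correspond to compatible pairs of a sub-/quotient-object of $I$ together with a sub-/quotient quiver representation. First I would observe, via Lemma~\ref{lem:semistable-flag-framing-must-be-injective}, that semistability already forces all $\rho_i$ to be injective, so the framing data is equivalent to a flag of subspaces $V_a \subseteq V_{a+1} \subseteq \cdots \subseteq V_N \subseteq F_{k,p}(I)$ with $\dim V_i = 1$ for $a \le i \le N$; thus the only remaining content of \eqref{eq:off-wall-rank-1-framing} is that these inclusions among the one-dimensional $V_i$ are \emph{equalities}, i.e. $V_a = V_{a+1} = \cdots = V_N$ inside $F_{k,p}(I)$.

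The forward direction: assume $(I,\vec V,\vec\rho)$ is $\tau_0^\pm(1,\bar x)$-semistable. Since the forgetful map $\Pi_\fN$ takes $\tau_0^\pm$-semistables to $\tau_0$-semistables, and since $\tau^\pm$ and $\tau_0$ have the same semistable objects in rank $1$ up to the wall-crossing phenomena controlled by $\lambda^\pm$ — more precisely, I would argue that any sub-object $I' \subset I$ that is $\tau_0$-destabilizing or neutral but $\tau^\pm$-destabilizing has the form $[0 \to \cZ]$ with $\cZ$ zero-dimensional (for DT, a destabilizing quotient $[0\to\cZ]$; for PT, a destabilizing sub $[0\to\cZ]$), by Lemma~\ref{lem:ambient-moduli-stack} and the description of $\tau_0$ in Definition~\ref{def:stability-conditions}. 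The choice $\lambda^-(0,0,-1) \ll 0$ (resp. $\lambda^+(0,0,-1)\gg 0$) is exactly what makes the framed stability $\tau_0^\pm(1,\bar x)$ on the summand $(0,0,-m',\vec f)$ coming from such a $\cZ$-piece so negative (resp. positive) that it would destabilize unless no such piece splits off as a framed sub-/quotient-object — which translates into $\tau^\pm$-(semi)stability of $I$ together with the framing condition that no one-dimensional $V_i$ maps to zero in the relevant $F_{k,p}(\cZ)$-quotient, forcing the chain of $V_i$ to collapse to a single line. The bookkeeping showing that the phases $(\vec\mu + \bar x\vec 1)\cdot \vec e$ of the possible framed sub-/quotient-objects are, for $\bar x \in (x_0(a),1]$, all strictly on the correct side except for the trivial ones, is the place where the precise inequalities defining $x_0(a)$ and the genericity/ordering $1 > \mu_1 \gg \cdots \gg \mu_N > 0$ are used.

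Conversely, given that $I$ is $\tau^\pm$-semistable and \eqref{eq:off-wall-rank-1-framing} holds, I would check directly that no sub-object $(I',\vec V',\vec\rho') \subset (I,\vec V,\vec\rho)$ can destabilize: split the check according to whether $I' = I$ (then $\vec V'$ is a proper sub-flag, and the phase comparison is governed by $(\vec\mu+\bar x\vec 1)\cdot(\vec e - \vec e')$, which is controlled by the hierarchy of the $\mu_i$ and the bound $\bar x > x_0(a)$), or $I' \subsetneq I$ with $I'$ of rank $1$ (use $\tau^\pm$-semistability of $I$ plus $\lambda^\pm(\alpha)=0$ and the additivity of $r$), or $I' = [0\to\cZ]$ of rank $0$ (here the collapsed framing \eqref{eq:off-wall-rank-1-framing} together with $\lambda^\pm$ being very negative/positive is what rules it out). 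The main obstacle I expect is the last case of this casework — keeping track of the interplay between the three additive functions $\lambda^\pm$, $r$, $\vec\mu + \bar x\vec 1$ and the lexicographic order on $(\bR\cup\{\pm\infty\})^2$, so as to nail down exactly which finitely many lower bounds on $1/\mu_1$, $\mu_i/\mu_{i+1}$ and which upper/lower bound on $\lambda^\pm(0,0,-1)$ are needed; once those bounds are fixed (as they may be, since only finitely many classes $(\beta,\vec e) \le (\alpha,\vec 1_{[a,N]})$ are relevant), the argument is a finite, if somewhat tedious, verification.
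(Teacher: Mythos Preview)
Your outline is essentially the paper's own approach: invoke Lemma~\ref{lem:semistable-flag-framing-must-be-injective} to settle the framing condition (since all $V_i$ with $a\le i\le N$ are one-dimensional, injectivity of the $\rho_i$ already \emph{is} \eqref{eq:off-wall-rank-1-framing}), and then do casework on the class $(\gamma,\vec e')$ of a potential destabilizing sub-object, using that the $\lambda^\pm$-term dominates all the bounded $\vec\mu$- and $\bar x$-contributions when $s=1$. The paper organizes the casework as the single key inequality \eqref{eq:framing-stability-DT-subobject}, but the content is the same.

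Two comments. First, the clause ``no one-dimensional $V_i$ maps to zero in the relevant $F_{k,p}(\cZ)$-quotient, forcing the chain of $V_i$ to collapse to a single line'' is a red herring: that language belongs to the Mochizuki-style $\ell$-stability of Definition~\ref{def:ellstable}, not to $\tau_0^\pm(1,\bar x)$, and in any case you have already collapsed the chain from injectivity alone. Drop it; the forward direction needs only (i) injectivity $\Rightarrow$ \eqref{eq:off-wall-rank-1-framing}, and (ii) if $I$ had a $\tau^\pm$-destabilizing rank-$1$ sub $I'$ (equivalently a rank-$0$ quotient $\cZ[-1]$), then $(I',\vec 0,\vec 0)$ destabilizes the framed object by $\lambda^\pm$-dominance. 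Second, your converse casework omits sub-objects $I'$ with curve class $\beta_C'\neq\beta_C$ (rank $1$) or $\beta_C'\neq 0$ (rank $0$); the paper disposes of these in a short preliminary step using the second and third cases of \eqref{eq:joyce-framed-stack-stability}, and you should too.
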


This relates our auxiliary invariants to the DT and PT invariants
(\S\ref{sec:enumerative-invariants}) in the corresponding stability
chambers: $\Pi_\fN$ restricted to the class $(\beta, \vec 1_{[a,N]})$,
where $\beta = (1, -\beta_C, -m)$, is therefore a
$\bP^{f_{k,p}(\beta)-1}$-bundle and so
\begin{equation}\label{eq:ss-invars-off-wall}
  \tilde \sN^\pm_{(\lambda,\mu,\nu),m,\vec 1_{[a,N]}}(1, \bar x) = [f_{k,p}(\beta)]_\kappa \cdot \sN_{(\lambda,\mu,\nu),m}(\tau^\pm).
\end{equation}

\subsubsection{}
\label{sec:off-wall-invariants-proof-framing}

\begin{proof}[Proof of Lemma~\ref{lem:off-wall-invariants}.]
  The basic ideas are from \cite[Propositions 10.3(a), 10.5, 10.13(a),
    10.14(v)]{joyce_wc_2021}, which the reader should consult for more
  details. Let $(I, \vec V, \vec \rho)$ be an object of numerical
  class $(\beta, \vec 1_{[a,N]})$.

  We begin with the framing data. Given $(I, \vec V, \vec \rho)$ with
  framing of the form \eqref{eq:off-wall-rank-1-framing}, there are
  sub-objects of numerical class $(\beta, \vec 1_{[b,N]})$ for $a < b
  \le N$, given by the object $I$ along with a sub-chain of framing
  isomorphisms. None of these sub-objects are $\tau_0^\pm(1, \bar
  x)$-destabilizing by our choice of $\bar x > x_0(a)$.

  On the other hand, if the quiver framing is not of the form
  \eqref{eq:off-wall-rank-1-framing}, then $\rho_b = 0$ for some
  $a \le b \le N$ and the object $(I, \vec V, \vec \rho)$ splits as a
  direct sum of objects of class $(0, \vec 1_{[a,b]})$ and
  $(\beta, \vec 1_{[b+1,N]})$. One of these sub-objects is
  $\tau_0^\pm(1, \bar x)$-destabilizing.

  \subsubsection{}

  Now we consider the object $I$. Note first that a sub-object
  $I' \subset I$ induces a sub-object
  $(I', \vec 0, \vec 0) \subset (I, \vec V, \vec \rho)$. If $I'$ has
  class $(1, -\beta_C', -m')$ for $\beta_C' \neq \beta_C$, then $I'$
  and $(I', \vec 0, \vec 0)$ are $\tau^\pm$- and
  $\tau_0^\pm(s, x)$-destabilizing, respectively. Similarly, if $I'$
  has class $(0, -\beta_C', -m')$ for $\beta_C' \neq 0$, then neither
  $I'$ nor any sub-object $(I', \vec V', \vec \rho')$ can cause
  $\tau^\pm$- or $\tau_0^\pm(s, x)$-instability or strict
  semistability. Hence it is enough to assume that $I'$ has class
  $(1, -\beta_C, -m')$ or $(0, 0, -m')$.

  In what follows, we work in the DT chamber with stability condition
  $\tau^-$. The proof for $\tau^+$ in the PT chamber is analogous.
  Recall from Definition~\ref{def:stability-conditions} that $\tau_0$
  is the wall where $\arg z_1 = \arg z_0$. Let $\theta$ denote this
  angle, so that $\theta = \tau_0(1, -\beta_C, -m') = \tau_0(0, 0,
  -m')$. For an effective splitting $(\beta, \vec 1) =
  ((1,-\beta_C,-m'), \vec e') + ((0, 0, m'-m), \vec e'')$,
  \begin{align*}
    \tau_0^-(1,\bar x)((1,-\beta_C,-m'), \vec e') &= \bigg(\theta, \,\frac{m'-n}{m'-n_0}\lambda^-(0,0,-1) + \frac{(\vec \mu +\bar x\vec 1)\cdot \vec e'}{m'-n_0}\bigg), \\
    \tau_0^-(1,\bar x)((0,0,m'-m), \vec e'') &= \bigg(\theta, \,\lambda^-(0,0,-1) + \frac{(\vec \mu +\bar x\vec 1)\cdot \vec e''}{m-m'}\bigg).
  \end{align*}
  Compare the second entries. The absolute values of the second terms
  $((\vec \mu +\bar x \vec 1)\cdot \vec e')/(m'-n_0)$ and $((\vec \mu
  + \bar x \vec 1) \cdot \vec e'')/(m-m')$ are upper-bounded by $N$.
  By our choice of a sufficiently negative $\lambda^-(0,0,-1)$ in
  Definition~\ref{def:joyce-auxiliary-stability}, the first terms
  dominate. Since $n_0 < m' \le n$, it follows that
  \begin{equation} \label{eq:framing-stability-DT-subobject}
    \tau_0^-(1,\bar x)((1,-\beta_C,-m'), \vec e') > \tau_0^-(1,\bar x)((0,0,m'-m), \vec e'').
  \end{equation}

  Suppose $I$ is $\tau^-$-unstable. Then its destabilizing sub-object
  must have class $(1, -\beta_C, -m')$. So $(I, \vec V, \vec \rho)$
  has a sub-object of class $((1, -\beta_C, -m'), \vec 0)$, which is
  $\tau_0^-(1, \bar x)$-destabilizing by
  \eqref{eq:framing-stability-DT-subobject}.
  
  Conversely, suppose $(I, \vec V, \vec \rho)$ is
  $\tau_0^-(1, \bar x)$-unstable, with destabilizing sub-object of
  class $(\gamma, \vec d')$. If $\gamma = \beta$, then the choice of
  $\bar x$ in \S\ref{sec:off-wall-invariants-proof-framing} means the
  framing data cannot be of the form
  \eqref{eq:off-wall-rank-1-framing}. Similarly if $\gamma = 0$, the
  framing maps cannot all be injective and therefore
  \eqref{eq:off-wall-rank-1-framing} cannot hold again. Finally, if
  $\gamma \neq 0, \beta$, then
  \eqref{eq:framing-stability-DT-subobject} means
  $\gamma = (1, -\beta_C, -m')$ is a $\tau^-$-destabilizing sub-object
  of $I$.
\end{proof}

\subsubsection{}

\begin{lemma} \label{lem:off-wall-flag-invariants}
  Let $(\beta, \vec e)$ be an effective numerical class with $\beta
  \neq 0$, $0\leq e_1\leq 1$, $e_N>0$ and $e_i \le e_{i+1} \le e_i + 1$ for all $1 \le
  i < N$. If $a$ is the smallest index where $e_a = e_N$, then for $s
  \in \{0, 1\}$ and $x \in (x_0(a), 0]$:
  \begin{enumerate}[label=(\roman*)]
  \item there are no strictly $\tau_0^\pm(s, x)$-semistable objects
    and $\tilde\sZ_{\beta,\vec e}^\pm(s, x)$ is independent of $x$;
  \item if $e_N > 1$ then
    \begin{equation} \label{eq:flag-invars-off-wall}
      \tilde\sZ^\pm_{\beta,\vec e}(s, x) = [f_{k,p}(\beta) - e_a + 1]_\kappa \cdot \tilde\sZ^\pm_{\beta,\vec e - \vec 1_{[a,N]}}(s, x).
    \end{equation}
  \end{enumerate}
\end{lemma}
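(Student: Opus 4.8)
The plan is to prove both parts by analyzing the weak stability condition $\tau_0^\pm(s,x)$ combinatorially, with no new geometric input beyond the properness already established in Proposition~\ref{prop:properness-quiver}. First I would set up the basic dichotomy: for an object $(I,\vec V,\vec\rho)$ of class $(\beta,\vec e)$ with $\beta\neq 0$, Lemma~\ref{lem:semistable-flag-framing-must-be-injective} forces all framing maps to be injective on the semistable locus, so we may view the quiver data as a (not necessarily full) flag of subspaces $V_1\subseteq\cdots\subseteq V_N\subseteq F_{k,p}(I)$ with $\dim V_i=e_i$. Any destabilizing sub- or quotient-object either comes from a sub/quotient of $I$ together with the induced flag, or is a pure ``quiver'' object of class $(0,\vec f)$. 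The conditions $0\le e_1\le 1$, $e_i\le e_{i+1}\le e_i+1$, and the choice of $a$ as the smallest index with $e_a=e_N=\max$ mean precisely that the top ``step'' of the flag is $V_a=V_{a+1}=\cdots=V_N$, i.e.\ the subspace $V_a\subseteq F_{k,p}(I)$ stabilizes from index $a$ onward.

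For part (i), genericity of $\vec\mu$ (the $\bZ$-linear independence) rules out any nontrivial coincidence of the rational second coordinates of $\tau_0^\pm(s,x)$ as long as $x\notin\{x_0(b)\}_b$ is not a value where $\pm\infty$ transitions occur; combined with $x\in(x_0(a),0]$ chosen so that no sub/quotient class $(0,\vec 1_{[b,N]})$ with $b\le a$ becomes a tie, this shows semistable $=$ stable. More precisely, I would argue that for $x$ in the half-open interval $(x_0(a),0]$, the only $x$-dependence of which objects are semistable comes from the last two cases in \eqref{eq:joyce-framed-stack-stability} for classes $(0,\vec f)$, and the relevant inequalities $(\vec\mu+x\vec 1)\cdot\vec f \gtrless 0$ do not flip within this interval for the finitely many $\vec f\le\vec e$ that can appear; hence $\tilde\sZ^\pm_{\beta,\vec e}(s,x)$ is locally constant, hence constant, on $(x_0(a),0]$. (Using that $\tilde\sZ$ is defined via $\sT$-equivariant localization on the proper fixed locus, ``locally constant in $x$'' means the stable locus itself is literally constant as $x$ varies without crossing a wall.) The absence of strictly semistables then follows because any would-be S-equivalence class would have to be a genuine direct sum, and a direct-sum decomposition with equal $\tau_0^\pm(s,x)$ on the summands is precisely what genericity of $\vec\mu$ forbids for $\beta\neq 0$.

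For part (ii), when $e_N>1$ I would exhibit $\Pi$ restricted to the class $(\beta,\vec e)$ as a projective bundle over the class $(\beta,\vec e-\vec 1_{[a,N]})$ of relative dimension $f_{k,p}(\beta)-e_a$, then apply Proposition~\ref{prop:projective-bundle-pushforward} (or rather \eqref{eq:full-flag-bundle-pushforward}, since it is a single projective-bundle step) to get the factor $[f_{k,p}(\beta)-e_a+1]_\kappa$. Concretely: removing the top step of the flag, i.e.\ forgetting the common subspace $V_a=\cdots=V_N\subseteq F_{k,p}(I)$ but remembering $V_1\subseteq\cdots\subseteq V_{a-1}$, is the map to the $(\vec e-\vec 1_{[a,N]})$-stack. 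Its fibers parametrize choices of a subspace $V_a$ with $V_{a-1}\subseteq V_a\subseteq F_{k,p}(I)$ of the correct dimension; the key point, to be checked against the stability condition, is that \emph{every} such intermediate subspace yields a $\tau_0^\pm(s,x)$-stable object (this is where $e_N>1$ matters, so that $V_{a-1}\neq 0$ forces the would-be destabilizing quotient $(0,\vec 1_{[a,N]})$ to actually be a quotient of a proper quiver-subobject, killing it by the choice of $\bar\mu,\lambda^\pm$), and conversely a stable object's top subspace can be anything of that dimension. Since $e_N>1$ means $V_{a-1}$ has dimension $e_a-1\ge 1$, the fiber is $\mathrm{Gr}(1, F_{k,p}(I)/V_{a-1})\cong\bP^{f_{k,p}(\beta)-e_a}$, and the APOT on the total space is the symmetrized pullback of the one downstairs, so \eqref{eq:full-flag-bundle-pushforward} applies verbatim.

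The main obstacle, I expect, is the verification in part (ii) that the projective-bundle structure is compatible with stability at both endpoints $s=0$ and $s=1$ \emph{simultaneously} and uniformly in the allowed range of $x$ --- that is, checking that no object in the fiber becomes strictly semistable or unstable for some intermediate choice of the top subspace. This amounts to a careful case analysis of which sub/quotient classes $(\gamma,\vec f)$ can destabilize, using the hierarchy $1>\mu_1\gg\cdots\gg\mu_N>0$ together with the sign of $\lambda^\pm(0,0,-1)$, and it is essentially the same bookkeeping as in the proof of Lemma~\ref{lem:off-wall-invariants} but with a general flag shape rather than the single-step flag $\vec 1_{[a,N]}$. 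I would organize it by first disposing of sub/quotient-objects supported on $\beta$-components other than $\beta_C$ (or $0$) via the $\pm\infty$ entries in \eqref{eq:joyce-framed-stack-stability}, then of sub/quotient-objects of the form $(1,-\beta_C,-m')$ via the dominance of the $\lambda^\pm$ term, and finally of the purely-quiver sub/quotient-objects $(0,\vec f)$ via the genericity of $\vec\mu$ and the inequality $x>x_0(a)$. Part (i)'s claim that $\tilde\sZ$ is $x$-independent on $(x_0(a),0]$ falls out of the same analysis, since the set of stable objects does not change as $x$ moves within that interval.
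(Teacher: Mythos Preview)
Your outline is essentially the paper's approach, but two points deserve comment.

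For part (i), your argument that genericity of $\vec\mu$ ``rules out any nontrivial coincidence of the rational second coordinates'' is too quick. The second coordinate of $\tau_0^\pm(s,x)$ depends on the real parameter $x$, so for a given splitting $(\beta,\vec e)=(\gamma,\vec f)+(\delta,\vec g)$ there is typically a unique $\tilde x$ at which the slopes coincide; the content is to show $\tilde x\notin(x_0(a),0]$. The paper separates cases: for $s=1$ the $\lambda^\pm$ term dominates and forces the equality to fail unless the $\lambda$-contributions cancel; when they do cancel (and for $s=0$), one must actually locate $\tilde x$. Writing $b$ for the least index with $e_b>0$, the $\mu_b$-term dominates in the equation $\vec\mu\cdot(\vec g/r(\delta)-\vec f/r(\gamma))=-x\,\vec 1\cdot(\vec g/r(\delta)-\vec f/r(\gamma))$, and a short case analysis (distinguishing $a>b$ from $a=b$, the latter forcing $\vec e=\vec 1_{[b,N]}$) pins $\tilde x$ outside $(x_0(a),0]$. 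Your sketch does not engage with this.

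For part (ii), the paper avoids your ``main obstacle'' entirely. Rather than verifying the projective-bundle description uniformly in $x$, the paper proves \eqref{eq:flag-invars-off-wall} only for $x$ just above $x_0(a)$ and then invokes part (i) to extend to all of $(x_0(a),0]$. At such $x$, the quotient class $(0,\vec 1_{[a,N]})$ has $\tau_0^\pm$-value arbitrarily close to that of $(\beta,\vec e)$, so passing to the canonical sub-object $(I,\vec V',\vec\rho')$ with $V'_i=\im(V_{a-1}\hookrightarrow V_i)$ for $i\ge a$ preserves semistability by a single numerical inequality. This sidesteps the uniform case analysis you anticipate. Your parenthetical explanation of why $e_N>1$ matters is also off: the quotient $(0,\vec 1_{[a,N]})$ is a genuine quotient of $(I,\vec V,\vec\rho)$, not of a proper sub-object; the condition $e_N>1$ simply ensures $a>1$ so that $V_{a-1}$ is defined and the recursion makes sense.
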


The inequalities $1 > \mu_1 > \mu_2 > \cdots > \mu_N > 0$ imply that
$-1 < x_0(1) < x_0(2) < \cdots < x_0(N) < 0$, so
\eqref{eq:flag-invars-off-wall} may be applied recursively.

\begin{proof}
  We sketch the main ideas and refer the reader to \cite[Proposition
  10.14]{joyce_wc_2021} for details.
	
  We first show (i). By
  Lemma~\ref{lem:semistable-flag-framing-must-be-injective}, it
  suffices to consider objects $(I, \vec V, \vec \rho)$ of class
  $(\beta,\vec e)$ with all $\rho_i$ injective. Assume that $(I, \vec
  V, \vec \rho)$ has weakly de-stabilizing sub- and quotient objects
  of classes $(\gamma,\vec f)$ and $(\delta,\vec g)$ respectively.
  Equating the values of $\tau^\pm_0(s,x)$ on these classes, we get
  \begin{equation}\label{eq:strictly_ss_eq}
    \frac{s\lambda^\pm(\gamma)}{r(\gamma)}-\frac{s\lambda^\pm(\delta)}{r(\delta)} = \left(\vec \mu + x\vec 1\right)\cdot\left(\frac{\vec g}{r(\delta)}-\frac{\vec f}{r(\gamma)}\right).
  \end{equation}
  We argue that this cannot hold for $s\in\{0,1\}$ and
  $x\in(x_0(a),0]$. For $s=1$ and $\abs x \leq 1$, this follows from
  our choice of sufficiently positive/negative $\lambda^\pm$ in
  Definition~\ref{def:joyce-auxiliary-stability} unless the terms on
  the left hand side of \eqref{eq:strictly_ss_eq} cancel. In this
  latter case, and for $s=0$, \eqref{eq:strictly_ss_eq} has at most
  one solution. Suppose $\tilde{x}\in \mathbb{R}$ is a solution, and let
  $1\leq b\leq N$ be the minimal index with $e_b>0$. If $a>b$, then by
  our choice of $\vec{\mu}$, the $\mu_b$-term in
  \eqref{eq:strictly_ss_eq} dominates, and $\tilde x$ is close to a
  nonzero integer multiple of $\mu_b$. In particular, it cannot be in
  $(x_0(a),0]$. Finally, if $a=b$, then $\vec e=\vec 1_{[b,N]}$, and
  by Lemma~\ref{lem:semistable-flag-framing-must-be-injective}, $\vec
  f=\vec 1_{[c,N]}$ for some $b\leq c\leq N+1$. Then $\tilde{x}$ can
  be checked to either be $\leq x_0(a)$ or positive. So, in particular
  \eqref{eq:strictly_ss_eq} does not hold for $x\in (x_0(a),0]$. We
    conclude that there are no strictly semi-stable objects.
  
  Next, we claim that $\tau_0^\pm(s, x)$ is continuous in $x\in
  (x_0(a), 0]$ for sub and quotient objects of $(I, \vec V, \vec
  \rho)$. Indeed, $\tau_0^\pm(s, x)$ is only discontinuous for classes
  of the form $(0, \vec g)$, for which the point of discontinuity is
  $x(\vec g) \coloneqq -\vec\mu \cdot \vec g/\vec 1 \cdot \vec g$. By
  injectivity of the framing maps, such classes can only come from
  quotient objects and moreover, if $g_a = 0$ then $g_b = 0$ for all
  $b > a$. It follows that $x(\vec g)$ is maximized, among quotient
  classes $(0, \vec g)$, by $\vec g = \vec 1_{[a, N]}$, and this
  maximum is exactly $x(\vec g) = x_0(a)$. From continuity and the
  absence of strictly semi-stables, we conclude that $\tilde \sZ_{\beta,
    \vec e}(s, x)$ is independent of $x \in (x_0(a), 0]$.

  For (ii), it is enough to prove \eqref{eq:flag-invars-off-wall} for
  $x > x_0(a)$ very close to $x_0(a)$, and then to apply (i). Let $(I,
  \vec V, \vec \rho)$ be a $\tau_0^\pm(s,x)$-semistable object of
  class $(\beta, \vec e)$. Note that our assumptions imply $a>1$. We
  canonically obtain an object sub-object $(I, \vec V', \vec \rho')$
  defined as follows: for $i<a$, take $V'_i\coloneqq V_i$ and for
  $i\geq a$, take $V'_i$ to be the image of $V_{a-1}$ in $V_i$. The
  corresponding quotient is of class $(0, \vec{1}_{[a,N]})$ and is
  given by a chain of isomorphisms of $1$-dimensional vector spaces
  $L_a\cong L_{a+1}\cong \cdots \cong L_N$. It is easy to check
  numerically that $(I, \vec V', \vec \rho')$ is still
  $\tau_0^\pm(s,x)$-semistable for $x$ very close to $x_0(a)$. Hence
  $\Pi\colon (I, \vec V, \vec \rho) \mapsto (I, \vec V', \vec \rho')$
  is a morphism of moduli stacks. The construction can be reversed
  after choosing a generator for $L_a$ among non-zero vectors in
  $F_{k,p}(I)/\im(V_{a-1})$ up to scaling. This means $\Pi$ is a
  $\bP^{f_{k,p}(\beta)-e_a+1}$-bundle. Then
  \eqref{eq:flag-invars-off-wall} follows from
  Proposition~\ref{prop:projective-bundle-pushforward} and
  Lemma~\ref{prop:symmetrized-pullback-functoriality}.
\end{proof}

\subsection{Wall-crossing formulas}
\label{sec:horizontal_wall_crossing}

\subsubsection{}\label{sec:horizontal_wall_crossing_full-flag}

The strategy to obtain the desired DT/PT wall-crossing formula for the
class $\alpha = (1, -\beta_C, -n)$ is to perform the wall-crossing in
$s$ twice: wall-cross using $\tau_0^-(s,0)$ from the DT chamber onto
the wall, by varying $s$ from $1$ to $0$, and then wall-cross using
$\tau_0^+(s,0)$ from the wall into the PT chamber, by varying $s$ from
$0$ to $1$. Schematically, we follow the path in
Figure~\ref{fig:joyce-WCF} twice:
\[ \tilde\sN^-_{(\lambda,\mu,\nu),n,\vec d}(1,0) \leadsto \tilde\sN^-_{(\lambda,\mu,\nu),n,\vec d}(0,0) = \tilde\sN^+_{(\lambda,\mu,\nu),n,\vec d}(0,0) \leadsto \tilde\sN^+_{(\lambda,\mu,\nu),n,\vec d}(1,0). \]
We begin with the class $(\alpha, \vec d)$ where $\vec d = (1, 2,
\ldots, N)$. Recall that this is a full flag, in the sense of
Definition~\ref{def:full-flag}.

\subsubsection{}

It follows immediately from Lemma~\ref{lem:off-wall-flag-invariants}
that if $(\beta, \vec e)$ is a full flag, then iterated applications
of \eqref{eq:flag-invars-off-wall} with $x=0$ gives the full vertical
``wall''-crossing
\begin{align} 
  \tilde \sN^\pm_{(\lambda,\mu,\nu),m,\vec e}(1, 0) &= [f_{k,p}(\beta)]_\kappa! \cdot \sN_{(\lambda,\mu,\nu),m}(\tau^\pm) \label{eq:joyce-x-WCF-for-N} \\
  \tilde \sN^\pm_{(\lambda,\mu,\nu),m,\vec e}(0, 0) &= [f_{k,p}(\beta)-1]_\kappa! \cdot \tilde\sN_{(\lambda,\mu,\nu),m} \label{eq:joyce-x-WCF-for-N-ss}
\end{align}
where in \eqref{eq:joyce-x-WCF-for-N} we further applied
\eqref{eq:ss-invars-off-wall} and in \eqref{eq:joyce-x-WCF-for-N-ss}
we set $\tilde\sN_{(\lambda,\mu,\nu),m} \coloneqq
\tilde\sN^\pm_{(\lambda,\mu,\nu),m,\vec 1}(0, 0)$. We can suppress
$\pm$ from the notation here since
$\tilde\sN^+_{(\lambda,\mu,\nu),m,\vec e}(0,x) =
\tilde\sN^-_{(\lambda,\mu,\nu),m,\vec e}(0,x)$ by the definition of
$\tau_0^\pm(0,x)$.

Completely analogously,
$\tilde\sQ^+_{m,\vec e}(0,0) = \tilde\sQ^-_{m,\vec e}(0,0)$ by
definition, and
Lemma~\ref{lem:joyce-full-flag-splitting-lemma}\ref{it:joyce-full-flag-splitting-dim-0}
shows that there is no wall-crossing in $s$ for the invariants
$\tilde\sQ^\pm_{m,\vec e}(s, 0)$. So we suppress both $\pm$ and $s$
from the notation. If $((0,0,-m), \vec e)$ is a full flag, iterated
applications of \eqref{eq:flag-invars-off-wall} give
\begin{equation} \label{eq:joyce-x-WCF-for-Q}
  \tilde\sQ^\pm_{m,\vec e}(s, 0) \eqqcolon \tilde\sQ_{m,\vec e} = [m-1]_\kappa! \cdot \tilde\sQ_m
\end{equation}
where $\tilde\sQ_m \coloneqq \tilde\sQ^\pm_{m, \vec 1}(0, 0)$ and we
used that $f_{k,p}((0,0,-m)) = m$. Note that there is no analogue of
\eqref{eq:ss-invars-off-wall} here; see also
\S\ref{sec:moduli-stack-of-dim-0-sheaves}.

\subsubsection{}

\begin{lemma} \label{lem:joyce-full-flag-splitting-lemma}
  Suppose $(\beta, \vec e)$ is a full flag and there is an integer $M
  \ge 1$ and a splitting $(\beta, \vec e) = \sum_{i=1}^M (\beta_i,
  \vec e_i)$ such that for some $s \in [0,1]$,
  \begin{equation} \label{eq:joyce-full-flag-splitting-condition}
    \tau_0^\pm(s, 0)(\beta_1, \vec e_1) = \cdots = \tau_0^\pm(s, 0)(\beta_M, \vec e_M)
  \end{equation}
  and there exist $\tau_0^\pm(s,0)$-semistable objects of class
  $(\beta_i, \vec e_i)$ for all $i$. Then:
  \begin{enumerate}[label=(\roman*)]
  \item \label{it:joyce-full-flag-splitting-general} $s \in (0, 1)$,
    $M \le 2$ with equality at only finitely many $s$, and each
    $(\beta_i, \vec e_i)$ is a full flag with no strictly
    $\tau_0^\pm(s,0)$-semistable objects of class $(\beta_i, \vec e_i)$;
  \item \label{it:joyce-full-flag-splitting-dim-0} if $\beta = (0, 0,
    -m)$, then $M = 1$;
  \item \label{it:joyce-full-flag-splitting-rank-1} if $\beta = (1,
    -\beta_C, -m)$ and $M = 2$, then $(\beta, \vec e) = ((1, -\beta_C,
    -m'), \vec e_1) + ((0, 0, m'-m), \vec e_2)$ where:
    \begin{enumerate}
    \item (DT) $\vec e_1 < \vec e_2$ if using $\tau_0^-(s, 0)$;
    \item (PT) $\vec e_1 > \vec e_2$ if using $\tau_0^+(s, 0)$.
    \end{enumerate}
    Here $<$ and $>$ refer to the lexicographical order.
  \end{enumerate}
\end{lemma}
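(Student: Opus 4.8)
The plan is to analyze the constraint \eqref{eq:joyce-full-flag-splitting-condition} purely combinatorially, exploiting the explicit formula \eqref{eq:joyce-framed-stack-stability} for $\tau_0^\pm(s,0)$ together with the genericity of $\vec\mu$ and the sign conditions on $\lambda^\pm$ chosen in Definition~\ref{def:joyce-auxiliary-stability}. First I would observe that since each $(\beta_i,\vec e_i)$ admits a $\tau_0^\pm(s,0)$-semistable object, Lemma~\ref{lem:semistable-flag-framing-must-be-injective} forces the framing maps in each summand to be injective when $\beta_i\neq 0$, and the underlying objects $I_i$ to be $\tau_0$-semistable; combined with the hypothesis that $(\beta,\vec e)$ is a full flag, this pins down the shapes of the $\vec e_i$ severely (each must be, up to reindexing, a ``step function'' $\vec 1_{[a_i,b_i]}$-type pattern or an honest full flag). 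In particular, at most one summand can have nonzero curve class because $\sum\rk(\beta_i)=\rk(\beta)\in\{0,1\}$, and the pieces with $\beta_i=0$ have class $(0,0,-\star)$ since by the full-flag hypothesis no spurious curve classes $\beta_C'$ appear.

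Next I would handle the endpoint cases $s\in\{0,1\}$. For $s=1$, part~(i) of Lemma~\ref{lem:off-wall-flag-invariants} (applied to the relevant $a$) already asserts there are no strictly semistable objects and, more to the point, the proof there shows that equality of $\tau_0^\pm(1,0)$ on two distinct effective summands forces the dominant $\lambda^\pm$-terms to cancel and then a $\vec\mu$-genericity contradiction; so $M=1$ when $s=1$. For $s=0$ the stability condition reduces to $\tau_0^\pm(0,0)(\beta_i,\vec e_i) = (\tau_0(\beta_i), (\vec\mu\cdot\vec e_i)/r(\beta_i))$ (or the $\beta_i=0$ case), and equality across $i$ of the second coordinate is a single linear equation in the $\mu_j$; genericity of $\vec\mu$ kills all but the trivial solution unless the $\vec e_i/r(\beta_i)$ all coincide, which for a full flag forces $M=1$. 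This gives $s\in(0,1)$.

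Then, for $s\in(0,1)$, I would prove $M\le 2$. Suppose $M\ge 3$. The equalities \eqref{eq:joyce-full-flag-splitting-condition} give $M-1$ independent linear equations; after using the $\tau_0$-coordinate to group summands (the first coordinate of $\tau_0^\pm$ is locally constant, so all summands share the same $\tau_0(\beta_i)$, hence at most one has $\beta_i=(1,-\beta_C,-\star)$ and the rest are $(0,0,-\star)$), the second-coordinate equalities among the zero-curve-class pieces read $(\vec\mu\cdot\vec e_i)/r(\beta_i)$ = const. Since $r$ is additive with $r(0,0,-1)=1$ we have $r(\beta_i)=m_i$, and genericity of $\vec\mu$ together with the full-flag shape of the $\vec e_i$ (each essentially a contiguous block pattern) forces at most one such block, so at most two summands total; equality $M=2$ pins $s$ to a root of one linear equation, hence happens only at finitely many $s$. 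That no strictly semistable objects of class $(\beta_i,\vec e_i)$ exist follows because a further splitting would contradict the same genericity argument one level down; and each $(\beta_i,\vec e_i)$ is a full flag since a full flag can only decompose into full flags (the conditions $e_{1}\le 1$, $d_a\le d_{a+1}\le d_a+1$, $d_N=f_{k,p}(\beta)$ are all preserved under the induced splitting of the flag). Part~(ii) is then immediate: $\beta=(0,0,-m)$ has $\tau_0$-coordinate forcing any nonzero summand to also be of that form, and the $\vec\mu$-genericity argument above gives $M=1$.

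Finally, for part~(iii), with $\beta=(1,-\beta_C,-m)$ and $M=2$, one summand is $((1,-\beta_C,-m'),\vec e_1)$ and the other $((0,0,m'-m),\vec e_2)$. The equality $\tau_0^\pm(s,0)(\beta_1,\vec e_1)=\tau_0^\pm(s,0)(\beta_2,\vec e_2)$ in its second coordinate, written out via \eqref{eq:joyce-framed-stack-stability}, becomes
\[
\frac{s\lambda^\pm(\beta_1)+\vec\mu\cdot\vec e_1}{m'-n_0} = \frac{s\lambda^\pm(0,0,-1)(m-m')+\vec\mu\cdot\vec e_2}{m-m'}.
\]
For $\tau_0^-$ with $\lambda^-(0,0,-1)\ll 0$, and for $\tau_0^+$ with $\lambda^+(0,0,-1)\gg 0$, the sign of the leading $\lambda^\pm$-contributions forces the $\vec\mu$-terms to compensate in a definite direction; since $\vec\mu$ is a decreasing generic vector and the $\vec e_i$ are full flags summing to $\vec e=(1,2,\dots)$-type, the inequality $\vec e_1<\vec e_2$ (DT) resp.\ $\vec e_1>\vec e_2$ (PT) in lexicographic order is exactly what makes the remaining equation solvable for some $s\in(0,1)$; the opposite ordering would force $s\notin[0,1]$. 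I expect the main obstacle to be the careful bookkeeping in this last sign analysis --- making precise how ``$\lambda^\pm(0,0,-1)$ sufficiently negative/positive'' together with the $N$-boundedness of all $\vec\mu\cdot\vec e_i$ terms pins down the lexicographic direction uniformly over all admissible splittings --- which is essentially the argument of \cite[Prop.~10.14]{joyce_wc_2021} adapted to our hands-on choice of $\lambda^\pm$.
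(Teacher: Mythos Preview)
Your proposal is correct and follows essentially the same approach as the paper: use Lemma~\ref{lem:semistable-flag-framing-must-be-injective} to force each summand to be a full flag with $\beta_i\neq 0$ and $\vec e_i\neq\vec 0$, reduce \eqref{eq:joyce-full-flag-splitting-condition} to a system of linear equations $A_i s+B_i=0$ with $B_i\neq 0$, and invoke genericity of $\vec\mu$ to get $s\in(0,1)$ and $M\le 2$; part~(ii) then drops out because the $s$-dependence cancels, and part~(iii) follows from the explicit wall equation. The only place where the paper is sharper than your sketch is the endgame of~(iii): rather than a vague ``dominant $\lambda^\pm$-contribution'' argument, the paper simplifies the equality to
\[
-s(n-n_0)\lambda^\pm(0,0,-1)=\vec\mu\cdot\Bigl(\tfrac{m'-n_0}{m-m'}\vec e_2-\vec e_1\Bigr),
\]
whose left-hand side has a definite sign, and then uses the hierarchy $\mu_1\gg\mu_2\gg\cdots$ (not merely that $\vec\mu$ is decreasing) so that the sign of the right-hand side is governed by the first nonzero entry of $\tfrac{m'-n_0}{m-m'}\vec e_2-\vec e_1$, which immediately yields the lexicographic ordering.
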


Classes of the form $\beta_i = (0, -\beta_C', -m')$ for $\beta_C' \neq
0$ will never arise in \eqref{eq:joyce-full-flag-splitting-condition}
because their $\tau_0 = \pi/2$ is already smaller than the $\tau_0$ of
classes of the form $(1, -\beta_C', -m')$ or $(0, 0, -m')$.

Later, in the situation of \ref{it:joyce-full-flag-splitting-rank-1}
above, we will take $\{\vec f, \vec g\} = \{\vec e_1, \vec e_2\}$ such
that $\vec f > \vec g$. Then a DT splitting is
$((0, 0, m'-m), \vec f) + ((1, -\beta_C, -m'), \vec g)$ while a PT
splitting is $((1, -\beta_C, -m'), \vec f) + ((0,0,m'-m), \vec g)$.
\subsubsection{}

\begin{proof}[Proof of Lemma~\ref{lem:joyce-full-flag-splitting-lemma}.]
  Part (i) is \cite[Prop. 10.16]{joyce_wc_2021} which we briefly
  review. By the definition of $\tau_0^\pm(s, 0)$, evidently
  $\beta_i \neq 0$ for all $i$. If $\vec e_i = \vec 0$ for some $i$,
  then $e_{j,N} > e_{j,N+1} \coloneqq f_{k,p}(\beta_j)$ for some $j$,
  contradicting
  Lemma~\ref{lem:semistable-flag-framing-must-be-injective}.
  Consequently,
  Lemma~\ref{lem:semistable-flag-framing-must-be-injective} implies
  that each $(\beta_i, \vec e_i)$ must be a full flag. No two
  $\vec e_i$ can be proportional, otherwise their sum cannot satisfy
  the condition $e_a \le e_{a+1} \le e_a+1$ for all $1 \le a < N$.
  Hence the condition \eqref{eq:joyce-full-flag-splitting-condition}
  yields a collection of $M-1$ linear equations of the form
  \begin{equation} \label{eq:joyce-full-flag-splitting-condition-s}
    A_i s + B_i = 0, \qquad B_i \neq 0,
  \end{equation}
  for the variable $s$. Genericity of $\vec\mu$ implies that there is no
  solution if $M > 2$, and that $s \in (0,1)$. Finally, if there
  were a strictly $\tau_0^\pm(s,0)$-semistable object of class
  $(\beta_i, \vec e_i)$, then there would be a splitting
  $(\beta_i, \vec e_i) = (\beta_i', \vec e_i') + (\beta_i'', \vec e_i'')$
  such that, writing $\{i,j\} = \{1,2\}$, the splitting
  $(\beta, \vec e) = (\beta_i', \vec e_i') + (\beta_i'', \vec e_i'') + (\beta_j, \vec e_j)$
  also satisfies the conditions of the lemma, contradicting $M \le 2$.

  Parts (ii) and (iii) are clear after writing out
  \eqref{eq:joyce-full-flag-splitting-condition} explicitly. Namely,
  $((0,0,-m),\vec e)$ can only split into two classes
  $((0,0,-m'),\vec e_1)$ and $((0,0,-m+m'),\vec e_2)$, in which case
  \eqref{eq:joyce-full-flag-splitting-condition-s} reads
  \[ \frac{sm'\lambda^\pm(0,0,-1) + \vec\mu\cdot\vec e_1}{m'} = \frac{s(m-m')\lambda^\pm(0,0,-1)+\vec\mu\cdot\vec e_2}{m-m'}. \]
  This is independent of $s$, i.e. $A_i = 0$, and therefore has no
  solution. Similarly, for
  \eqref{eq:joyce-full-flag-splitting-condition} to hold,
  $((1, -\beta_C, -m), \vec e)$ can only split into two classes
  $((1,-\beta_C,-m'),\vec e_1)$ and $((0,0,-m+m'),\vec e_2)$, in which
  case \eqref{eq:joyce-full-flag-splitting-condition-s} reads
  \[ \frac{s(m'-n)\lambda^\pm(0,0,-1)+\vec\mu\cdot\vec e_1}{m'-n_0} = \frac{s(m-m')\lambda^\pm(0,0,-1)+\vec\mu\cdot\vec e_2}{m-m'}. \]
  This simplifies to the equation
  \begin{equation}\label{eq:s_walls_equation}
    -s(n-n_0)\lambda^\pm(0,0,-1) = \vec\mu\cdot\left(\frac{m'-n_0}{m-m'}\vec e_2 - \vec e_1\right),
  \end{equation}
  which defines the walls $s_i$ in the wall-crossing along $s$. The
  left hand side is positive for $\tau^-$ and negative for $\tau^+$.
  Since $\mu_1 \gg \mu_2 \gg \cdots$, and $n_0 < m' < m$, this implies
  $\vec e_1 < \vec e_2$ and $\vec e_1 > \vec e_2$ in the two cases,
  respectively.
\end{proof}

\subsubsection{}

\begin{definition}[{\cite[Definition 10.19]{joyce_wc_2021}}]
  Fix a full flag $(\beta, \vec e)$ where $\beta = (1, -\beta_C, -m)$.
  Let $s \in (0,1)$ be a wall for $\tilde \sN^\pm_{(\lambda,\mu,\nu),
    m, \vec e}(s, 0)$ from
  Lemma~\ref{lem:joyce-full-flag-splitting-lemma}\ref{it:joyce-full-flag-splitting-rank-1},
  i.e. a solution $s$ to \eqref{eq:s_walls_equation}. For this $s$,
  genericity of $\vec\mu$ implies that the integer vectors $\vec e_1,
  \vec e_2$ solving \eqref{eq:s_walls_equation} are unique.

  Let $\{\vec f, \vec g\} = \{\vec e_1, \vec e_2\}$ ordered so that
  $\vec f > \vec g$. Let $a$ (resp. $b$) be the smallest index where
  $f_a > 0$ (resp. $g_b > 0$) and consider the quiver $\hat Q(k,a,b)$
  given by
  \[ \begin{tikzcd}[column sep=1em]
      \overset{\hat V_1}{\blacksquare} \ar{r} & \blacksquare \ar{r} & \cdots \ar{r} & \blacksquare \ar{r} & \overset{\hat V_a}{\blacksquare} \ar{r} \ar{drr}[swap]{\hat\rho_{-1}} & \blacksquare \ar{r} & \cdots \ar{r} & \blacksquare \ar{r} & \blacksquare \ar{r}{\hat\rho_{b-1}} \ar[dotted]{dll}[swap]{0} & \overset{\hat V_b}{\blacksquare} \ar{r} \ar{dlll}{\hat\rho_0} & \blacksquare \ar{r} & \cdots \ar{r} & \overset{\hat V_r}{\blacksquare} \ar{r} & \overset{F_{k,p}(I)}{\bigbullet} \\
      {} & & & & & & \overset{\hat V_0}{\blacksquare}.
    \end{tikzcd} \]
  In complete analogy with Definition~\ref{def:auxiliary-stacks}, let
  \[ \fN^{\hat Q(k,a,b)}_{(\lambda,\mu,\nu),m,\hat{\vec e}} \coloneqq \left\{\left[(I, \hat{\vec V}, \hat{\vec \rho})\right] : I \in \fN_{(\lambda,\mu,\nu),m}, \; \dim \hat{\vec V} = \hat{\vec e}\right\} \]
  be the moduli stack of triples. We consider only $\hat{\vec e}
  \coloneqq (1, \vec e)$, i.e. $\dim \hat V_0 = 1$. Take the weak
  stability condition $\hat\tau_0^\pm(s,x)$ defined using the formula
  \eqref{eq:joyce-framed-stack-stability} but with the parameter
  $\hat{\vec\mu} \coloneqq (-\epsilon, \vec\mu)$ for a very small
  $\epsilon > 0$. The dotted arrow is a relation, not a quiver map; it
  means to take the closed substack
  \[ \bM \subset \fN^{\hat Q(k,a,b),\sst}_{(\lambda,\mu,\nu),m,\hat{\vec e}}(\hat\tau_0^\pm(s,0)) \]
  where $\hat\rho_0 \circ \hat\rho_{b-1} = 0$. Since $e_b = e_{b-1} +
  1$, this is the substack where $\hat\rho_0$ factors through a
  morphism $\hat V_b/\hat V_{b-1} \to \hat V_0$.
\end{definition}

\subsubsection{}

The moduli stack $\bM$ is the master space for Joyce-style
wall-crossing. Let $\bC^\times$ act by scaling the map $\hat\rho_{-1}$
with weight denoted $z$. We work
$(\bC^\times \times \sT)$-equivariantly on $\bM$.

In any splitting $(\beta, (1, \vec e)) = (\gamma, (1, \vec f)) +
(\delta, (0, \vec g))$, the $\hat\tau_0^\pm(s,0)$ of the two terms
cannot be equal because $\epsilon$ is small and contributes to only
one of them. Hence $\bM$ contains no strictly semistable objects. By
the same reasoning as in Theorem~\ref{prop:properness-quiver}, one
concludes that the master space is a separated algebraic space with
proper $\sT$-fixed loci. Then Theorem~\ref{thm:symmetric-pullback}
endows $\bM$ with a symmetric APOT by symmetrized pullback along the
forgetful map to $\fN^\pl_{(\lambda,\mu,\nu),m}$. By
Theorem~\ref{thm:symmetrized-pullback-localization} and
Lemma~\ref{lem:obstruction-theory-square-root}, we obtain a
symmetrized virtual structure sheaf $\hat\cO^\vir$ amenable to
$(\bC^\times \times \sT)$-equivariant localization.

\subsubsection{}

\begin{proposition}[{\cite[Props. 10.20, 10.21]{joyce_wc_2021}}] \label{prop:joyce-master-space-fixed-loci}
  The $\bC^\times$-fixed locus of $\bM$ is the disjoint union of the
  following pieces.
  \begin{enumerate}
  \item Let $Z_{\hat\rho_{-1}=0} \coloneqq \{\hat\rho_{-1} = 0\}
    \subset \bM$. By stability, $\hat\rho_0 \neq 0$. For $s_- < s$
    very close to $s$, there is a natural isomorphism of stacks
    \begin{align*}
      Z_{\hat\rho_{-1}=0} &\xrightarrow{\sim} \fN^{Q(N),\sst}_{(\lambda,\mu,\nu),m,\vec e}(\tau_0^\pm(s_-,0)) \\
      \left[(I, (\hat V_0, \vec V), (0, \hat \rho_0, \vec \rho))\right] &\mapsto \left[(I, \vec V, \vec \rho)\right]
    \end{align*} 
    which identifies the $\hat\cO^\vir$. The virtual normal bundle
    $\cN^\vir_{\hat\rho_{-1}=0}$ is a line bundle with
    $\bC^\times$-weight $z$.
  \item Let $Z_{\hat\rho_0=0} \coloneqq \{\hat\rho_0 = 0\} \subset
    \bM$. By stability, $\hat\rho_{-1} \neq 0$. For $s_+ > s$ very
    close to $s$, there is a natural isomorphism of stacks
    \begin{align*}
      Z_{\hat\rho_0=0} &\xrightarrow{\sim} \fN^{Q(N),\sst}_{(\lambda,\mu,\nu),m,\vec e}(\tau_0^\pm(s_+,0)) \\
      \left[(I, (\hat V_0, \vec V), (\hat \rho_{-1}, 0, \vec \rho))\right] &\mapsto \left[(I, \vec V, \vec \rho)\right]
    \end{align*}
    which identifies the $\hat\cO^\vir$. The virtual normal bundle
    $\cN^\vir_{\hat\rho_0=0}$ is a line bundle with
    $\bC^\times$-weight $z^{-1}$.
  \item For each splitting
    $(\beta, \vec e) = (\gamma, \vec f) + (\delta, \vec g)$ satisfying
    Lemma~\ref{lem:joyce-full-flag-splitting-lemma}, let
    \[ Z^\pm_{(\gamma,\vec f),(\delta, \vec g)} \coloneqq \left\{\left[(I' \oplus I'', (\hat V_0, \vec V' \oplus \vec V''), (\hat\rho_{-1}, \hat\rho_0, \vec \rho' \oplus \vec \rho''))\right] : \begin{array}{c} \hat\rho_{-1} \neq 0 \\ \hat\rho_0\big|_{V'_b} = 0, \; \hat\rho_0\big|_{V''_b} \neq 0 \end{array}\right\} \subset \bM \]
    where $\cl((I', \vec V', \vec \rho')) = (\gamma, \vec f)$ and
    $\cl((I'', \vec V'', \vec \rho'')) = (\delta, \vec g)$. There are
    natural isomorphisms of stacks
    \begin{align}
      Z^-_{(\gamma,\vec f), (\delta, \vec g)} &\xrightarrow{\sim} \fQ^{Q(N),\sst}_{m',\vec f}(\tau_0^-(s,0)) \times \fN^{Q(N),\sst}_{(\lambda,\mu,\nu),m-m',\vec g}(\tau_0^-(s,0)) \label{eq:fixed-locus-3-isomorphism} \\
      Z^+_{(\gamma,\vec f), (\delta, \vec g)} &\xrightarrow{\sim} \fN^{Q(N),\sst}_{(\lambda,\mu,\nu),m-m',\vec f}(\tau_0^+(s,0)) \times \fQ^{Q(N),\sst}_{m',\vec g}(\tau_0^+(s,0)) \nonumber \\
      \left[(I, (\hat V_0, \vec V), (\hat\rho_{-1}, \hat\rho_0, \vec\rho))\right] &\mapsto \left(\left[(I', \vec V', \vec \rho')\right], \left[(I'', \vec V'', \vec \rho'')\right]\right) \nonumber
    \end{align}
    which identifies the $\hat\cO^\vir$. The target stacks have no
    strictly $\tau_0^\pm(s,0)$-semistable objects by
    Lemma~\ref{lem:joyce-full-flag-splitting-lemma}\ref{it:joyce-full-flag-splitting-general}.
    Under this isomorphism, the virtual normal bundle is
    \begin{align*}
      \cN^\vir_{(\gamma,\vec f),(\delta,\vec g)}
      &= \left(z^{-1} \bF(\vec f, \vec g) + z \bF(\vec g, \vec f)\right) - \kappa \cdot (\cdots)^\vee \\
      &\quad - \left(z^{-1} R\pi_*\cExt(\scI_\gamma, \scI_\delta(-D)) + z R\pi_*\cExt(\scI_\delta, \scI_\gamma(-D))\right)
    \end{align*}
    where $\bF(\vec f, \vec g)$ is the restriction of
    \eqref{eq:quiver-bilinear-obstruction} to numerical class $(\vec
    f, \vec g)$, and $\scI_\gamma$ and $\scI_\delta$ are pullbacks of
    universal families for numerical classes $\gamma$ and $\delta$
    respectively. The $(\cdots)^\vee$ indicates the dual of the
    preceding bracketed term.
  \end{enumerate}
\end{proposition}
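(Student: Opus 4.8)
The plan is to analyze the $\bC^\times$-action on $\bM$ componentwise, exactly following the template of the Mochizuki-style Proposition~\ref{prop:masterspace-fixedloci} but now with the quiver $\hat Q(k,a,b)$ and the relation $\hat\rho_0 \circ \hat\rho_{b-1} = 0$. First I would observe that the $\bC^\times$-action scales only $\hat\rho_{-1}$, so a point of $\bM$ is $\bC^\times$-fixed if and only if either $\hat\rho_{-1}=0$, or $\hat\rho_{-1}\neq 0$ and the weight-$z$ piece of the framing is ``split off,'' i.e.\ the object decomposes as a direct sum whose two summands carry the incoming and outgoing part of $\hat\rho_{-1}$ separately. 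This trichotomy immediately yields the three cases in the statement: $Z_{\hat\rho_{-1}=0}$, $Z_{\hat\rho_0=0}$, and the interaction loci $Z^\pm_{(\gamma,\vec f),(\delta,\vec g)}$. The first two are handled by noting that setting $\hat\rho_{-1}=0$ (resp.\ $\hat\rho_0 = 0$) turns $\hat\tau^\pm_0(s,0)$-stability into $\tau^\pm_0(s_-,0)$-stability (resp.\ $\tau^\pm_0(s_+,0)$-stability) for $s_\mp$ infinitesimally on one side of $s$, because the small parameter $\epsilon$ in $\hat{\vec\mu}=(-\epsilon,\vec\mu)$ acts like a perturbation of $s$ past the wall; this is the content of \cite[Props.\ 10.20, 10.21]{joyce_wc_2021} and I would cite it, checking only that the identification of moduli problems (forgetting $\hat V_0$ and $\hat\rho_{\pm}$) is an isomorphism of stacks, which is routine. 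The virtual normal bundle in these two cases is the rank-one piece carrying $\hat\rho_{-1}$, which visibly has $\bC^\times$-weight $z$ and $z^{-1}$ respectively; one identifies it with the relevant line in \eqref{eq:quiver-bilinear-obstruction} associated to the extra vertex $\hat V_0$, and the APOT identification follows from Proposition~\ref{prop:master_space_vir_class_comparison} once one matches K-theory classes.

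The heart of the argument is the third case. For a $\bC^\times$-fixed point with $\hat\rho_{-1}\neq 0$, the scaling forces a splitting of the underlying object in $\fN_{(\lambda,\mu,\nu),m}$ together with its quiver data, and the relation $\hat\rho_0\circ\hat\rho_{b-1}=0$ together with the location $b$ of the splitting forces $\hat\rho_0$ to be supported entirely on the second summand. One then checks — exactly as in Lemma~\ref{lem:pos-stab} and \cite[Prop.\ 4.59, Lemma~4.28]{Kuhn2021} — that the splitting is of the constrained shape dictated by Lemma~\ref{lem:joyce-full-flag-splitting-lemma}\ref{it:joyce-full-flag-splitting-rank-1}: in the DT case $(\gamma,\vec f) = ((0,0,m'-m),\vec f)$ and $(\delta,\vec g)=((1,-\beta_C,-m'),\vec g)$ with $\vec f > \vec g$, and dually in the PT case. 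Each summand is forced to lie in the appropriate $\tau^\pm_0(s,0)$-semistable (=stable) locus of its quiver-framed moduli stack, giving the isomorphism \eqref{eq:fixed-locus-3-isomorphism}; the scheme structure on the fixed locus matches the product because the splitting is canonical. I would prove the identification of $\hat\cO^\vir$ via Proposition~\ref{prop:master_space_vir_class_comparison}, so the only real computation is to verify the displayed formula for $\cN^\vir_{(\gamma,\vec f),(\delta,\vec g)}$, which I would postpone to the later section (as the excerpt does at \S\ref{prop:joyce-master-space-fixed-loci}) — it amounts to writing the symmetrized pullback virtual tangent complex on $\bM$ via \eqref{eq:APOT-Nvir}, restricting to the fixed locus, extracting the $\bC^\times$-moving part, and recognizing the cross-terms $\mathrm{Ext}_{\bar X}(\scI_\gamma,\scI_\delta(-D))$, $\mathrm{Ext}_{\bar X}(\scI_\delta,\scI_\gamma(-D))$ coming from $g^*\bE_\fM$ and the quiver cross-terms $\bF(\vec f,\vec g)$, $\bF(\vec g,\vec f)$ coming from $\Omega_g$, with the $\kappa(\cdots)^\vee$ term enforced by symmetry.

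The main obstacle, I expect, is not any single step but the bookkeeping in the third case: correctly pinning down which of the two orderings of $\{\vec e_1,\vec e_2\}$ gives the sub-object versus the quotient in the DT versus PT master space, and confirming that the extra vertex $\hat V_0$ and the relation $\hat\rho_0\circ\hat\rho_{b-1}=0$ interact with $b = i_{\min}(\vec g)$ exactly so that $Z^\pm_{(\gamma,\vec f),(\delta,\vec g)}$ is nonempty precisely for the splittings allowed by Lemma~\ref{lem:joyce-full-flag-splitting-lemma}. This is where the small parameter $\epsilon$ and the genericity of $\vec\mu$ (which makes the walls $s_i$ and the vectors $\vec e_1,\vec e_2$ unique) do the essential work, and I would lean on the corresponding statements in \cite[\S 10]{joyce_wc_2021}, adapting them to our geometry and checking the CY3-specific point that the obstruction theory restricts correctly on the fixed loci — which is exactly what Proposition~\ref{prop:master_space_vir_class_comparison} was built to handle.
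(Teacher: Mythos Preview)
Your proposal is correct and takes essentially the same approach as the paper: cite Joyce for the identification of fixed loci, then invoke Proposition~\ref{prop:master_space_vir_class_comparison} to reduce the identification of $\hat\cO^\vir$ to a K-theory class match, which is carried out explicitly for case~3 by splitting $\scI_\beta \leadsto z\scI_\gamma \oplus \scI_\delta$ and $\bF(\vec e,\vec e)$ into diagonal and off-diagonal pieces. The only cosmetic difference is that the paper does not postpone the normal-bundle/K-class computation but writes it out inline (this is in fact the place the Mochizuki-style proof was deferring \emph{to}).
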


\begin{proof}
  The proof exactly follows Joyce's, except when showing that the
  various identifications of fixed loci also identify their
  $\hat\cO^\vir$. While Joyce carefully shows that the virtual cycles 
  agree, in our case it is easier to apply Proposition
  \ref{prop:master_space_vir_class_comparison} and match K-theory
  classes of APOTs. We show how this works in case 3, for $Z \coloneqq
  Z^-_{(\gamma,\vec f), (\delta, \vec g)}$; the other cases are
  similar, but simpler.
  
  On the left hand side of \eqref{eq:fixed-locus-3-isomorphism}, there
  is an APOT which is the $\bC^\times$-fixed part of the APOT on
  $\bM$, which was obtained by symmetrized pullback along the
  forgetful morphism $g\colon \bM \to \fN^\pl_{(\lambda,\mu,\nu),m}$.
  So, in K-theory,
  \begin{equation} \label{eq:master-space-split-fixed-locus-obstruction-theory}
    \begin{aligned}
      &\left(\bE_{\fN^\pl_{(\lambda,\mu,\nu),m}} + \Omega_g - \kappa \Omega_g^\vee\right)\Big|_Z \\
      &= \left(R\pi_* \cExt(\scI_\beta, \scI_\beta(-D)) + \bF(\vec e, \vec e) - \kappa \bF(\vec e, \vec e)^\vee\right)\Big|_Z \\
      &= -R\pi_* \cExt((z\scI_\gamma) \oplus \scI_\delta, (z\scI_\gamma) \oplus \scI_\delta) \\
      &\qquad + \left(\bF(\vec f, \vec f) + \bF(\vec g, \vec g) + z^{-1} \bF(\vec f, \vec g) + z \bF(\vec g, \vec f)\right) - \kappa \otimes (\cdots)^\vee
    \end{aligned}
  \end{equation}
  where the second equality passes through the isomorphism
  \eqref{eq:fixed-locus-3-isomorphism}, e.g. $\scI_\beta$ splits as
  $z\scI_\gamma \oplus \scI_\delta$. We omitted some obvious
  pullbacks.

  On the right hand side of \eqref{eq:fixed-locus-3-isomorphism},
  there is an APOT which is the $\boxplus$ of two APOTs obtained by
  symmetrized pullback along forgetful morphisms to $\fQ^\pl_{m'}$ and
  $\fN^\pl_{(\lambda,\mu,\nu),m-m'}$ respectively. This is equivalent
  to symmetrized pullback along the map $f$ which is the $\boxplus$ of
  the two forgetful morphisms. So
  \begin{align*}
    &\bE_{\fQ^\pl_{m'} \times \fN^\pl_{(\lambda,\mu,\nu),m-m'}} + \Omega_f - \kappa \Omega_f^\vee \\
    &= -R\pi_* \cExt(\scI_\gamma, \scI_\gamma) + R\pi_* \cExt(\scI_\delta, \scI_\delta) + \left(\bF(\vec f, \vec f) + \bF(\vec g, \vec g)\right) - \kappa \otimes (\cdots)^\vee.
  \end{align*}
  This is evidently the $\bC^\times$-fixed part of
  \eqref{eq:master-space-split-fixed-locus-obstruction-theory}. Hence
  the natural $\hat\cO^\vir$ on both sides of
  \eqref{eq:fixed-locus-3-isomorphism} are identified by
  Proposition~\ref{prop:master_space_vir_class_comparison}.

  It is also clear that the non-$\bC^\times$-fixed part of
  \eqref{eq:master-space-split-fixed-locus-obstruction-theory} is the
  claimed virtual normal bundle $\cN^\vir_{(\gamma,\vec f), (\delta,
    \vec g)}$.
\end{proof}

\subsubsection{}

Using Proposition~\ref{prop:joyce-master-space-fixed-loci} and
applying appropriate residues to the $\bC^\times$-localization formula
on $\bM$, as written in Proposition~\ref{prop:master-space-relation},
we immediately obtain the ``single-step'' wall-crossing formula
\begin{equation}\label{eq:single_step_s_wc}
  0 = \tilde\sN_{\beta,\vec e}^\pm(s_-,0) - \tilde\sN_{\beta,\vec e}^\pm(s_+,0) + \hspace{-1.5em}\sum_{\substack{(\beta, \vec e) = (\gamma, \vec f) + (\delta, \vec g)\\\tau^\pm_0(s,0)(\gamma, \vec f) = \tau^\pm_0(s,0)(\delta, \vec g)\\\vec f > \vec g}} \hspace{-1.5em} [\chi(\gamma,\delta) - c_Q(\vec f, \vec g)]_\kappa \cdot \tilde\sZ^\pm_{\gamma,\vec f}(s, 0) \tilde\sZ^\pm_{\delta, \vec g}(s, 0)
\end{equation}
using notation from \S\ref{sec:framing-quiver-notation}. Each term
corresponds to a $\bC^\times$-fixed locus. Note that the splittings
have $\gamma = (0, 0, -m')$ for $\tau^-$ and $\delta = (0, 0, -m')$
for $\tau^+$, so we use the more uniform notation $\tilde\sZ^\pm$ in
the sum.

Collecting the wall-crossings at each wall $s$ which occurs in
Lemma~\ref{lem:joyce-full-flag-splitting-lemma}\ref{it:joyce-full-flag-splitting-rank-1}
yields a combinatorial wall-crossing formula relating
$\tilde\sZ^{\pm}_{\beta,\vec e}(0,0)$ and
$\tilde\sZ^{\pm}_{\beta,\vec e}(1,0)$. These formulas depend on the
direction in which we are moving along the $s$ axis.

\subsubsection{}

For the DT wall-crossing, recursively apply the ``$-$'' case of the
single-step wall-crossing formula \eqref{eq:single_step_s_wc} to move
from $s=1$ to $s=0$, starting from the class $(\alpha, \vec d)$ where
$\vec d = (1, 2, \ldots, N)$. We claim that the resulting formula is
\begin{equation} \label{eq:joyce-s-WCF-for-DT-rough}
  \tilde\sN^-_{(\lambda,\mu,\nu), n, \vec d}(1, 0) = \hspace{-4em} \sum_{\substack{\ell>0, \; (\alpha,\vec d) = \sum_{i=1}^\ell (\gamma_i, \vec e_i)\\\{\gamma_i \eqqcolon (0, 0, -m_i)\}_{i=1}^{\ell-1}, \; \gamma_\ell \eqqcolon (1, -\beta_C, -m_\ell)\\\forall i:\, (\gamma_i,\vec e_i) \text{ full flag}\\\vec e_1 > \cdots > \vec e_\ell}}\hspace{-4em}\tilde \sN_{(\lambda,\mu,\nu), m_\ell,\vec e_\ell}(0,0) \prod_{i=1}^{\ell-1} \Big[\chi(\gamma_i,\sum_{j=i+1}^\ell \gamma_j) - c_Q(\vec e_i, \sum_{j=i+1}^\ell \vec e_j)\Big]_\kappa \tilde \sQ_{m_i,\vec e_i}
\end{equation}
We explain the conditions in the sum. By Lemma
\ref{lem:joyce-full-flag-splitting-lemma}, all full flags split into
smaller full flags at each wall. Since we started with a full flag
$(\alpha,\vec d)$, note that all sums $\sum_{i\in I} (\gamma_i,\vec
e_i)$ for any $I\subset\{1,\dots,\ell\}$ must be full flags.
Conversely, every splitting into two smaller full flags occurs at some
$s \in (0, 1)$ because $|\lambda^\pm(0,0,-1)|$ in
\eqref{eq:s_walls_equation} is very big. Finally, the ordering
condition $\vec e_1 > \cdots > \vec e_\ell$ arises by considering two
consecutive splittings. Namely, take walls $s' < s$ where numerical
classes split as
\begin{align*}
  ((1, -\beta_C, -m), \vec f_{i-1}) &= ((0, 0, -m+m'), \vec e_i) + ((1, -\beta_C, -m'), \vec f_i) \quad \text{at } s \\
  ((1, -\beta_C, -m'), \vec f_i) &= ((0, 0, -m'+m''), \vec e_{i+1}) + ((1, -\beta_C, -m''), \vec f_{i+1}) \quad \text{at } s'.
\end{align*}
Since $\vec e_i > \vec f_i = \vec e_{i+1} + \vec f_{i+1}$, it is clear
that $\vec e_i > \vec e_{i+1}$. Iterating this over all walls yields
the ordering condition.
In the word notation of Definition~\ref{def:word-rearrangements},
\begin{equation} \label{eq:joyce-s-WCF-for-DT}
  \tilde\sN^-_{(\lambda,\mu,\nu), n, \vec d}(1, 0) = \hspace{-1em}\sum_{\substack{k>0, \, \vec m \in \bZ_{>0}^k\\w \in R(\vec m, N-|\vec m|)\\o_1(w) < \cdots < o_{k+1}(w)}} \hspace{-1em} \tilde \sN_{(\lambda,\mu,\nu), n-|\vec m|, \vec e_{k+1}}(0, 0) \prod_{i=1}^k \Big[m_i - \sum_{j=i+1}^{k+1} c_{i,j}(w)\Big]_\kappa \tilde \sQ_{m_i, \vec e_i}
\end{equation}
where $(\vec e_i)_{i=1}^{k+1}$ is the decomposition of $\vec d$
associated to the word $w$, and we used that
\[ \chi((0, 0, -m), (1, -\beta_C, -m')) = m. \]
The condition that $\vec e_1 > \cdots > \vec e_\ell$ becomes the
condition that $o_1(w) < \cdots < o_{k+1}(w)$.

\subsubsection{}

For the PT wall-crossing, recursively apply the ``$+$'' case of the
single-step wall-crossing formula \eqref{eq:single_step_s_wc} to move
along $s$ in the other direction, from $s=0$ to $s=1$. We start from a
full flag $(\beta, \vec e)$ with $\beta = (1, -\beta_C, -m)$; later
this will be $(\gamma_\ell, \vec e_\ell)$ from
\eqref{eq:joyce-s-WCF-for-DT-rough}. We claim that the resulting
formula is
\begin{equation} \label{eq:joyce-s-WCF-for-PT-rough}
  \tilde \sN_{(\lambda,\mu,\nu), m, \vec e}(0, 0) = \hspace{-4em} \sum_{\substack{\ell>0, \; (\beta,\vec e) = \sum_{i=1}^\ell (\gamma_i, \vec f_i)\\\{\gamma_i \eqqcolon (0, 0, -m_i)\}_{i=1}^{\ell-1}, \; \gamma_\ell \eqqcolon (1, -\beta_C, -m_\ell)\\\forall i:\, (\gamma_i,\vec f_i) \text{ full flag}\\\vec f_\ell > \vec f_1 > \cdots > \vec f_{\ell-1}}} \hspace{-4em}\tilde \sN^+_{(\lambda,\mu,\nu), m_\ell,\vec f_\ell}(1,0) \prod_{i=1}^{\ell-1} \Big[\chi(\gamma_i, \sum_{j=i+1}^\ell \gamma_j)-c_Q(\vec f_i, \sum_{j=i+1}^\ell \vec f_j)\Big]_\kappa \tilde \sQ_{m_i,\vec f_i}
\end{equation}
where we absorbed a sign $(-1)^{\ell-1}$ into the quantum integers
using the skew-symmetry of $\chi$ and $c_Q$. To explain the ordering
condition in the above sum, we again consider two consecutive walls
and splittings
\begin{align*}
  ((1, -\beta_C, -m), \vec e_{i-1}) &= ((1, -\beta_C, -m'), \vec e_i) + ((0, 0, -m+m'), \vec f_i) \quad \text{at } s \\
  ((1, -\beta_C, -m'), \vec e_i) &= ((1, -\beta_C, -m''), \vec e_{i+1}) + ((0, 0, -m'+m''), \vec f_{i+1}) \quad \text{at } s',
\end{align*}
but now $s' > s$ since we are moving in the other direction along $s$,
and $\vec e_{i+1} + \vec f_{i+1} = \vec e_i > \vec f_i$ and we must
compare $\vec f_i$ with $\vec f_{i+1}$. Using
\eqref{eq:s_walls_equation}, we get
\[ \frac{-1}{(n-n_0)\lambda^+}\vec \mu\cdot \left(\frac{m''-n_0}{m'-m''}\vec f_{i+1} - \vec e_{i+1}\right) = s'>s = \frac{-1}{(n-n_0)\lambda^+}\vec \mu\cdot \left(\frac{m'-n_0}{m-m'}\vec f_i - \vec e_{i+1} - \vec f_{i+1}\right) \]
where $\lambda^+$ is short for $\lambda^+(0,0,-1)$. As the prefactor
$-1/(n-n_0)\lambda^+$ is negative, this is equivalent to
\[ \frac{m'-n_0}{m'-m''}\vec \mu\cdot \vec f_{i+1} < \frac{m'-n_0}{m-m'}\vec \mu\cdot \vec f_i, \]
which yields $\vec f_i > \vec f_{i+1}$ since $\vec\mu$ is generic.
Iterating this over all walls yields the ordering condition $\vec f_1
> \cdots > \vec f_{\ell-1}$. Finally, in a sequence of splittings
\[ \vec e = \vec e_1 + \vec f_1 = (\vec e_2 + \vec f_2) + \vec f_1 = \cdots = ((\vec e_{\ell-1} + \vec f_{\ell-1}) + \cdots) + \vec f_1, \]
where $\vec e_i > \vec f_i$ by
Lemma~\ref{lem:joyce-full-flag-splitting-lemma}\ref{it:joyce-full-flag-splitting-rank-1},
every $\vec e_i$ has the same first non-zero entry as $\vec e$ since
$(\beta, \vec e)$ is a full flag. Hence $\vec f_\ell \coloneqq \vec
e_{\ell-1} > \vec f_1$. In the word notation of
Definition~\ref{def:word-rearrangements},
\begin{equation} \label{eq:joyce-s-WCF-for-PT}
  \tilde\sN_{(\lambda,\mu,\nu), m, \vec e}(0, 0) = \hspace{-2em}\sum_{\substack{k>0, \, \vec m \in \bZ_{>0}^k\\w \in R(\vec m, M-|\vec m|)\\ o_{k+1}(w) < o_1(w) < \cdots < o_k(w)}} \hspace{-2em}\tilde \sN_{(\lambda,\mu,\nu), m-|\vec m|, \vec f_{k+1}}^+(1, 0) \prod_{i=1}^k \Big[m_i - \sum_{j=i+1}^{k+1} c_{i,j}(w)\Big]_\kappa \tilde \sQ_{m_i, \vec f_i}
\end{equation}
where $M \coloneqq f_{k,p}((1, -\beta_C, -m))$ and $(\vec
f_i)_{i=1}^{k+1}$ is the decomposition of $\vec e$ associated to the
word $w$.

\subsubsection{}

\begin{proposition}
  In the word notation of Definition~\ref{def:word-rearrangements},
  \begin{equation} \label{eq:joyce-s-WCF}
    \tilde \sN_{(\lambda,\mu,\nu),n,\vec d}^-(1,0) = \hspace{-1em}\sum_{\substack{k>0, \, \vec m \in \bZ_{>0}^k\\w \in R(\vec m, N-|\vec m|)\\o_1(w) < \cdots < o_k(w)}}\hspace{-1em} \tilde \sN_{(\lambda,\mu,\nu),n-|\vec m|,\vec e_{k+1}}^+(1,0) \prod_{i=1}^k \Big[m_i - \sum_{j=i+1}^{k+1} c_{i,j}(w)\Big]_\kappa \tilde\sQ_{m_i,\vec e_i}.
  \end{equation}
\end{proposition}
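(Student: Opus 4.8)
The plan is simply to compose the two wall-crossing formulas \eqref{eq:joyce-s-WCF-for-DT} and \eqref{eq:joyce-s-WCF-for-PT} that were just established, and to check that the resulting double sum collapses to the single sum in \eqref{eq:joyce-s-WCF}. First I would substitute \eqref{eq:joyce-s-WCF-for-PT}, applied to the full flag $((1,-\beta_C,-(n-|\vec m|)),\vec e_{k+1})$, into each on-wall invariant $\tilde\sN_{(\lambda,\mu,\nu),n-|\vec m|,\vec e_{k+1}}(0,0)$ occurring on the right of \eqref{eq:joyce-s-WCF-for-DT}; this is legitimate since $(\gamma_{k+1},\vec e_{k+1})$ is a full flag by construction and $\gamma_{k+1}=(1,-\beta_C,-(n-|\vec m^{(1)}|))$. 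The outcome expresses $\tilde\sN^-_{(\lambda,\mu,\nu),n,\vec d}(1,0)$ as a double sum, indexed by a ``DT word'' $w_1$ on $\vec d$ (with $k_1$ non-background letters and increasing first occurrences $o_1(w_1)<\cdots<o_{k_1+1}(w_1)$), together with a ``PT word'' $w_2$ on the sub-flag $\vec e_{k_1+1}$ carved out of $\vec d$ by $w_1$ (of length $M=f_{k,p}(\gamma_{k_1+1})$, with $k_2$ non-background letters and ordering $o_{k_2+1}(w_2)<o_1(w_2)<\cdots<o_{k_2}(w_2)$), the weight being the product of the two families of $\tilde\sQ$'s, one trailing $\tilde\sN^+$, and two products of quantum integers.

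Next I would set up the bijection between such pairs $(w_1,w_2)$ and the words $w\in R(\vec m,N-|\vec m|)$ with $o_1(w)<\cdots<o_k(w)$ appearing in \eqref{eq:joyce-s-WCF}. In one direction, the positions of $w_1$ carrying its background letter $k_1+1$ form a subset $S\subset\{1,\dots,N\}$ of size $M$; I superimpose $w_2$ (transported to $S$ by the order isomorphism) onto those positions, relabelling $w_2$'s letters $1,\dots,k_2$ as $k_1+1,\dots,k_1+k_2$ and its background letter as the new background $k_1+k_2+1$, producing $w$ with $k=k_1+k_2$ and $\vec m=(\vec m^{(1)},\vec m^{(2)})$. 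In the other direction, given $w$ I set $k_1:=\#\{i\le k:o_i(w)<o_{k+1}(w)\}$; since $o_1(w)<\cdots<o_k(w)$ already holds, the only thing to verify is that $o_{k+1}(w)$ lies between $o_{k_1}(w)$ and $o_{k_1+1}(w)$ in that chain — which is exactly the defining property of $k_1$, with the obvious conventions at $k_1=0$ and $k_1=k$ — and one checks that this is precisely the conjunction of the DT ordering condition on the induced $w_1$ (restrict $w$ to letters $\le k_1$, collapsing the rest) and the PT ordering condition on the induced $w_2$ (restrict $w$ to the positions $S$ of letters $\ge k_1+1$, with letter $k+1$ playing the role of the background).

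Finally I would check that the summands agree. The $\tilde\sQ$-factors and the trailing $\tilde\sN^+$-factor match essentially by construction: the flags attached to letters $\le k_1$ are literally those of $w_1$, the flags attached to letters in $[k_1+1,k+1]$ are supported on $S$ and come from $w_2$, and the index arithmetic $n-|\vec m|=(n-|\vec m^{(1)}|)-|\vec m^{(2)}|$ is automatic. For the quantum-integer coefficient the key input is \eqref{eq:word-symmetrized-inversion-number}: $c_{i,j}(w)$ depends only on the relative order of the occurrences of letters $i$ and $j$, so $c_{i,j}(w)=c_{i,j}(w_1)$ for $i,j\le k_1$ and $c_{i,j}(w)=c_{i-k_1,j-k_1}(w_2)$ for $k_1<i,j\le k+1$; and by bilinearity of $c_Q$, $\sum_{j=k_1+1}^{k+1}c_{i,j}(w)=c_Q(\vec e_i,\sum_{j>k_1}\vec e_j)=c_{i,k_1+1}(w_1)$ for $i\le k_1$, the inner sum of flags being the flag of $w_1$'s background letter. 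Combining, $m_i-\sum_{j>i}c_{i,j}(w)$ reduces to the corresponding DT factor for $i\le k_1$ and to the corresponding PT factor for $i>k_1$, so $\prod_{i=1}^{k}[m_i-\sum_{j>i}c_{i,j}(w)]_\kappa$ factors as (DT coefficient)$\cdot$(PT coefficient), giving a term-by-term match with \eqref{eq:joyce-s-WCF}. I expect the main obstacle to be purely organizational: carrying the two layers of word data, flag decompositions, and index shifts cleanly through the bijection and the coefficient factorization, and handling the degenerate cases $k_1=0$ (PT applied directly to the full flag $\vec d$) and $k_1=k$ ($w_2$ trivial) without range or sign errors.
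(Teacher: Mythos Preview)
Your proposal is correct and follows essentially the same approach as the paper: substitute the PT formula into the DT formula, observe that the composite ordering on the flag pieces is $\vec e_1>\cdots>\vec e_{k_1}>\vec e_{k+1}>\vec e_{k_1+1}>\cdots>\vec e_k$, so that summing over all splittings $k=k_1+k_2$ removes the constraint on $o_{k+1}(w)$ and leaves exactly $o_1(w)<\cdots<o_k(w)$, with the inverse bijection determined by the position of $o_{k+1}(w)$ in this chain. Your treatment is in fact more thorough than the paper's, which leaves the factorization of the quantum-integer coefficients (via bilinearity of $c_Q$ and locality of $c_{i,j}$) implicit.
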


\begin{proof}
  Plug \eqref{eq:joyce-s-WCF-for-PT-rough} into
  \eqref{eq:joyce-s-WCF-for-DT-rough}. The composite of a DT splitting
  in \eqref{eq:joyce-s-WCF-for-DT-rough} with a PT splitting in
  \eqref{eq:joyce-s-WCF-for-PT-rough} has the form
  \begin{align*}
    \vec d
    &= \vec e_1 + \cdots + \vec e_{\ell_--1} + \vec e_{\ell_-} \\
    &= \vec e_1 + \cdots + \vec e_{\ell_--1} + (\vec f_{\ell_+} + \vec f_{\ell_+-1} + \cdots + \vec f_1)
  \end{align*}
  on framing dimensions, where $\vec e_1 > \cdots > \vec e_{\ell_--1} >
  \vec e_{\ell_-}$ (DT ordering) and $\vec f_{\ell_+} > \vec f_1 >
  \cdots > \vec f_{\ell_+-1}$ (PT ordering). Consequently
  \[ \vec e_{\ell_--1} > \vec e_{\ell_-} > \vec f_{\ell_+} \]
  because $\vec f_{\ell_+}$ is a summand of $\vec e_{\ell_-}$. Hence
  the composite ordering condition is
  \[ \vec e_1 > \cdots > \vec e_{\ell_--1} > \vec f_{\ell_+} > \vec f_1 > \cdots > \vec f_{\ell_+-1}. \]
  Since we sum over all $\ell_->0$ and $\ell_+>0$, the result is that
  there is no constraint on $\vec f_{\ell_+}$. Setting $\vec
  e_{\ell_--1+i} \coloneqq \vec f_i$ and $\ell \coloneqq \ell_- +
  \ell_+ - 1$, the composite splitting therefore has the ordering
  $\vec e_1 > \cdots > \vec e_{\ell-1}$. In word notation this becomes
  $o_1(w) < \cdots < o_{\ell-1}(w)$.

  Conversely, the position of $o_\ell(w)$ specifies the splitting
  $\ell = (\ell_- -1) + \ell_+$ into the DT and PT parts of the
  splitting.
\end{proof}

\subsection{``Combinatorial reduction''}
\label{sec:joyce-combinatorics-trick}

\subsubsection{}

We can use the formulas \eqref{eq:joyce-x-WCF-for-N} and
\eqref{eq:joyce-x-WCF-for-Q} to remove the dependence on framing
dimensions $\vec e$ from the auxiliary invariants $\tilde \sN$ and
$\tilde \sQ$ in \eqref{eq:joyce-s-WCF}. The result is
\begin{equation} \label{eq:joyce-DT-PT-WCF}
  \sN_{(\lambda,\mu,\nu),n}(\tau^-) = \sum_{m \in \bZ} \sN_{(\lambda,\mu,\nu),n-m}(\tau^+) \cdot \sW_{m, N}
\end{equation}
where
\begin{align*}
  \sW_{m, N} &\coloneqq \sum_{\substack{k>0, \; \vec m \in \bZ_{>0}^k\\|\vec m| = m}} c_<'(\vec m, N-m) \frac{[N-m]_\kappa!}{[N]_\kappa!} \prod_{i=1}^k [m_i-1]_\kappa!\cdot \tilde\sQ_{m_i} \\
  c_<'(m_1,\ldots,m_\ell) &\coloneqq \sum_{\substack{w \in R(m_1,\ldots,m_\ell)\\o_1(w) < \cdots < o_{\ell-1}(w)}} \prod_{i=1}^{\ell-1} \Big[m_i - \sum_{j=i+1}^\ell c_{i,j}(w)\Big]_\kappa.
\end{align*}
The combinatorial quantity $c_<'$ should be compared with the similar
quantity $c_>'$ from \S\ref{sec:prop_comb}.

\subsubsection{}

In principle, one can now evaluate $\sW_{m,N}$ explicitly in order to
obtain the desired DT/PT vertex correspondence; see
Proposition~\ref{prop:joyce-combinatorics-explicit}. The combinatorics
involved is similar to that of \S\ref{sec:prop_comb}, so, instead, we
provide a different method of obtaining the DT/PT vertex
correspondence from \eqref{eq:joyce-DT-PT-WCF}.

The idea is as follows. All the wall-crossing so far was done in a
setup for fixed $\alpha = (1,-\beta_C,-n)$, especially the single-step
wall-crossing \eqref{eq:single_step_s_wc} for full flags $(\beta, \vec
e)$. We now wish to allow $n$ to vary because the DT/PT vertex
correspondence is an equality of generating series. However, the $N$
in \eqref{eq:joyce-DT-PT-WCF} depends non-trivially on $n$ (and on the curve class $\beta_C$). We will
remove this dependence as follows.

\subsubsection{}
\label{sec:joyce-WCF-generating-functions} 

Fix an integer $M$. The DT/PT vertex correspondence is an equality of
Laurent series in $Q$, and we will prove it modulo $Q^M$. Since $\fN_{(\lambda,\mu,\nu),m} = \emptyset$ for $m \le n_0$ and
$\fQ_{m} = \emptyset$ for $m<0$, this means that only
the enumerative invariants associated to the finite collection of moduli stacks
\[ \{\fN_{(\lambda,\mu,\nu),m} : n_0 < m < M\}  \mbox{ and }\{\fQ_m : 0\leq m < M - n_0\}\] are involved. Hence:
\begin{itemize}
\item we may choose a single $k \in \bZ$ which satisfies the regularity
  condition of Definition~\ref{def:framing-functor} for each $\fN_{(\lambda,\mu,\nu), m}$ for $n_0<m<M$ ;
\item choosing $\tilde{N}\geq f_{k,1}(1,-\beta_C,-M)$, we set $p_{m}:= \tilde{N} -(M-m)$. This is bounded below by $f_{k,1}(1,\beta_C,-m)$, hence positive for $n_0<m<M$. This choice guarantees that $\tilde N = f_{k,p_{m}}(\alpha_\fX)$ is independent of $n_0<m<M$.
\end{itemize}
For each $n_0<m<M$, using the framing functor $F_{k,p_{m}}$, we
therefore obtain the wall-crossing formula \eqref{eq:joyce-DT-PT-WCF}.
Collecting them into a generating series, we obtain the factorization
\begin{equation}\label{eq:joyce-DT-PT-WCF-series}
  \sum_{n \geq n_0} Q^n \sN_{(\lambda,\mu,\nu),n}(\tau^-) \equiv \bigg(\sum_{n \geq n_0} Q^n \sN_{(\lambda,\mu,\nu),n}(\tau^+)\bigg) \bigg(\sum_{n \geq 0} Q^n \sW_{n,\tilde N}\bigg) \bmod{Q^M}.
\end{equation}

This is almost Theorem~\ref{thm:dt-pt}, since the first two bracketed
terms are $\sV^{\DT,K}_{\lambda,\mu,\nu}$ and
$\sV^{\PT,K}_{\lambda,\mu,\nu}$ respectively, and it remains to
evaluate the third bracketed term. The formula \eqref{eq:joyce-DT-PT-WCF-series} is stated for a
given $(\lambda,\mu,\nu)$, and the constant $\tilde N$ depends a
priori on the curve class $\beta_C = (|\lambda|, |\mu|, |\nu|)$.  We apply the same sort of trick
again to eliminate this dependence. 
For this, we may choose $k$, and $\tilde N$ large enough so that they work for both the original $\beta_C$ and chosen $M$, as well as for
$\beta_C=0$ and $M$ replaced by $M+n_0$. This yields the same formula
\eqref{eq:joyce-DT-PT-WCF-series} when $(\lambda, \mu, \nu) =
(\emptyset, \emptyset, \emptyset)$, i.e.
\begin{equation}\label{eq:joyce-DT0-PT0-WCF-series}
  \sum_{n \in \bZ} Q^n \sN_{(\emptyset,\emptyset,\emptyset),n}(\tau^-) \equiv 1 \cdot \sum_{n \in \bZ} Q^n \sW_{n,\tilde N} \bmod{Q^{M+n_0}}.
\end{equation}
by the triviality of the PT vertex for
$(\emptyset,\emptyset,\emptyset)$. To emphasize, it is the same
$\tilde N$ in \eqref{eq:joyce-DT-PT-WCF-series} which appears in
\eqref{eq:joyce-DT0-PT0-WCF-series}. So, inserting
\eqref{eq:joyce-DT0-PT0-WCF-series} into
\eqref{eq:joyce-DT-PT-WCF-series} gives
\[ \bigg(\sum_{n \in \bZ} Q^n \sN_{(\lambda,\mu,\nu),n}(\tau^-)\bigg) \equiv \bigg(\sum_{n \in \bZ} Q^n \sN_{(\lambda,\mu,\nu),n}(\tau^+)\bigg) \bigg(\sum_{n \in \bZ} Q^n \sN_{(\emptyset,\emptyset,\emptyset),n}(\tau^-)\bigg) \bmod{Q^M}. \]
The dependence on $\tilde N$ has been completely eliminated. Since $M$
was arbitrary, this equality holds up to any finite order. This
concludes the Joyce-style wall-crossing proof of the DT/PT vertex
correspondence (Theorem~\ref{thm:dt-pt}). \qed

\subsubsection{}

For completeness, we record the explicit formula for $\sW_{m,N}$, even
though we did not require it for the DT/PT proof. It should be
compared with \eqref{eq:wall_comb}.

\begin{proposition} \label{prop:joyce-combinatorics-explicit}
  For integers $k \ge 0$ and $N > 0$, and $\vec m = (m_1, \ldots,
  m_k) \in \bZ_{>0}^k$,
  \[ \sum_{\sigma \in S_k} c'_<(m_{\sigma(1)}, \ldots, m_{\sigma(k)}, N-|\vec m|) = \frac{[N]_\kappa!}{[N - |\vec m|]_\kappa! \prod_i [m_i-1]_\kappa!} \cdot \begin{cases} 1 & k \le 1 \\ 0 & \text{otherwise}. \end{cases} \]
\end{proposition}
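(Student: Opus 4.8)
\textbf{Proof plan for Proposition~\ref{prop:joyce-combinatorics-explicit}.}
The approach mirrors the inductive proof of Proposition~\ref{prop:mochizuki-combinatorics}, exploiting that $c'_<$ and $c'_>$ are the ``same'' combinatorial object up to reversing the order of the indices and hence reversing all words. First I would establish the precise relationship: reversing a word $w \in R(m_1,\dots,m_{k+1})$ sends the condition $o_1(w) < \cdots < o_k(w)$ to a last-occurrence condition, and sends $c_{i,j}(w)$ to $-c_{i,j}(w)$ by \eqref{eq:word-symmetrized-inversion-number}; combined with the identity $[-a]_\kappa = -[a]_\kappa$ (equivalently $[a]_{\kappa^{-1}} = [a]_\kappa$), one can either relate $c'_<(\vec m, N-|\vec m|)$ directly to $c'_>$ of a suitable reindexing, or simply rerun the combinatorial machinery of \S\ref{sec:flag_lemma}--\S\ref{sec:sum_rw} verbatim with the inequalities reversed. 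I expect the cleanest route is the latter: prove the analogue of Lemma~\ref{sec:flag_lemma} (the ``Jacobi identity'' decomposition $[A+C]_\kappa[B]_\kappa = [A]_\kappa[B-C]_\kappa + [A+B]_\kappa[C]_\kappa$ is symmetric in a way that adapts immediately), then carry out the same splitting $w \mapsto (w', w_\Omega, w_{\bar\Omega})$ and induction on $k$.

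The key structural difference, and the source of the dichotomy $k \le 1$ versus $k \ge 2$ in the answer, is that for $c'_<$ the ordering condition $o_1(w) < \cdots < o_{k}(w)$ forces the \emph{first} letter of the relevant subword to be $1$ in a way that does \emph{not} match up across the recursion the way it did for $c'_>$. Concretely, I would first do the base cases: $k=0$ gives $c'_<$ of a single argument, which is $[N]_\kappa!/([N-m]_\kappa![m-1]_\kappa!)$ by the same computation as in \S\ref{sec:mochizuki-base-case} (indeed $c'_<$ and $c'_>$ of a single pair coincide, since there is no ordering constraint when $\ell = 2$). For $k=1$, the left side is $\sum_{w \in R(m_1, N-m_1)}[m_1 - c_{1,2}(w)]_\kappa$ with \emph{no} constraint (the condition $o_1(w) < \cdots < o_k(w)$ is vacuous for $k=1$), which is again exactly the base case of Proposition~\ref{prop:mochizuki-combinatorics}. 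So for $k \le 1$ the two propositions give literally the same sum and hence the same value.

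For $k \ge 2$ I would show the symmetrized sum vanishes. The mechanism: after applying the (reversed) Lemma~\ref{sec:flag_lemma} and splitting off the subword $w_\Omega$, the analogue of line \eqref{eq:flagsum2} becomes a sum over words $w_\Omega$ with the condition that the \emph{last} occurrences are ordered, but now the word does \emph{not} automatically begin (or end) with a fixed letter in a way compatible with stripping one letter and reducing $k$; instead, one picks up an extra alternating sign or an extra factor that, upon symmetrizing over $S_k$, telescopes to zero. More precisely, I anticipate that the recursion produces, in place of the clean $\binom{\ell-1}{\omega}\omega!(\ell-1-\omega)!$ bookkeeping of \S\ref{sec:sum_rw}, a signed version $\sum_\omega (-1)^{?}\binom{k-1}{\omega}\omega!(k-1-\omega)!\cdot(\text{const})$ whose signed binomial sum $\sum_\omega (-1)^\omega \binom{k-1}{\omega}$ vanishes for $k-1 \ge 1$, i.e. $k \ge 2$, and equals $1$ for $k=1$. \textbf{The main obstacle} is getting this sign (or the precise combinatorial weight) exactly right: one must track carefully how the ordering condition $o_1(w) < \cdots < o_k(w)$ interacts with the three-way factorization $w \mapsto (w', w_\Omega, w_{\bar\Omega})$ — in particular which of the sub-sums inherits an ordering constraint that forces a fixed extremal letter, and whether stripping that letter reduces to $c'_<$ with $k$ decremented (as for $c'_>$) or instead introduces the sign. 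I would verify the $k=2$ case by hand first to pin down the sign convention, then feed it into the induction.
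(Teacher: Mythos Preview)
Your approach is exactly what the paper does: its entire proof reads ``Exercise for the reader, using the same ideas as in the proof of \eqref{eq:wall_comb},'' i.e.\ rerun the machinery of \S\ref{sec:flag_lemma}--\S\ref{sec:sum_rw}, so your plan is aligned with (and in fact more detailed than) the paper's own treatment. One small slip: your $k=0$ remark is off --- for $k=0$ the sum is $c'_<(N)=1$ and the right-hand side is $[N]_\kappa!/[N]_\kappa!=1$; the formula you wrote there is actually the $k=1$ case.
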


\begin{proof}
  Exercise for the reader, using the same ideas as in the proof of
  \eqref{eq:wall_comb}.
\end{proof}

Plugging this back into $\sW_{m,N}$ shows that $\sW_{m,N} = \tilde
\sQ_m$, which, in particular, is independent of $N$. This independence
is an alternate way to obtain the factorization
\eqref{eq:joyce-DT-PT-WCF-series}, as an exact equality of series
instead of merely up to some finite order.

\subsubsection{}

For use in \S\ref{sec:invariants-at-the-wall}, we can also apply the
formulas \eqref{eq:joyce-x-WCF-for-N}, \eqref{eq:joyce-x-WCF-for-N-ss}
and \eqref{eq:joyce-x-WCF-for-Q} to the individual DT and PT
wall-crossing formulas \eqref{eq:joyce-s-WCF-for-DT} and
\eqref{eq:joyce-s-WCF-for-PT}. For DT, the result is
\begin{equation} \label{eq:joyce-s-WCF-for-DT-short}
  \sN_{(\lambda,\mu,\nu),n}(\tau^-) = \sum_{m \in \bZ} \tilde \sN_{(\lambda,\mu,\nu),n-m} \cdot \sW^-_{m, N}
\end{equation}
where
\begin{align*}
  \sW^-_{m,N} &\coloneqq \sum_{\substack{k>0, \; \vec m \in \bZ_{>0}^k\\|\vec m| = m}} c_<(\vec m, N-m) \frac{[N-m-1]_\kappa!}{[N]_\kappa!} \prod_{i=1}^k [m_i-1]_\kappa!\cdot \tilde\sQ_{m_i} \\
  c_<(m_1,\ldots,m_\ell) &\coloneqq \sum_{\substack{w \in R(m_1,\ldots,m_\ell)\\o_1(w) < \cdots < o_\ell(w)}} \prod_{i=1}^{\ell-1} \Big[m_i - \sum_{j=i+1}^\ell c_{i,j}(w)\Big]_\kappa.
\end{align*}
For PT, the result is
\begin{equation} \label{eq:joyce-s-WCF-for-PT-short}
  \tilde\sN_{(\lambda,\mu,\nu),n} = \sum_{m \in \bZ} \sN_{(\lambda,\mu,\nu),n-m}(\tau^+) \cdot \sW^+_{m, N}
\end{equation}
where
\begin{align*}
  \sW^+_{m,N} &\coloneqq \sum_{\substack{k>0, \; \vec m \in \bZ_{>0}^k\\|\vec m| = m}} b_<(\vec m, N-m) \frac{[N-m]_\kappa!}{[N-1]_\kappa!} \prod_{i=1}^k [m_i-1]_\kappa!\cdot \tilde\sQ_{m_i} \\
  b_<(m_1,\ldots,m_\ell) &\coloneqq \sum_{\substack{w \in R(m_1,\ldots,m_\ell)\\o_\ell(w) < o_1(w) < \cdots < o_{\ell-1}(w)}} \prod_{i=1}^{\ell-1} \Big[m_i - \sum_{j=i+1}^\ell c_{i,j}(w)\Big]_\kappa.
\end{align*}

\subsubsection{}

\begin{proposition} \label{prop:joyce-PT-combinatorics-explicit}
  For integers $k > 0$ and $N > 0$, and $\vec m = (m_1, \ldots, m_k)
  \in \bZ_{>0}^k$,
  \begin{align*}
    &\sum_{\sigma \in S_k} b_<(m_{\sigma(1)}, \ldots, m_{\sigma(k)}, N-|\vec m|) \\
    &= \frac{[N-1]_\kappa!}{[N - |\vec m| - 1]_\kappa! \prod_i [m_i-1]_\kappa!} \cdot \begin{cases} \left(-\kappa^{\frac{1}{2}}\right)^{m_1} + \left(-\kappa^{\frac{1}{2}}\right)^{-m_1} & k = 1 \\ \left(-\kappa^{\frac{1}{2}}\right)^{m_1-m_2} + \left(-\kappa^{\frac{1}{2}}\right)^{m_2-m_1} & k = 2 \\ 0 & \text{otherwise}. \end{cases}
  \end{align*}
\end{proposition}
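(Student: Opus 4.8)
The plan is to imitate the proof of Proposition~\ref{prop:mochizuki-combinatorics} and Proposition~\ref{prop:joyce-combinatorics-explicit}, but tracking the new ordering constraint $o_\ell(w) < o_1(w) < \cdots < o_{\ell-1}(w)$ that appears in the definition of $b_<$. The presence of the ``extra'' letter $\ell = k+1$ whose first occurrence must come \emph{before} all the others (rather than after) is exactly what produces the nonzero answers for $k = 1, 2$ instead of only $k \le 1$. As in \S\ref{sec:sum_rw}, I would first deal with the base case $k=1$ directly: the sum $\sum_{w \in R(m_1, N-m_1)} [m_1 - c_{1,2}(w)]_\kappa$ restricted to words with $o_2(w) < o_1(w)$, i.e. words beginning with the letter $2$. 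Expanding $[m_1 - c_{1,2}(w)]_\kappa$ via the identity $[a-b]_\kappa = (-1)^b \kappa^{b/2}[a]_\kappa - (-1)^a \kappa^{a/2}[b]_\kappa$ and using the reversal involution (which now also swaps the roles of ``word begins with $1$'' and ``word begins with $2$'') together with Proposition~\ref{prop:q-binomial}, one should land on the claimed $\left(-\kappa^{1/2}\right)^{m_1} + \left(-\kappa^{1/2}\right)^{-m_1}$ up to the overall factorial prefactor. The factor of $2$ in the $k=2$ case will similarly come from the symmetrization $\sum_{\sigma \in S_2}$ combining the two ways of ordering $m_1, m_2$.

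For the inductive step $k > 2$, I would apply Lemma~\ref{sec:flag_lemma} exactly as in \S\ref{sec:sum_rw}, peeling off the letter $1$ and splitting the word into three subwords $(w', w_\Omega, w_{\bar\Omega})$. The key point is bookkeeping the position of the special letter $k+1$: since $o_{k+1}(w)$ must precede $o_1(w)$ (which is itself forced to be the minimal occurrence among $1, \ldots, k$), the letter $k+1$ lands in the ``$w_\Omega$'' part of the decomposition, so that the $w_{\bar\Omega}$ factor becomes an instance of the ordinary $c_<$-sum (handled by Proposition~\ref{prop:joyce-combinatorics-explicit}, which vanishes unless that piece has $\le 1$ letters) while the $w_\Omega$ factor, still carrying $k+1$, becomes a smaller instance of the $b_<$-sum. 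This decouples the symmetrized sum into a product of (i) a $b_<$-piece on $\Omega \cup \{1, k+1\}$, (ii) a vanishing-or-trivial $c_<$-piece on the complement, and (iii) a $q$-binomial gluing factor as in \eqref{eq:flagsum1}. Because the $c_<$-piece kills all but $|\bar\Omega| \le 1$, only a bounded number of terms survive, and the recursion closes: for $k \ge 3$ every surviving term still has a $b_<$-subfactor with $\ge 3$ letters or a $c_<$-subfactor with $\ge 2$ letters, one of which vanishes, giving $0$; for $k = 2$ one computes the two surviving configurations explicitly.

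The main obstacle I anticipate is getting the combinatorial decoupling in the inductive step exactly right — specifically, verifying that the ordering condition $o_{k+1}(w) < o_1(w) < \cdots < o_k(w)$ restricts \emph{precisely} to $k+1 \in \Omega \cup \{1\}$ in the Lemma~\ref{sec:flag_lemma} decomposition and that, after removing the leading $1$ from $w_\Omega$, the residual condition on $w_\Omega$ is genuinely the $b_<$-ordering for the smaller alphabet (with $k+1$ now playing the role of the special first letter). One must also check that the quantum-integer arguments transform correctly under letter-removal, using the linearity of $c_Q(\vec e_i, \vec e_j)$ in both slots as in \S\ref{sec:sum_rw}; the shifts $c_{i,j}(w) \to m_i + c_{i,j}(w')$ need to be matched against the shifts hidden inside $b_<$. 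Once this is pinned down, the rest is the same telescoping sum over $\Omega$ with binomial coefficients $\binom{\,\cdot\,}{\,\cdot\,}$ that appeared at the end of \S\ref{sec:sum_rw}, now weighted by the $k=1,2$ outputs of the inductive hypothesis, which collapses to give the stated answer. Since the paper explicitly states this is an exercise for the reader, a full writeup would mirror \S\ref{sec:prop_comb} closely and I would not reproduce every algebraic manipulation.
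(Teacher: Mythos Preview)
The paper's own proof is a single sentence (``Exercise for the reader, using Proposition~\ref{prop:joyce-combinatorics-explicit}''), so your overall plan --- induct on $k$, reduce via Lemma~\ref{sec:flag_lemma}, and feed the pieces into the already-proven identities for $c_<$ and $c'_<$ --- is exactly the intended route. There is nothing to compare at the level of strategy.

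That said, one concrete detail in your write-up is backwards. In the decomposition of \S\ref{sec:sum_rw}, the ``special'' letter $\ell+1$ (here $k+1$, carrying $N-|\vec m|$ copies) is always placed in the subword $w_{\bar\Omega}$, not in $w_\Omega$; this is forced by the lemma's convention $\sigma(\ell+1)=\ell+1$ and by how the alphabets $\Omega\cup\{1\}$ and $([2,\ell]\setminus\Omega)\cup\{\ell+1\}$ are assigned. Under the $b_<$ ordering $o_{k+1}<o_1<\cdots<o_k$, the constraint restricted to $w_{\bar\Omega}$ is precisely ``letter $k+1$ first, then the rest in increasing order'' --- i.e.\ a smaller $b_<$-instance --- while $w_\Omega$ (on $\Omega\cup\{1\}$) inherits only $o_1<o_i$ for $i\in\Omega$ and, after stripping the leading~$1$ as in \S\ref{sec:sum_rw}, becomes an instance of the symmetrized $c_<$- or $c'_<$-sum. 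So the recursion you describe is correct, but with the roles of $w_\Omega$ and $w_{\bar\Omega}$ exchanged; fixing this, the vanishing for $k\ge3$ indeed comes from Proposition~\ref{prop:joyce-combinatorics-explicit} applied to the $w_\Omega$-factor.

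A second minor point: your base-case argument via the word-reversal involution needs care, since reversal sends ``$w$ starts with $2$'' to ``$w$ ends with $2$'', which is a \emph{different} subset of $R(m_1,N-m_1)$, so the cancellation in \S\ref{sec:mochizuki-base-case} does not apply verbatim. A cleaner route is to strip off the forced leading~$2$, which shifts $c_{1,2}(w)\mapsto c_{1,2}(w')+m_1$ and reduces the sum to an unconstrained one over $R(m_1,N-m_1-1)$; then Proposition~\ref{prop:q-binomial} applies directly.
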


\begin{proof}
  Exercise for the reader, using
  Proposition~\ref{prop:joyce-combinatorics-explicit}.
\end{proof}

\subsection{Invariants at the wall}
\label{sec:invariants-at-the-wall}

\subsubsection{}

One advantage of the Joyce-style wall-crossing setup, compared to the
Mochizuki-style setup of \S\ref{sec:mochizuki-WCF}, is that we get for
free some wall-crossing formulas \eqref{eq:joyce-s-WCF-for-DT-short}
and \eqref{eq:joyce-s-WCF-for-PT-short} for the invariants
$\tilde\sQ_m$ and $\tilde\sN_{(\lambda,\mu,\nu),m}$ {\it on} the wall
$\tau_0$. These wall-crossing formulas are interesting in their own
right and have not appeared in the literature, to the best of our
knowledge. We can explicitly identify $\tilde\sQ_m$ and
$\tilde\sN_{(\lambda,\mu,\nu),m}$ as enumerative invariants of
$\Hilb(\bC^3)$ (Proposition~\ref{prop:on-wall-invariants-Q-joyce}) and
of $\Quot(\cO_{\bC^3}^{\oplus 2})$
(Proposition~\ref{prop:on-wall-invariants-N}) respectively.

\subsubsection{}

\begin{proposition} \label{prop:on-wall-invariants-Q-joyce}
  Let $1 \le a \le N$. In the notation of
  Lemma~\ref{lem:off-wall-flag-invariants}, for $\bar x(a) \ge x_0(a)$
  there is an isomorphism
  \[ \fQ^{Q(N),\sst}_{m,\vec 1_{[a,N]}}(\tau_0^\pm(0, \bar x(a))) \cong \fQ^{\st}_{m, \vec 1_{[a,N]}}. \]
\end{proposition}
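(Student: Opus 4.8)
The plan is to show that $\tau_0^\pm(0,\bar x(a))$-semistability, for this particular choice of framing dimension vector $\vec 1_{[a,N]}$ and parameter range, is equivalent to the stability notion defining $\fQ^{\st}_{m,\vec 1_{[a,N]}}$ from \S\ref{sec:on-wall-invariants-Q}, namely that the composition $V_a \to \cdots \to V_N \to F_{k,p}(\cE) = H^0(\cE)$ induces a surjection $V_a \otimes \cO_{\bar X} \twoheadrightarrow \cE$. Both sides are open substacks of $\fQ^{Q(N),\pl}_{m,\vec 1_{[a,N]}}$, so it suffices to compare the two semistability conditions objectwise on $\bC$-valued points; the identification of virtual structure sheaves is then automatic because both are obtained by symmetrized pullback along the same forgetful map $\Pi_\fQ$ to $\fQ_m$, restricted to the same open substack.

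First I would recall that for $\beta = (0,0,-m)$ the weak stability condition $\tau_0^\pm(0, \bar x)$ reduces, by the formula \eqref{eq:joyce-framed-stack-stability} with $s = 0$, to comparing the quantities $(\vec\mu + \bar x\vec 1)\cdot \vec e / (\vec 1 \cdot \vec e)$ on sub- and quotient-objects (together with the $\pm\infty$ dichotomy in the first entry governed by the sign of $(\vec\mu + \bar x\vec 1)\cdot \vec e$). Then I would analyze destabilizing sub- and quotient-objects of an object $([0\to\cE], \vec V, \vec\rho)$ of class $((0,0,-m), \vec 1_{[a,N]})$. By Lemma~\ref{lem:semistable-flag-framing-must-be-injective} all framing maps of a semistable object are injective, so $\vec V$ is a flag $V_a \subset V_{a+1} \subset \cdots \subset V_N \subset H^0(\cE)$ of successively one-dimensional-larger subspaces (after using Remark~\ref{rem:quiver-framing-isomorphisms} to collapse the constant stretches to a single map $V_a \to H^0(\cE)$). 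A subsheaf $\cE' \subset \cE$ gives a sub-object $([0\to\cE'], \vec V \cap F_{k,p}(\cE'), \ldots)$, and a quotient $\cE \twoheadrightarrow \cE''$ gives a quotient-object of class $((0,0,-m''), \vec g)$ where $\vec g$ records the images of the $V_i$; the genericity of $\vec\mu$ (so that $\mu_a$ dominates $\mu_{a+1} \gg \cdots$) together with $\bar x(a) \ge x_0(a)$ is exactly the condition ensuring that the quotient class $\vec g = \vec 1_{[a,N]}$ (i.e. the ``stable'' destabilizer, $V_a \to H^0(\cE)/\cE')$) sits right at the boundary while all other quotient classes are strictly below. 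Carrying out the casework, $\tau_0^\pm(0,\bar x(a))$-semistability will turn out to hold if and only if $V_a \otimes \cO_{\bar X} \to \cE$ is surjective, i.e. precisely the condition of $\fQ^{\st}_{m,\vec 1_{[a,N]}}$ — this is the same argument structure as in Lemma~\ref{lem:off-wall-invariants} and Lemma~\ref{lem:off-wall-flag-invariants}, specialized to $\beta = 0$.

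The main obstacle I anticipate is handling the $\pm\infty$ discontinuity in the first entry of $\tau_0^\pm$ for rank-zero classes, which makes the naive ``compare slopes'' reasoning slightly delicate: one must carefully track which sub/quotient objects land in the $(\infty, \cdot)$ versus $(-\infty, \cdot)$ regimes as a function of the sign of $(\vec\mu + \bar x\vec 1)\cdot \vec e$, and verify that at $\bar x(a) = x_0(a)$ the class $\vec 1_{[a,N]}$ sits exactly on the hyperplane where this sign flips (which is the defining property of $x_0(a)$ in \S\ref{sec:ss_inv}). Once this bookkeeping is set up correctly, the equivalence of the two stability notions is a direct check, and the isomorphism of stacks with their (symmetrized) virtual structure sheaves follows immediately since both APOTs are symmetrized pullbacks of the obstruction theory on $\fQ_m$ along the same smooth forgetful morphism, restricted to the same open locus — exactly as in Proposition~\ref{prop:on-wall-invariants-Q}.
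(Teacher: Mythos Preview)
Your proposal is correct and follows essentially the same approach as the paper: both reduce to comparing (semi)stability conditions objectwise, invoke Lemma~\ref{lem:semistable-flag-framing-must-be-injective} to force the framing maps to be injective (hence a chain of isomorphisms followed by an inclusion into $H^0(\cE)$), and then identify the unique source of destabilization as the failure of the induced section $\cO_{\bar X} \to \cE$ to be surjective.

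The paper's argument is more direct than yours in one respect. Rather than analyzing general sub- and quotient-objects and tracking the $\pm\infty$ dichotomy, it simply observes: if $s$ is not surjective with image $\cE' \subsetneq \cE$, then the \emph{entire} framing $\vec V$ factors through $H^0(\cE')$, so $(\cE'[-1], \vec V, \vec\rho)$ is a sub-object with quotient $(\cE/\cE'[-1], \vec 0, \vec 0)$, and a one-line slope comparison shows this is destabilizing. Your anticipated ``main obstacle'' --- the $\pm\infty$ discontinuity in the first entry of $\tau_0^\pm$ --- never actually arises here, because both the relevant sub-object and quotient have nonzero $\beta$ (of the form $(0,0,-\star)$), placing them in the first case of \eqref{eq:joyce-framed-stack-stability}; once the framing maps are injective there are no nontrivial sub-objects with $\beta = 0$. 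So your casework would go through, but the bookkeeping you flag as delicate is in fact vacuous.
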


\begin{proof}
  This is basically an analogue of Lemma~\ref{lem:off-wall-invariants}
  for $\fQ^{Q(N)}$ instead of $\fN^{Q(N)}$. Let $(\cE[-1], \vec V,
  \vec \rho)$ be an object on the left hand side. By
  Lemma~\ref{lem:semistable-flag-framing-must-be-injective}, the
  framing is a chain of isomorphisms followed by an inclusion, like
  \eqref{eq:off-wall-rank-1-framing}. Since $\cE$ is a
  zero-dimensional sheaf, $\cE(k) \cong \cE$ and $V_{N+1} = H^0(\cE)$.
  Hence the data $(\cE[-1], \vec V, \vec \rho)$ is equivalent to the
  data $(\cE, s)$ where $s\colon \cO_{\bC^3} \to \cE$ is a section. If
  $s$ were not surjective, it factors through a non-trivial sub-sheaf
  $\cE' \subset \cE$ of length $m' < m$. Then
  \[ \tau_0^\pm(0, \bar x(a))((\cE'[-1], \vec V, \vec \rho)) = \frac{(\vec \mu + \bar x(a)) \cdot \vec 1_{[a,N]}}{m'} > 0 = \tau_0^\pm(0, \bar x(a))((\cE/\cE'[-1], \vec 0, \vec 0)) \]
  and therefore the sub-object $(\cE'[-1], \vec V, \vec \rho)$ is
  destabilizing. Furthermore, this is the only way in which $(\cE, s)$
  can have a destabilizing sub-object. This gives the desired
  isomorphism.
\end{proof}

\subsubsection{}

Combining the isomorphisms of
Proposition~\ref{prop:on-wall-invariants-Q-joyce} and
Proposition~\ref{prop:on-wall-invariants-Q} and taking enumerative
invariants, we get the equality $\tilde \sQ_m =
\sN_{(\emptyset,\emptyset,\emptyset),m}(\tau^-)$. Applying this to the
left hand side of the $(\emptyset,\emptyset,\emptyset)$ case of DT/PT
wall-crossing formula \eqref{eq:joyce-DT-PT-WCF} yields
\[ \tilde \sQ_n = \sW_{n, N}. \]
Matching coefficients of $\tilde \sQ_m$ on both sides would reproduce
the combinatorial statement of
Proposition~\ref{prop:joyce-combinatorics-explicit}, but note that
this argument alone is {\it not sufficient} to provide an alternate
proof of Proposition~\ref{prop:joyce-combinatorics-explicit} since the
$\tilde \sQ_m$ are not algebraically independent.

\subsubsection{}

\begin{proposition} \label{prop:on-wall-invariants-N}
  Let $1 \le a \le N$. In the notation of
  Lemma~\ref{lem:off-wall-flag-invariants}, for $\bar x(a) \ge x_0(a)$
  there is an isomorphism
  \[ \fN^{Q(N),\sst}_{(\emptyset,\emptyset,\emptyset),m,\vec 1_{[a,N]}}(\tau_0^\pm(0, \bar x(a))) \cong \Quot(\cO_{\bC^3}^{\oplus 2}, m) \]
  which identifies the (symmetrized) virtual structure sheaves.
\end{proposition}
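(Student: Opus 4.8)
The strategy mirrors the proof of Proposition~\ref{prop:on-wall-invariants-Q-joyce}, with an additional bookkeeping step to match the geometry with $\Quot(\cO_{\bC^3}^{\oplus 2})$. The key observation is that for $(\lambda,\mu,\nu) = (\emptyset,\emptyset,\emptyset)$, an object of $\fN_{(\emptyset,\emptyset,\emptyset),m}$ is a pair $I = [\cO_{\bar X} \xrightarrow{s} \cE]$ with $\cE$ of zero-dimensional support away from $D$, equivalently a pair $[\cO_{\bC^3} \xrightarrow{s} \cE]$ on the toric chart $\bC^3 = \bar X \setminus D$ with $\cE$ zero-dimensional; this is visibly the same as a surjection $\cO_{\bC^3}^{\oplus 2} \twoheadrightarrow \cQ$ onto a length-$m$ quotient $\cQ$ once one adds the section generating the remaining $\cO_{\bC^3}$-summand, which is exactly what the rank-$1$ framing $V_a \xrightarrow{\sim}\cdots\xrightarrow{\sim} V_N \hookrightarrow V_{N+1} = F_{k,p}(I) = \chi(\cE(k)) \oplus L$ provides.

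First I would run the analogue of Lemma~\ref{lem:off-wall-invariants} and Proposition~\ref{prop:on-wall-invariants-Q-joyce} in this setting: by Lemma~\ref{lem:semistable-flag-framing-must-be-injective} the framing maps are injective, so the quiver data on class $\vec 1_{[a,N]}$ reduces to a single line $V_a \hookrightarrow F_{k,p}(I)$, i.e. to a choice of a vector in $\chi(\cE(k)) \oplus L$ up to scale; and a destabilizing sub-object analysis (exactly as in Proposition~\ref{prop:on-wall-invariants-Q-joyce}, using that for $\beta \neq 0$ any sub- or quotient-object causing instability at $\tau_0^\pm(0,\bar x(a))$ has numerical class $(1,0,-\star)$ or $(0,0,-\star)$) shows $\tau_0^\pm(0,\bar x(a))$-semistability is equivalent to the framed section generating $\cE$. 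Concretely: the pair $([\cO_{\bC^3} \xrightarrow{s} \cE], v)$ with $v \in \chi(\cE(k))\oplus L$ is semistable iff the two maps $\cO_{\bC^3} \to \cE$ (from $s$, i.e. from the $L$-component) and $\cO_{\bC^3} \to \cE$ (from $v$ via the identification $\chi(\cE(k)) = H^0(\cE(k))$, using $k \gg 0$ so $\cE$ is globally generated after twist) together span $\cE$. This is exactly the data of a surjection $\cO_{\bC^3}^{\oplus 2} \twoheadrightarrow \cE$ up to the scaling of $v$ and of $\cE$; after rigidification one obtains $\Quot(\cO_{\bC^3}^{\oplus 2}, m)$. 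I would spell out carefully that the point of $\Quot(\cO_{\bC^3}^{\oplus 2})$ remembers the full surjection $\cO^{\oplus 2}\twoheadrightarrow \cQ$, including which summand is the "section" $s$ — this is why we need $p > 0$ (the $L^{\oplus p}$ in $F_{k,p}$) so that $s$ is genuinely recorded and the map $\Hom(-,-) \to \Hom(F_{k,p}(-),F_{k,p}(-))$ is injective, as noted in Definition~\ref{def:framing-functor}.

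Then I would match virtual structure sheaves via Proposition~\ref{prop:master_space_vir_class_comparison} (with trivial $\bC^\times$-action, so $\bM = \fX$). On the quiver side, the APOT comes from symmetrized pullback of \eqref{eq:obstruction-theory-rank-1} along $\Pi_\fN$, whose K-theory class (using \eqref{eq:quiver-bilinear-obstruction} on the open flag-stable locus, collapsing the chain of isomorphisms to $\Hom(V_a, F_{k,p}(I))$) is
\[
-\Ext_{\bar X}(I, I(-D)) + \big(\Hom_{\bar X}(\cO_{\bar X}, \cE) - \bC\big) - \kappa\big(\Hom_{\bar X}(\cO_{\bar X}, \cE) - \bC\big)^\vee,
\]
where I am writing $I = [\cO \to \cE]$ and $V_a$ is identified with $H^0(\cE(k))$ minus the line spanned by... — more precisely $V_a$ has rank $f_{k,p}(I) - 1$ and sits as the hyperplane complementary to the framing vector. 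On the $\Quot$ side, the virtual tangent space is $\Ext_{\bar X}(\cE', \cO_{\bC^3}^{\oplus 2}) - \Ext_{\bar X}(\cE',\cE')$ for the universal quotient kernel $\cE'$, and I would rewrite this using the tautological sequence $0 \to \cE' \to \cO^{\oplus 2} \to \cE \to 0$ and the identities $\Ext_{\bar X}(\cO_{\bar X}, \cE) = \Hom_{\bar X}(\cO_{\bar X},\cE)$ (affine support of $Z$), together with the relation $\Ext_{\bar X}(I, I(-D)) = \Ext_{\bar X}(\cE,\cE)_\perp - \cdots$ coming from the triangle in \eqref{eq:obstruction-theory-rank-1-triangle} and the computation in the proof of Lemma~\ref{lem:obstruction-theory-rank-1-is-vertex}, exactly as was done for the $\fQ$-version in the proof of Proposition~\ref{prop:on-wall-invariants-Q}. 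Once the two K-theory classes are shown equal, Proposition~\ref{prop:master_space_vir_class_comparison} (or, since everything is already a POT on a quasi-projective scheme, \cite{thomas2020ktheoretic} directly) gives $\cO^\vir$ matching, and tensoring by the square roots of the virtual canonicals gives $\hat\cO^\vir$ matching.

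The main obstacle I expect is the K-theory bookkeeping in the last step: correctly tracking the $\bC$ and $\bC\otimes\kappa$ correction terms that convert between the "traceless" $\Ext_{\bar X}(I,I(-D))$ appearing in the obstruction theory on $\fN$ and the "full" $\Ext$ groups appearing naturally on $\Quot(\cO^{\oplus 2})$, and making sure the framing contribution $\Hom(V_a, F_{k,p}(I)) - \Hom(V_a,V_a)$ on the quiver side exactly accounts for the $\Hom_{\bar X}(\cO^{\oplus 2}, \cE) - \Hom_{\bar X}(\cE,\cE) - (\text{automorphisms of }\cO^{\oplus 2})$ discrepancy on the $\Quot$ side — including the subtlety that $\Quot(\cO^{\oplus 2})$ records $s$ as one of the two generators while the quiver records it separately via the $L^{\oplus p}$ summand. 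This is entirely parallel to the proof of Proposition~\ref{prop:on-wall-invariants-Q} and should go through without new ideas, just with one extra $\cO_{\bC^3}$-summand to carry around, but it is the place where a sign or a missing $\kappa$-twist is most likely to hide.
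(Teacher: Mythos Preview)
Your overall strategy is the same as the paper's, but there are concrete gaps in the execution that would prevent the argument from going through as written.

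\textbf{The isomorphism.} You miss one new phenomenon not present in the $\fQ$ case you are mimicking. The framing vector $v$ lies in $F_{k,p}(I) = H^0(\cE) \oplus L$, so it has two components $\rho_1\colon V \to L$ and $\rho_2\colon V \to H^0(\cE)$. The paper first shows $\rho_1 \neq 0$: if $\rho_1 = 0$ then $([0 \to \cE],\, V \xrightarrow{\rho_2} H^0(\cE))$ is a sub-object of class $((0,0,-m), \vec 1_{[a,N]})$, which is destabilizing for $\bar x(a) > x_0(a)$ regardless of whether $s$ and $\rho_2$ jointly generate $\cE$. Only after $\rho_1$ is an isomorphism does the joint-generation condition on $(s, \rho_2)$ become the correct characterization of semistability. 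Your characterization ``the two sections together span $\cE$'' skips this step and is not quite the right statement.

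\textbf{The virtual tangent match.} Several details are off. First, $V_a$ has rank $1$, not $f_{k,p}(I) - 1$; the framing contribution in K-theory after collapsing the chain of isomorphisms is $\Hom(V, L \oplus H^0(\cE)) - \bC$, not $\Hom(\cO, \cE) - \bC$ (you dropped the $L$ summand, which is exactly the term that the isomorphism $\rho_1$ will absorb). Second, and more seriously, the obstruction theory on $\Quot(\cO_{\bC^3}^{\oplus 2})$ that you must match is the \emph{symmetric} one used in \cite{FaMoRi21}, with virtual tangent
\[
\Ext_{\bar X}(W \otimes \cO, \cE) + \Ext_{\bar X}(\cE, W \otimes \cO) - \Ext_{\bar X}(\cE, \cE), \qquad W = L \oplus V,
\]
coming from viewing $\Quot$ as a moduli of rank-$2$ pairs; it is \emph{not} the standard Quot tangent $\Ext(\cE', \cE)$, and your expression $\Ext(\cE', \cO^{\oplus 2}) - \Ext(\cE', \cE')$ is neither of these. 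The paper's matching then goes via $\Ext_{\bar X}(I, I) = \Ext_{\bar X}(I, I(-D)) + \Ext_{\bar X}(I|_D, I|_D)$ together with the identification $\Hom(V, V) \cong \Hom(V, L)$ via $\rho_1$, which is what makes the $L$ summand and the $-\bC$ term fit together with the $W = L \oplus V$ side. Your outline does not set up any of these identifications, and the ones you sketch (using $\Ext(\cE,\cE)_\perp$ and the argument for $\fQ$) do not apply here since $\fN$ carries the obstruction theory \eqref{eq:obstruction-theory-rank-1}, not \eqref{eq:obstruction-theory-rank-0-pl}.
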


We view $\Quot(\cO_{\bC^3}, m)$ as the moduli scheme of pairs
$[\cO_{\bar X}^{\oplus 2} \twoheadrightarrow \cE]$ where $\cE$ is a
zero-dimensional of length $m$ and support on $\bC^3 = \bar X
\setminus D$, and the map is a surjection. It carries a natural
symmetric perfect obstruction theory coming from its presentation as a
critical locus in an associated non-commutative Quot scheme. The
generating series
\begin{equation*}
  \sum_{m\in\bZ} Q^m \tilde\sN_{(\emptyset,\emptyset,\emptyset),m, \vec 1_{[a,N]}}(0,\bar x(a)) = \sum_{m \in \bZ} Q^m \chi(\Quot(\cO_{\bC^3}^{\oplus 2}, m), \hat\cO^\vir) \eqqcolon \sV^{\DT(2),K}_{\emptyset,\emptyset,\emptyset}
\end{equation*}
is therefore the K-theoretic rank-$2$ degree-$0$ DT vertex computed in
\cite[Theorem A]{FaMoRi21}.

\subsubsection{}

\begin{proof}
  This will be a longer version of the proof of
  Lemma~\ref{prop:on-wall-invariants-Q-joyce}. By the same argument as
  in that proof, the relevant objects on the left hand side are pairs
  \begin{equation} \label{eq:on-wall-quot-objects}
    ([L \otimes \cO_{\bar X} \xrightarrow{s} \cE], V \xrightarrow{\rho} L \oplus H^0(\cE))
  \end{equation}
  where $L$ and $V$ are $1$-dimensional vector spaces and $\cE$ is a
  zero-dimensional sheaf on $\bC^3$. Let $\rho_1$ and $\rho_2$ be the
  $L$ and $H^0(\cE)$ components of $s$ respectively. The component
  $\rho_1$ cannot be zero, because otherwise there would be the
  destabilizing sub-object $([0 \to \cE], \, V \xrightarrow{\rho_1}
  H^0(E))$. Hence $\rho_1$ is an isomorphism and we only need to study
  $s$ and $\rho_2$.

  Note that $s\colon L \otimes \cO \to \cE$ is equivalent to a
  morphism $s\colon L\to H^0(\cE)$. Abusing notation, we write $s$ for
  both. Similarly, write $\rho_2\colon V \otimes \cO \to \cE$. Now
  assume for the sake of contradiction that
  \[ s \times \rho_2\colon (L\oplus V) \otimes \cO \to E \]
  is not surjective. Take $\cE'$ to be its image. Then the pair
  $([L\otimes\cO \to \cE'], \, V \to L \oplus H^0(\cE'))$ is a
  sub-object because both $s$ and $\rho_2$ factor through $\cE'$. By
  the same calculation as in the proof of
  Lemma~\ref{prop:on-wall-invariants-Q-joyce}, this sub-object is
  destabilizing. Furthermore, this is the only way in which
  \eqref{eq:on-wall-quot-objects} can have a destabilizing sub-object,
  given that $\rho_1$ is an isomorphism. This gives the desired
  isomorphism.

  \subsubsection{}

  To match virtual cycles, recall that the virtual tangent space of
  $\Quot(\cO_{\bC^3}^{\oplus 2})$ used in \cite{FaMoRi21} is
  \begin{align*}
    &\Ext_{\bar X}(W \otimes \cO, W \otimes \cO) - \Ext_{\bar X}([W \otimes \cO \to \cE], [W \otimes \cO \to \cE]) \\
    &= \Ext_{\bar X}(W \otimes \cO, \cE) + \Ext_{\bar X}(\cE, W \otimes \cO) - \Ext_{\bar X}(\cE, \cE)
  \end{align*}
  where $W \coloneqq V \oplus L \cong \bC^2$ and $\Ext_{\bar X}(-, -)
  \coloneqq \sum_i (-1)^i \Ext^i_{\bar X}(-, -)$. On the other hand,
  abbreviating $I \coloneqq [L \otimes \cO \to \cE]$, the APOT on
  stable loci of $\fN^{Q(N),\pl}_{m,\vec 1}$ comes from symmetrized
  pullback, with virtual tangent space
  \begin{equation} \label{eq:quot-obstruction-theory-symmetrized-pullback}
    -\Ext_{\bar X}(I, I(-D)) + \left(\Hom(V \otimes \cO, L \otimes \cO \oplus \cE) - \bC\right) - \left(\Hom(V \otimes \cO, L \otimes \cO \oplus \cE) - \bC \right)^\vee \kappa^{-1}
  \end{equation}
  coming from \eqref{eq:quiver-bilinear-obstruction} (see also
  Remark~\ref{rem:quiver-framing-isomorphisms}). From
  \eqref{eq:obstruction-theory-rank-1-LES},
  \begin{align*}
    \Ext_{\bar X}(I, I)
    &= \Ext_{\bar X}(I, I(-D)) + \Ext_{\bar X}(I\big|_D, I\big|_D) \\
    &= \Ext_{\bar X}(I, I(-D)) + \Ext_{\bar X}(L \otimes \cO_D, L \otimes \cO_D).
  \end{align*}
  Evidently this last term is equal to $\Ext_{\bar X}(L \otimes \cO, L
  \otimes \cO)$. The $\bC = \Hom(V \otimes \cO, V \otimes \cO)$ may be
  identified with $\Hom(V \otimes \cO, L \otimes \cO)$ using the
  isomorphism $\rho_1$. Putting it all together,
  \eqref{eq:quot-obstruction-theory-symmetrized-pullback} becomes
  \[ \left(\Ext_{\bar X}(L \otimes \cO, \cE) + \Ext_{\bar X}(\cE, L \otimes \cO) - \Ext_{\bar X}(\cE, \cE)\right) + \Ext_{\bar X}(V \otimes \cO, \cE) + \Ext_{\bar X}(\cE, V \otimes \cO). \]
  So the K-theory classes of the two virtual tangent spaces match,
  proving \eqref{eq:APOT-k-class-matching}, so that we can use
  Proposition \ref{prop:master_space_vir_class_comparison} to identify
  $\hat\cO^\vir$.
\end{proof}

\subsubsection{}
\label{sec:rank-2-DT0-vertex}

Applying the identifications of
Proposition~\ref{prop:on-wall-invariants-N} and
Proposition~\ref{prop:on-wall-invariants-Q} to the wall-crossing
formula \eqref{eq:joyce-s-WCF-for-PT-short} when $(\lambda, \mu, \nu)
= (\emptyset, \emptyset, \emptyset)$, the combinatorial result of
Proposition~\ref{prop:joyce-PT-combinatorics-explicit} yields the
identity
\begin{equation} \label{eq:rank-2-DT0-vertex}
  \begin{aligned}
    \sV^{\DT(2),K}_{\emptyset,\emptyset,\emptyset}
    &= \bigg(\sum_{n \in \bZ} Q^n (-\kappa^{\frac{1}{2}})^n \tilde \sQ_n\bigg)\bigg(\sum_{n \in \bZ} Q^n (-\kappa^{\frac{1}{2}})^{-n} \tilde \sQ_n\bigg) \\
    &= \sV^{\DT,K}_{\emptyset,\emptyset,\emptyset}\Big|_{Q \mapsto -Q\kappa^{\frac{1}{2}}} \cdot \sV^{\DT,K}_{\emptyset,\emptyset,\emptyset}\Big|_{Q \mapsto -Q\kappa^{-\frac{1}{2}}}.
  \end{aligned}
\end{equation}
This is exactly the rank-$2$ case of \cite[Theorem A]{FaMoRi21},
though of course their proof via a rigidity argument is very
different, much simpler, and more conceptual than the path we took.
Conversely, matching coefficients of $\tilde \sQ_{m_1} \tilde
\sQ_{m_2}$ on both sides would reproduce the combinatorial statement
of Proposition~\ref{prop:joyce-PT-combinatorics-explicit}, but again
note that this argument alone is {\it not sufficient} to provide an
alternate proof of
Proposition~\ref{prop:joyce-PT-combinatorics-explicit} since the
$\tilde \sQ_m$ are not algebraically independent.

\phantomsection
\addcontentsline{toc}{section}{References}

\begin{small}
\bibliographystyle{alpha}
\bibliography{paper}
\end{small}

\end{document}